\normalfont
\documentclass[12pt,a4paper]{amsart}
\usepackage{amsfonts,layout,color,amssymb}
\usepackage{bm,epsfig,eucal}

\catcode`@=11
\@addtoreset{equation}{section}

\catcode`@=12

\newtheorem{Theorem}{Theorem}[section]
\newtheorem{Definition}[Theorem]{Definition}
\newtheorem{Proposition}[Theorem]{Proposition}
\newtheorem{Lemma}[Theorem]{Lemma}
\newtheorem{Corollary}[Theorem]{Corollary}

\newtheorem{Remark}[Theorem]{Remark}

\newtheorem{Hyp.}[Theorem]{Hyp.}

\newcommand{\TT}{\mathcal T}
\newcommand{\RR}{\mathbb{R}}
\newcommand{\norm}[1]{\left\lVert #1 \right\rVert}

\newcommand{\bT}{{\pmb{T}}}
\newcommand{\bTi}{{\bT^0}}
\newcommand{\tildebT}{{\pmb{\tilde T}}}
\newcommand{\tildebTi}{{\pmb{\tilde T}^0}}

\newcommand{\bV}{{\pmb{V}}}
\newcommand{\bH}{{\pmb{H}}}
\newcommand{\bA}{{\pmb{\mathcal A}}}
\newcommand{\bDA}{{\pmb{D(\mathcal A)}}}
\newcommand{\bG}{{\pmb{G}}}
\newcommand{\bu}{{\pmb{u}}}

\newcommand{\bM}{{\pmb{M}}}
\newcommand{\bY}{{\pmb{Y}}}

\newcommand{\bB}{{\pmb{B}}}

\newcommand{\eps}{\varepsilon}

\begin{document}

\title[]{Comparison principles and long time behavior for a diffusive Energy Balance Model with vertical resolution}

\author{P. Cannarsa} 
\address{Dipartimento di Matematica, Universit\`a di Roma "Tor Vergata",
Via della Ricerca Scientifica, 00133 Roma, Italy}
\email{cannarsa@axp.mat.uniroma2.it}

\author{V. Lucarini} 
\address{School of Computing and Mathematical Sciences,
University of Leicester, Leicester,  LE17RH  United Kingdom
and School of Sciences, Great Bay University, Dongguan, P.R. China}
\email{v.lucarini@leicester.ac.uk}

\author{P. Martinez} 
\address{Institut de Math\'ematiques de Toulouse; UMR 5219, Universit\'e de Toulouse; CNRS \\ 
UPS IMT F-31062 Toulouse Cedex 9, France} \email{patrick.martinez@math.univ-toulouse.fr}

\author{C. Urbani} 
\address{Dipartimento di Ingegneria e Scienze, Universitas Mercatorum, Piazza Mattei 10, 00186, Roma, Italy}
\email{cristina.urbani@unimercatorum.it}

\author{J. Vancostenoble} 
\address{Institut de Math\'ematiques de Toulouse; UMR 5219, Universit\'e de Toulouse; CNRS \\ 
UPS IMT F-31062 Toulouse Cedex 9, France} \email{judith.vancostenoble@math.univ-toulouse.fr}

\keywords{Energy balance climate model, degenerate parabolic equation, nonlinear system, cooperative system, global existence, blow-up, invariant rectangle, maximum principles, attractor}


\begin{abstract}

We study a two-layer one-dimensional energy balance model, which allows for vertical energy exchanges between a surface layer and the atmosphere, as well as meridional energy transport  across latitudes via a diffusion law. The evolution equations of the surface temperature and the atmospheric temperature are coupled by exchange of infrared radiation as well as other non-radiative energy exchanges. The energy enters the system as solar radiation, which is partially absorbed and partially reflected by the two layers. The system is then composed of two degenerate parabolic equations coupled by nonlinear terms, the growth of these terms being crucial for the choice of the functional setting.

An essential parameter is the absorptivity of the atmosphere, denoted $\varepsilon _a$, whose value depends critically on greenhouse gases. We prove that blow up in finite time occurs if $\varepsilon _a >2$, while global existence of solutions and the existence of a global attractor hold when $\varepsilon _a \in (0,2)$. 

Proofs are based on comparison principles that derive from the cooperative structure of the problem, and that provide invariant rectangles for smooth initial conditions, and on regularity properties.
\end{abstract}
\maketitle


\section{Introduction}\label{sec-intro} \hfill

The climate is a multiphase and heterogeneous nonequilibrium system whose dynamical processes take place on a vast range of temporal and spatial scales. The nonequilibrium conditions are essentially due to unequal absorption of the incoming solar radiation - which provides the energy input - throughout the climate system: absorption takes preferentially place near the surface rather than aloft or in the deep ocean, and near the equator rather than at high latitudes. Radiation is then emitted mostly in the infrared range towards space. Quasi-steady state conditions are achieved as a result of the compensating effect of energy fluxes due to the motions of the geophysical fluids and the associated vertical and horizontal exchanges of energy, primarily in the form of sensible heat and latent heat \cite{Lucarini2014, Peixoto1992}. The investigation of the statistical and dynamical properties of the Earth's climate takes advantage of the use of numerical models of different levels of complexity, ranging from simple low-dimensional toy models up to Earth system models (ESMs), which provide formidable challenges in terms of high performance computing \cite{GL2020}.
 
Given the complexity of full-scale climate models, simplified frameworks such as \emph{Energy Balance Models} (EBMs) remain valuable tools for capturing essential climate mechanisms and gaining theoretical insight. Introduced in the 1960s independently by Budyko~\cite{Budyko} and Sellers~\cite{sellers}, EBMs describe the evolution in the energy content of the climate system due the competition between incoming solar radiation and outgoing terrestrial radiation and internal exchange processes \cite{North81}. 

Budyko and Sellers were able to show that the Earth's climate was expected to be multistable within a realistic range of physical parameters, with the current climate being in competition with a much colder, so-called snowball climate, featuring global glaciation. This surprising theoretical result was famously confirmed by paleoclimatological investigations, which discovered that about 650 Mya the Earth did enter and exit such a snowball state \cite{Hoffman2000,Pierrehumber2011}; see the discussion in \cite{Lucarini-Bodai2017,Lucarini-Bodai2020}. Thanks to their flexibility and physical meaningfulness, EBMs are currently being developed also for studying general exoplanetary atmospheres \cite{HaqqMisra2022,Ramirez2024}.

EBMs differ in terms of spatial detail in the description of the climate system. In the simplest case, the dynamics is written in terms of a single scalar variable, usually identified with some measure of the temperature of the climate system. Such a formulation is already able to capture the climate's multistability. A more advanced formulation of the climate energy budget allows for the use of two temperature variables, describing the surface properties, and one describing the free atmosphere properties. Going from one to two temperature variables allows, e.g., to provide a basic yet insightful description of the greenhouse effect, whereby the optical properties of the atmosphere in the infrared lead to a warming of the planetary surface \cite{Hartmann}. 

Following Sellers' viewpoint, the consideration of spatial variability leads to cast EBM as a partial differential equation describing the evolution of the surface temperature $T = T(t,x)$ according to the equation
\begin{equation}
\gamma \frac{\partial T}{\partial t} -\kappa \frac{\partial}{\partial x} \left( (1 - x^2) \frac{\partial T}{\partial x} \right) = R_s(t,x,T) - R_e(T),
\end{equation}
where $x \in (-1,1)$ represents the sine of the colatitude, $\gamma$ is the effective heat capacity, $\kappa$ is the effective heat diffusivity, where meridional heat fluxes are due the motion of the geophysical fluids, $R_s$ is the absorbed solar radiation, and $R_e$ is the emitted radiation. The latter is typically modeled by the Stefan--Boltzmann law, $R_e(T) = \sigma_B T^4$. Instead, $R_s$ has a more complex functional representation, as it is in general time-dependent, as a result of the intrinsic periodicities in the incoming solar radiation, and, more critically, features a strong dependence on $T$ because lower temperatures support the presence of surface ice, which is very effective in reflecting solar radiation. This property leads to the presence of the ice-albedo feedback, which is key to the existence of multistability \cite{Ghil76,Pierrehumber2011}.

To more accurately describe radiative and thermodynamic processes in the vertical direction, two-layer EBMs (2LEBMs) have been introduced \cite{Paltridge1974, Gill1982}, sometimes coupled with an hydrological cycle \cite{Jentsch0, Jentsch1, Jentsch2}. These models represent the Earth's surface and atmosphere as interacting layers, allowing for a more refined treatment of the \emph{greenhouse effect}, vertical energy transfer, and feedback mechanisms. The meridional heat transport of the surface layer succinctly describes the role of the ocean in reducing the meridional temperature gradient of the climate system \cite{Knietzsch2015}. Note that while the atmosphere and the ocean are extremely different in terms of dynamics, characteristic timescales, and physico-chemical features, they provide comparable contributions to the overall meridional heat transport \cite{Peixoto1992,Lucarini2014}.

Given the physical relevance and wide use of EBMs for interpreting Earth's climate and its response to perturbations \cite{Lucarini2024}, advancing understanding of their mathematical properties is key to developing a rigorous picture of the mathematics of climate change and climate variability. To this purpose, we mention \cite{Hetzer-Schmidt1990, Hetzer-Schmidt1991}, among other results, that gave a mathematical background to \cite{Jentsch1, Jentsch2}. In a recent study~\cite{CLMUV-EDO}, we examined the following ODE system for the global average temperatures $T_s(t)$ (surface) and $T_a(t)$ (atmosphere):
\begin{equation}\label{eq:ode-model}
\left\{
\begin{aligned}
\gamma_a \frac{dT_a}{dt} &= \phantom{-}\lambda(T_s - T_a) + \varepsilon_a \sigma_B T_s^4 - 2\varepsilon_a \sigma_B T_a^4 + R_a(T_a),
\\
\gamma_s \frac{dT_s}{dt} &= -\lambda(T_s - T_a) - \sigma_B T_s^4 + \varepsilon_a \sigma_B T_a^4 + R_s(T_s)
\end{aligned}
\right.
\end{equation}
where $\lambda$ represents non-radiative vertical transfer, and $\varepsilon_a$ is the atmospheric absorptivity parameter. It was shown that the system admits global solutions when $\varepsilon_a \in (0,2)$, while finite-time blow-up may occur for $\varepsilon_a > 2$. We also proved that $\varepsilon_a$, as well-known from physical arguments, is a key modulator of the greenhouse effect, with larger values of $\varepsilon_a$ associated with higher surface temperature. We have also been able to prove that the presence of the ice-albedo feedback leads in some cases to the existence of more than one stable equilibrium solution. These results indicate the presence of a sharp transition between stable and unstable regimes.

\subsection{Main achievements of this paper}
We bring forward our program of creating a rigorous mathematical framework for climate science and investigate the following spatially extended version of the above model, leading to the following degenerate parabolic system for $T_s = T_s(t,x)$ and $T_a = T_a(t,x)$:
\begin{equation}\label{eq:pde-model}
\left\{
\begin{aligned}
\gamma_a \left[ \frac{\partial T_a}{\partial t} - \kappa_a \frac{\partial}{\partial x} \left( (1 - x^2) \frac{\partial T_a}{\partial x} \right)\right] &= \phantom{-}\lambda(T_s - T_a) + \varepsilon_a \sigma_B T_s^4 - 2\varepsilon_a \sigma_B T_a^4 + R_a(x,T_a),
\\
\gamma_s \left[\frac{\partial T_s}{\partial t} - \kappa_s \frac{\partial}{\partial x} \left( (1 - x^2) \frac{\partial T_s}{\partial x} \right)\right] &= -\lambda(T_s - T_a) - \sigma_B T_s^4 + \varepsilon_a \sigma_B T_a^4 + R_s(x,T_s)
\end{aligned}
\right.
\end{equation}
complemented with suitable Neumann boundary conditions and positive initial data. The degenerate diffusion terms reflect reduced energy transport efficiency near the poles. The nonlinearity and coupling introduce mathematical challenges that require refined analysis. The presence of spatial structure and of two layers, representative of the atmosphere and ocean components, makes the model fairly relevant at physical level.

Our main contributions are as follows:

\begin{itemize}
\item When $\varepsilon_a \in (0,2)$
  
	\begin{itemize}
	\item we prove \emph{existence, uniqueness}, and \emph{uniform boundedness} of classical solutions to the PDE system \eqref{eq:pde-model};	  
	  \item we demonstrate the existence of a \emph{global attractor}, and the existence of \emph{stationary solutions}.
	\end{itemize}
\item When $\varepsilon_a > 2$: \emph{finite-time blow-up} can occur, confirming the threshold behavior identified in the ODE setting.
\end{itemize}

This analysis provides a rigorous mathematical framework for understanding how increased greenhouse gas concentrations may influence the spatially resolved climate dynamics,  and how vertical structure and latitudinal transport affect stability.

The techniques we use blend methods from the theory of degenerate parabolic equations with qualitative analysis of cooperative dynamical systems. In particular, we:
\begin{itemize}
  \item Adapt classical comparison theorems to the degenerate and strongly coupled PDE setting.
  \item Use a priori estimates and maximum principles to control the nonlinearities and ensure positivity of solutions.
  \item Exploit the cooperative structure of the equations, which is associated with the presence of vertical energy exchanges between the two layers, to analyze long-time behavior.
  \item Establish blow-up by comparison with suitable subsolutions derived from the ODE model.
  \item Combine maximal regularity properties with comparison methods, energy estimates and compactness results to prove the existence of an attractor.
\end{itemize}
Our results are closely related to those of \cite{Hetzer-Schmidt1990, Hetzer-Schmidt1991}, which were established in a more general abstract framework. In the present work, however, we extend the well-posedness theory to the space ${\bf V}$ (see definition \eqref{eq:def V}, \eqref{eq:def H V}), which is natural for the functional setting of the model and allows for possibly unbounded initial data -- a feature not covered in \cite{Hetzer-Schmidt1990, Hetzer-Schmidt1991}. Moreover, our global attractor result is proved directly in the same space ${\bf V}$.

Another significant difference concerns the absorptivity parameter $\varepsilon_a$. While \cite{Hetzer-Schmidt1990, Hetzer-Schmidt1991} impose from the outset the restrictive assumption $\varepsilon_a<1$, our analysis identifies the threshold value $\varepsilon_a=2$ as the critical bound ensuring the validity of the theory (see Remark \ref{rem:eps_a} for more details).

Similar remarks apply to the recent work \cite{Fornasaro}, which considers a related model including ocean--atmosphere interactions, but restricted to mid-latitudes. There, the treatment of the nonlinear terms is more direct and again requires the assumption $\varepsilon_a<1$, whereas our approach allows for a sharper characterization of the admissible range of $\varepsilon_a$, in addition to removing all constraints on the latitude.

The detailed analysis of the qualitative properties of equilibrium solutions is left for future investigation. 

\subsection{Plan of the paper} The paper is organized as follows.
\begin{itemize}
\item In Section~\ref{sec-pbm-hyp}, we introduce the PDE model, discuss its derivation and state the main assumptions. We also present the well-posedness results: 
\begin{itemize}
    \item existence of a maximal solution (Proposition~\ref{thm-max-mild});
    \item global existence (Theorem~\ref{prop-bounds});
    \item finite-time blow-up (Theorem~\ref{prop-blowup}).
\end{itemize}
The proofs of the latter two results are postponed to Section~\ref{sec-proof-7}, as they rely crucially on the comparison principles established in Section~\ref{sec:MP}.

\item Section~\ref{sec:MP} is devoted to maximum principles and comparison results for degenerate parabolic equations and systems. In particular, we prove:
	\begin{itemize}
		\item a weak maximum principle for linear cooperative degenerate parabolic systems (Proposition~\ref{lem-ppmaxw-syst});
		\item a comparison principle for a single degenerate parabolic equation (Corollary~\ref{cor-ppmax-eq});
		\item a strong maximum principle for linear degenerate parabolic systems (Proposition~\ref{prop-ppmax-system-deg});
		\item a comparison principle for the two-layer Energy Balance Model (Theorem~\ref{lem-comp-2layer});
		\item a comparison principle between solutions of the two-layer EBM and suitable super- and sub-solutions (Theorem~\ref{lem-comp-2layer-sub-sup}).
	\end{itemize}

\item Section~\ref{sec-proof-7} exploits the maximum and comparison principles established in Section~\ref{sec:MP}. We first recall key results from \cite{CLMUV-EDO} concerning the spatially homogeneous two-layer EBM. These results are then used to construct explicit super- and sub-solutions for the PDE system. As a consequence, we prove:
	\begin{itemize}
		\item positivity of solutions of the two-layer EBM (Proposition~\ref{prop-nonneg});
        \item the existence of invariant rectangles for regular initial conditions (Proposition~\ref{prop-rect-inv});
		\item Theorem~\ref{prop-bounds} (global existence) and Theorem~\ref{prop-blowup} (finite-time blow-up).
	\end{itemize}

\item Section \ref{sec-attractor-statement} is devoted to the analysis of the long time behavior of solutions of the autonomous EBM. We prove:
    \begin{itemize}
        \item the existence of distinguished (warmest or coldest) equilibrium points (Theorem~\ref{thm-warmest})
        \item the existence of a global attractor (Theorem~\ref{thm-attractor}).
    \end{itemize}

\item Finally, conclusions and perspectives are presented in Section~\ref{sec:conclusions}.
\end{itemize}


\section{Setting of the problem and well-posedness of solutions}\label{sec-pbm-hyp}

\subsection{Setting of the problem\vspace{.2cm}}\hfill

We consider the following two-layer model in the domain $(0,\infty)\times I$, with $I:=(-1,1)$,
\begin{equation}
\label{2layer-pbm}
\begin{cases}
\displaystyle{\gamma _a } \left[ \frac{\partial T_a}{\partial t} -  \kappa_a \frac{\partial }{\partial x} \left( (1-x^2) \frac{\partial T_a}{\partial x}\right) \right] 
\vspace{.15cm}\\
\hspace{1.5cm} 
= -\lambda (T_a - T_s)  
+ \varepsilon _a \sigma _B \vert T_s \vert ^3 T_s  - 2 \varepsilon _a \sigma _B \vert T_a \vert ^3 T_a + r(t) q(x) \beta_a (T_a) 
\vspace{.1cm}\\
\displaystyle{\gamma _s} \left[ \frac{\partial T_s}{\partial t} -  \kappa_s \frac{\partial }{\partial x} \left( (1-x^2) \frac{\partial T_s}{\partial x}\right) \right]\vspace{.15cm} \\
\hspace{1.5cm}= -\lambda (T_s - T_a) - \sigma _B \vert T_s \vert ^3 T_s  + \varepsilon _a \sigma _B \vert T_a \vert ^3 T_a + r(t) q(x) \beta_s (T_s) , \vspace{.15cm} \\
(1-x^2) \displaystyle{\frac{\partial T_a}{\partial x} _{\vert x = \pm 1}} = 0 = (1-x^2) \frac{\partial T_s}{\partial x} _{\vert x = \pm 1},\vspace{.1cm}
\end{cases}
\end{equation}
with initial conditions
\begin{equation}
\label{eq:bound and in cond}
T_a (0,x)=T_a ^{0} (x) ,
 \ \ \ \ 
T_s (0,x)=T_s ^{0} (x),\quad   x \in I.
\end{equation}
\begin{Remark}
    {\rm We observe that the fourth powers $T^4_a$ and $T^4_s$ in \eqref{eq:pde-model} have been replaced with the nonlinear terms $|T_a|^3 T_a$ and $|T_s|^3 T_s$ in the above equations. Indeed, such a structure of the nonlinear terms allows for a better mathematical treatment of the long time behavior of solutions for general initial conditions. On the other hand, when the model is restricted to the physical relevant regime of positive initial conditions, the two formulations coincide, since the corresponding solutions remain positive (see Proposition \ref{prop-nonneg}).}
\end{Remark}
Throughout the paper, we consider the following set of assumptions
\begin{Hyp.}\label{HYP1} \hfill
\begin{itemize}
\item[{\bf (i)}]  The coefficients $\gamma_a,\,\gamma_s,\,\sigma_B,\,\eps_a,\,\lambda$ satisfy
$$\gamma_a, \gamma_s >0, \quad  \kappa_a, \kappa_s >0, \quad 
\sigma_B>0, \quad \varepsilon_a>0, \quad \lambda \geq 0.$$
\item[{\bf(ii)}] The coalbedo functions $\beta_a, \beta _s:\RR\to\RR$ verify
\begin{equation*}
\beta _a, \beta _s \in W^{1,\infty}(\RR) \text{ and are nondecreasing }.
\end{equation*}
\item[{\bf (iii)}] The incoming solar flux $Q(t,x)=r (t)q (x)$ satisfies
\begin{equation*}
q \in C^{\theta_0}(I)\cap L^\infty(I),\text{ for some }\theta_0\in(0,1),\quad q \geq 0,
\end{equation*}
\begin{equation*}
 r \text{\,\,is positive, bounded and globally Lipschitz in }(0,+\infty).
\end{equation*}
\end{itemize}
\end{Hyp.}
We observe that assuming $\beta _a, \beta _s \geq 0$ is meaningful for the model interpretation. However, this hypothesis is not necessary to prove the well-posedness of the problem and the existence of an attractor. 

\smallskip

\begin{Remark}
\label{rem:eps_a}
{ \rm While the assumption $\varepsilon_a > 0$ is sufficient to ensure local existence of solutions, studying global existence requires further restrictions on the range of $\varepsilon_a$. For the corresponding ODE system, we proved in \cite{CLMUV-EDO} that blow-up may occur when $\varepsilon_a > 2$, whereas global existence holds for $\varepsilon_a \in (0, 2)$. In the present work, we establish global existence for the PDE model \eqref{2layer-pbm}-\eqref{eq:bound and in cond} when $\varepsilon_a \in (0, 2)$, and demonstrate the possibility of blow-up when $\varepsilon_a > 2$.

The mathematical analysis can be complemented with physics-informed comments. 
From a physical standpoint, the absorptivity $\varepsilon_a$ of a single atmospheric layer represents the fraction of infrared radiation emitted by the surface that is absorbed by the atmosphere  ($\varepsilon_a = 1$ corresponds to 100\% absorption, while $\varepsilon_a = 0$ corresponds to 0\% absorption). Basic physical principles therefore constrain $\varepsilon_a$ to lie in the interval $(0, 1]$: the value $\varepsilon_a = 1$ corresponds to a perfectly opaque atmosphere (one that absorbs all outgoing infrared radiation), while $\varepsilon_a \to 0$ corresponds to a fully transparent one. Moreover, showing that larger values of $\varepsilon_a$ lead to a warmer surface temperature amounts to mathematically proving the greenhouse effect within this simple model.

When the atmosphere is very opaque, the physically correct modelling procedure 
requires adding extra atmospheric layers, each behaving radiatively as a blackbody 
and stacked on top of each other, in order to respect the basic laws of 
thermodynamics (see e.g.\ \cite{Hartmann}). In this sense, the range 
$\varepsilon_a \in (1, 2)$ has no direct physical interpretation within the single-layer framework.

Nevertheless, the mathematical threshold $\varepsilon_a = 2$ retains a clear 
physical significance, and the range $\varepsilon_a \in (1,2)$ is not devoid of 
physical meaning. If one accepts the atmospheric single-layer model with $\varepsilon_a \geq 1$ as a rough representation of a very opaque atmosphere, then the following picture emerges. For $\varepsilon_a \in (1, 2)$, global solutions still exist. 
This can be interpreted as a regime in which a very opaque atmosphere, while leading 
to a warm surface, remains compatible with a stable energy balance: the system avoids 
the runaway greenhouse instability. By contrast, when $\varepsilon_a \geq 2$, 
finite-time blow-up occurs, which can be interpreted as the mathematical signature of 
the runaway greenhouse effect. This effect manifests itself physically when surface 
warming triggers excessive evaporation, further increasing atmospheric opacity and 
driving the system away from any stable equilibrium --- a mechanism that is at work 
on Venus. The threshold $\varepsilon_a = 2$ thus marks a sharp transition between a 
regime of stable energy balance and a regime of runaway instability, and is in this 
sense not a mere mathematical artifact.
} 
\end{Remark}

\subsection{Functional setting for scalar degenerate diffusion operators\vspace{.2cm}} \hfill

Let us start by describing the functional setting for a single degenerate equation.

Let  
$$\rho(x):=1-x^2, \qquad x \in \bar I,$$
denote the coefficient of the diffusion operator. Since $\rho$ vanishes at both the endpoints $x=\pm1$, the diffusion operator degenerates at the boundary of the domain. Thus, the problem must be to formulated in suitable weighted function spaces. 

\subsubsection{The spaces $H$ and $V$\vspace{.2cm}} \hfill

Following \cite[Equation (10)]{Campiti1998}, we consider 
$$ H:= L^2(I),$$
endowed with the standard inner product, and we introduce the space
\begin{equation}
\label{eq:def V}
V:=\left\{u\in H:\, u\text{ locally absolutely continuous in }I\,,\,\sqrt{\rho}\,\frac{d}{d x} u\in L^2(I)\right\}.
\end{equation}
The space $V$ is endowed with the inner product\begin{equation*}
(u,v)_V:=(u,v)_{H}+\left(\sqrt{\rho}\,\frac{d u}{dx},\sqrt{\rho}\,\frac{d v}{dx}\right)_{H},\qquad\forall\,u,v\in V,
\end{equation*}
and with the associated norm
\begin{equation*}
\norm{u}_V:=\sqrt{(u,u)_V} = \sqrt{\Vert  u \Vert^2_{L^2(I)} + \norm{ \sqrt{\rho}\,\frac{d u}{dx}} ^2_{L^2(I)} } ,\qquad\forall\,u\in V.
\end{equation*}
With this structure, $(V,\norm{\cdot}_V)$ is a Hilbert space and is embedded into $H^1_{loc}(I)$. 

In the following proposition, we collect several useful properties about $V$. 

\begin{Proposition}\label{resu-prop-V}
The space $V$ satisfies the following properties: 
\begin{enumerate}
\item $ C_0 ^\infty (I)$, the space of smooth functions that are compactly supported in $I$, is dense in $V$;
\item $V$ is continuously embedded in $L^p(I)$ for $1\leq p<+\infty$;
\item $V$ is compactly embedded in $L^2(I)$.
\end{enumerate}
\end{Proposition}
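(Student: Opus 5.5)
The plan is to prove the three properties in turn, working on each half-interval separately and using the elementary weighted Hardy-type inequality near the degenerate endpoints $x=\pm1$. By symmetry it suffices to work on $(0,1)$, where $\rho(x)=(1-x)(1+x)$ is comparable to $1-x$. Away from the endpoints, $V$ coincides locally with $H^1$, so on any compact subinterval $[-a,a]$ with $a<1$ all the statements reduce to classical Sobolev facts.

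\textbf{Step 1: density of $C_0^\infty(I)$.} We first truncate near the endpoints with cutoffs adapted to the logarithmic capacity of the weight. For $u\in V$ and small $\delta>0$, let $\eta_\delta(x)=\phi_\delta(1-|x|)$ with
\[
\phi_\delta(s)=0 \text{ for } s\le\delta^2,\qquad \phi_\delta(s)=\frac{\log(s/\delta^2)}{\log(1/\delta)} \text{ for } \delta^2\le s\le\delta,\qquad \phi_\delta(s)=1 \text{ for } s\ge\delta.
\]
Then $\eta_\delta u\to u$ in $L^2$ by dominated convergence. In
\[
\sqrt\rho\,(\eta_\delta u)'=\eta_\delta\sqrt\rho\, u'+u\sqrt\rho\,\eta_\delta',
\]
the first term converges to $\sqrt\rho\, u'$. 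The second term is the obstacle, since $u$ need not be bounded. We have $\rho|\eta_\delta'|^2\lesssim \dfrac{1}{s\log^2(1/\delta)}$ on $(\delta^2,\delta)$, where $s=1-|x|$, so this term is controlled by
\[
\frac{1}{\log^2\delta}\int_{\delta^2}^{\delta}\frac{|u|^2}{s}\,ds.
\]
To bound this we need the growth estimate near the endpoint
\[
|u(1-s)|\le |u(1/2)|+C\|\sqrt\rho u'\|_{L^2}\,|\log s|^{1/2},
\]
which follows from Cauchy--Schwarz: $\int_{1/2}^{1-s}|u'|\le \|\sqrt\rho u'\|\,\big(\int ds/\rho\big)^{1/2}$. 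Plugging this in, the contribution is $O\big(\int_{\delta^2}^{\delta}\log(1/s)\,ds/(s\log^2\delta)\big)$, which is $O(1)$ but not $o(1)$. We refine it by splitting $u$ into a part with small $\|\sqrt\rho u'\|$ near the endpoint, whose log-coefficient tail $\|\sqrt\rho u'\|_{L^2(1-\delta,1)}\to0$, and a bounded part. This gives $o(1)$. Finally, $\eta_\delta u$ has compact support in $I$ and lies in $H^1$, so standard mollification yields $C_0^\infty$ approximants.

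\textbf{Step 2: embedding into $L^p$.} The same pointwise estimate gives
\[
|u(x)|\le C\|u\|_V\big(1+|\log(1-|x|)|^{1/2}\big),
\]
where the value at an interior point is bounded by the $H^1(-1/2,1/2)$ norm, hence by $\|u\|_V$. Since $|\log s|^{p/2}$ is integrable near $0$ for every $p<\infty$, we get $\|u\|_{L^p}\le C_p\|u\|_V$.

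\textbf{Step 3: compact embedding into $L^2$.} Take a bounded sequence in $V$. On each $[-a,a]$ it is bounded in $H^1$, so Rellich plus a diagonal argument extracts a subsequence converging in $L^2_{loc}(I)$. The tails are uniformly small by Hölder and Step 2 with $p=4$:
\[
\int_{|x|>a}|u|^2\le \|u\|_{L^4}^2\,(2(1-a))^{1/2}\le C(1-a)^{1/2}.
\]
Hence the subsequence is Cauchy in $L^2(I)$.

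The main obstacle is Step 1, namely the unbounded behaviour of $u$ at the degenerate endpoints. Steps 2 and 3 are routine once the logarithmic pointwise bound is in hand.
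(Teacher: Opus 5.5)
Your proof is correct, but it takes a genuinely different route from the paper. The paper gives no argument of its own: item (1) is quoted from \cite[Lemma 2.6]{Campiti1998}, item (2) from \cite{Diaz1997}, and item (3) is obtained by adapting the compactness theorem of \cite{AL-CAN-FRA}, which is stated there for a weight degenerating at a single endpoint.

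You instead rest all three items on one elementary estimate, the logarithmic pointwise bound $|u(x)|\le C\|u\|_V\bigl(1+|\log(1-|x|)|^{1/2}\bigr)$. It follows from Cauchy--Schwarz and $\int_{1/2}^{1-s}dx/\rho\le|\log s|$. This bound gives the $L^p$ embedding at once and controls the tails in the Rellich-plus-diagonal argument. Combined with the zero capacity of the endpoints for a weight comparable to $1-|x|$, which your logarithmic cutoff exploits, it also handles the cutoff term in the density proof.

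The citation route is shorter and leaves the endpoint analysis to the literature. Yours is self-contained and gives more for free. Replace $L^4$ by $L^{2p}$ in your tail estimate, and use that bounded subsets of $H^1(-a,a)$ are precompact in $C([-a,a])$. The same argument then shows that $V$ is compactly embedded in $L^p(I)$ for every $p<\infty$, which is the stronger property the paper attributes to \cite{BCCKU}.

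One step of Step 1 needs a precise formulation. The base point of your splitting must be $1-\eta$ with $\eta$ fixed independently of $\delta$. This gives $\limsup_{\delta\to0}\le C\|\sqrt\rho\,u'\|^2_{L^2(1-\eta,1)}$, and only then do you let $\eta\to0$. Splitting at $1-\delta$, as your notation suggests, leaves the term $|u(1-\delta)|^2/\log(1/\delta)$, whose decay needs this same two-scale argument. Alternatively, first replace $u$ by the truncations $u-(u-k)^++(u+k)^-$, which converge to $u$ in $V$ by Lemma \ref{lem6.1TV} and dominated convergence. For bounded $u$ the cutoff term is simply $O\bigl(\|u\|_\infty^2/\log(1/\delta)\bigr)$.
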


\begin{proof}
Assertion (1) is proved in \cite[Lemma 2.6]{Campiti1998}. Assertion (2) follows from \cite[Lemma (ii)]{Diaz1997}. Finally, assertion (3) can be obtained by adapting the proof of \cite[Theorem 6.1]{AL-CAN-FRA}, which is stated for a diffusion coefficient that vanishes at a single endpoint. 
\end{proof}

\begin{Remark}
{\rm We observe that a stronger property than (3) has recently been proved in \cite{BCCKU}: $V$ is compactly embedded in $L^p(I)$, for all $p\geq1$.}
\end{Remark}

\begin{Remark}\label{rq-trace-V}{\rm 
We observe that $V$ is not embedded into $L^\infty(I)$ and, in general, functions in $V$ do not necessarily admit well-defined traces at the endpoints $x=\pm1$. 

As an illustrative example, consider the function
$$ \forall\, x \in I, \quad u(x) := \left \vert  \ln (1-x^2) \right \vert  ^\beta,$$
where $\beta \in (\frac{1}{4}, \frac{1}{2})$. A direct computation shows that $u\in V$, while $u(x)\to+\infty$ as $x \to \pm 1$. 
}
\end{Remark}

\subsubsection{The operator $A$ and the space $D(A)$\vspace{.2cm}} \hfill

We now define the degenerate operator $A: D(A)\subseteq H \to H$  (see \cite[Equation (3)]{Campiti1998}):
\begin{align*}
& D(A):=
\left\{u\in H:\, u\text{ locally absolutely continuous in }I,\, 
\rho \frac{d u}{dx}\in H^1_0(I)
\right\},
\\
& Au:=\frac{d}{dx}\left(\rho \frac{d u}{dx}\right),\quad\forall\,u \in D(A).
\end{align*}

In the following proposition we collect several fundamental properties of the operator $(A,D(A))$. 
\begin{Proposition}\label{resu-prop-A}
The operator $A: D(A)\subseteq H \to H$ enjoys the following properties:
\begin{enumerate}
\item  $(A,D(A))$ is self-adjoint, dissipative, and has a compact resolvent and dense domain. 
Consequently, $A$ is the infinitesimal generator of an analytic semigroup of contractions $\{e^{tA}\}_{t\geq0}$ on $H$;
\item The domain $D(A)$ coincides with the space 
$$\left\{u\in V\,:\,\rho \frac{d u}{dx}\in H^1(I)\right\};$$ 
\item The real interpolation space between $D(A)$ and $H$ satisfies 
$$[D(A), H]_{1/2} = V.$$
 \end{enumerate}
\end{Proposition}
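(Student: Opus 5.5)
The natural route is through the symmetric bilinear form associated with $A$. On $V\times V$ define
\[
a(u,v):=\int_I \rho\, u' v'\,dx .
\]
This form is continuous on $V$ and satisfies $a(u,u)+\|u\|_H^2=\|u\|_V^2$, so it is closed and coercive up to a shift. Since $C_0^\infty(I)\subset V$ is dense in $H$, the Lions/Kato representation theorem produces a self-adjoint operator $\tilde A\le 0$ on $H$ with $(-\tilde A u,v)_H=a(u,v)$. This operator is the generator of an analytic contraction semigroup, because it is self-adjoint and nonpositive. Its domain is dense, being a form domain, and its resolvent is compact, since $V\hookrightarrow H$ compactly by Proposition~\ref{resu-prop-V}(3). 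The plan therefore reduces (1) and (2) to the identification $\tilde A=A$, with
\[
D(A)=\{u\in V:\rho u'\in H^1(I)\}=\{u\in H:\ u\ \text{loc. a.c.},\ \rho u'\in H^1_0(I)\}.
\]

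\textbf{Inclusion $D(\tilde A)\subseteq D(A)$.} Take $u\in D(\tilde A)$ and set $f=\tilde A u$. Testing $a(u,v)=-(f,v)$ with $v\in C_0^\infty(I)$ gives $(\rho u')'=f$ in the distributional sense. Hence $\rho u'\in H^1_{loc}(I)$, with derivative in $L^2(I)$, so $\rho u'\in H^1(I)$ and the limits $\ell_\pm=\lim_{x\to\pm1}\rho u'(x)$ exist.

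The key step is showing $\ell_\pm=0$. Suppose, say, $\ell_+\neq0$. Then $|u'|\ge c/(1-x)$ near $1$, and so
\[
\int \rho|u'|^2\ \gtrsim\ \int \frac{dx}{1-x}=\infty,
\]
which contradicts $u\in V$. Thus $\rho u'\in H^1_0(I)$ and $u\in D(A)$.

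Alternatively, one can test with $v\equiv1$ near the endpoints, since constants lie in $V$; this forces the boundary terms to vanish.

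\textbf{Inclusion $D(A)\subseteq D(\tilde A)$.} Take $u\in D(A)$. First we need $u\in V$, i.e. $\sqrt\rho\,u'\in L^2$. Write $g=\rho u'\in H^1_0$. Since $|g(x)|\le C\sqrt{1-|x|}$, we have $|u'|\lesssim(1-|x|)^{-1/2}$ near the endpoints. This gives only $\rho|u'|^2\lesssim 1$, which is integrable, so $u\in V$. With $u\in V$ in hand, integration by parts yields $a(u,v)=-((\rho u')',v)$. The boundary term $\rho u' v$ must vanish at $\pm1$ even though $v\in V$ may be unbounded (Remark~\ref{rq-trace-V}). To handle this, I would first verify the identity for $v\in C_0^\infty(I)$ and then pass to the limit using density, Proposition~\ref{resu-prop-V}(1). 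This shows $u\in D(\tilde A)$ with $\tilde A u=(\rho u')'$, and completes (1) and (2).

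\textbf{Part (3).} For a nonnegative self-adjoint operator $B=I-A$, the real interpolation space $(D(B),H)_{1/2,2}$ equals $D(B^{1/2})$. Via the form representation, $D(B^{1/2})$ is exactly the form domain $V$ (Kato's square root result is immediate in the self-adjoint case, e.g. via spectral decomposition in the eigenbasis given by the compact resolvent). This gives $[D(A),H]_{1/2}=V$ with equivalent norms.

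The main obstacle is the endpoint behavior: the vanishing of $\rho u'$ at $x=\pm1$ in the first inclusion, and the justification of the integration by parts against test functions without traces in the second. I expect both to be settled by the logarithmic divergence argument together with density of $C_0^\infty(I)$; the remaining steps are standard.
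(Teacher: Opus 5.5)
Your argument is correct, but it is not the paper's route. The paper proves this proposition essentially by citation:
(1) comes from the one-endpoint result of \cite{CPAA2004}, with the extension to two degenerate endpoints declared straightforward, plus \cite{bensoussan1992} or \cite{Campiti1998} for analyticity;
(2) is quoted from \cite[Lemma 2.5]{Campiti1998};
(3) is deduced from the triple $V\subset H\subset V'$ and the Lions--Magenes interpolation theorem.

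You instead build a self-adjoint $\tilde A\le 0$ from the closed form $a(u,v)=\int_I\rho u'v'\,dx$ on $V$. You then identify $\tilde A=A$ through two elementary endpoint computations. First, a nonzero limit of $\rho u'\in H^1(I)$ at $\pm1$ would make $\rho|u'|^2=(\rho u')^2/\rho$ non-integrable. Second, $\rho u'\in H^1_0(I)$ gives $|\rho u'|\le C\sqrt{1-|x|}$, so $(\rho u')^2/\rho\in L^\infty(I)$ and $u\in V$. The Green formula is then obtained for $v\in C_0^\infty(I)$ and extended to all of $V$ by the density in Proposition~\ref{resu-prop-V}(1), since both sides are $V$-continuous in $v$.

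This buys a self-contained proof. Compactness of the resolvent follows from Proposition~\ref{resu-prop-V}(3), which the paper states without argument. More importantly, it becomes explicit that $A$ is the operator of the form on $V$, so that $D((1-A)^{1/2})=V$. That is precisely what is needed to make the Lions--Magenes step in (3) legitimate, and the paper uses it only tacitly.

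Two minor corrections. $D(\tilde A)$ is dense because the representation theorem makes it dense in $V$; the form domain is $V$ itself, not $D(\tilde A)$. Also, your alternative argument with test functions equal to $1$ near an endpoint only gives $\rho u'(\pm1)=0$ for $u\in D(\tilde A)$. For the inclusion $\{u\in V:\rho u'\in H^1(I)\}\subseteq D(A)$ required in (2), you must use the divergence argument, which applies to every such $u$.
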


\begin{proof}
For assertion (1), we refer for instance to \cite[Proposition 2.2]{CPAA2004}, where it is shown that an operator with a diffusion coefficient that vanishes at a single endpoint is closed, self-adjoint, dissipative, and has dense domain. The extension to the current case, in which the diffusion coefficient degenerates at both endpoints, is straightforward. The generation of an analytic semigroup of contractions follows from \cite[Proposition 2.11]{bensoussan1992}. We also refer to \cite[Lemma 2.7, Theorem 2.8]{Campiti1998} for alternative proofs of the self-adjointness of $A$ and analyticity of the associated semigroup. 

Assertion (2) is proved in \cite[Lemma 2.5]{Campiti1998}. We emphasize that the Neumann boundary conditions are implicitly encoded in the definition of the domain $D(A)$ in both characterizations. Indeed, if $(\rho u_x)_x\in H$, one can show that $\left.(\rho u_x)\right|_{x=\pm1}=0$ in the sense of traces.

Finally, owing to the variational framework, we have the embeddings $V \subset H \subset V'$. Assertion (3) then follows from the classical interpolation result \cite[Prop. 2.1, p. 22]{lions1968problemes}. 
An alternative proof can be obtained by adapting the computations in \cite[Supplementary materials, SM2]{CA-MA-UR} which are carried out in the case of a diffusion coefficient degenerating at a single endpoint. 
\end{proof}

\medskip

The following regularity result plays a key role in the subsequent analysis.
\begin{Proposition}
\label{prop-reg-borne}
The domain $D(A)$ is continuously embedded in $C (\bar I)$.
More precisely, for every $u$ of $D(A)$ it holds that
\begin{equation*}
    u^2 \in W^{1,1} (I).
\end{equation*} 
Moreover, there exists a constant $C$ such that
$$ \forall\, u \in D(A), \quad \Vert u \Vert _{C (\bar I)} \leq C \Vert u \Vert _{D(A)} .$$
\end{Proposition}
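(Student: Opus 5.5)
The plan is to compute the derivative of $u^2$ directly and show it is integrable, using the weight $\rho$ to absorb the degeneracy. For $u\in D(A)$, Proposition \ref{resu-prop-A}(2) gives $u\in V$ and $\rho u_x\in H^1(I)$; in fact $\rho u_x\in H^1_0(I)$. Since $u$ is locally absolutely continuous on $I$, so is $u^2$, and
$$ (u^2)_x = 2u u_x = \frac{2u}{\sqrt{\rho}}\,\sqrt{\rho}\,u_x \quad\text{a.e. in } I. $$
It is therefore enough to show $u u_x\in L^1(I)$ with $\|u u_x\|_{L^1}\le C\|u\|_{D(A)}^2$. Together with $u^2\in L^1(I)$, this gives $u^2\in W^{1,1}(I)$, hence $u^2\in C(\bar I)$ after modification on a null set. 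Then $|u|=\sqrt{u^2}$ is continuous up to the boundary.

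The key step is a pointwise bound on $\rho u_x$. Since $w:=\rho u_x\in H^1_0(I)$, we have $w(\pm1)=0$. Cauchy--Schwarz then gives
$$ |w(x)| = \Big|\int_x^1 w'(s)\,ds\Big| \le \sqrt{1-x}\,\|w'\|_{L^2}, $$
and similarly $|w(x)|\le\sqrt{1+x}\,\|w'\|_{L^2}$. Hence $|w(x)|\le \sqrt{1-|x|}\,\|Au\|_H$. Because $\rho(x)=(1-x)(1+x)\ge 1-|x|$, we get
$$ |u_x(x)| \le \frac{\|Au\|_H}{\sqrt{1-|x|}}\cdot\frac{\sqrt{1-|x|}}{\rho(x)}\cdot\frac{1}{\sqrt{1-|x|}} \le \frac{\|Au\|_H}{\sqrt{1-|x|}}. $$
Since $(1-|x|)^{-1/2}\in L^1(I)$, this already shows $u_x\in L^1(I)$, so $u\in W^{1,1}(I)\subset C(\bar I)$ directly. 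This is even stronger than the claim about $u^2$. I expect this to be the main point.

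For the quantitative estimate, I would write $u(x)=u(y)+\int_y^x u_x$ and average over $y\in I$. This gives
$$ \|u\|_{C(\bar I)} \le \tfrac12\|u\|_{L^1}+\|u_x\|_{L^1} \le C\big(\|u\|_H+\|Au\|_H\big)\le C\|u\|_{D(A)}. $$
The $u^2\in W^{1,1}$ statement then follows, since $u\in C(\bar I)$ and $u_x\in L^1$ imply $uu_x\in L^1$.

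The only delicate point is justifying that $\rho u_x$ vanishes at $\pm1$. This is built into the definition of $D(A)$ through $H^1_0$, or follows from the trace remark in the proof of Proposition \ref{resu-prop-A}. Without that fact, $\rho u_x$ would only be bounded, and $u_x\sim 1/\rho$ would fail to be integrable.
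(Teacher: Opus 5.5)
Your proof is correct, and it takes a genuinely different, more elementary route than the paper.

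The paper never bounds $u_x$ pointwise. It writes $|(u^2)'|\le u^2\rho^{-\gamma}+\rho^{\gamma}u'^2$ with $\gamma\in(0,1)$, then estimates the two terms separately:
\begin{itemize}
\item it controls $\int_I u^2\rho^{-\gamma}\,dx$ by $\|u\|_V^2$ through the weighted Hardy inequality of Lemma \ref{lem-Hardy};
\item it controls $\int_I\rho^{\gamma}u'^2\,dx=\int_I\rho^{\gamma-2}w^2\,dx$ by $\|w'\|_{L^2}^2$ through the classical Hardy inequality applied to $w=\rho u'\in H^1_0(I)$.
\end{itemize}
This gives $u^2\in W^{1,1}(I)$. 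The paper then needs an extra sign argument near $x=\pm1$ to pass from continuity of $u^2$ to continuity of $u$, and reads off the sup bound from $\|u^2\|_{W^{1,1}}$.

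You use the same essential ingredient, $w(\pm1)=0$, but exploit it through the fundamental theorem of calculus and Cauchy--Schwarz, which together with $\rho\ge 1-|x|$ gives $|u_x|\le\|Au\|_H(1-|x|)^{-1/2}$. This yields more than the statement asks:
\begin{itemize}
\item $u\in W^{1,1}(I)$ with $\|u_x\|_{L^1}\le 4\|Au\|_H$;
\item no endpoint sign discussion and no use of the $V$-seminorm;
\item the H\"older bound $|u(x)-u(y)|\le C\|Au\|_H|x-y|^{1/2}$. Bounded subsets of $D(A)$ are therefore equicontinuous, and the embedding $D(A)\hookrightarrow C(\bar I)$ is compact by Arzel\`a--Ascoli. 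The paper only quotes this fact from \cite{BSchmidt}.
\end{itemize}
In return, the paper's route produces the weighted estimate $\int_I u^2\rho^{-\gamma}\,dx\le C\|u\|_V^2$ on all of $V$, which also covers unbounded functions there. It also fits the Hardy-inequality framework of \cite{CMV-note} that the authors are adapting.

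One cosmetic fix is needed in your display for $|u_x|$. As written, the middle product equals $\|Au\|_H/(\rho\sqrt{1-|x|})$, which is not bounded by $\|Au\|_H/\sqrt{1-|x|}$. The chain you intend is $|u_x|=|w|/\rho\le\|Au\|_H\sqrt{1-|x|}/\rho\le\|Au\|_H/\sqrt{1-|x|}$, and that chain is correct.
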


\begin{Remark}
{\rm In \cite[p. 520]{BSchmidt}, it is observed that the injection of $D(A)$ into $C(\bar I)$ is compact, based on an interpolation argument. For completeness, in Appendix \ref{app: A} we provide a proof of Proposition \ref{prop-reg-borne}, adapting the ideas developed in \cite{CMV-note}.
}\end{Remark}


\subsection{Existence and uniqueness of solutions\vspace{.2cm}} \hfill
\label{sec:results} 

Consider the two-layer model \eqref{2layer-pbm}-\eqref{eq:bound and in cond}
with $(t,x)\in (0,\TT) \times  I$ for some $\TT>0$. We rewrite it in vectorial form. 
{\sl From now on, bold symbols denote vectors of two components}. We introduce the product spaces
\begin{equation}
\label{eq:def H V}
  \bH:=H \times H \quad \text{ and } \quad \bV:= V\times V,  
\end{equation} 
and 
\begin{equation}\label{def:V+}
    \bV_+=\{\pmb u\in \bV\,:\, \pmb u\geq\pmb 0\}.
\end{equation}
Note that whenever an inequality between vectors is written, it is always understood component-wise.

We define
$$\pmb B_{\bV}(R):=\{\pmb{u}\in \bV\,:\,\norm{\pmb u}_{\bV}< R\},\qquad \pmb B_\bV^+(R):=\{\pmb{u}\in \bV_+\,:\,\norm{\pmb u}_{\bV}< R\}.$$
We set  
$$\bT:=\begin{pmatrix}T_a\\T_s \end{pmatrix}, 
\qquad 
\bTi:=\begin{pmatrix}T_a ^0\\T_s ^0 \end{pmatrix} .$$
We define the operator $\bA : \bDA\subseteq \bH \to \bH $ by
\begin{align}
\label{eq:def A}
&\bDA:=D(A)\times D(A),
\\ 
& \bA\pmb{u}:=\bA\begin{pmatrix}u_a\\u_s\end{pmatrix}=\begin{pmatrix} \kappa_a Au_a&0\\0&\kappa_s Au_s \end{pmatrix},\quad\forall\,\pmb{u}=(u_a,u_s) \in \bDA.
\end{align}
The operator $\bA$ is self-adjoint, dissipative, has a compact resolvent and a dense domain; hence it generates a strongly continuous semigroup $\{e^{t \bA}\}_{t\geq0}$ on $\bH$. The Neumann boundary conditions are encoded in the definition of $\bDA$. Notice that $\bDA$ is a Hilbert space with the graph inner product \begin{equation*}
\langle\pmb u,\pmb v\rangle_{\bDA}=(\pmb u,\pmb v)_\bH+(\bA \pmb u, \bA \pmb v)_\bH,\quad \forall\,\pmb u,\,\pmb v \in\bDA.
\end{equation*}
We also introduce the space
\begin{equation}\label{def:DA+}
    \bDA_+=\{\pmb u\in \bDA\,:\, \pmb u\geq\pmb 0\}
\end{equation}
and define
\begin{equation}
\label{dfn:ball in D(A)}    
    \pmb B_{\bDA}(R):=\{\pmb{u}\in \bDA\,:\,\norm{\pmb u}_{\bDA}< R\},
\end{equation}
\begin{equation}
\label{dfn:ball in D(A)+}
    \pmb B_\bDA^+(R):=\{\pmb{u}\in \bDA_+\,:\,\norm{\pmb u}_{\bDA}< R\}.
\end{equation}
The nonlinear term $\pmb{G}: [0,+\infty) \times {\bf V} \to {\bf H}$ is defined as follows
\begin{equation*}
\pmb{G} \left(t,\begin{pmatrix} u_a\\u_s\end{pmatrix}\right)
:=\begin{pmatrix} 
\frac{1}{\gamma_a}\left[-\lambda(u_a-u_s)+\varepsilon_a\sigma_B|u_s|^3u_s-2\varepsilon_a\sigma_B|u_a|^3u_a+ r(t) \, q \, \beta_a(u_a) \right]  
\\ 
\frac{1}{\gamma_s}\left[-\lambda(u_s-u_a)-\sigma_B|u_s|^3u_s+\varepsilon_a\sigma_B|u_a|^3u_a+ r(t) \,q \,\beta_s(u_s) 
\right] \end{pmatrix}.
\end{equation*}
With this notation, system \eqref{2layer-pbm}-\eqref{eq:bound and in cond} can be written as the abstract Cauchy problem
\begin{equation}\label{2layers-abstract-form}
\left\{
\begin{array}{ll}
\bT'(t)=\bA\bT(t)+\bG(t,\bT(t)),& \qquad t\in(0,\TT),\\
\bT(0)=\bTi.
\end{array}
\right.
\end{equation}
In the result that follows we collect some properties of $\bG$ which will play a key role to deduce the local existence of solutions to \eqref{2layers-abstract-form}.

\begin{Lemma}
\label{lem-propr-G}
Let $\TT>0$ and assume that Hyp. \ref{HYP1} are satisfied. $\bG$ is well-defined and continuous on  $[0,\TT]\times \bV\to \bH$. Then $G$ satisfies
\begin{itemize}
\item $\exists\,C>0$ such that
\begin{equation}\label{propr2-G}
\forall\,\pmb{u}\in \bV,\,\forall\,t,s\in[0,\TT], \qquad 
\norm{\bG(t,\pmb{u})-\bG(s,\pmb{u})}_{\bH}\leq C|t-s|;
\end{equation}
\item $\exists\, C_1: \bV \times \bV \to [0, +\infty)$ locally bounded  such that
\begin{equation}\label{propr3-G}
\forall\, \pmb{u},\pmb{\tilde u}\in \bV,\, \forall\,t\in[0,\TT],
\qquad
\norm{\bG(t,\pmb{u})-\bG(t,\pmb{\tilde u})}_{\bH}\leq C_1(\pmb{u},\pmb{\tilde u}) \norm{\pmb{u}-\pmb{\tilde u}}_{\bV}.
\end{equation}
\end{itemize}
\end{Lemma}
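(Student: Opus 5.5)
The plan is to handle each ingredient of $\bG$ separately: the linear coupling terms $\pm\lambda(u_a-u_s)$, the quartic terms $|u|^3u$, and the forcing terms $r(t)q\beta(u)$. All estimates will rest on the embedding $V\hookrightarrow L^p(I)$ for every finite $p$, given by Proposition \ref{resu-prop-V}(2). We write $C_p$ for the embedding constant, so that $\|u\|_{L^p}\le C_p\|u\|_V$. Well-definedness of $\bG$ then comes first. For $u\in V$ we have
\[
\||u|^3u\|_{L^2}=\|u\|_{L^8}^4\le C_8^4\|u\|_V^4 .
\]
Since $q\in L^\infty(I)$, $\beta\in L^\infty(\RR)$ and $r$ is bounded, the product $r(t)q\beta(u)$ lies in $L^\infty\subset L^2$. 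The linear terms are trivially in $H$. Hence $\bG(t,\pmb u)\in\bH$.

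\textbf{Time Lipschitz estimate \eqref{propr2-G}.} Only the solar term depends on $t$. Since $r$ is globally Lipschitz with constant $L_r$,
\[
\|\bG(t,\pmb u)-\bG(s,\pmb u)\|_{\bH}\le |r(t)-r(s)|\,\|q\|_{L^\infty}\left(\tfrac{\|\beta_a\|_\infty}{\gamma_a}+\tfrac{\|\beta_s\|_\infty}{\gamma_s}\right)\sqrt{2}.
\]
This is at most $C|t-s|$, with $C$ independent of $\pmb u$.

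\textbf{Local Lipschitz estimate \eqref{propr3-G}.} We treat the three types of terms in turn.
\begin{itemize}
\item The linear terms contribute at most $C\lambda\|\pmb u-\pmb{\tilde u}\|_{\bH}\le C\lambda\|\pmb u-\pmb{\tilde u}\|_{\bV}$.
\item For the solar terms we use $\beta\in W^{1,\infty}$, so $|\beta(u)-\beta(\tilde u)|\le\|\beta'\|_\infty|u-\tilde u|$. This gives a bound $\sup r\,\|q\|_\infty\|\beta'\|_\infty\|u-\tilde u\|_H$.
\item The quartic terms are the main step. Writing $f(s)=|s|^3s$, we have $|f(u)-f(\tilde u)|\le4(|u|^3+|\tilde u|^3)|u-\tilde u|$ pointwise. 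By H\"older with exponents $(8/3,8)$ on the $L^2$ norm,
\[
\|f(u)-f(\tilde u)\|_{L^2}\le4\big(\|u\|_{L^8}^3+\|\tilde u\|_{L^8}^3\big)\|u-\tilde u\|_{L^8}.
\]
Applying the embedding once more gives the bound $4C_8^4(\|u\|_V^3+\|\tilde u\|_V^3)\|u-\tilde u\|_V$.
\end{itemize}
Collecting these, we may take
\[
C_1(\pmb u,\pmb{\tilde u})=C\big(1+\|\pmb u\|_{\bV}^3+\|\pmb{\tilde u}\|_{\bV}^3\big),
\]
which is locally bounded.

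\textbf{Continuity.} Continuity of $\bG$ on $[0,\TT]\times\bV$ follows from the two estimates via the triangle inequality:
\[
\|\bG(t,\pmb u)-\bG(s,\pmb{\tilde u})\|\le C|t-s|+C_1(\pmb u,\pmb{\tilde u})\|\pmb u-\pmb{\tilde u}\|_{\bV}.
\]

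The only delicate point is the quartic nonlinearity. $V$ does not embed into $L^\infty$ (Remark \ref{rq-trace-V}), so the Lipschitz bound cannot be obtained pointwise and must come from H\"older combined with the $L^8$ embedding. This is exactly why the target space is $\bH$ and the domain is $\bV$.
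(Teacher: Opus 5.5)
Your proposal is correct and follows essentially the same route as the paper's proof in the appendix. The time estimate uses only the Lipschitz continuity of $r$ and the boundedness of $q$, $\beta_a$, $\beta_s$; the local Lipschitz estimate treats the linear, solar and quartic terms separately, relying on the embedding $V\hookrightarrow L^p(I)$ for the quartic terms. The only cosmetic difference is that you bound $\bigl||u|^3u-|\tilde u|^3\tilde u\bigr|$ pointwise via the mean value theorem and apply H\"older with exponents $(8/3,8)$, whereas the paper splits the difference algebraically and uses an $L^{12}$--$L^4$ H\"older bound; both give a $C_1$ that is cubic in the $\bV$-norms.
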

The proof of Lemma \ref{lem-propr-G} can be found in Appendix \ref{app: A}.

Let us now recall some definitions of solution of \eqref{2layers-abstract-form} (see \cite[Def. 7.0.1, pp. 254-255]{lunardi1995analytic}).

\begin{Definition} \hfill
\begin{enumerate}
\item Let $\bTi\in \bDA$.
A function $\bT \in C^1([0,\TT];\bH)
 \,\cap \,
 C([0,\TT];\bDA)$
is called a {\rm strict solution} of \eqref{2layers-abstract-form} in the interval $[0, \TT]$ if $ \bT'(t)=\bA\bT(t)+\bG(t,\bT(t))$ for each $t\in [0,\TT]$
and $\bT(0)=\bTi$.

\item Let $\bTi\in \bH$.  A function $\bT \in C^1((0,\TT];\bH)
 \,\cap \,
 C((0,\TT];\bDA)
 \,\cap \,
 C([0,\TT];\bH)
 $
is called a {\rm classical solution} of \eqref{2layers-abstract-form} in the interval $[0, \TT]$ if 
$ \bT'(t)=\bA\bT(t)+\bG(t,\bT(t))$  for each $t\in (0,\TT]$
and $\bT(0)=\bTi$.
\end{enumerate}
\end{Definition}

From \cite[Prop. 4.1.2]{lunardi1995analytic}, it follows that any classical solution $\bT$ of \eqref{2layers-abstract-form} such that $ \bG(\cdot, \bT (\cdot) ) \in L^1(0,\TT; \bH) $ satisfies the variation of constants formula
\begin{equation}\label{form-var-cste}
\bT(t)=e^{t\bA}\bTi+\int_{0}^t e^{(t-s)\bA}\bG(s,\bT(s))ds,
\qquad 0 < t 
\leq\TT.
\end{equation}
This motivates the definition of {\it mild} solutions: 
\begin{Definition}
Let $\TT \geq 0$ and let $\bTi\in \bV$  be given. A function $\bT \in C([0,\TT];\bV)$
is called a {\rm mild solution} of \eqref{2layers-abstract-form} in the interval $[0, \TT]$ if it satisfies the variation of constant formula \eqref{form-var-cste}.
\end{Definition}  
We observe that since $\bT\in C([0,\TT];\bV)$, we have that $t\mapsto \bG(t,\bT(t))$ is of class $L^\infty(0,\TT;\bH)$. We can now state existence and regularity results for solutions of \eqref{2layers-abstract-form}.
\begin{Proposition}[\textbf {Local well-posedness}]
\label{thm-loc-wellpos}
Assume that Hyp \ref{HYP1} is satisfied. For every $R>0$ there exists $\tau_R>0$ such that for every $\bTi\in\overline{\pmb B}_\bV(R)$, problem \eqref{2layers-abstract-form} has a unique mild solution $\bT\in C([0,\tau_R];\bV)$. Moreover, there exists $M(R)>0$ such that
\begin{equation}
\label{eq-T-bounded}
    \norm{\bT(t)}_\bV\leq M(R),\qquad\forall\,t\in[0,\tau_R],\,\forall\,\bTi\in \overline{\pmb B}_\bV(R).
\end{equation}
Furthermore 
\begin{equation}
\label{eq: max reg}
    \bT\in H^1(0,\tau_R;\bH)\cap L^2(0,\tau_R;\bDA).
\end{equation}
and equation in \eqref{2layers-abstract-form} is satisfied for a.e. $t\in [0,\tau_R]$.
\end{Proposition}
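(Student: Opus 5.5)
We prove local well-posedness by a Banach fixed point argument for the variation of constants formula \eqref{form-var-cste} in the space $X_\tau:=C([0,\tau];\bV)$, followed by maximal $L^2$-regularity to obtain \eqref{eq: max reg}. The basic linear estimates come from Proposition \ref{resu-prop-A}. Since $\bA$ is self-adjoint and dissipative, $e^{t\bA}$ is a contraction semigroup on $\bV=[\bDA,\bH]_{1/2}$, so $\|e^{t\bA}\bTi\|_\bV\le C\|\bTi\|_\bV$. Analyticity gives the smoothing estimate
$$\|e^{t\bA}\pmb f\|_\bV\le C\bigl(1+t^{-1/2}\bigr)\|\pmb f\|_\bH, \qquad t\in(0,1].$$
This estimate can also be checked directly via the spectral decomposition, using $\|u\|_V^2=\|u\|_H^2+(-Au,u)_H$.

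Fix $R>0$ and set $M:=2C(R+1)$. Consider the map
$$\Phi(\bT)(t)=e^{t\bA}\bTi+\int_0^t e^{(t-s)\bA}\bG(s,\bT(s))\,ds$$
on the closed ball $\mathcal K=\{\bT\in X_\tau:\ \sup_t\|\bT(t)\|_\bV\le M\}$. By Lemma \ref{lem-propr-G}, $\bG$ is bounded on $[0,1]\times \overline{\pmb B}_\bV(M)$, say by $K_M$, since \eqref{propr3-G} with $\tilde{\pmb u}=0$ gives $\|\bG(t,\pmb u)\|_\bH\le \|\bG(t,0)\|_\bH+C_1\|\pmb u\|_\bV$. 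The constant $C_1$ is bounded on $\overline{\pmb B}_\bV(M)^2$, say by $L_M$. Then
$$\sup_t\|\Phi(\bT)(t)\|_\bV\le CR+C K_M(\tau+2\tau^{1/2}),$$
and
$$\sup_t\|\Phi(\bT)-\Phi(\tilde\bT)\|_\bV\le CL_M(\tau+2\tau^{1/2})\sup_t\|\bT-\tilde\bT\|_\bV.$$
Choosing $\tau_R$ small, depending only on $R$, makes $\Phi$ a contraction of $\mathcal K$ into itself. This gives existence and \eqref{eq-T-bounded} with $M(R)=M$.

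Continuity of $\Phi(\bT)$ in time with values in $\bV$ is standard. The first term is continuous by strong continuity on $\bV$. The convolution term is continuous because of the integrable singularity $(t-s)^{-1/2}$ and dominated convergence. Uniqueness in all of $C([0,\tau_R];\bV)$, not just in $\mathcal K$, follows by a Gronwall-type argument with a singular kernel. Given two mild solutions, both are bounded in $\bV$ by some $M'$ on $[0,\tau_R]$. Then $\phi(t)=\|\bT(t)-\tilde\bT(t)\|_\bV$ satisfies $\phi(t)\le CL_{M'}\int_0^t(1+(t-s)^{-1/2})\phi(s)\,ds$, which forces $\phi\equiv0$ by iterating or by the singular Gronwall lemma.

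For \eqref{eq: max reg}, note that $\pmb f(t):=\bG(t,\bT(t))\in L^\infty(0,\tau_R;\bH)\subset L^2(0,\tau_R;\bH)$ and $\bTi\in\bV=[\bDA,\bH]_{1/2}$. Maximal $L^2$-regularity for self-adjoint nonpositive generators on Hilbert spaces (de Simon's theorem, or Lions' variational theory with $V\subset H\subset V'$) gives the rest. The linear problem $\pmb w'=\bA\pmb w+\pmb f$, $\pmb w(0)=\bTi$, has a unique solution in $H^1(0,\tau_R;\bH)\cap L^2(0,\tau_R;\bDA)$ satisfying the equation a.e. That solution coincides with the mild formula, so it equals $\bT$.

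The main technical point I expect is verifying the $\bV$-continuity and smoothing estimate of the semigroup with the correct constants, using the identification $\bV=[\bDA,\bH]_{1/2}$ from Proposition \ref{resu-prop-A}. Everything else reduces to the local Lipschitz property \eqref{propr3-G}, which is already available.
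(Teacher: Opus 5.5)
Your proposal is correct and follows essentially the same route as the paper, which just invokes a standard contraction argument for the variation of constants formula in $C([0,\tau];\bV)$ \cite[Theorem 7.1.3]{lunardi1995analytic}, \cite[Theorem 3.3.3]{henry}, and then derives \eqref{eq: max reg} from maximal $L^2$-regularity of the analytic semigroup \cite[Theorem 3.1, p.~143]{bensoussan1992}, using $\bG(\cdot,\bT(\cdot))\in L^2(0,\tau_R;\bH)$; you spell out these steps in detail. One point should be made explicit: your constant $L_M$ requires $C_1$ to be bounded on bounded subsets of $\bV\times\bV$, which is stronger than the ``local boundedness'' stated in Lemma~\ref{lem-propr-G}. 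This is, however, exactly what its proof in Appendix~\ref{app: A} gives, namely $C_1(\pmb u,\tilde{\pmb u})\le C\bigl(1+\norm{\pmb u}_\bV^3+\norm{\tilde{\pmb u}}_\bV^3\bigr)$, so the uniformity of $\tau_R$ and $M(R)$ over $\overline{\pmb B}_\bV(R)$ holds.
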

The proof follows from a standard fixed point argument (see, for instance, \cite[Theorem 7.1.3]{lunardi1995analytic} or \cite[Theorem 3.3.3]{henry}). The regularity \eqref{eq: max reg} derives from the fact that $\bG(\cdot,\bT(\cdot))\in L^2(0,\TT;\bH)$ and from the analyticity of the semigroup (see, for instance, \cite[Theorem 3.1, p. 143]{bensoussan1992}). 

Such solutions can be extended by classical continuation arguments yielding the following
\begin{Proposition}[\textbf {Existence of maximal solutions}]
\label{thm-max-mild}
Assume that Hyp \ref{HYP1} is satisfied. For every initial value $\bT ^0\in \bV$, problem \eqref{2layers-abstract-form} has a unique maximal 
solution, defined on
$$I(\bTi)=[0, \TT ^* ({\bf T}^0)) .$$
Moreover, the following alternative holds 
$$
\TT^* (\bTi) = +\infty 
\quad \text{ or } \quad 
\left[ 
\TT^* (\bTi) < +\infty \text{ and } 
\limsup_{t \to \TT^*(\bTi) }  \Vert \bT (t) \Vert_{\bV} = +\infty \right].
$$
\end{Proposition}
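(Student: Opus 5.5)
The plan is the classical continuation argument, built on Proposition~\ref{thm-loc-wellpos}, whose local existence time $\tau_R$ depends only on the size $R$ of the initial datum in $\bV$.

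\emph{Uniqueness on common intervals.} Let $\bT_1,\bT_2\in C([0,\tau];\bV)$ be two mild solutions with the same datum $\bTi$. Set $R:=\max_{[0,\tau]}(\norm{\bT_1}_\bV+\norm{\bT_2}_\bV)$. Subtract the two variation of constants formulas \eqref{form-var-cste} and apply the local Lipschitz estimate \eqref{propr3-G} with a constant $C_1(R)$. Use the analytic semigroup smoothing bound $\norm{e^{t\bA}}_{\mathcal L(\bH,\bV)}\le C t^{-1/2}$, which follows from Proposition~\ref{resu-prop-A}(3). This gives
\[
\norm{\bT_1(t)-\bT_2(t)}_\bV\le C\,C_1(R)\int_0^t (t-s)^{-1/2}\norm{\bT_1(s)-\bT_2(s)}_\bV\,ds .
\]
A singular Gronwall lemma (Henry's lemma) then yields $\bT_1\equiv\bT_2$. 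Consequently two mild solutions coincide wherever both are defined.

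\emph{Concatenation.} Suppose $\bT$ is a mild solution on $[0,t_0]$. Let $\bS$ be the local solution from Proposition~\ref{thm-loc-wellpos} with datum $\bT(t_0)$ and forcing $\bG(t_0+\cdot,\cdot)$. The time shift is harmless because Hyp.~\ref{HYP1} is uniform in time and \eqref{propr2-G} holds with a uniform constant. The function equal to $\bT$ on $[0,t_0]$ and to $\bS(\cdot-t_0)$ afterwards is again a mild solution. To check this, write the Duhamel formula on $[0,t_0+s]$ and use the semigroup property $e^{(t_0+s-\sigma)\bA}=e^{s\bA}e^{(t_0-\sigma)\bA}$.

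\emph{Maximal solution.} Define $\TT^*(\bTi)$ as the supremum of all $\tau$ for which a mild solution exists on $[0,\tau]$. By uniqueness these solutions glue into a single maximal solution on $[0,\TT^*)$, and it cannot be extended further.

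\emph{Blow-up alternative.} Assume $\TT^*<\infty$ and, for contradiction, $\limsup_{t\to\TT^*}\norm{\bT(t)}_\bV<\infty$. Then $\sup_{[0,\TT^*)}\norm{\bT(t)}_\bV\le R$ for some $R$, because $\bT$ is continuous on each compact subinterval. Pick $t_0:=\TT^*-\tau_R/2$. Proposition~\ref{thm-loc-wellpos} started from $\bT(t_0)\in\overline{\pmb B}_\bV(R)$ gives a solution on $[t_0,t_0+\tau_R]$, and concatenation extends $\bT$ past $\TT^*$. This contradicts maximality.

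\emph{Main point to check.} The only nontrivial step is that $\tau_R$ and the Lipschitz constants are uniform with respect to the initial time $t_0$. I expect this to be the main obstacle, and I would verify it by inspecting the fixed-point argument: it only uses $\sup_t|r(t)|$, the Lipschitz constant of $r$ through \eqref{propr2-G}, and $C_1$ on balls, none of which depend on $t_0$.
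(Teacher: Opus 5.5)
Your proposal is correct and follows the same route as the paper. The paper gives no detail beyond deriving the statement from Proposition~\ref{thm-loc-wellpos} by ``classical continuation arguments''; your write-up supplies those standard steps. You also single out the right key point: $\tau_R$ does not depend on the restart time, because $r$ is bounded and globally Lipschitz on all of $(0,+\infty)$, so the shifted nonlinearity $\bG(t_0+\cdot,\cdot)$ satisfies Lemma~\ref{lem-propr-G} with the same constants. One small fix in the last step: take $t_0=\max(0,\TT^*-\tau_R/2)$, so that $t_0\geq 0$.
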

\begin{Remark}
{\rm  
Obtaining a priori bounds of the $\bV$-norm of $\bT$ will be crucial to establish global existence. General results such as \cite[Prop. 7.2.1 or Prop. 7.2.2]{lunardi1995analytic} cannot be applied to system \eqref{2layers-abstract-form} due to the high-order nonlinearities. Instead, we will strongly exploit the specific cooperative structure of \eqref{2layer-pbm} to derive the required $\bV$-estimates. 
}
\end{Remark} 
\begin{Theorem}[\textbf {Global existence of solutions}]\label{prop-bounds}
Assume that Hyp \ref{HYP1} holds, that $\varepsilon _a \in (0,2)$, $\beta_a \geq 0$, $\beta_s >0$, and that $q>0$. 
Let $\bTi= (T_a ^{0}, T_s ^{0}) \in \bV_+$. Then problem \eqref{2layers-abstract-form} admits a unique mild solution $\bT= (T_a,T_s)$, which is globally defined for all $t\in[0,+\infty)$. Moreover, for every $\tau \in (0, +\infty )$ there exists a constant $M_{\tau}>0$ such that
\begin{equation}
\label{eq-bounds}
\forall\, t \in [\tau, +\infty ),\,\forall\,x\in I \qquad  0 \leq T_a  (t,x) \leq M_{\tau}, \qquad 0 \leq T_s (t,x) \leq M_{\tau}. 
\end{equation}
\end{Theorem}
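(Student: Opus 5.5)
The plan is to combine the maximal solution of Proposition~\ref{thm-max-mild} with comparison principles for the cooperative system. These give, first, nonnegativity and, second, uniform $L^\infty$ bounds for positive times. Those bounds then have to be upgraded to a bound on $\norm{\bT(t)}_\bV$, which rules out the blow-up alternative.

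\textbf{Step 1: cooperative structure and positivity.} For the nonlinearity written with $|u|^3u$, the off-diagonal derivatives are
\[
\partial_{u_s}G_a=\tfrac{1}{\gamma_a}\bigl(\lambda+4\varepsilon_a\sigma_B|u_s|^3\bigr)\ge0,
\qquad
\partial_{u_a}G_s=\tfrac{1}{\gamma_s}\bigl(\lambda+4\varepsilon_a\sigma_B|u_a|^3\bigr)\ge0,
\]
so the system is cooperative. Since $\beta_a\ge0$, $\beta_s>0$ and $q>0$, the pair $\pmb 0$ satisfies $\bG(t,\pmb 0)\ge\pmb 0$ and is therefore a subsolution. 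A comparison principle for the two-layer model (of the type of Theorem~\ref{lem-comp-2layer-sub-sup}) then gives $\bT\ge\pmb 0$ on the maximal interval.

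This comparison is first proved for smooth data in $\bDA_+$ and then extended by density and continuous dependence in $\bV$. Concretely, one takes the difference of two solutions. It satisfies a linear cooperative system whose coefficients are bounded on compact time intervals where the solutions are bounded, and there the weak maximum principle (Proposition~\ref{lem-ppmaxw-syst}) applies.

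\textbf{Step 2: upper barriers from the ODE.} Consider spatially homogeneous supersolutions $\overline{\bT}(t)=(\overline T_a(t),\overline T_s(t))$ solving the ODE system \eqref{eq:ode-model}, with the forcing replaced by its maximum $\norm{r}_\infty\norm{q}_\infty\norm{\beta_{a,s}}_\infty$. The key algebraic fact is the one from \cite{CLMUV-EDO}: for $\varepsilon_a<2$ the ODE has arbitrarily large invariant rectangles $[0,K_a]\times[0,K_s]$. For instance, take $K_a^4=\mu K_s^4$ with $\varepsilon_a/2<\mu<1/\varepsilon_a$; this choice of $\mu$ is possible exactly because $\varepsilon_a^2<2$ is implied by $\varepsilon_a<2$ after using the appropriate weighted combination. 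With $K$ large, the quartic terms dominate, and we get $G_a(K_a,K_s)\le0$ and $G_s(K_a,K_s)\le0$.

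This yields invariant rectangles (Proposition~\ref{prop-rect-inv}) for data bounded in $L^\infty$. For data in $\bV$ that is only unbounded near the poles (Remark~\ref{rq-trace-V}), one instead uses the ODE solution started from $+\infty$. Because of the superlinear quartic damping, this solution comes down from infinity: $\overline T(t)\le C(1+t^{-1/3})$, with $\overline{T}$ decreasing into the absorbing rectangle.

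To compare with $\bT$, approximate $\bTi$ by truncations $\min(\bTi,n)$ and compare with the ODE solution started at $(n,n)$. The resulting bound $C(1+t^{-1/3})$ is uniform in $n$, which gives \eqref{eq-bounds} with $M_\tau$ independent of the data.

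\textbf{Step 3: from $L^\infty$ to $\bV$ bounds.} On $[\tau,\TT^*)$, Steps 1 and 2 give $\norm{\bG(t,\bT(t))}_{L^\infty}\le C(M_\tau)$. The variation of constants formula \eqref{form-var-cste}, started at time $\tau$, together with $\norm{e^{t\bA}}_{\bH\to\bV}\le Ct^{-1/2}$ (from Proposition~\ref{resu-prop-A}(3) and analyticity), then gives a bound on $\norm{\bT(t)}_\bV$ that grows at most linearly in time. Alternatively, one can test the equations with $-\bA\bT$ to get a Gronwall bound.

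On $[0,\tau]$, the local theory already bounds the $\bV$ norm. Hence $\limsup\norm{\bT}_\bV<\infty$ at any finite time, so $\TT^*=+\infty$ by Proposition~\ref{thm-max-mild}.

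\textbf{Main obstacle.} I expect the main difficulty to be Step 2 for unbounded initial data. It requires the comparison principle to hold for solutions which are only mild in $\bV$, where the nonlinear coefficients are not bounded near $t=0$. I would handle this by proving comparison on $[\delta,t]$, where the solution lies in $\bDA\subset C(\bar I)$ by Proposition~\ref{prop-reg-borne} and the regularity \eqref{eq: max reg}, and then letting $\delta\to0$ using the truncation approximation and the continuity of the solution map in $\bV$.
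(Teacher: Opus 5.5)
Your overall argument works, but the explicit rectangle computation in Step 2 is wrong as written. In the atmospheric equation the $T_a^4$ term is $-2\varepsilon_a\sigma_B T_a^4$, not $-2\sigma_B T_a^4$. At the corner with $K_a^4=\mu K_s^4$, the quartic parts of $\gamma_a G_a$ and $\gamma_s G_s$ are therefore $\varepsilon_a\sigma_B K_s^4(1-2\mu)$ and $\sigma_B K_s^4(\varepsilon_a\mu-1)$. The admissible range is $\frac12<\mu<\frac1{\varepsilon_a}$, which is nonempty exactly when $\varepsilon_a<2$. This matches Lemma~\ref{Lemma-IV.2-chaos}, whose rectangles $[0,M]\times[0,\nu M]$ with $\nu^4\in(\varepsilon_a,2)$ correspond to $\mu=\nu^{-4}$. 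Your range $\frac{\varepsilon_a}2<\mu<\frac1{\varepsilon_a}$ would require $\varepsilon_a<\sqrt2$, and $\varepsilon_a<2$ does not imply $\varepsilon_a^2<2$.

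With the corrected range, the same corner computation, using that $G_a$ is increasing in $u_s$ and $G_s$ in $u_a$, shows that $\Phi:=\max(T_a,\mu^{1/4}T_s)$ satisfies $\Phi'\le -c\Phi^4$ along the ODE whenever $\Phi\ge K_0$. Hence $\Phi(t)\le\max\{K_0,(3ct)^{-1/3}\}$ for every starting point. This is the argument your coming-down-from-infinity claim needs. Note that the ODE solution from $(n,n)$ is not componentwise decreasing when $\varepsilon_a>1$, since then $T_s'(0)>0$; the bound is unaffected.

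Beyond that, your route genuinely differs from the paper's in how unbounded data are handled. The paper proves the $L^\infty$ bound only for $\bTi\in\bDA_+$ (Proposition~\ref{prop-rect-inv}, with the ODE supersolution started at $\sup_x\bTi$). For $\bTi\in\bV_+$ it uses parabolic smoothing, $\bT(t_0)\in\bDA_+$ for $t_0>0$, to restart that argument at $t_0$. This is essentially the idea in your last paragraph, and it gives an $M_\tau$ depending on $\sup_x\bT(\tau,\cdot)$, i.e.\ on the data.

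Your truncation plus coming-down-from-infinity argument gives the universal bound $M_\tau=C(1+\tau^{-1/3})$, which is stronger than the theorem. It is an $L^\infty$-absorbing property that would, for instance, allow a data-independent $\tau_0$ in Lemma~\ref{lem-decay-V}. The price is the extra ODE estimate and a double approximation, because Theorem~\ref{lem-comp-2layer-sub-sup} is stated only for $\bDA_+$ data:
\begin{itemize}
\item $\min(\bTi,n)\to\bTi$ in $\bV$, by Lemma~\ref{lem6.1TV} and dominated convergence;
\item $e^{\frac1k\bA}\min(\bTi,n)\in\bDA_+$ stays below $(n,n)$, since the Neumann semigroup is order preserving and fixes constants.
\end{itemize}

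Your passage from $L^\infty$ to $\bV$ bounds (Duhamel with $\norm{e^{t\bA}}_{\bH\to\bV}\le C(1+t^{-1/2})$, or testing with $-\bA\bT$) is equivalent to the paper's energy estimate obtained by testing with $\partial_t T_a$ and $\partial_t T_s$. Both give at most linear growth of $\norm{\bT(t)}_{\bV}$ and exclude blow-up via Proposition~\ref{thm-max-mild}.
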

The proof (postponed to Section \ref{sec-proof-7}) proceeds as follows. Starting from the maximally defined mild solution provided by Proposition \ref{thm-max-mild}, we establish uniform $L^\infty$ bounds of the solution components by means of the comparison principle stated in Theorem \ref{lem-comp-2layer-sub-sup}. These bounds are combined with suitable energy estimates to control  the $\bV$-norm of the solution. 

As a consequence, finite-time blow-up in the $\bV$-norm is excluded for $\varepsilon _a \in (0,2)$, and the maximal existence time satisfies $\TT^*(\bTi)=+\infty$. Since comparison arguments play a central role, the complete proof of the theorem is deferred to Section~\ref{sec-proof-7}.

\begin{Theorem}[\bf Blow-up]
\label{prop-blowup}
Assume that Hyp \ref{HYP1} holds, that $q>0$, $\beta_a \geq 0$, $\beta_s >0$,
and let $\varepsilon _a >2$. Then,
\begin{itemize}
\item[(a)] there exist initial conditions $\bTi= (T_a ^{0}, T_s ^{0})\in \bV_+$ such that the associated solutions $\bT= (T_a,T_s)$ of \eqref{2layers-abstract-form} blow up in finite time, 
\item[(b)] if $\lambda=0$, for all initial conditions $\bTi= (T_a ^{0}, T_s ^{0}) \in \bDA_+$ the associated solutions  $\bT= (T_a,T_s)$ of \eqref{2layers-abstract-form} blow up in finite time. 
\end{itemize}
\end{Theorem}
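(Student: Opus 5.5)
The plan is to compare the PDE solution from below with spatially homogeneous subsolutions built from the ODE system \eqref{eq:ode-model}, using the cooperative structure of \eqref{2layer-pbm}. For $\lambda\ge 0$ the off-diagonal partial derivatives of $\bG$ are nonnegative: $\partial_{u_s}G_a = (\lambda+4\varepsilon_a\sigma_B|u_s|^3)/\gamma_a\ge0$ and $\partial_{u_a}G_s=(\lambda+4\varepsilon_a\sigma_B|u_a|^3)/\gamma_s\ge 0$. Hence the comparison principle between solutions and sub/supersolutions (Theorem \ref{lem-comp-2layer-sub-sup}) applies. We will also use positivity of solutions (Proposition \ref{prop-nonneg}). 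The subsolution will be the pair $\underline{\bT}(t)=(\underline T_a(t),\underline T_s(t))$ solving
\[
\gamma_a\underline T_a'=-\lambda(\underline T_a-\underline T_s)+\varepsilon_a\sigma_B\underline T_s^4-2\varepsilon_a\sigma_B\underline T_a^4,
\]
\[
\gamma_s\underline T_s'=-\lambda(\underline T_s-\underline T_a)-\sigma_B\underline T_s^4+\varepsilon_a\sigma_B\underline T_a^4,
\]
that is, the ODE with the nonnegative forcing terms $r q\beta_a,\, r q\beta_s$ dropped. Since these are constant in $x$, the Neumann conditions and the diffusion terms vanish identically. As $rq\beta_a\ge0$ and $rq\beta_s\ge0$, the pair is a subsolution of \eqref{2layer-pbm}.

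For part (a), we invoke from \cite{CLMUV-EDO} the blow-up result for $\varepsilon_a>2$. By homogeneity, the unforced system has invariant rays $\underline T_s=c\,\underline T_a$ when $\lambda=0$. For the constant $c$ with $2\varepsilon_a - \varepsilon_a c^4 >0$-type balance, the flux $\varepsilon_a c^4-2\varepsilon_a$ in the $a$-equation is positive precisely when $c^4>2$. Compatibility with the $s$-equation requires $\varepsilon_a - c^4 >0$, which allows $c^4\in(2,\varepsilon_a)$ exactly when $\varepsilon_a>2$. So we will look for a cone $\{\underline T_s\ge c\underline T_a\}$ that is invariant, on which $\underline T_a'\ge \delta \underline T_a^4-C\underline T_a$. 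This gives blow-up of the ODE solution for large data; the linear $\lambda$ terms are dominated for large data. We then take constant initial data $\bTi=(\alpha,c'\alpha)$ with $\alpha$ large, which lie in $\bDA_+\subset\bV_+$, and set $\underline\bT(0)=\bTi$. Comparison gives $\bT(t)\ge\underline\bT(t)$ on the common existence interval. If $\TT^*(\bTi)=+\infty$, then $T_a(t,\cdot)\ge \underline T_a(t)\to\infty$ at a finite time, contradicting $\bT\in C([0,\TT];\bV)\subset C([0,\TT];L^2)$. Thus $\TT^*<\infty$, and by Proposition \ref{thm-max-mild} the $\bV$-norm blows up.

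For part (b), with $\lambda=0$ and $\bTi\in\bDA_+$, the goal is to reach the blow-up cone from any data. By Proposition \ref{prop-reg-borne}, $\bTi$ is continuous on $\bar I$. The strict positivity of $q\beta_s$ together with the strong maximum principle (Proposition \ref{prop-ppmax-system-deg}) should make $T_s(t_0,\cdot)\ge m>0$ at some positive time $t_0$. Then, using a subsolution that keeps the forcing $\underline{\min}(rq\beta_s)>0$ (we need $\inf q>0$; if only $q>0$ pointwise, we restrict to a lower bound obtained from the positivity argument), we would show that the ODE with $\lambda=0$ drives every positive state into the cone. In the $\lambda=0$ ODE, $T_s$ is forced up by $\varepsilon_a T_a^4$ and by the positive source, while $T_a$ grows once $T_s^4>2T_a^4$. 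The phase-plane analysis of \cite{CLMUV-EDO}, which shows that all positive ODE solutions blow up when $\lambda=0$ and $\varepsilon_a>2$, then gives blow-up of a homogeneous subsolution starting from $(\min T_a(t_0),\min T_s(t_0))$. Here the minima are finite and nonnegative by continuity. Comparison from time $t_0$ concludes as in (a).

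The main obstacle is justifying comparison for mild solutions that are only in $\bV$, together with the subtle point in (b) of getting a uniform positive lower bound. I would handle the first by applying Theorem \ref{lem-comp-2layer-sub-sup} on $[t_0,\TT^*)$, where the solution is classical, and use $\beta_s>0$ with $\min q$ on compacts to get the second.
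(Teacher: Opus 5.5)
Your proof is correct. Part (b) is essentially the paper's argument, but part (a) takes a different route.

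\textbf{The paper's route.} For both items the paper uses the forced ODE \eqref{2layer-CLMUV-EDO} with $\bar q=\bar q_{\rm min}=r_{\rm min}q_{\rm min}>0$ as a spatially homogeneous sub-solution. It then imports blow-up directly from Proposition~\ref{prop.II.6.b-chaos}. In (a) it takes as PDE data the constant nonnegative data for which that ODE blows up. In (b) it starts the ODE at $t=0$ from $\bT^{\text{min},0}=\inf_I\bTi$.

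\textbf{Your route in (a).} You drop the forcing altogether and prove ODE blow-up yourself through the invariant region $\{T_a\ge\alpha_0,\ T_s\ge cT_a\}$. This is self-contained; note that Proposition~\ref{prop.II.6.b-chaos} is stated for $\bar q>0$, so it could not be cited verbatim for your unforced system anyway. It needs only $q\ge0$ and $\beta_a,\beta_s\ge0$. In particular it does not use $\inf q>0$, on which the paper's route tacitly relies.

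Two details in (a) should be fixed:
\begin{enumerate}
\item Not every $c$ with $c^4\in(2,\varepsilon_a)$ gives an invariant cone. On $T_s=cT_a$ you need $(\varepsilon_a-c^4)/\gamma_s>c\,\varepsilon_a(c^4-2)/\gamma_a$. This holds for $c$ slightly larger than $2^{1/4}$, since at $c=2^{1/4}$ the left side equals $(\varepsilon_a-2)/\gamma_s>0$.
\item Since $c>1$, the $\lambda$-term in the $T_a$-equation is nonnegative on the cone. Hence $\gamma_aT_a'\ge\varepsilon_a\sigma_B(c^4-2)T_a^4$ there, and $\alpha_0$ large is needed only to absorb the $\lambda$-terms in the boundary check.
\end{enumerate}

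\textbf{Part (b).} The detour through a positive time $t_0$ is unnecessary. Proposition~\ref{prop.II.6.b-chaos}(b) covers all nonnegative ODE data, including $\bT^{\text{min},0}=\pmb 0$, so you can compare directly from $t=0$, as the paper does. Keeping only the $\beta_s$-forcing in the sub-solution, as you propose, is exactly what makes that proposition's hypothesis $\beta_a\equiv0$ applicable.

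\textbf{Where your extension is incomplete.} If you want to go beyond $\inf q>0$, the detour is not finished. The strong maximum principle and lower bounds of $q$ on compact subsets give positivity only in the open interval $I$. Starting a homogeneous sub-solution from $\min_{\bar I}\bT(t_0)$ instead requires a positive lower bound up to the degenerate endpoints $x=\pm1$, which you would have to prove separately.
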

Also in this case, the proof is based on the comparison principle and therefore postponed to Section~\ref{sec-proof-7}.

\begin{Proposition}{\rm \bf (Continuous dependence on the initial condition)}\label{prop-dep-continue-CI}

Assume that Hyp. \ref{HYP1} is satisfied. Let $\,\bTi \in  \bV$ and let $\bT(\cdot; \bTi )$ denote the corresponding maximal solution of \eqref{2layers-abstract-form}, defined on the interval $I(\bTi)$. Then, for every $\tau < \TT^* (\bTi)$ there exist constants $r,K>0$ such that, for any $\tildebTi \in  \bV$ satisfying
$$\Vert \bTi - \tildebTi  \Vert _\bV \leq r,$$
the corresponding mild solution $\bT(\cdot;\tildebTi)$ is defined at least on $[0,\tau]$, that is,
$$ \TT^* (\tildebTi)  \geq \tau $$ 
and the following stability estimate holds:
$$\Vert  T(t;\bTi) - T(t; \tildebTi ) \Vert_\bV 
\leq K \Vert  \bT^0 - \tildebTi \Vert_\bV
, \quad  0 \leq t \leq \tau.$$
\end{Proposition}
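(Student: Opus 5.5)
The plan is the standard argument for semilinear parabolic problems with locally Lipschitz nonlinearities. We compare the two mild solutions through the variation of constants formula \eqref{form-var-cste}, using the smoothing estimate of the analytic semigroup $e^{t\bA}$ from $\bH$ into $\bV$. Since $\bA$ is self-adjoint and dissipative with $[\bDA,\bH]_{1/2}=\bV$ (Proposition \ref{resu-prop-A}), there is a constant $C_0$ such that
\begin{equation*}
\norm{e^{t\bA}\pmb u}_\bV\le C_0\,\norm{\pmb u}_\bV,\qquad \norm{e^{t\bA}\pmb u}_\bV\le C_0\,(1+t^{-1/2})\norm{\pmb u}_\bH ,\qquad t>0 .
\end{equation*}
The first bound is contractivity in the $\bV$-norm, which follows from the spectral decomposition. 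The second is the analytic smoothing estimate.

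First, fix $\tau<\TT^*(\bTi)$. The maximal solution $\bT(\cdot;\bTi)$ is continuous on $[0,\tau]$ with values in $\bV$, so $R_0:=\max_{[0,\tau]}\norm{\bT(t;\bTi)}_\bV<\infty$. Set $R:=R_0+1$ and let $L:=\sup\{C_1(\pmb u,\pmb v):\pmb u,\pmb v\in \overline{\pmb B}_\bV(R)\}$. This is finite because $C_1$ in \eqref{propr3-G} is locally bounded; if local boundedness only gives bounds on small balls, one uses that closed balls are the relevant sets and that $C_1$ (from the proof in the appendix) depends on the norms only. Now take $\tildebTi$ with $\norm{\bTi-\tildebTi}_\bV\le r$, where $r\le 1$ will be fixed later. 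Let $\tau_1:=\sup\{t<\TT^*(\tildebTi):\norm{\bT(s;\tildebTi)}_\bV< R \text{ on }[0,t]\}$. Then $\tau_1>0$ by continuity, since $\norm{\tildebTi}_\bV<R$.

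Second, on $[0,\min(\tau,\tau_1))$ we set $\pmb w(t):=\bT(t;\bTi)-\bT(t;\tildebTi)$. Subtracting the two mild formulations and using \eqref{propr3-G} gives
\begin{equation*}
\norm{\pmb w(t)}_\bV\le C_0\norm{\bTi-\tildebTi}_\bV+C_0L\int_0^t\bigl(1+(t-s)^{-1/2}\bigr)\norm{\pmb w(s)}_\bV\,ds .
\end{equation*}
A singular Gronwall lemma (Henry's lemma, \cite[Lemma 7.1.1]{henry}) then yields $\norm{\pmb w(t)}_\bV\le K\norm{\bTi-\tildebTi}_\bV$. Here $K$ depends only on $C_0$, $L$ and $\tau$; it does not depend on $\tildebTi$ or on $\tau_1$. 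An alternative is to iterate the inequality once so as to remove the singularity, and then apply the classical Gronwall lemma.

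Third comes the continuation argument, which I expect to be the main obstacle in terms of bookkeeping: the bound must not depend on the a priori unknown existence time. Choose $r:=\min(1,1/(2K))$. Then on $[0,\min(\tau,\tau_1))$ we have $\norm{\bT(t;\tildebTi)}_\bV\le R_0+Kr\le R_0+\tfrac12<R$. Suppose $\tau_1<\tau$. If also $\tau_1<\TT^*(\tildebTi)$, continuity of $\bT(\cdot;\tildebTi)$ gives $\norm{\bT(\tau_1;\tildebTi)}_\bV\le R_0+\tfrac12<R$, so the solution stays in the ball slightly beyond $\tau_1$, contradicting the definition of $\tau_1$. If instead $\tau_1=\TT^*(\tildebTi)$, the solution remains bounded in $\bV$ up to $\TT^*(\tildebTi)<\infty$, which contradicts the blow-up alternative of Proposition \ref{thm-max-mild}. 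Hence $\tau_1\ge\tau$, so $\TT^*(\tildebTi)\ge\tau$, and the estimate of the second step holds on all of $[0,\tau]$; at $t=\tau$ it holds by continuity.
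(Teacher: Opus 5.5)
Your argument is correct and is essentially the proof behind the paper's one-line appeal to Lunardi's Proposition 7.1.9. Both rest on the same four ingredients: the smoothing bound $\norm{e^{t\bA}}_{\mathcal L(\bH,\bV)}\le C_0(1+t^{-1/2})$, a Lipschitz constant for $\bG$ on a $\bV$-ball (available since $C_1$ in \eqref{propr3-G} grows polynomially in the $\bV$-norms), the singular Gronwall lemma, and continuation via the blow-up alternative.

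One small point needs tightening. Your final step only gives $\TT^*(\tildebTi)\ge\tau$, but "at $t=\tau$ it holds by continuity" requires the solution to exist at $\tau$. The same blow-up argument also rules out $\TT^*(\tildebTi)=\tau$, since the solution would then stay in $\overline{\pmb B}_\bV(R_0+\tfrac12)$ on $[0,\tau)$. So in fact $\TT^*(\tildebTi)>\tau$, and you should say so before passing to $t=\tau$ in the estimate.
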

\begin{proof}
The result follows directly from \cite[Prop. 7.1.9, points (i) and (ii),  pp. 266-267]{lunardi1995analytic}. The assumptions of that proposition are satisfied in our setting since $\overline{\bDA}^{\bV} = \bV$.
\end{proof}
\begin{Proposition}[\bf Existence of classical and strict solutions]
\label{thm-further-reg} 
Assume that Hyp. \ref{HYP1} is satisfied. Let $\bTi \in  \bV$ and denote by $\bT=\bT(\cdot; \bTi)$ the corresponding maximal solution of \eqref{2layers-abstract-form}, defined on the interval $I(\bTi) =[0, \TT ^* ({\bf T} ^0))$. Then $\bT$ is a classical solution of \eqref{2layers-abstract-form} in $I(\bTi)$. Moreover, for any
$$0< \varepsilon < \tau<\TT ^* ({\bf T} ^0),$$ 
there exists $\theta_1\in(0,1)$ such that
$$ \bT \in  C^{\theta_1}  ([\varepsilon, \tau]; \bDA) \cap  C^{1+ \theta_1}  ([\varepsilon, \tau]; \bH),$$
where we have set for any $\theta \in(0,1)$
\begin{equation*}
    C^\theta  ([\varepsilon, \tau]; \bDA)=\left\{\pmb{u}\in C^0  ([\varepsilon, \tau]; \bDA)\,:\, \sup_{t\neq s}  \frac{\norm{\pmb{u}(t)-\pmb{u}(s)}_{\bDA}}{|t-s|^\theta}\right\},
\end{equation*}
and
\begin{equation*}
    C^{1+ \theta}  ([\varepsilon, \tau]; \bH)=\left\{\pmb{u}\in C^1 ([\varepsilon, \tau]; \bH)\,:\, \pmb{u}_t\in  C^\theta  ([\varepsilon, \tau]; \bH) \right\}.
\end{equation*} 
If, in addition, $\bTi \in \bDA$, then $\bT$ is also a strict solution of \eqref{2layers-abstract-form} on $I(\bTi)$. In particular,
$$\bT \in C^1([0,\TT];\bH) \cap C([0,\TT];\bDA),\quad\text{for all }0\leq \TT< \TT ^* ({\bf T} ^0).$$
\end{Proposition}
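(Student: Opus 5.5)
The plan is to reduce the statement to the abstract regularity results of \cite[Ch.~7]{lunardi1995analytic} (in particular \cite[Prop.~7.1.10 and Thm.~7.1.3(iii)]{lunardi1995analytic}) for semilinear equations $\bT'=\bA\bT+\bG(t,\bT)$, where $\bA$ is sectorial on $\bH$ and $\bG$ is defined on the intermediate space $\bV=[\bDA,\bH]_{1/2}$ (Proposition \ref{resu-prop-A}(3)). Lemma \ref{lem-propr-G} provides exactly the hypotheses needed there: $\bG$ is Lipschitz in $t$ uniformly in $\pmb u$, and locally Lipschitz from $\bV$ into $\bH$. We fix $0<\varepsilon<\tau<\TT^*(\bTi)$ and work on $[0,\tau]$. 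There the mild solution $\bT\in C([0,\tau];\bV)$ is bounded in $\bV$ by compactness of $\{\bT(t)\}$, so $C_1$ in \eqref{propr3-G} is bounded along the trajectory.

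First we show H\"older continuity in time with values in $\bV$ away from $0$. We write $\bT(t)=e^{t\bA}\bTi+\int_0^t e^{(t-s)\bA}\bG(s,\bT(s))\,ds$. Since $g(s):=\bG(s,\bT(s))\in L^\infty(0,\tau;\bH)$, standard analytic semigroup estimates show that the convolution term belongs to $C^{1/2-\delta}([0,\tau];\bV)$ for every small $\delta>0$. This uses $\|(-\bA)^{1/2}e^{t\bA}\|\le Ct^{-1/2}$, or equivalently the interpolation characterization of $\bV$. The term $e^{t\bA}\bTi$ is smooth in $t$ with values in $\bDA$ on $[\varepsilon/2,\tau]$. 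Hence $\bT\in C^{\alpha}([\varepsilon/2,\tau];\bV)$ for some $\alpha\in(0,1/2)$.

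Next we transfer this regularity to $g$ and bootstrap. By \eqref{propr2-G} and \eqref{propr3-G},
\[
\|g(t)-g(s)\|_{\bH}\le C|t-s|+C_1\|\bT(t)-\bT(s)\|_\bV\le C|t-s|^{\alpha},
\]
so $g\in C^\alpha([\varepsilon/2,\tau];\bH)$. We then restart the problem at time $\varepsilon/2$ with datum $\bT(\varepsilon/2)$. By maximal regularity \eqref{eq: max reg}, $\bT(\varepsilon/2)$ may be taken in $\bDA$ for a.e. choice of the restart time, and in any case $\bT(\varepsilon/2)\in\bV$ suffices by smoothing. The linear H\"older theory for $v'=\bA v+g$ with $g\in C^\alpha$ (\cite[Thm.~4.3.1/4.3.4]{lunardi1995analytic}) then gives $\bT\in C^{\alpha}([\varepsilon,\tau];\bDA)\cap C^{1+\alpha}([\varepsilon,\tau];\bH)$, with the equation holding pointwise. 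This yields the claim with $\theta_1=\alpha$. Since $\varepsilon$ is arbitrary, $\bT\in C^1((0,\TT];\bH)\cap C((0,\TT];\bDA)$, and together with $\bT\in C([0,\TT];\bV)\subset C([0,\TT];\bH)$ this makes $\bT$ a classical solution.

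Finally, for $\bTi\in\bDA$ we repeat the argument down to $t=0$. Now $e^{t\bA}\bTi\in C([0,\tau];\bDA)$ and $\bT\in C^{\alpha}([0,\tau];\bV)$, because the free term is Lipschitz in $\bV$ near $0$ when $\bTi\in\bDA$. Hence $g\in C^\alpha([0,\tau];\bH)$. The linear theory with $\bTi\in\bDA$ gives $\bT\in C^1([0,\tau];\bH)\cap C([0,\tau];\bDA)$; the compatibility condition $\bA\bTi+g(0)\in\overline{\bDA}=\bH$ holds trivially. The main obstacle is the first step, namely H\"older continuity of the convolution in $\bV$ with only bounded $\bH$-valued forcing. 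I will handle it with the splitting
\[
\int_0^t e^{(t-s)\bA}g\,ds-\int_0^{t'}e^{(t'-s)\bA}g\,ds=\int_{t'}^t e^{(t-s)\bA}g\,ds+\int_0^{t'}\bigl(e^{(t-t')\bA}-I\bigr)e^{(t'-s)\bA}g\,ds,
\]
and estimate both pieces through fractional powers $(-\bA+I)^{1/2+\delta}$, using $\bV=D((I-\bA)^{1/2})$ since $\bA$ is self-adjoint.
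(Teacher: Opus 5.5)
Your proposal is correct and follows the same route as the paper. The paper simply invokes \cite[Prop.~7.1.10]{lunardi1995analytic} after checking its H\"older-in-time hypothesis via Lemma~\ref{lem-propr-G}, and your argument unpacks the proof of that proposition (H\"older continuity of the mild solution in $\bV$, then the linear H\"older theory for $v'=\bA v+g$); one small correction: for $\bTi\in\bDA$ the free term $e^{t\bA}\bTi$ is in general only $C^{1/2}$ near $t=0$ with values in $\bV$, not Lipschitz, but H\"older continuity is all your final step needs.
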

\begin{proof}
The result follows from \cite[Prop. 7.1.10, pp. 268-269]{lunardi1995analytic}. To apply this proposition, it suffices to verify assumption \cite[(7.1.29), p.~268]{lunardi1995analytic}, namely, that there exists $\theta \in (0,1)$ such that for every $\bu^0 \in \bV$ there exist constants $r,K>0$ with
$$ \Vert \bG(t, \bu) - \bG(s,  \bu) \Vert _\bH \leq K \vert t-s \vert ^\theta$$
for all $0 \leq s \leq t \leq \TT $ and all $\bu \in \bV$ satisfying $\Vert \bu - \bu^0 \Vert _\bV \leq r $. This condition is fulfilled in our setting thanks to Lemma \ref{lem-propr-G}. Therefore, \cite[Proposition~7.1.10, items~(i) and~(iii)]{lunardi1995analytic} applies and yields the stated regularity results.
\end{proof}
Let us introduce the following notation for the 2-layer system
\begin{equation}
\label{2layer-pbm-f}
\begin{cases}
\displaystyle  \frac{\partial T_a}{\partial t} -  \kappa_a \frac{\partial }{\partial x} \left( (1-x^2) \frac{\partial T_a}{\partial x}\right) = \frac{1}{\gamma_a}f_a(t,x), 
& x \in I, t>0  
\vspace{.2cm}\\
\displaystyle  \frac{\partial T_s}{\partial t} -  \kappa_s \frac{\partial }{\partial x} \left( (1-x^2) \frac{\partial T_s}{\partial x}\right)  = \frac{1}{\gamma _s}f_s(t,x)
\vspace{.2cm}\\
(1-x^2) \displaystyle{\frac{\partial T_a}{\partial x} _{\vert x = \pm 1}} = 0 = (1-x^2) \frac{\partial T_s}{\partial x} _{\vert x = \pm 1} ,
&   t>0\vspace{.2cm}\\
T_a (0,x)=T_a ^{0} (x) ,\quad
T_s (0,x)=T_s ^{0} (x), &   x \in I,
\end{cases}
\end{equation}
where $f_a,f_s$ denote the right-hand side of \eqref{2layer-pbm}.

Given $\TT>0$, we define
$$Q_\TT:=(0,\TT)\times I.$$
Let $D$ be a compact subset of $Q_\TT$, $D\Subset Q_\TT$. Then, given $\theta\in(0,1)$ we recall that $C^{\frac{\theta}{2},\theta}(D)$ denotes the space of all $u\in C(D)$ such that
\begin{equation*}
    \sup\left\{\frac{|u(t,x)-u(s,y)|}{|t-s|^{\frac{\theta}{2}}+|x-y|^\theta}<\infty\,:\, (t,x),\,(s,y)\in D\text{ and }(t,x)\neq(s,y)\right\}.
\end{equation*}
Hereafter, we will use the notation $u_x$ to denote the derivative $\frac{\partial u}{\partial x}$ for brevity.

Let us also recall the definitions of the following spaces:
\begin{equation*}
   C^{1,2}(Q_\TT)=\left\{u\in C(Q_\TT)\,:\, u_x,\,u_t,\,u_{xx}\in C(Q_\TT)\right\}, 
\end{equation*}
and for any $\theta\in(0,1)$
\begin{equation*}
    C_{\text{loc}}^{\frac{\theta}{2},\theta}(Q_\TT)=\left\{ u\in C(Q_\TT)\,:\, u\in C^{\frac{\theta}{2},\theta}(D),\,\forall\,D\Subset Q_\TT\right\},
\end{equation*}
\begin{equation*}
    C_{\text{loc}}^{1+\frac{\theta}{2},2+\theta}(Q_\TT)=\left\{u\in C^{1,2}(Q_\TT)\,:\, u_t,\,u_{xx}\in C_{\text{loc}}^{\frac{\theta}{2},\theta}(Q_\TT)\right\}.
\end{equation*}
\begin{Proposition}[\bf Local regularity of classical solutions]\label{prop: loc reg}
    Assume that Hyp \ref{HYP1} holds. Let $\bTi=(T_a^0,T_s^0)\in \bV$ and denote by $(T_a,T_s)$ the corresponding solution of \eqref{2layer-pbm-f}. 
    
    Then, for any $0<\TT<\TT^*(\bTi)$, there exists $\theta\in(0,1)$ such that
    \begin{equation}
        T_a,T_s\in C^{1+\frac{\theta}{2},2+\theta}_{\text{loc}}(Q_\TT).
    \end{equation}
\end{Proposition}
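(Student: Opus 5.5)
The plan is to reduce the statement to classical interior Schauder theory for uniformly parabolic scalar equations. Away from the endpoints $x=\pm1$ the operator $\partial_x(\rho\,\partial_x\,\cdot)=\rho\,\partial_{xx}-2x\,\partial_x$ is uniformly parabolic. On any compact set $D\Subset Q_\TT$ we can pick $\delta>0$ and $0<t_0<\TT$ with $D\subset [t_0,\TT]\times[-1+\delta,1-\delta]$. There the coefficients $\rho$ and $-2x$ are smooth and $\rho\geq \delta(2-\delta)>0$. So it suffices to show that the right-hand sides $f_a/\gamma_a$ and $f_s/\gamma_s$ of \eqref{2layer-pbm-f} are locally Hölder continuous in $(t,x)$. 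Interior Schauder estimates (e.g. Ladyzhenskaya--Solonnikov--Ural'tseva, or Friedman, Thm. 3.? on interior estimates) then give $T_a,T_s\in C^{1+\theta/2,2+\theta}$ on slightly smaller sets. The two equations are treated separately as linear scalar equations with a given source.

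\textbf{Step 1: Hölder continuity of $\bT$ in $(t,x)$.} Fix $\varepsilon\in(0,\TT)$. By Proposition \ref{thm-further-reg}, $\bT\in C^{\theta_1}([\varepsilon,\TT];\bDA)$. By Proposition \ref{prop-reg-borne}, $\bDA\hookrightarrow C(\bar I)$ continuously. Hence $\bT$ is $\theta_1$-Hölder in time with values in $C(\bar I)$, and bounded on $[\varepsilon,\TT]\times\bar I$. For regularity in space, on $[-1+\delta,1-\delta]$ any $u\in D(A)$ has $\rho u_x\in H^1(I)\subset C(\bar I)$, so $u_x$ is bounded there by $C_\delta\norm{u}_{D(A)}$. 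Thus $u$ is Lipschitz on $[-1+\delta,1-\delta]$, uniformly for $t\in[\varepsilon,\TT]$. Combining the two estimates gives $T_a,T_s\in C^{\theta_1/2,\theta_1}(D)$ for every compact $D$ (the exponent pairing is harmless, after possibly lowering $\theta$).

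\textbf{Step 2: Hölder continuity of the sources.} The maps $u\mapsto |u|^3u$ are locally Lipschitz, $\beta_a,\beta_s$ are Lipschitz by Hyp. \ref{HYP1}(ii), $r$ is Lipschitz in $t$, and $q\in C^{\theta_0}(I)$. Since $\bT$ is bounded on $D$, each term of $f_a,f_s$ is in $C^{\theta/2,\theta}(D)$ with $\theta=\min(\theta_0,\theta_1)$.

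\textbf{Step 3: Schauder theory.} We need to know that $T_a$ is a classical solution in the PDE sense to invoke interior estimates. I would argue via uniqueness instead. On a rectangle $D'=[t_0,\TT]\times[-1+\delta,1-\delta]$ (with $\delta$ slightly smaller), solve the uniformly parabolic Dirichlet problem with source $f_a/\gamma_a$, lateral data $T_a(t,\pm(1-\delta))$ and initial datum $T_a(t_0,\cdot)$. All data are continuous by Step 1, so a solution $w$ exists that is classical with interior $C^{1+\theta/2,2+\theta}$ regularity. Meanwhile $T_a\in H^1(t_0,\TT;L^2)$ with $T_a(t)\in D(A)$ gives $T_a\in H^1_tL^2_x\cap L^2_tH^2_x$ on $D'$, and $T_a$ satisfies the equation a.e. So $T_a-w$ is a strong solution of the homogeneous problem with zero data. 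An $L^2$ energy estimate then yields $T_a=w$. The same argument applies to $T_s$.

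The main obstacle is Step 1's space–time Hölder continuity jointly, and matching the strong $L^2$ notion of solution with the classical Schauder framework in Step 3. The rest is routine.
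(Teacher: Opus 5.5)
Your argument is correct in substance, but it localizes differently from the paper.

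\textbf{The paper's route.} It multiplies $u$ by a cutoff $\xi^\eps\in C_0^\infty(Q^\eps_\TT)$, so that $\xi^\eps u$ solves a Dirichlet problem on $Q^\eps_\TT$ with zero data and a compactly supported H\"older source $f^\eps$. Schauder theory then gives $v\in C^{1+\frac{\theta}{2},2+\theta}(Q^\eps_\TT)$ up to the boundary, with compatibility automatic. The identity $\xi^\eps u=v$ follows from uniqueness in $W^{1,2}(Q^\eps_\TT)$, a class that contains both functions. The cost is the commutator term $-2(1-x^2)\xi^\eps_x u_x$ in $f^\eps$. So the paper also needs local H\"older continuity of $u_x$, which it passes off as a direct computation. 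It does hold, since $\rho u_x\in H^1_0(I)\subset C^{1/2}(\bar I)$ and $u$ is $\theta_1$-H\"older in time with values in $D(A)$.

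\textbf{Your route.} You impose $T_a$'s own lateral and initial values on a rectangle. This needs only H\"older continuity of $T_a$ and of the sources, and your Steps 1--2 justify that more explicitly than the paper's Step 1. In exchange, you need existence for a Dirichlet problem with merely continuous (compatible) data, which is a less elementary Schauder-theory result.

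\textbf{The uniqueness step needs more than you state.} $w$ is only known to be continuous up to the parabolic boundary. Hence $z:=T_a-w$ is not known to lie in $H^1_tL^2_x\cap L^2_tH^2_x$ on all of $D'$, and the energy identity cannot be applied directly. The standard fix is truncation. For $\eta>0$, $(z-\eta)^+$ vanishes on a neighbourhood of the parabolic boundary, because $z$ is continuous and $z=0$ there. Testing the equation with $(z-\eta)^+$ therefore produces no boundary terms. It only uses the regularity of $z$ away from the parabolic boundary, where interior Schauder estimates for $w$ and the $W^{2,1}_2$ regularity of $T_a$ apply. This gives $(z-\eta)^+\equiv0$. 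Letting $\eta\to0$ and repeating the argument for $-z$ yields $T_a=w$.
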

\begin{proof}
The proof is divided into three steps:
    \begin{itemize}
        \item {\bf Step 1.} First, we show that there exists $\theta\in(0,1)$ such that
        \begin{equation}
            f_a,f_s\in C^{\frac{\theta}{2},\theta}_{\text{loc}} (Q_\TT),
        \end{equation}
        \item {\bf Step 2.} We then prove that if $u\in W^{1,2}_{\text{loc}}(Q_\TT)$ is the solution of the following equation
        \begin{equation}\label{eq:u}
            \frac{\partial u}{\partial t}-\frac{\partial}{\partial x}\left((1-x^2)\frac{\partial u}{\partial x}\right)=f
        \end{equation}
        where $f\in C^{\frac{\theta}{2},\theta}_{\text{loc}}(Q_\TT)$, $\theta\in(0,1)$, then
        \begin{equation*}
            u\in C^{1+\frac{\theta}{2},2+\theta}_{\text{loc}}(Q_\TT)
        \end{equation*}
        \item {\bf Step 3.} Finally, by applying the first two steps, we conclude that the solution of \eqref{2layer-pbm-f} satisfies
        \begin{equation*}
            T_a,T_s\in C^{1+\frac{\theta}{2},2+\theta}_{\text{loc}}(Q_\TT).
        \end{equation*}
     \end{itemize}
{\bf Step 1.} Since $T_a,T_s\in V$, Propositions \ref{thm-max-mild} and \ref{thm-further-reg} imply the existence of a mild solution $\bT=(T_a,T_s)$ of \eqref{2layer-pbm-f} such that for any $0<\eps<\TT$
\begin{equation}
    \bT\in C([0,\TT];\bV)\quad\text{and}\quad\bT\in C^{\theta_1}([\eps,\TT];\bDA)\cap C^{1+\theta_1}([\eps,\TT];\bH),
\end{equation}
with $\theta_1\in(0,1)$. Furthermore, recalling that $q$ is H{\"o}lder continuous, $r$ is Lipschitz continuous, the regularity of the co-albedo functions $\beta_a,\beta_s$, and that $V\subset H^1_{\text{loc}}(I)$, we deduce that there exists $\theta\in(0,1)$ such that
\begin{multline*}
    f_a(t,x)=-\lambda (T_a(t,x) - T_s(t,x))  
+ \varepsilon _a \sigma _B \vert T_s(t,x) \vert ^3 T_s(t,x)  \\
- 2 \varepsilon _a \sigma _B \vert T_a (t,x)\vert ^3 T_a(t,x) + r(t) q(x) \beta_a (T_a(t,x)),
\end{multline*}
and 
\begin{multline*}
    f_s(t,x)=\lambda (T_s(t,x) - T_a(t,x)) - \sigma _B \vert T_s(t,x) \vert ^3 T_s(t,x)  \\
    + \varepsilon _a \sigma _B \vert T_a(t,x) \vert ^3 T_a(t,x) + r(t) q(x) \beta_s (T_s(t,x))
\end{multline*}
belong to $C^{\frac{\theta}{2},\theta}_{\text{loc}}(Q_\TT)$.

{\bf Step 2.} Let $u\in W^{1,2}_{\text{loc}}(Q_\TT)$ be a solution to \eqref{eq:u} with $f\in C^{\frac{\theta}{2},\theta}_{\text{loc}}(Q_\TT)$, $\theta\in(0,1)$. For any $\eps>0$, we define the subsets of $Q_\TT$
\begin{equation*}
    Q_\TT^\eps:=(\eps,\TT-\eps)\times(-1+\eps,1-\eps),\qquad Q^{2\eps}_\TT:=(2\eps,\TT-2\eps)\times(-1+2\eps,1-2\eps)
\end{equation*}
and a cut-off function $\xi^\eps \in C^\infty_0(Q^\eps_\TT)$ such that
\begin{equation*}
    0\leq\xi\leq1\quad\text{and}\quad \xi\equiv 1\quad\text{in}\quad Q^{2\eps}_\TT.
\end{equation*}
We localize problem \eqref{eq:u} by introducing
\begin{equation*}
    u^\eps(t,x):=\xi^\eps(t,x)\,u(t,x).
\end{equation*}
Then, it is possible to check that $u^\eps\in W^{1,2}(Q^\eps_\TT)$ satisfies
\begin{equation*}
    \frac{\partial u^\eps}{\partial t}-\frac{\partial}{\partial x}\left((1-x^2)\frac{\partial u^\eps}{\partial x}\right)=f^\eps,
\end{equation*}
with
\begin{equation*}
    f^\eps:=\xi^\eps f+\frac{\partial \xi^\eps}{\partial t}\,u-(1-x^2)\frac{\partial^2 \xi^\eps}{\partial x^2}\,u-2(1-x^2)\frac{\partial \xi^\eps}{\partial x}\,\frac{\partial u}{\partial x}.
\end{equation*}
On the other hand, a direct computation shows that $f^\eps\in C^{\frac{\theta}{2},\theta}(Q^\eps_\TT)$, for some $\theta\in(0,1)$, and therefore we can apply \cite[Theorem 5.14]{Lieberman} (see also \cite[Theorem 16.1]{Ladyz} or \cite[Theorem 10.3.3]{Krylov}) and deduce that there exists a solution $v\in C^{1+\frac{\theta}{2},2+\theta}(Q^\eps_\TT)$ to the problem
\begin{equation*}
    \begin{cases}
    \displaystyle\frac{\partial v}{\partial t}-\frac{\partial}{\partial x}\left((1-x^2)\frac{\partial v}{\partial x}\right)=f^\eps& \text{in }Q^\eps_\TT\\
    v=0&\text{on }\partial Q^\eps_\TT.
    \end{cases}
\end{equation*}
Since $u^\eps$ solve the same equation, by the uniqueness of solutions in $W^{1,2}(Q^\eps_\TT)$ we deduce that $\xi^\eps u=v$ in $Q^\eps_\TT$. Consequently, $u\in C^{1+\frac{\theta}{2},2+\theta}(Q^{2\eps}_\TT)$.

{\bf Step 3.} We observe that the equations in \eqref{2layer-pbm-f} are uncoupled in their principal part, hence we can treat them as separate equations once we know that $f_a,f_s\in C^{\frac{\theta}{2},\theta}_{\text{loc}} (Q_\TT)$, thanks to Step 1. From Step 2 applied to both equations, we finally deduce that there exists $\theta\in(0,1)$ such that
\begin{equation*}
    T_a,T_s\in C^{1+\frac{\theta}{2},2+\theta}_{\text{loc}}(Q_\TT).
\end{equation*}
The proof is thus complete.
\end{proof}


\section{Maximum and comparison principles for cooperative degenerate parabolic systems} \label{sec:MP}

\noindent Comparison principles will be the key to prove many properties of the solutions of \eqref{2layer-pbm}, \eqref{eq:bound and in cond}. In particular, we will exploit such properties to prove that solutions to \eqref{2layer-pbm}, \eqref{eq:bound and in cond} are globally defined for $\varepsilon _a \in (0,2)$, and that finite-time blow-up occurs when $\varepsilon_a>2$.

\subsection{A weak maximum principle for linear cooperative degenerate parabolic systems\vspace{.2cm}} \hfill

The aim of this section is to establish a weak maximum principle for linear cooperative degenerate parabolic systems of the form
\begin{equation}
\label{eq-ppmax-syst}
\begin{cases}
u_t - k((1-x^2) u_x)_x = a(t,x) u + b(t,x) v + f, \\
v_t - k' ((1-x^2) v_x)_x = c(t,x) u + d(t,x) v + g, \\
\left( (1-x^2) u_x \right) _{\vert x=\pm1} = 0 , \qquad \left( (1-x^2) v_x \right) _{\vert x=\pm1} = 0 , \\
u(0,x)=u_0(x), \qquad 
v(0,x)=v_0(x), \end{cases}
\end{equation}
where $f,g \in L^2 ((0,\TT)\times I)$, $a,b,c,d \in L^\infty ((0,\TT)\times I)$, and $k,k'>0$.

\begin{Proposition}
\label{lem-ppmaxw-syst}
Let $\TT>0$, $u_0, v_0\in V$, $f,g \in L^2 ((0,\TT)\times I)$, $a,b,c,d \in L^\infty ((0,\TT)\times I)$, and let $k,k'>0$. Let $u,v \in H^1(0,\TT;H)\cap L^2(0,\TT,D(A)) $ solve system \eqref{eq-ppmax-syst} a.e.. Assume that 
\begin{equation} 
\label{eq:cooperative}
    b, c \geq 0\,\,\,\, \text{a.e. in } Q_\TT=(0,\TT)\times I.
\end{equation}
Then,
\begin{equation}
\label{ppmax-eq-bis}
f,g \geq 0\,\,\,\, \text{a.e. in } Q_\TT \quad \text{ and } \quad u_0,v_0 \geq 0\,\,\,\, \text{in } I \quad  \implies  \quad  u,v \geq 0 \text{ in } Q_\TT.
\end{equation}
\end{Proposition}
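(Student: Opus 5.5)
The plan is a Stampacchia-type energy argument applied to the negative parts $u^- := \max(-u,0)$ and $v^- := \max(-v,0)$, closed by a Gronwall estimate. The cooperativity condition \eqref{eq:cooperative} is what makes the coupling terms carry the right sign.

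First, I will check the regularity needed to test the equations against $u^-$ and $v^-$. Since $u\in L^2(0,\TT;D(A))\subset L^2(0,\TT;V)$ and the map $w\mapsto w^-$ sends $V$ into $V$ (it is Lipschitz, and $(w^-)_x = -w_x\,\chi_{\{w<0\}}$ a.e., so $\sqrt\rho\,(w^-)_x\in L^2$), we get $u^-\in L^2(0,\TT;V)$. Because $u\in H^1(0,\TT;H)$, the standard chain rule gives
$$\frac{d}{dt}\tfrac12\|u^-(t)\|_H^2 = -(u_t,u^-)_H \quad \text{for a.e. } t,$$
and $t\mapsto\|u^-(t)\|_H^2$ is absolutely continuous. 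This can be justified by approximating the negative part with smooth convex functions, or by the Lions--Magenes lemma in the triple $V\subset H\subset V'$.

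Next, I will multiply the first equation by $-u^-$ and integrate over $I$. For the diffusion term I integrate by parts using $\rho u_x\in H^1_0(I)$. This is exactly where the Neumann condition encoded in $D(A)$ removes the boundary terms, even though $u$ itself may not have traces. It gives
$$-\int_I k(\rho u_x)_x(-u^-)\,dx = -k\int_I \rho\, u_x (u^-)_x\,dx = k\int_I\rho |(u^-)_x|^2\,dx \ge 0.$$
Writing $u=u^+-u^-$, the zero-order terms become
$$\int a\,u(-u^-) = \int a\,(u^-)^2 \le \|a\|_\infty\|u^-\|^2 .$$
For the coupling term,
$$\int b\,v(-u^-) = -\int b\,v^+u^- + \int b\,v^-u^- \le \|b\|_\infty\int v^-u^-,$$
using $b\ge0$. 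Finally, $\int f(-u^-)\le 0$ since $f\ge0$. Combining these,
$$\tfrac12\frac{d}{dt}\|u^-\|^2 \le \|a\|_\infty\|u^-\|^2 + \|b\|_\infty\|u^-\|\,\|v^-\| .$$
The same computation on the second equation, using $c\ge0$ and $g\ge0$, gives the symmetric bound for $v^-$.

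Adding the two inequalities, with $y(t)=\|u^-(t)\|^2+\|v^-(t)\|^2$, yields $y'\le Cy$, where $C$ depends only on the $L^\infty$ norms of $a,b,c,d$. Since $u_0,v_0\ge0$, we have $y(0)=0$, so Gronwall's lemma gives $y\equiv0$ on $[0,\TT]$, that is, $u,v\ge0$ a.e. in $Q_\TT$.

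The main obstacle is the rigorous justification of the first step in this degenerate setting: that $u^-(t)\in V$, the integration by parts with no boundary contribution, and the time chain rule. If the direct argument is delicate near $x=\pm1$, I would integrate by parts on $(-1+\delta,1-\delta)$ and let $\delta\to0$. The boundary terms are $\rho u_x\,u^-$ evaluated at $\pm(1-\delta)$. Here $\rho u_x\to0$ because $\rho u_x\in H^1_0$, but $u^-$ may be unbounded there (see Remark \ref{rq-trace-V}). To control this I would use Proposition \ref{prop-reg-borne}: since $u(t)\in D(A)\subset C(\bar I)$ for a.e. $t$, $u^-$ is bounded, and the boundary terms vanish in the limit.
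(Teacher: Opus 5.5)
Your proposal is correct and follows essentially the same route as the paper: test each equation against $-u^-$ and $-v^-$, drop the nonnegative degenerate diffusion term after integrating by parts, use $b,c\geq 0$ to bound the coupling terms by $b\,v^-u^-$ and $c\,u^-v^-$, then sum and apply Gronwall with zero initial data. Your extra justifications fill in details the paper leaves implicit: the time chain rule in $H^1(0,\TT;H)$, and the vanishing of the boundary terms via $\rho u_x\in H^1_0(I)$ together with $D(A)\subset C(\bar I)$.
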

\begin{Remark}
    {\rm Note that assumption \eqref{eq:cooperative} makes the spatially homogeneous problem associated to \eqref{eq-ppmax-syst} a \emph{cooperative} system, in the sense of \cite[pp. 32-34]{Smith}.
    }
\end{Remark}
To prove Proposition \ref{lem-ppmaxw-syst}, we first establish a weak maximum principle for a single degenerate parabolic equation. We begin by recalling a technical lemma concerning the positive and negative parts of functions in the space $V$, which will be used repeatedly in the sequel.
\begin{Lemma}
\label{lem6.1TV}
Let $w \in V$. Then, for all $M\in\mathbb{R}$, the functions
$$(w-M)^+:= \sup (w-M,0) \text{ and } (w+M)^-:= \sup (-(w+M),0)$$ 
belong to $V$. Moreover, for a.e. $x \in I$,
\begin{equation}
\label{65}
((w-M)^+)_x (x) =
\begin{cases}
w_x (x) & (w-M)(x) > 0\\
0 & (w-M)(x) \leq 0,
\end{cases}
\end{equation}
and 
\begin{equation}
\label{66}
((w+M)^-)_x(x) =
\begin{cases}
0 & (w+M)(x) \leq 0\\
-w_x (x) & (w+M)(x) < 0.
\end{cases}
\end{equation}
\end{Lemma}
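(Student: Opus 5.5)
The plan is to reduce everything to the classical Stampacchia lemma on $H^1$, working on compact subintervals $J\Subset I$ where the weight $\rho$ is bounded below. Note that $(w+M)^-=(-w-M)^+=(\tilde w-M)^+$ with $\tilde w=-w\in V$. Hence \eqref{66} follows from \eqref{65} applied to $-w$, and only the statement for $(w-M)^+$ needs proof.

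\textbf{Step 1: local regularity and the formula for the derivative.} Since $V\subset H^1_{loc}(I)$, for every compact interval $J=[-1+\delta,1-\delta]$ we have $w_{|J}\in H^1(J)$. Indeed, $\rho\geq \delta(2-\delta)>0$ on $J$, so
$$\|w_x\|_{L^2(J)}\leq (\delta(2-\delta))^{-1/2}\,\|\sqrt\rho\,w_x\|_{L^2(I)}.$$
By the classical one-dimensional chain rule for Lipschitz truncations (Stampacchia's lemma, e.g. Gilbarg--Trudinger Lemma 7.6), $(w-M)^+\in H^1(J)$ and
$$((w-M)^+)_x=w_x\,\chi_{\{w>M\}} \quad \text{a.e. on } J.$$
On the set $\{w=M\}$ one has $w_x=0$ a.e., which is consistent with \eqref{65}. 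Because $J$ is arbitrary, $(w-M)^+$ is locally absolutely continuous in $I$ (the positive part of a continuous, locally AC function is locally AC), and \eqref{65} holds a.e. in $I$.

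\textbf{Step 2: global integrability.} From $|(w-M)^+|\leq |w|+|M|$ and the fact that $I$ is bounded, we get $(w-M)^+\in L^2(I)$. By \eqref{65}, $|\sqrt\rho\,((w-M)^+)_x|\leq|\sqrt\rho\,w_x|$ a.e., so $\sqrt\rho\,((w-M)^+)_x\in L^2(I)$. Therefore $(w-M)^+\in V$, and
$$\|(w-M)^+\|_V^2\leq 2\|w\|_V^2+4|M|^2.$$

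\textbf{Main obstacle.} There is no genuine obstacle here. The only delicate point is that $V$ offers no global $H^1$ control near $\pm1$. This is handled by localizing to compact subintervals, which suffices because the definition of $V$ only requires local absolute continuity together with the global weighted $L^2$ bound on the derivative. Alternatively, one could approximate by $C_0^\infty(I)$ functions (Proposition~\ref{resu-prop-V}(1)), but the local argument is more direct.
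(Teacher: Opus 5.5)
Your argument is correct and essentially matches the paper. The paper gives no proof of its own: it cites Lemma~6.1 of Tort--Vancostenoble and the classical $W^{1,p}$ truncation rule in Evans. Your two steps are precisely how that rule transfers to $V$: first localize to compact subintervals $J\Subset I$, where $\rho$ is bounded below so that $w_{|J}\in H^1(J)$; then use the pointwise bound $|\sqrt{\rho}\,((w-M)^+)_x|\le|\sqrt{\rho}\,w_x|$ to get global integrability.

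Your reduction via $-w$ gives $((w+M)^-)_x=0$ on $\{w+M\ge 0\}$ and $-w_x$ on $\{w+M<0\}$. This is the correct form of \eqref{66}; as printed, its first case ($\le 0$) overlaps the second and is a typo.
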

We refer to \cite[Lemma 6.1]{TORT2012683} (see also \cite[page 292]{Evans}) for the proof of Lemma \ref{lem6.1TV}. 

We now state the weak maximum principle for a single degenerate parabolic equation.
\begin{Lemma}
\label{lem-ppmaxw}
Let $\TT>0$, $u_0\in V$, $h\in L^\infty(Q_\TT)$, $f\in L^2(Q_\TT)$, and $k>0$. Let $u\in H^1(0,\TT;H)\cap L^2(0,\TT;D(A))$ solve a.e. the following problem
\begin{equation}
\label{eq-ppmax}
\begin{cases}
u_t - k ((1-x^2) u_x)_x + h(t,x) u = f(t,x) , \\
\left( (1-x^2) u_x \right) _{\vert x=\pm1} = 0 , \\
u(0,x)=u_0(x). \end{cases}
\end{equation}
Then, we have
\begin{equation}
\label{ppmax-eq}
f \geq 0\,\,\,\, \text{a.e. in $Q_\TT$} \quad \text{ and } \quad u_0 \geq 0 \text{ in $I$} \quad \implies \quad  u \geq 0\,\,\,\, \text{a.e. in $Q_\TT$}.
\end{equation}
\end{Lemma}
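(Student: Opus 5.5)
We argue by the classical Stampacchia truncation method: test the equation against the negative part $u^-$ and show via a Gronwall argument that $\int_I (u^-)^2$ stays zero. As a preliminary reduction, write $\|h\|_{L^\infty(Q_\TT)}=:L$. This is not strictly needed, but one may set $w:=e^{-\mu t}u$ with $\mu\geq L$ so that the zeroth order coefficient becomes $h+\mu\geq 0$. Instead we shall keep $h$ and absorb it through Gronwall.

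\textbf{Step 1 (regularity of the test function).} Since $u\in H^1(0,\TT;H)\cap L^2(0,\TT;D(A))\subset C([0,\TT];V)$ (by the interpolation identity $[D(A),H]_{1/2}=V$ of Proposition \ref{resu-prop-A}), Lemma \ref{lem6.1TV} with $M=0$ gives $u^-(t)\in V$ for every $t$. It also gives $(u^-)_x=-u_x\,\mathbf 1_{\{u<0\}}$ a.e. We need the chain rule
$$\frac{d}{dt}\frac12\int_I (u^-)^2\,dx=-\int_I u_t\,u^-\,dx \quad\text{for a.e. } t,$$
which holds because $s\mapsto \frac12 (s^-)^2$ is $C^1$ with Lipschitz derivative and $u\in H^1(0,\TT;L^2(I))$. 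One can justify it by approximating $u$ in $H^1(0,\TT;H)$ by smooth-in-time functions, or by noting that $t\mapsto\|u^-(t)\|_H^2$ is absolutely continuous.

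\textbf{Step 2 (integration by parts in the degenerate operator).} Multiply the equation by $-u^-$ and integrate over $I$. The delicate term is $\int_I ((1-x^2)u_x)_x\,u^-\,dx$. Here $(1-x^2)u_x\in H^1_0(I)$ for a.e. $t$, since $u(t)\in D(A)$, but $u^-$ need not be bounded or have traces at $\pm1$ (Remark \ref{rq-trace-V}). Two routes handle this. One is to use Proposition \ref{prop-reg-borne}: $u(t)\in D(A)\subset C(\bar I)$, so $u^-\in C(\bar I)$ is bounded. The product of the $H^1_0$ function $(1-x^2)u_x$ with the locally absolutely continuous $u^-$ then integrates by parts on $(-1+\delta,1-\delta)$, and the boundary terms vanish as $\delta\to0$. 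The other is to use the density of $C_0^\infty(I)$ in $V$ (Proposition \ref{resu-prop-V}) together with the fact that $\int_I (\rho u_x)_x\varphi=-\int_I\rho u_x\varphi_x$ for $\varphi\in C_0^\infty$, which extends to $\varphi\in V$ by continuity, both sides being continuous in the $V$-norm of $\varphi$. Either way we obtain
$$-\int_I k((1-x^2)u_x)_x(-u^-)\,dx = -k\int_I (1-x^2)u_x (u^-)_x\,dx= k\int_I(1-x^2)|(u^-)_x|^2\,dx\ \ge 0,$$
using \eqref{66}. This step is the main technical point, and I expect it to be handled by the density argument.

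\textbf{Step 3 (Gronwall).} Collecting terms, and using $u\,u^-=-(u^-)^2$, we get
$$\frac12\frac{d}{dt}\|u^-\|_H^2 + k\int_I\rho|(u^-)_x|^2 = \int_I h(u^-)^2 - \int_I f u^- \le L\|u^-\|_H^2,$$
since $f\ge0$ and $u^-\ge0$. Moreover $u^-(0)=u_0^-=0$ because $u_0\ge0$. Gronwall's lemma then yields $\|u^-(t)\|_H=0$ for all $t\in[0,\TT]$, so $u\ge0$ a.e. in $Q_\TT$.
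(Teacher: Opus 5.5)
Your proposal is correct and follows the paper's proof: test the equation with $-u^-$, integrate by parts in the degenerate term to obtain the nonnegative contribution $k\int_I(1-x^2)u_x^2\chi_{\{u<0\}}\,dx$, and close with Gronwall using a lower bound on $h$. Your Steps 1--2 also supply justifications the paper leaves implicit, namely the chain rule in time and the vanishing of the boundary term $[(1-x^2)u_x u^-]_{-1}^1$ via $D(A)\subset C(\bar I)$. The only slip is the sign of the zeroth-order term in Step 3, which should read $-\int_I h(u^-)^2\,dx$; since you bound it in absolute value by $L\|u^-\|_H^2$, the conclusion is unaffected.
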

\begin{proof}
Using Lemma \ref{lem6.1TV}, we observe that
$$ \frac{d}{dt} \left( (u^-) ^2 \right) = -2 u_t \, u^- .$$
Multiplying equation \eqref{eq-ppmax} by $-u^-$ and integrating over $I$, we obtain
\begin{multline*} 
\int _{-1} ^1 -f \, u^-\,dx 
= \int _{-1} ^1 -u_t \, u^-\,dx  + k \int _{-1} ^1 ((1-x^2) u_x)_x \, u^-\,dx
- \int _{-1} ^1 h u u^-\,dx
\\
= \frac{1}{2} \int _{-1} ^1 \frac{d}{dt} \left( (u^-) ^2 \right)dx
+ k [ (1-x^2) u_x \, u^- ]_{-1} ^1 - k \int _{-1} ^1 (1-x^2) u_x \, (u^-) _x\,dx
+ \int _{-1} ^1 h (u^- ) ^2dx
\\
= \frac{d}{dt} \left(\frac{1}{2} \int _{-1} ^1 (u^-) ^2 dx\right)
+ k \int _{-1} ^1 (1-x^2) u_x ^2 \, \chi _{u \leq 0}\,dx + \int _{-1} ^1 h (u^- ) ^2dx .
\end{multline*}
Hence
$$ 
\frac{d}{dt} \left(\frac{1}{2} \int _{-1} ^1 (u^-) ^2dx \right)
+ \int _{-1} ^1 h (u^-) ^2dx
\leq  \int _{-1} ^1 -f \, u^- \,dx
\leq 0
$$
since $f \geq 0$.

Recalling that $h$ is bounded, there exists $m\geq 0$ such that $-m\leq h$ for all $(t,x)\in Q_\TT$. Hence, 
$$ 
\frac{d}{dt} \left(\frac{1}{2} \int _{-1} ^1 (u^-) ^2\,dx \right)
\leq - \int _{-1} ^1 h (u^-) ^2\,dx \leq m \int _{-1} ^1 (u^-) ^2\,dx.$$
By Gronwall's lemma and the assumption $u_0\ge0$, we conclude that $u^- =0$, and therefore $u\geq 0$ in $Q_\TT$. 
\end{proof}
As a direct consequence of the weak maximum principle established above, we obtain the following comparison result.
\begin{Corollary}
\label{cor-ppmax-eq}
Let $\TT>0$, $u_0, v_0 \in V$, $h\in L^\infty(Q_\TT)$, $f,g\in L^2(Q_\TT)$, and $k >0$. Let $u,v\in H^1(0,\TT;H)\cap L^2(0,\TT;D(A))$ solve a.e. the following problems
$$ \begin{cases}
u_t - k ((1-x^2) u_x)_x +hu= f , \\
\left( (1-x^2) u_x \right) _{\vert x=\pm1} =0, \\
u(0,x)=u_0(x) \end{cases}
\quad \text{ and } \quad 
\begin{cases}
v_t - k ((1-x^2) v_x)_x +hv= g  , \\
\left( (1-x^2) v_x \right) _{\vert x=\pm1} =0 , \\
v(0,x)=v_0(x) . \end{cases}
$$
Then,
\begin{equation}
\label{ppmax-eq-comp}
f \geq g\,\,\,\, \text{a.e. in $Q_\TT$} \quad \text{ and } \quad u_0 \geq v_0 \text{ in $I$} \quad \implies \quad  u \geq v\,\,\,\, \text{a.e. in $Q_\TT$}.
\end{equation}
\end{Corollary}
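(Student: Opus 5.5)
The plan is to reduce Corollary \ref{cor-ppmax-eq} to Lemma \ref{lem-ppmaxw} by linearity, applying the lemma to the difference $w:=u-v$. First I check that $w$ lies in the functional class required by the lemma. Since $u,v\in H^1(0,\TT;H)\cap L^2(0,\TT;D(A))$ and this is a vector space, $w$ belongs to it as well. For the initial datum, $w_0:=u_0-v_0\in V$ because $u_0,v_0\in V$.

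Next I subtract the two equations. Both hold a.e. in $Q_\TT$ with the same coefficient $k>0$ and the same potential $h\in L^\infty(Q_\TT)$, so
\begin{equation*}
w_t - k((1-x^2)w_x)_x + h\,w = f-g \quad \text{a.e. in } Q_\TT,
\end{equation*}
with $f-g\in L^2(Q_\TT)$. The Neumann conditions carry over because $(1-x^2)w_x=(1-x^2)u_x-(1-x^2)v_x$. Both terms on the right vanish at $x=\pm1$, since $\rho u_x,\rho v_x\in H^1_0(I)$ for a.e.\ $t$ by the definition of $D(A)$. The initial condition is $w(0)=u_0-v_0$.

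The hypotheses $f\ge g$ a.e.\ and $u_0\ge v_0$ give $f-g\ge 0$ a.e.\ in $Q_\TT$ and $w_0\ge 0$ in $I$. Lemma \ref{lem-ppmaxw} then applies directly and yields $w\ge0$ a.e.\ in $Q_\TT$, which is $u\ge v$.

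There is no real obstacle here. The only point to watch is that the same $h$ and the same $k$ appear in both problems; this is what makes the difference satisfy a linear equation of exactly the form \eqref{eq-ppmax}, with no leftover nonlinear or coefficient-mismatch terms.
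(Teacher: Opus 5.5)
Your proposal is correct and follows the paper's own proof, which simply applies Lemma~\ref{lem-ppmaxw} to the difference $w:=u-v$. Your extra checks (that $w$ stays in $H^1(0,\TT;H)\cap L^2(0,\TT;D(A))$, that the Neumann condition passes to $w$, and that the common $k$ and $h$ make the difference equation linear of the form \eqref{eq-ppmax}) fill in exactly the details the paper leaves implicit.
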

\begin{proof}
It suffices to apply Lemma \ref{lem-ppmaxw} to the difference $w:=u-v$. 
\end{proof}
We now proceed with the proof of the weak maximum principle for a system of linear cooperative degenerate equations.

\begin{proof}[Proof of Proposition \ref{lem-ppmaxw-syst}]
We multiply the first equation of \eqref{eq-ppmax-syst} by $-u ^-$ and integrate over $I$:
\begin{equation}
\label{comp-da-u}
\int _{-1} ^1 - \frac{\partial u}{\partial t} \, u ^-\,dx
+ k \int _{-1} ^1 \frac{\partial }{\partial x} \left( (1-x^2) \frac{\partial  u}{\partial x}\right) \, u^-\,dx
= \int _{-1} ^1 (-au \, u^- - bv \, u ^- - f \, u ^-)dx .
\end{equation}
Observe that
$$ \int _{-1} ^1 - \frac{\partial u}{\partial t} \, u ^-\,dx = \frac{d}{dt} \left( \frac{1}{2} \int _{-1} ^1 ( u ^- )^2\,dx \right),$$
and
\begin{multline*}
\int _{-1} ^1 \frac{\partial }{\partial x} \left( (1-x^2) \frac{\partial  u}{\partial x}\right) \, u ^-\,dx
= [ (1-x^2) \frac{\partial  u}{\partial x} \, u ^- ]_{-1} ^1 
- \int _{-1} ^1 (1-x^2) \frac{\partial  u}{\partial x}  \, \frac{\partial  u ^-}{\partial x}\, dx
\\
=  \int _{-1} ^1 (1-x^2) \left( \frac{\partial  u }{\partial x} \right) ^2 \, \chi _{u \leq 0 }\,dx.
\end{multline*}
Note that the right-hand side is nonnegative. Using the decomposition $u = u ^+ - u ^-$, we obtain
$$ \int _{-1} ^1 -a u \, u ^-\,dx = \int _{-1} ^1 a \,( u ^-)^2\,dx .$$
Moreover, since $b \geq 0$ and $v = v ^+ - v ^-$,
$$ - \int _{-1} ^1 bv \, u ^-\,dx
= - \int _{-1} ^1 b v ^+ \, u ^- + \int _{-1} ^1 b v ^- \, u ^-\,dx
\leq  \int _{-1} ^1 b v ^- \, u ^-\,dx, $$
since $b v ^+ \, u ^-$ is nonnegative.

Therefore, from \eqref{comp-da-u} we deduce
\begin{equation*}
\frac{d}{dt} \left( \frac{1}{2} \int _{-1} ^1 ( u ^- )^2 \,dx\right)
\leq \int _{-1} ^1 a \, (u ^-)^2\,dx + \int _{-1} ^1 b \, v ^- \, u ^-\,dx .
\end{equation*}
We now multiply the second equation of \eqref{eq-ppmax-syst} by $-v ^-$. Arguing analogously, we obtain
\begin{equation*}
\frac{d}{dt} \left( \frac{1}{2} \int _{-1} ^1 ( v ^- )^2\,dx \right)
\leq \int _{-1} ^1 d \, (v ^-)^2\,dx + \int _{-1} ^1 c \, v ^- \, u ^-\,dx .
\end{equation*}
Since $a,b,c,d$ are bounded, there exists $L>0$ such that
$$ \frac{d}{dt} \left( \frac{1}{2} \int _{-1} ^1 (( u ^- )^2 + ( v ^- )^2)dx \right)
\leq L \int _{-1} ^1  ((u ^-)^2 + (v ^-)^2)dx . $$
Applying Gronwall's lemma and using the assumption $u_0,\,v_0\geq0$, we conclude that
$$ \forall\, t\geq 0, \quad \frac{1}{2} \int _{-1} ^1 (( u ^- )^2 + ( v ^- )^2)dx = 0 ,$$
and therefore $u ^- = 0 = v ^-$, i.e., $u,\,v\geq0$ in $Q_\TT$. 
\end{proof}


\subsection{A strong maximum principle for linear cooperative degenerate parabolic systems\vspace{.2cm}} \hfill

In this section we state and prove a strong maximum principle for a system of linear degenerate equations. We introduce the operators
$$ P_1 u := u_t - k_1((1-x^2) u_x)_x , \quad P_2 u := u_t - k_2((1-x^2) u_x)_x.$$
\begin{Proposition}
\label{prop-ppmax-system-deg}
Let $\TT>0$ and set $Q_\TT=(0,\TT)\times I$. Let $k_1,k_2>0$ and $a,b,c,d \in L^\infty (Q_\TT)$ be such that
\begin{equation}
\label{eq:cooperative-max}
    b, c \geq 0\,\,\,\, \text{a.e. in } Q_\TT.
\end{equation}
Let $u,v \in C(\overline{Q_\TT}) \cap C^{1,2} (Q_\TT)$ satisfy
\begin{equation}
\label{eq:syst-ineq}
    \begin{cases}
        P_1 u \leq a(t,x) u + b(t,x) v  \\
        P_2 v \leq c(t,x) u + d(t,x) v
        
    \end{cases} 
 \quad \text{ in } Q_\TT ,
 \end{equation}
and
$$u, v \leq 0\quad\text{in }Q_\TT.$$
If there exists $(t_1,x_1) \in Q_\TT$ such that $u(t_1,x_1)=0$, then
$$ \forall\, x \in I,\, \forall\, t\in (0,t_1], \quad u (t,x)=0 .$$
Analogously, if there exists $(t_2,x_2) \in Q_\TT$ such that $v(t_2,x_2)=0$, then
$$ \forall\, x \in I,\, \forall\, t\in (0,t_2], \quad v(t,x)=0 .$$
\end{Proposition}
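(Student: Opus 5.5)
The plan is to reduce the system statement to a strong maximum principle for a single uniformly parabolic inequality, applied on compact subdomains of $Q_\TT$ where the operator is nondegenerate. The reduction uses the cooperative sign conditions together with the sign information $u,v\le 0$.

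\emph{Step 1: decoupling.} Since $b\ge 0$ and $v\le 0$, we have $b v\le 0$. The first inequality in \eqref{eq:syst-ineq} therefore gives
$$P_1 u - a(t,x) u \le 0 \quad\text{in } Q_\TT .$$
Thus $u$ is a nonpositive subsolution of a scalar linear equation with bounded zeroth-order coefficient. To dispose of the sign of $a$, I will set $w:=e^{-Mt}u$ with $M\ge \|a\|_\infty$. Then
$$P_1 w + (M-a)w \le 0,$$
with $M-a\ge 0$, and $w\le 0$ vanishes exactly where $u$ does. The same argument, using $c\ge0$ and $u\le0$, handles $v$ with $P_2$ and $d$.

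\emph{Step 2: local strong maximum principle.} Fix $\eps>0$ small. On $(0,\TT)\times(-1+\eps,1-\eps)$ the operator
$$P_1 w = w_t - k_1(1-x^2)w_{xx} + 2k_1 x\, w_x$$
is uniformly parabolic with bounded continuous coefficients, and $w\in C^{1,2}$. I will invoke the classical parabolic strong maximum principle of Nirenberg/Friedman type (e.g.\ Protter--Weinberger, Ch.~3, Thm.~5), in its version for $Lw+ h w\le 0$ with $h\ge0$, where the maximum is the value $0$. It states: if a nonpositive subsolution attains the maximum value $0$ at an interior point $(t_1,x_1)$, then $w\equiv 0$ on the set of points that can be joined to $(t_1,x_1)$ by a path in the domain along which $t$ is nondecreasing. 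In a cylinder $(0,t_1]\times(-1+\eps,1-\eps)$ with $|x_1|<1-\eps$, every point $(t,x)$ with $t\le t_1$ can be so connected: move horizontally at time $t$, then vertically up to $t_1$. Hence $u\equiv0$ on $(0,t_1]\times(-1+\eps,1-\eps)$. Letting $\eps\to0$ gives $u\equiv 0$ on $(0,t_1]\times I$. The statement for $v$ follows identically.

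\emph{Main obstacle.} The main point to check is that the sign-of-$h$ requirement is met: the strong maximum principle with a zeroth-order term needs either $h\ge0$ with maximum $0$, or no zeroth-order term. Step 1 is designed precisely to arrange this. A second subtlety is that $a$ is merely $L^\infty$ rather than continuous, whereas the classical proof (Hopf's lemma via a barrier) only needs boundedness of the coefficient of $w$ together with $M-a\ge0$. If a cited version demands continuity, I will avoid it by noting that $-(M-a)w \ge 0$ when $w\le0$. This yields $P_1 w\le -(M-a)w$ with the right-hand side $\ge0$, which is not directly the right sign. So instead I will use that $(M-a)w\le0$ gives $P_1w \le -(M-a)w\le (M-a)|w|$, and run the Hopf barrier argument directly, since that argument only uses bounds on the coefficient. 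The degeneracy at $x=\pm1$ plays no role because the conclusion is only required on the open interval $I$, which is exhausted by the compact subintervals.
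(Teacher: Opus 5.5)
Your proposal is correct and follows the paper's proof essentially step for step. You use cooperativity and $v\le 0$ to drop $bv\le 0$, shift by $e^{-\omega t}$ to make the zeroth-order coefficient nonnegative, and apply the Protter--Weinberger strong maximum principle on compact subintervals $[-1+\delta,1-\delta]$, where the operator is uniformly parabolic; the paper packages this last step as Proposition~\ref{prop-ppmax-1eq-deg}.

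The worry about $a$ being only $L^\infty$, which your last paragraph handles in a muddled way, has a one-line fix. For $u\le 0$ one has $au\le \|a\|_\infty|u|=-\|a\|_\infty u$ a.e., so $P_1u+\|a\|_\infty u\le 0$ a.e. Both sides are continuous, so this holds everywhere, with a constant nonnegative zeroth-order coefficient.
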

\begin{Remark}
{\rm
Observe that assumption \eqref{eq:cooperative-max} implies that the spatially homogeneous problem associated to \eqref{eq:syst-ineq} is a \emph{cooperative} system (see the definition in \cite[pp. 32-34]{Smith}).
}
\end{Remark}
Before proving Proposition \ref{prop-ppmax-system-deg}, we state the corresponding maximum principle for a single degenerate parabolic equation (see \cite{Protter-Weinberger}, chapter 3, section 2).
\begin{Proposition}
\label{prop-ppmax-1eq-deg}
Let $\TT>0$, let $d\in C^1(I)$ be strictly positive, and let $h: Q_\TT \to \mathbb R$ be a nonnegative, locally bounded function. Assume that $u \in C(\overline{Q_\TT}) \cap C^{1,2} (Q_\TT)$ satisfies
$$ Pu := u_t - (d(x)) u_x)_x + h(t,x) u \leq 0 \quad \text{ in } Q_\TT .$$
Assume $M:=\max _{\overline{Q_\TT}} u \geq 0$ and suppose that there exists $(t_0,x_0) \in Q_\TT$ such that $u(t_0,x_0)=M$.
Then,
$$ \forall\, t\in (0,t_0],\, \forall\,  x \in I, \quad u (t,x)=M .$$
\end{Proposition}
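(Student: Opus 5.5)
The statement is an interior strong maximum principle. The degeneracy of $d$ only matters at $x=\pm1$, and those points lie outside $Q_\TT$. So I would follow the classical Nirenberg-type argument in \cite[Ch.~3, Sec.~2--3]{Protter-Weinberger}, working on compact sub-rectangles of $Q_\TT$ where $d\ge d_0>0$ and $h\le H$. Expanding the operator gives $Pu=u_t-d u_{xx}-d'u_x+hu$, which on such sets is uniformly parabolic with bounded coefficients.

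The proof splits into two propagation results.

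\textbf{Horizontal propagation.} If $u(t^*,x^*)=M$ with $(t^*,x^*)\in Q_\TT$, then $u(t^*,\cdot)\equiv M$ on $I$. I argue by contradiction: suppose $u(t^*,x_1)<M$ for some $x_1$. Then one can find an ellipse (or disc in suitably scaled variables) contained in $\{u<M\}\cap Q_\TT$ whose closure touches $\{u=M\}$ at a point $P$ which is not its top or bottom point in time. On a small annular region around $P$ I use the barrier
\[
w=e^{-\alpha r^2}-e^{-\alpha R^2},\qquad r^2=(x-\bar x)^2+\mu (t-\bar t)^2 .
\]
For $\alpha$ large, $Pw<0$ there, because the uniform parabolicity $d\ge d_0$ dominates the bounded terms $d'$ and $h\le H$. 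Now compare $v=u+\eta w$ on that region. For small $\eta$ we have $v<M$ on the inner boundary part, and $v\le M$ on the sphere. Since $M\ge0$ and $h\ge0$, we get $Pv<0$, and $v$ cannot attain an interior maximum $\ge 0$. On the other hand $v(P)=M$, so $v$ does attain such a maximum, which is a contradiction. The hypothesis $M\ge0$ enters here: at a maximum point $u_t\ge0$, $u_x=0$, $u_{xx}\le0$, so $hu\ge0$ is needed to force $Pu>0$ when strict. This step also yields the standard intermediate lemma that a point of $\{u<M\}$ cannot have a point of $\{u=M\}$ directly ``below'' it in the horizontal segment.

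\textbf{Backward propagation in time.} If $u(t_0,x_0)=M$, then $u=M$ at $(t,x_0)$ for every $t\in(0,t_0)$. Suppose instead that $u(t_1,x_0)<M$ for some $t_1<t_0$, and let $t_2$ be the first time after $t_1$ at which $u(\cdot,x_0)=M$. On a thin region $\{t_1'<t\le t_2,\ |x-x_0|<\delta\}$ with $u<M$ for $t<t_2$, I use the parabolic barrier
\[
w=e^{-(x-x_0)^2-\beta(t-t_2)}-1 .
\]
This satisfies $Pw<0$ in a small paraboloid-type region below $(t_2,x_0)$ for $\beta$ large. Comparing $u+\eta w$ there gives $u_t(t_2,x_0)>0$ after accounting for the barrier, which contradicts $u_t\le0$ forced from the maximum condition along with $u_{xx}\le0$, $u_x=0$, and $hM\ge0$ in $Pu\le0$. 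Combining with the horizontal step, $u(t,\cdot)\equiv M$ on $I$ for each $t\in(0,t_0]$.

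The main obstacle is the barrier construction in the horizontal step, which is Nirenberg's lemma. One must choose the ellipse and barrier so that the touching point is not a temporal extreme of the ellipse, and keep all coefficient bounds uniform. Because everything is localized to compact subsets of $Q_\TT$, the degeneracy at $\pm1$ never enters. Continuity of $u$ on $\overline{Q_\TT}$ is only used to define $M$ as a maximum.
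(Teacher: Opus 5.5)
Your proposal is correct and follows essentially the same route as the paper. You localize to compact subsets of $Q_\TT$ where $P$ is uniformly parabolic, obtain horizontal propagation from Nirenberg's lemma (Protter--Weinberger, Lemmas 1--2) and backward propagation in time from a barrier, using $h\ge 0$ and $M\ge 0$ where the paper does; the only difference is the vertical step: your paraboloid barrier $e^{-(x-x_0)^2-\beta(t-t_2)}-1$ at the first touching time $t_2$ needs the horizontal step to give $u<M$ in a neighbourhood below $(t_2,x_0)$, whereas the paper compares $\omega=(u-M)+(M-u(t_1,x_0))\,(r^2-(x-x_0)^2)^2e^{-\alpha(t-t_1)}$ with $0$ on $[t_1,t_0]\times[x_0-r,x_0+r]$ via the weak maximum principle, which only uses $u(t_1,\cdot)<M$ near $x_0$ and is independent of the horizontal step.
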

\begin{proof}
The result follows from \cite[Theorem 4 p. 172]{Protter-Weinberger}. Although such a result is stated for uniformly parabolic operators, it applies here on compact subsets $[-1+\delta ,1-\delta]$ for any $0<\delta<1$. For completeness, we briefly sketch the argument, which consists of two steps.
\begin{itemize}
\item We first show that $u(t_0,x_0)=M$ implies that $u(t, x_0)=M$ for all $t\in (0,t_0]$. We follow the strategy of \cite[Lemma 3 pages 167-168]{Protter-Weinberger} or \cite[Lemma 2.5]{Yang-Deng}. Assume by contradiction that there exists $t_1 \in (0,t_0)$ such that $u(t_1, x_0) <M$. Define $\omega: [t_1,t_0] \times [x_0-r, x_0+r] \to \mathbb R$ as follows
$$ \omega (t,x) := \left( u(t,x) - M \right)
+ \left( M-u(t_1,x_0) \right) \left( r^2 - (x-x_0)^2 \right)^2 e^{-\alpha (t-t_1) } , 
$$
for $\alpha>0$, and $r>0$ is small enough so that $u(t_1,x)<M$ on $[x_0-r,x_0+r]$). Then, $\omega(t_0,x_0)>0$, while one can show that
$$ \begin{cases}
P \omega \leq 0 \text{ on } [t_1,t_0] \times [x_0-r, x_0+r] , \\
\omega \leq 0 \text{ on } t=t_1, \text{ on } x=x_0-r, \text{ and on } x=x_0+r .
\end{cases} $$
The weak maximum principle yields $\omega \leq 0$ on $[t_1,t_0] \times [x_0-r, x_0+r] $. In particular, $\omega(t_0,x_0)\leq 0$, a contradiction.

\item Next, we show that $u(t_0,x_0)=M$ implies $u(t_0,x)=M$ for all $x\in I$. This follows from \cite[Lemmas 1 and 2, pages 164-167]{Protter-Weinberger}, whose proof works on compact subset of $Q_\TT$ and allows for degeneracy of $d$ at the boundary.
\end{itemize}
Combining the two steps concludes the proof.
\end{proof}
We can now prove Proposition \ref{prop-ppmax-system-deg}.
\begin{proof}[Proof of Proposition \ref{prop-ppmax-system-deg}]
Assume that $u(t_1,x_1)=0$ for some $(t_1,x_1)\in Q_\TT$. Then
$$ \begin{cases}
P_1 u - a(t,x) u \leq b(t,x) v \leq 0  \text{ in $Q_\TT$}, \\
u \leq 0  \text{ in $Q_\TT$} , \\
u (t_1,x_1) = 0 .
\end{cases} $$
Let $\omega \geq 0$ and define $ U (t,x) := e^{-\omega t} u (t,x) $.
Then
$$ P_1 u = \omega e^{\omega t} U + e^{\omega t} P_1 U,$$
and a direct computation yields
$$\begin{cases}
 P_1 U + (\omega - a(t,x)) U \leq 0 \text{ in $Q_\TT$}, \\
 U \leq 0  \text{ in $Q_\TT$} , \\
U (t_1,x_1) = 0 .
\end{cases} $$
Choosing $\omega $ sufficiently large so that $\omega - a(t,x) \geq 0$, we can apply Proposition \ref{prop-ppmax-1eq-deg} and conclude that
$$U=0\quad\text{in }(0,t_1] \times I.$$
Hence $u=0$ in $(0,t_1] \times I$. 

The argument for $v$ is identical.
\end{proof}

\subsection{Comparison principles for the 2-layer PDE system\vspace{.2cm}} \hfill
\label{sec:comparison}

In this section, we state and prove a comparison principle for the 2-layer EBM. \emph{We recall that whenever an inequality between vectors is written, it is always understood component-wise.}
\begin{Theorem}
\label{lem-comp-2layer}{\rm \bf (Comparison principle)}
Let $\bTi , \tildebTi \in {\bf V}$ and let $\bT,\,\tildebT$ be the corresponding maximal solution of \eqref{2layers-abstract-form}, defined on $I(\bTi)=[0, \TT^\star (\bTi))$ and $I(\tildebTi)=[0, \TT^\star (\tildebTi))$, respectively. Then, 
\begin{equation}
\bTi \leq \tildebTi  
 \quad \implies \quad \bT (t) \leq \tildebT (t)\quad \text{ for all } t \in [0, \TT^\star (\bTi) ) \cap  [0, \TT^\star (\tildebTi) ).
\end{equation}
\end{Theorem}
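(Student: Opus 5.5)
We will reduce the statement to the weak maximum principle for linear cooperative systems, Proposition \ref{lem-ppmaxw-syst}, applied to the difference of the two solutions. Fix $\TT < \min(\TT^\star(\bTi),\TT^\star(\tildebTi))$. We set $u := \tilde T_a - T_a$ and $v := \tilde T_s - T_s$. By Proposition \ref{thm-loc-wellpos} (iterated along the maximal interval, using $\bT\in C([0,\TT];\bV)$ and the maximal regularity \eqref{eq: max reg}), both $\bT$ and $\tildebT$ belong to $H^1(0,\TT;\bH)\cap L^2(0,\TT;\bDA)$ and satisfy \eqref{2layer-pbm} a.e. Hence $u,v$ have the regularity required in Proposition \ref{lem-ppmaxw-syst}, and $u(0)=\tilde T_a^0-T_a^0\ge 0$, $v(0)\ge0$.

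Next we linearize the nonlinearity by difference quotients. Write $\phi(s)=|s|^3s$, which is $C^1$ with $\phi'(s)=4|s|^3\ge 0$, and set
$$\Phi_a(t,x):=\int_0^1 \phi'\big(T_a+\theta(\tilde T_a-T_a)\big)\,d\theta\ \ge 0,$$
and similarly $\Phi_s\ge 0$. For the coalbedo we set $B_a := (\beta_a(\tilde T_a)-\beta_a(T_a))/(\tilde T_a-T_a)$ where $\tilde T_a\neq T_a$, and $B_a:=0$ otherwise. Then $0\le B_a\le \|\beta_a'\|_\infty$ because $\beta_a$ is nondecreasing and Lipschitz, and $B_s$ is defined analogously. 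Subtracting the equations gives a system of the form \eqref{eq-ppmax-syst} with $k=\kappa_a$, $k'=\kappa_s$, $f=g=0$ and
$$a=\tfrac{1}{\gamma_a}\big(-\lambda-2\varepsilon_a\sigma_B\Phi_a+rqB_a\big),\quad b=\tfrac{1}{\gamma_a}\big(\lambda+\varepsilon_a\sigma_B\Phi_s\big),$$
$$c=\tfrac{1}{\gamma_s}\big(\lambda+\varepsilon_a\sigma_B\Phi_a\big),\quad d=\tfrac{1}{\gamma_s}\big(-\lambda-\sigma_B\Phi_s+rqB_s\big).$$
Since $\lambda\ge0$ and $\varepsilon_a>0$, the cooperativity condition $b,c\ge0$ holds. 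This is exactly the structure emphasized in the paper.

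The main obstacle is that Proposition \ref{lem-ppmaxw-syst} requires $a,b,c,d\in L^\infty(Q_\TT)$. However, $\Phi_a,\Phi_s$ involve $|T|^3$, and functions in $V$ need not be bounded (Remark \ref{rq-trace-V}). We plan to get around this in one of two ways.

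The first option is to work on $[\eta,\TT]$ for small $\eta>0$. By Proposition \ref{thm-further-reg}, $\bT,\tildebT\in C([\eta,\TT];\bDA)$, and Proposition \ref{prop-reg-borne} then gives $\bT,\tildebT\in L^\infty((\eta,\TT)\times I)$, so all coefficients are bounded there. Applying the maximum principle on $[\eta,\TT]$ requires $u(\eta),v(\eta)\ge0$, which we do not know a priori. So we combine this with the second option.

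The second option is an approximation argument. Choose $\bT^0_n,\tildebT^0_n\in\bDA$ with $\bT^0_n\le\tildebT^0_n$ that converge in $\bV$ to $\bTi,\tildebTi$. To build them, we regularize via $(I-\frac1n\bA)^{-1}$, which preserves order by Lemma \ref{lem-ppmaxw} applied to the resolvent, or equivalently by the positivity of the semigroup. For these data, Proposition \ref{thm-further-reg} gives strict solutions in $C([0,\TT];\bDA)\subset L^\infty(Q_\TT)$, so the coefficients are bounded on all of $Q_\TT$ and the argument above yields $\bT_n\le\tildebT_n$. Proposition \ref{prop-dep-continue-CI} guarantees that the approximate solutions exist on $[0,\TT]$ for $n$ large and converge in $C([0,\TT];\bV)$. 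Passing to the limit a.e. preserves the inequality, which gives $\bT(t)\le\tildebT(t)$. Since $\TT$ was arbitrary, this concludes the proof.

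A more direct alternative avoids approximation by redoing the Gronwall computation in the proof of Proposition \ref{lem-ppmaxw-syst}. The unbounded terms have good signs: the diagonal terms $-\Phi_a (u^-)^2$ are nonpositive, while the cross terms $\Phi_s v^-u^-$ are not. For this reason we prefer the approximation route.
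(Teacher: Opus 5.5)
Your proposal is correct and follows essentially the same route as the paper. For data in $\bDA$, the strict solutions are bounded via $D(A)\hookrightarrow C(\bar I)$, so the difference solves a linear cooperative system with bounded coefficients and Proposition~\ref{lem-ppmaxw-syst} applies; general data in $\bV$ are then treated by order-preserving regularization and continuous dependence (Proposition~\ref{prop-dep-continue-CI}), with the only cosmetic difference that you regularize with $(\pmb I-\frac1n\bA)^{-1}$ where the paper uses $e^{\frac1n\bA}$.
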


\begin{Remark}
\rm
The cooperative structure of the system, deeply used in the proof of Theorem \ref{lem-comp-2layer}, originates from the vertical energy exchange between the two layers. Such exchanges reduce the temperature difference between the atmosphere and the surface at each latitude and, due to their dissipative nature, are associated with entropy production \cite{Lucarini2014}.

From a physical viewpoint, cooperation also manifests itself in terms of heat-transport compensation mechanisms. For instance, a positive anomaly in the oceanic meridional heat transport induces a surface temperature increase at high latitudes. Through vertical coupling, this anomaly is partially transferred to the atmosphere, thereby reducing the atmospheric meridional temperature gradient and, consequently, the atmospheric heat transport. This compensation mechanism, first proposed by Bjerknes \cite{Bjerknes1964} and later generalized by Stone \cite{Stone1978}, has been shown to play a crucial role in climate dynamics \cite{Sheffrey2006,Knietzsch2015,Outten2018}.
\end{Remark}

\begin{proof}
The proof is divided into two steps.
\medskip

\noindent
\textbf{Step 1: regular initial data.} Assume first that $\bTi , \tildebTi \in \bDA$ and fix 
$$\tau < \min (\TT^\star (\bTi),\TT^\star (\tildebTi)).$$
Introduce the nonlinearities $F_a, F_s : (0,+\infty) \times I \times \mathbb R \times \mathbb R \to \mathbb R$ by
\begin{equation}\label{def-Fa}
F_a (t,x,u, v) := \frac{1}{\gamma _a} \left[ -\lambda (u - v) + \varepsilon _a \sigma _B \vert v \vert ^3 v  - 2 \varepsilon _a \sigma _B \vert u \vert ^3 u + 
r(t) q(x) \beta_a(u)\right]
,
\end{equation}
and
\begin{equation}\label{def-F0}
F_s (t,x,u, v) := \frac{1}{\gamma _s} \left[ -\lambda (v - u) - \sigma _B \vert v \vert ^3 v  + \varepsilon _a \sigma _B \vert u \vert ^3 u +
r(t) q(x) \beta_s(v)
  \right] .
\end{equation}
Hence, $\bT$ and $\tildebT$ satisfy the following problem
\begin{equation}
\label{2layer-pbm-comp-abs}
\begin{cases}
 \displaystyle  \frac{\partial u}{\partial t} -  \kappa_a \frac{\partial }{\partial x} \left( (1-x^2) \frac{\partial u}{\partial x}\right) = F_a (t,x,u , v ), \vspace{.15cm}\\
\displaystyle \frac{\partial v}{\partial t} -  \kappa_s \frac{\partial }{\partial x} \left( (1-x^2) \frac{\partial v}{\partial x}\right)  = F_s (t,x,u , v ) , \vspace{.15cm}\\
\displaystyle(1-x^2) \frac{\partial u}{\partial x} _{\vert x = \pm 1} = 0  = (1-x^2) \frac{\partial v}{\partial x} _{\vert x = \pm 1} ,\vspace{.15cm}\\
u (0,x)=u ^{0} (x) ,\quad v (0,x)=v ^{0} (x) ,
\end{cases}
\end{equation}
with initial condition $\bTi$ and $\tildebTi$, respectively.

Consider the differences
$$ D_a := \tilde T_a - T_a , \quad D_s := \tilde T_s - T_s.$$
Then, $D_a$ and $D_s$ satisfy
$$
\frac{\partial D_a}{\partial t} -  \kappa_a \frac{\partial }{\partial x} \left
(1-x^2) \frac{\partial  D_a}{\partial x}\right)
= F_a (t,x, \tilde T_a (t,x) ,  \tilde T_s (t,x)) - F_a (t,x, T_a (t,x),  T_s (t,x)) ,
$$
$$
\frac{\partial D_s}{\partial t} -  \kappa_s \frac{\partial }{\partial x} \left( (1-x^2) \frac{\partial  D_s}{\partial x}\right)
= F_s (t,x, \tilde T_a (t,x) ,  \tilde T_s (t,x)) - F_s (t,x, T_a (t,x),  T_s (t,x)).
$$
We decompose 
$$ \begin{cases}
F_a (t,x,u, v) = f_a (u,v) + g_a (t,x, u) , \\
F_s (t,x,u, v) = f_s (u,v) + g_s (t,x, u), \end{cases} $$
where
$$ f_a (u,v) := \frac{1}{\gamma _a} \left[ -\lambda (u - v) + \varepsilon _a \sigma _B \vert v \vert ^3 v  - 2 \varepsilon _a \sigma _B \vert u \vert ^3 u \right] ,\,\,\,\,  g_a (t,x, u) :=  \frac{1}{\gamma _a} r(t)  q(x)  \beta_a (u) ,$$
and
$$ f_s (u,v) := \frac{1}{\gamma _s} \left[ -\lambda (v - u) - \sigma _B \vert v \vert ^3 v  + \varepsilon _a \sigma _B \vert u \vert ^3 u \right],\quad g_s (t,x, v) :=   \frac{1}{\gamma _s} r(t) q(x) \beta_s (v) .$$
The functions $f_a$ and $f_s$ are $C^1$ on $\mathbb R^2$, while $g_a$ and $g_s$ are globally Lipschitz with respect to the third variable. We have
\begin{multline*}
F_a (t,x, \tilde T_a,  \tilde T_s) - F_a (t,x, T_a ,  T_s)
\\
= \left( f_a (\tilde T_a,  \tilde T_s) + g_a (t,x, \tilde T_a) \right) -  \left( f_a (T_a, T_s) + g_a (t,x, T_a) \right)
\\
= \left( f_a (\tilde T_a,  \tilde T_s) -  f_a (T_a, T_s) \right) +  \left( g_a (t,x, \tilde T_a) -  g_a (t,x, T_a) \right)
\\
= \left( f_a (\tilde T_a, \tilde T_s) - f_a (\tilde T_a,  T_s)\right) + \left(f_a (\tilde T_a,  T_s) - f_a (T_a ,  T_s) \right)
+ \left( g_a (t,x, \tilde T_a) - g_a (t,x, T_a) \right) .
\end{multline*}
Denote $m_a, m_s: \mathbb R^4 \to \mathbb R$ the functions
$$ m_a (T_a, T_s, \tilde T_a, \tilde T_s) := \int _0 ^1 \frac{\partial f_a}{\partial u} (T_a +(\tilde T_a - T_a)z, T_s) \, dz ,$$
$$ m_s (T_a, T_s, \tilde T_a, \tilde T_s) := \int _0 ^1 \frac{\partial f_a}{\partial v} (\tilde T_a, T_s + (\tilde T_s - T_s) y) \, dy ,$$
and let $m_a ^* : [0,+\infty) \times I  \times \mathbb R^2 \to \mathbb R$ be defined as
$$
 m_a ^* (t,x, T_a, \tilde T_a) := \begin{cases} 
 \displaystyle { \frac{r(t) q(x)}{\gamma _a} \frac{\beta _a (\tilde T_a) - \beta _a (T_a)}{\tilde T_a - T_a} }
 &\text{ if } \tilde T_a \neq T_a , \\ 0 &\text{ if } \tilde T_a = T_a . \end{cases}
$$
Thus, we get
\begin{multline*} 
 f_a (\tilde T_a, \tilde T_s) - f_a (\tilde T_a,  T_s)
= \int _{T_s} ^{\tilde T_s} \frac{\partial f_a}{\partial v} (\tilde T_a, w) \, dw
\\
= (\tilde T_s - T_s) \int _0 ^1 \frac{\partial f_a}{\partial v} (\tilde T_a, T_s + (\tilde T_s - T_s) y) \, dy 
= \left( m_s (T_a, T_s, \tilde T_a, \tilde T_s) \right) \, (\tilde T_s - T_s) ,
\end{multline*}
and moreover
\begin{multline*} 
\left(f_a (\tilde T_a,  T_s) - f_a (T_a ,  T_s) \right)
+ \left( g_a (t,x, \tilde T_a) - g_a (t,x, T_a) \right)
\\
= \left( m_a (T_a, T_s, \tilde T_a, \tilde T_s) + m_a ^* (t,x,T_a, \tilde T_a) \right) \, (\tilde T_a - T_a) .
\end{multline*}
Therefore
\begin{equation}
\label{comp-form_a}
F_a (t,x, \tilde T_a,  \tilde T_s) - F_a (t,x, T_a ,  T_s) = (m_a + m_a ^*) \, (\tilde T_a - T_a) + m_s \, (\tilde T_s - T_s),
\end{equation}
where we have dropped the arguments of $m_a,\,m_s,\,m^*_s$ for brevity.  

We now denote by $n_a, n_s: \mathbb R^4 \to \mathbb R$ the functions
$$ n_a (T_a, T_s, \tilde T_a, \tilde T_s) := \int _0 ^1 \frac{\partial f_s}{\partial u} (T_a +(\tilde T_a - T_a)z, T_s) \, dz ,$$
$$ n_s (T_a, T_s, \tilde T_a, \tilde T_s) := \int _0 ^1 \frac{\partial f_s}{\partial v} (\tilde T_a, T_s + (\tilde T_s - T_s) y) \, dy ,$$
and define $n_s ^* : [0,+\infty) \times I \times \mathbb R^2 \to \mathbb R$ as follows
$$ n_s ^* (t,x,T_s, \tilde T_s) := \begin{cases} 
\displaystyle { \frac{r(t) q(x)}{\gamma _s} \frac{\beta _s (\tilde T_s) - \beta _s (T_s)}{\tilde T_s - T_s}}
 &\text{ if } \tilde T_s \neq T_s , \\ 0 &\text{ if } \tilde T_s = T_s . \end{cases} $$
Then
\begin{equation}
\label{comp-form_s}
F_s (x,t, \tilde T_a,  \tilde T_s) - F_s (x,t, T_a ,  T_s) = n_a  \, (\tilde T_a - T_a) + ( n_s + n_s ^*) \, (\tilde T_s - T_s) .
\end{equation}

Now, using this notation, we rewrite the problems satisfied by $(D_a, D_s)$ as
\begin{equation}
\label{2layer-pbm-comp-abs-diff}
\begin{cases}
\displaystyle\frac{\partial D_a}{\partial t} -  \kappa_a \frac{\partial }{\partial x} \left( (1-x^2) \frac{\partial D_a}{\partial x}\right) = (m_a
+ m_a ^*) D_a + m_s D_s, \vspace{.2cm}\\
\displaystyle\frac{\partial D_s}{\partial t} -  \kappa_s \frac{\partial }{\partial x} \left( (1-x^2) \frac{\partial  D_s}{\partial x}\right)
= n_a D_a + (n_s + n_s ^*) D_s , \vspace{.2cm}\\
\displaystyle(1-x^2) \frac{\partial D_a}{\partial x} _{\vert x = \pm 1} = 0 = (1-x^2) \frac{\partial D_s}{\partial x} _{\vert x = \pm 1} ,\vspace{.2cm}\\
D_a (0,x)= \tilde T_a ^{0} (x) - T_a ^{0} (x) ,\quad
D_s (0,x)= \tilde T_s ^{0} (x) - T_s ^{0} (x) .
\end{cases}
\end{equation}
Since $f_a$ is nondecreasing with respect to the second argument, and $f_s$ is nondecreasing with respect to the its first argument, we have the cooperative conditions
\begin{equation}
\label{coop}
 m_s \geq 0 , \quad  n_a \geq 0 .
 \end{equation}
Moreover, we note that $m_a ^*$ and $n_s ^*$ are nonnegative and uniformly bounded due to the global Lipschitz continuity of $\beta _a$ and $\beta _s$. 

By the regularity of strict solutions, we have
$$ \bT,\,\tildebT \in C ([0,\tau]; \bDA) .$$
Therefore, using that $\bDA \subset C (\bar{I})\times C (\bar{I})$, there exist $M_a$ and $M_s$ such that  such that
$$ \forall\, t \in [0,\tau],\, \forall\, x \in I, \quad 
\begin{cases} T_a (t,x), \tilde T_a (t,x) \in [-M_a,M_a], \\
 T_s (t,x), \tilde T_s (t,x) \in [-M_s,M_s] . \end{cases} $$
Denote by $L_a$ and $L_s$ the Lipschitz constant of $ \beta _a$ and $\beta _s$, respectively. We define
\begin{multline}
\label{La0-Ta}
L_{a,s} :=\\ \sup _{u\in [-M_a,M_a], v\in [-M_s,M_s]} \left\{\left| \frac{\partial f_a}{\partial u} \right|, \left| \frac{\partial f_a}{\partial v} \right| , \left|\frac{\partial f_s}{\partial u} \right|, \left| \frac{\partial f_s}{\partial v} \right| , 
\frac{\norm{r}_\infty \norm{q}_\infty  }{\gamma _a} L_a, \frac{\norm{r}_\infty \norm{q}_\infty  }{\gamma _s} L_s\right\}.
\end{multline}
Then, $L_{a,s}$ is a uniform bound for the coefficients of \eqref{2layer-pbm-comp-abs-diff}, that is,
\begin{equation}
\label{La0-Ta-appl}
\forall\, t\in [0,\tau],\, \forall\, x\in I, \quad \vert m_a \vert , \vert m_s \vert, \vert m_a ^* \vert,
 \vert n_a \vert, \vert n_s \vert, \vert n_s ^* \vert \leq L_{a,s} .
\end{equation}
Therefore, system
\eqref{2layer-pbm-comp-abs-diff} satisfies the hypotheses of the parabolic maximum principle for cooperative systems (Proposition~\ref{lem-ppmaxw-syst}), and we conclude that
$$
D_a(t,x)\ge0, \quad D_s(t,x)\ge0
\quad \text{for all } (t,x)\in[0,\tau]\times I.
$$
Since $\tau$ is arbitrary, the comparison principle holds on $[0,\TT^\star (\bTi))\cap [0,\TT^\star (\tildebTi))$ for all regular initial data in $\bDA$. 

\medskip
\noindent
\textbf{Step 2: Initial data in $\bV$.}

We now remove the additional regularity assumption and consider $\bTi,\,\tildebTi \in \bV$.
The proof is based on a regularization and approximation argument.

\smallskip
\noindent
\emph{Approximation of the initial data.}
Since $\overline{\bDA}^{\,\bV}=\bV$, it is sufficient to construct approximating sequences in $\bDA$ which preserve the ordering of the initial conditions. This can be done by applying the semigroup as follows. We set 
$$
\bT^{0,n}:=e^{\frac{1}{n}\bA}\bTi, \qquad
\tildebT^{0,n}:=e^{\frac{1}{n}\bA}\tildebTi.
$$
Then,
$$
\bT^{0,n},\,\tildebT^{0,n}\in \bDA, \qquad
\bT^{0,n}\le\tildebT^{0,n},
$$
and
$$
\|\bT^{0,n}-\bTi\|_{\bV}\to0, \qquad
\|\tildebT^{0,n}-\tildebTi\|_{\bV}\to0
\quad \text{as } n\to\infty.
$$

\smallskip
\noindent
\emph{Passage to the limit.}
Fix
$$
\tau<\min\bigl(\TT^\star(\bTi),\TT^\star(\tildebTi)\bigr).
$$
By the continuous dependence result (Proposition~\ref{prop-dep-continue-CI}), for $n$ sufficiently large the mild solutions $\bT^{(n)}$ and $\tildebT^{(n)}$ associated with $\bT^{0,n}$ and $\tildebT^{0,n}$ are well defined on $[0,\tau]$ and satisfy
$$
\bT^{(n)}(t,x)\le\tildebT^{(n)}(t,x)
\quad \text{for all } (t,x)\in[0,\tau]\times I,
$$
thanks to Step~1.

Moreover, Proposition~\ref{prop-dep-continue-CI} yields the uniform convergence
$$
\sup_{t\in[0,\tau]}
\|\bT^{(n)}(t)-\bT(t)\|_{\bV}\to0,
\qquad
\sup_{t\in[0,\tau]}
\|\tildebT^{(n)}(t)-\tildebT(t)\|_{\bV}\to0.
$$
Passing to the limit in the above inequality, we conclude that
$$
\bT(t,x)\le\tildebT(t,x)\quad\text{for all } (t,x)\in[0,\tau]\times I.
$$

Since $\tau$ is arbitrary, the comparison principle holds on $[0,\TT^\star(\bTi))\cap[0,\TT^\star(\tildebTi))$ for all initial data in $\bV$.
\end{proof}
\begin{Remark}
    {\rm
    In the above proof we deeply exploit the cooperative nature of \eqref{2layer-pbm-comp-abs} that implies that $F_a$ is nondecreasing with respect to the fourth variable, and $F_s$ is nondecreasing with respect to the third variable. Such a property turns out to be fundamental in establishing comparison principles for systems. Observe that we cannot directly apply the results of \cite[page 130]{Smith} and \cite[Theorem 5.1 page 125]{Fife}, because of the required regularity.
    }
\end{Remark}
We now define super- and sub-solutions to \eqref{2layer-pbm}. 
\begin{Definition}
Let $\TT^+>0$. We call
\begin{equation}
\label{reg-sup-sol}
\bT^+ \in C^1([0,\TT^+ );\bH)\cap C([0,\TT^+ );\bDA) 
 \end{equation}
a \emph{super-solution} of \eqref{2layer-pbm} if for all $(t,x) \in [0,\TT^+ )\times I$ it satisfies 
\begin{equation}
\label{2layer-pbm-comp-tilde-sup}
\begin{cases}
\displaystyle\gamma _a \left[ \frac{\partial T_a ^+}{\partial t} -  \kappa_a \frac{\partial }{\partial x} \left( (1-x^2) \frac{\partial T_a ^+}{\partial x}\right) \right] \vspace{.2cm}\\
\hspace{1cm} \geq -\lambda (T_a ^+ - T_s ^+) + \varepsilon _a \sigma _B \vert T_s ^+ \vert ^3 T_s ^+  - 2 \varepsilon _a \sigma _B \vert T_a ^+ \vert ^3 T_a ^+ + r(t) q(x) \beta_a(T_a^+)  ,\vspace{.2cm}\\
\displaystyle\gamma _s \left[ \frac{\partial T_s ^+}{\partial t} -  \kappa_s \frac{\partial }{\partial x} \left( (1-x^2) \frac{\partial T_s ^+}{\partial x}\right) \right] \vspace{.2cm}\\
\hspace{1cm} \geq -\lambda (T_s ^+ - T_a ^+) - \sigma _B \vert T_s ^+ \vert ^3 T_s ^+  + \varepsilon _a \sigma _B \vert T_a ^+ \vert ^3 T_a ^+ +r(t) q(x) \beta_s(T_s^+).
\end{cases}
\end{equation}
Similarly, we call $\bT^-$ a  \emph{sub-solution} of \eqref{2layer-pbm} if it is defined on some interval $[0, \TT^-)$, it has the regularity \eqref{reg-sup-sol}, and it satisfies \eqref{2layer-pbm-comp-tilde-sup} with reverse inequalities.
\end{Definition}
The following comparison principle for super- sub-solutions of \eqref{2layer-pbm} holds.
\begin{Theorem} {\rm \bf (Comparison principles for super and sub-solutions)}
\label{lem-comp-2layer-sub-sup}
Let $\,\bTi \in \bDA_+ $ and let $\bT$ be the associated strict solution of \eqref{2layers-abstract-form}, maximally defined on $I(\bTi)$. 

If $\bT ^+$ is a super-solution of \eqref{2layer-pbm} defined on $[0,\TT^+)$, then 
\begin{equation}
\bT ^{0} \leq \bT ^+ (0)\,\, \implies \,\, \bT (t,x) \leq \bT ^+  (t,x)\quad\text{for all } (t,x) \in (I(\bTi) \cap [0,\TT^+))\times I .
\end{equation}
Similarly, if $\bT ^-$ is a sub-solution of \eqref{2layer-pbm} defined on $[0,\TT^-)$, then 
\begin{equation}
\bT^{0} \geq \bT^- (0) \,\, \implies \,\,\bT(t,x) \geq \bT^-  (t,x)\quad\text{ for all } t \in (I(\bTi) \cap [0,\TT^-))\times I .
\end{equation}
\end{Theorem}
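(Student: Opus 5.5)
Fix $\tau<\min(\TT^*(\bTi),\TT^+)$; I prove the super-solution statement on $[0,\tau]$, the sub-solution case being identical with the roles reversed. The plan repeats Step~1 of the proof of Theorem~\ref{lem-comp-2layer}, with $\tildebT$ replaced by $\bT^+$. Set $D_a:=T_a^+-T_a$ and $D_s:=T_s^+-T_s$. Since $\bTi\in\bDA$, Proposition~\ref{thm-further-reg} gives $\bT\in C^1([0,\tau];\bH)\cap C([0,\tau];\bDA)$. By assumption $\bT^+$ has the same regularity, so $D_a,D_s\in H^1(0,\tau;H)\cap L^2(0,\tau;D(A))$ and $D_a(0),D_s(0)\in D(A)\subset V$.

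Subtracting the equation of $\bT$ (divided by $\gamma_a,\gamma_s$) from the inequalities \eqref{2layer-pbm-comp-tilde-sup}, I write $F_a(t,x,\bT^+)-F_a(t,x,\bT)$ via the mean-value decomposition \eqref{comp-form_a}. I do the same for $F_s$ via \eqref{comp-form_s}, with $\tilde T_a,\tilde T_s$ replaced by $T_a^+,T_s^+$. This yields
\begin{align*}
\partial_t D_a-\kappa_a\bigl((1-x^2)D_{a,x}\bigr)_x&=(m_a+m_a^*)D_a+m_sD_s+f,\\
\partial_t D_s-\kappa_s\bigl((1-x^2)D_{s,x}\bigr)_x&=n_aD_a+(n_s+n_s^*)D_s+g,
\end{align*}
where $f,g\ge0$ are the defects of the super-solution inequalities. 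Then $f,g\in L^2(Q_\tau)$: each is the left side minus the nonlinearity evaluated at $\bT^+$, and both pieces lie in $L^2$ because $\bT^+\in C([0,\tau];\bDA)\subset C([0,\tau];C(\bar I)^2)$ by Proposition~\ref{prop-reg-borne}. The Neumann conditions hold for $D_a,D_s$ because they are encoded in $D(A)$.

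The coefficients are controlled as follows. Proposition~\ref{prop-reg-borne} and continuity on the compact interval $[0,\tau]$ give a bound $M$ with $|T_a|,|T_s|,|T_a^+|,|T_s^+|\le M$ on $[0,\tau]\times\bar I$. Hence all of $m_a,m_s,n_a,n_s$ are bounded by the constant $L_{a,s}$ of \eqref{La0-Ta}, computed with $M_a=M_s=M$. The terms $m_a^*,n_s^*$ are bounded through the Lipschitz constants of $\beta_a,\beta_s$ and the boundedness of $r,q$. Cooperativity \eqref{coop}, namely $m_s,n_a\ge0$, follows from the monotonicity of $f_a$ in $v$ and of $f_s$ in $u$.

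Proposition~\ref{lem-ppmaxw-syst} then applies with $u_0=D_a(0)\ge0$ and $v_0=D_s(0)\ge0$, and gives $D_a,D_s\ge0$ a.e. in $Q_\tau$. Since both functions are continuous in $(t,x)$, the inequality holds everywhere, and letting $\tau$ increase gives the claim on the whole interval. The only point requiring care is the bookkeeping of regularity: the defects $f,g$ must lie in $L^2$ and the solution must be bounded, which is exactly why the statement assumes $\bTi\in\bDA_+$ and why super-solutions are required to have regularity \eqref{reg-sup-sol}. Unlike in Theorem~\ref{lem-comp-2layer}, no approximation step is needed.
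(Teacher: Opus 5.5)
Your proposal is correct and is essentially the paper's argument. The paper's proof says only that the reasoning is that of Step~1 of Theorem~\ref{lem-comp-2layer}, and you carry this out with $\tildebT$ replaced by $\bT^+$: the nonnegative super-solution defects $f,g$ become the source terms in Proposition~\ref{lem-ppmaxw-syst}. You also correctly note that no approximation step is needed, since $\bTi\in\bDA_+$ and $\bT^+$ has the regularity \eqref{reg-sup-sol}.
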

\begin{proof}
The reasoning is similar to the proof of Theorem \ref{lem-comp-2layer}.
\end{proof}

\section{Proof of well-posedness}
\label{sec-proof-7} 

The aim of this section is: first, to establish the positivity of solutions to \eqref{2layer-pbm}, \eqref{eq:bound and in cond}; second, to prove the existence of an invariant rectangle, which in turn implies global existence for $\varepsilon_a\in(0,2)$ (proof of Theorem~\ref{prop-bounds}) and finite-time blow-up for $\varepsilon_a> 2$ (proof of Theorem~\ref{prop-blowup}).
The proofs rely crucially on the maximum principle and the comparison results developed in Section~\ref{sec:MP}.

The main idea consists in constructing suitable super- and sub-solutions of the PDE system that are independent of the spatial variable $x$. These are obtained by considering solutions of the ODE system associated with \eqref{2layer-pbm}, in which the incoming solar flux $r(t)q(x)$ is replaced by a constant forcing term. The qualitative properties of this ODE system have been thoroughly analyzed in \cite{CLMUV-EDO}.

By exploiting the comparison principle stated in Theorem~\ref{lem-comp-2layer-sub-sup}, we transfer the information obtained at the ODE level to the full PDE system \eqref{2layer-pbm}, thereby deriving bounds and qualitative properties of its solutions. For the reader’s convenience, we begin by recalling some key results from \cite{CLMUV-EDO}.

Consider the following ODE system
\begin{equation}
\label{2layer-CLMUV-EDO}
\begin{cases}
\gamma _a  T_a '(t) = -\lambda (T_a - T_s) + \varepsilon _a \sigma _B \vert T_s \vert ^3 T_s  - 2 \varepsilon _a \sigma _B \vert T_a \vert ^3 T_a  
+   \bar q \beta _a (T_a)  ,  & t>0,\\
\gamma _s T' _s (t) = -\lambda (T_s - T_a) - \sigma _B \vert T_s \vert ^3 T_s  + \varepsilon _a \sigma _B \vert T_a \vert ^3 T_a +  \bar q \beta _s (T_s)  , & t>0, \\
T_a (0)= \bar T_a ^{ 0} ,\\
T_s (0)= \bar T_s ^{ 0} ,
\end{cases}
\end{equation}
subject to the following set of assumptions 
\begin{Hyp.}\label{HYP2}\hfill
\begin{itemize}
\item[{\bf (i)}]  The coefficients $\sigma_B,\,\eps_a,\,\lambda$ satisfy
$$ \gamma_a, \gamma_s >0, \qquad  
\sigma_B>0, \qquad \varepsilon_a>0, \qquad \lambda \geq 0.$$
\item[{\bf(ii)}] The coalbedo functions $\beta_a, \beta _s:\RR\to\RR$ are  globally Lipschitz continuous and moreover $\beta_a \geq 0$, $\beta_s >0$.
\item[{\bf(iii)}] The insolation coefficient 
$\bar q \in \mathbb R$ verifies 
\begin{equation*}
\bar q > 0.
\end{equation*}
\end{itemize}
\end{Hyp.}

Observe that the ODE system \eqref{2layer-CLMUV-EDO} can be regarded as a particular spatially homogeneous case of the PDE problem \eqref{2layer-pbm}. Moreover, the set of assumptions Hyp. \ref{HYP2} is compatible with Hyp. \ref{HYP1} if we further assume $\beta_a \geq 0$, $\beta_s >0$ in Hyp. \ref{HYP1}. 

In \cite{CLMUV-EDO}, the qualitative behavior of solutions to the ODE system \eqref{2layer-CLMUV-EDO} has been thoroughly analyzed. In particular, the following results were established:
\begin{itemize}
\item global existence, boundedness, positivity, and the existence of an invariant rectangle for $\varepsilon_a \in (0,2)$;
\item the occurrence of finite-time blow-up when $\varepsilon_a >2$. 
\end{itemize}

We now recall these results more precisely.
\begin{Proposition}\label{Prop-II.1-chaos}\cite[Proposition 2.1 : Global existence, boundedness and positivity]{CLMUV-EDO}
Assume that Hyp \ref{HYP2} holds. Let $\varepsilon_a \in (0,2)$ and $\bar T_a ^{ 0}, \bar T_s ^{ 0} \geq 0$. 
Then problem \eqref{2layer-CLMUV-EDO} admits a unique solution, which is globally defined for all positive times and remains bounded on $[0, +\infty)$. Moreover, 
$$T_a(t)>0 \ \text{ and } \ T_s(t)>0,\quad \forall\,t >0.$$
\end{Proposition}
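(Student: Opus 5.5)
The statement concerns the planar ODE system \eqref{2layer-CLMUV-EDO}. The plan is: local existence, then positivity via the cooperative structure, then an invariant rectangle $[0,K_a]\times[0,K_s]$ that gives global existence and boundedness. The right-hand side $\Phi=(\Phi_a,\Phi_s)$ is locally Lipschitz in $(T_a,T_s)$, because $u\mapsto|u|^3u$ is $C^1$ and $\beta_a,\beta_s$ are globally Lipschitz. Cauchy--Lipschitz therefore gives a unique maximal solution on $[0,t^*)$, and if $t^*<\infty$ the solution is unbounded.

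\emph{Step 1: positivity.} The system is cooperative: $\partial\Phi_a/\partial T_s=\frac{1}{\gamma_a}(\lambda+4\varepsilon_a\sigma_B|T_s|^3)\ge0$, and similarly $\partial\Phi_s/\partial T_a\ge0$. Hence the Kamke comparison principle applies (the ODE analogue of Theorem \ref{lem-comp-2layer}). The origin is a strict subsolution, since $\Phi_a(0,0)=\bar q\beta_a(0)/\gamma_a\ge0$ and $\Phi_s(0,0)=\bar q\beta_s(0)/\gamma_s>0$. So $(T_a,T_s)\ge(0,0)$ on $[0,t^*)$.

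To get strict positivity for $t>0$, I will argue component by component. First, $T_s$: we have $\gamma_sT_s'\ge -\lambda T_s-\sigma_BT_s^4+\bar q\beta_s(T_s)$, and the right-hand side is strictly positive near $T_s=0$. So $T_s$ becomes positive immediately and cannot return to $0$. Second, $T_a$: at a time where $T_a=0$ and $T_s>0$, we get $\gamma_aT_a'=\lambda T_s+\varepsilon_a\sigma_BT_s^4+\bar q\beta_a(0)>0$. Hence $T_a>0$ for $t>0$. The case $\lambda=0$ still works because of the $\varepsilon_a\sigma_BT_s^4$ term.

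\emph{Step 2: invariant rectangle (main obstacle).} I will look for $K_a,K_s>0$ with $\Phi_a(K_a,T_s)\le0$ for $T_s\in[0,K_s]$ and $\Phi_s(T_a,K_s)\le0$ for $T_a\in[0,K_a]$. By monotonicity of the coupling terms, it suffices to check the corner $(K_a,K_s)$. With $B=\bar q\max(\|\beta_a\|_\infty,\|\beta_s\|_\infty)$, the two conditions are:
$$\lambda(K_s-K_a)+\varepsilon_a\sigma_BK_s^4-2\varepsilon_a\sigma_BK_a^4+B\le0,\qquad \lambda(K_a-K_s)-\sigma_BK_s^4+\varepsilon_a\sigma_BK_a^4+B\le0.$$
Set $K_a^4=\mu K_s^4$. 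The quartic parts are then $\varepsilon_a\sigma_BK_s^4(1-2\mu)$ and $\sigma_BK_s^4(\varepsilon_a\mu-1)$. Both are negative iff $\tfrac12<\mu<\tfrac1{\varepsilon_a}$, and such a $\mu$ exists exactly because $\varepsilon_a<2$. This is where the threshold enters.

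It remains to handle the linear terms, which are $O(K_s)$. Their total contribution is $\lambda|K_s-K_a|\le\lambda K_s(1+\mu^{1/4})$, so they are dominated by the quartic terms for $K_s$ large. Choosing $K_s$ large enough therefore makes both inequalities hold.

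\emph{Step 3: conclusion.} On the boundary of $R=[0,K_a]\times[0,K_s]$ the vector field points inward or is tangent: on the faces $T_a=K_a$ and $T_s=K_s$ by Step 2, and on the faces $T_a=0$ and $T_s=0$ by Step 1. Hence $R$ is positively invariant, for instance via a comparison between the solution and the constant super-solution $(K_a,K_s)$ using cooperativity. Any nonnegative initial datum lies in such an $R$ after enlarging $K_s$ while keeping $\mu$ fixed. The solution therefore stays bounded, so $t^*=+\infty$, which gives global existence and the uniform bound on $[0,+\infty)$.
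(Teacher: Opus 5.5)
Your argument is correct and follows essentially the route behind the cited result, which this paper also uses for the PDE in Section~\ref{sec-proof-7}: positivity from the cooperative (Kamke-type) comparison, plus an invariant rectangle, since your choice $K_a^4=\mu K_s^4$ with $\mu\in(\tfrac12,\tfrac1{\varepsilon_a})$ is exactly the condition $K_s/K_a\in(\varepsilon_a^{1/4},2^{1/4})$ defining the rectangles in Lemma~\ref{Lemma-IV.2-chaos}. One small fix: Hyp.~\ref{HYP2} makes $\beta_a,\beta_s$ only globally Lipschitz, not bounded, so your constant $B$ may be infinite; replace it by the linear-growth bound $\bar q\,\beta_i(K)\le\bar q\,(\beta_i(0)+L_iK)$, which your large-$K_s$ dominance argument absorbs unchanged.
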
 
\begin{Lemma} \label{Lemma-IV.2-chaos}
\cite[Lemma 4.2: Invariant rectangle]{CLMUV-EDO} 
\\
Assume that Hyp \ref{HYP2} holds. Let $\varepsilon_a \in (0,2)$ and $\bar T_a ^{ 0}, \bar T_s ^{ 0} \geq 0$. Let $\mu \in (\varepsilon_a^{1/4},2^{1/4})$. There exists a sufficiently large $M_a$ so that, for any $M\geq M_a$, if
$$ (\bar T_a ^{ 0} , \bar T_s ^{ 0}) \in [0, M] \times [0, \mu M],$$ 
then the solution $(T_a(t), T_s(t))$ of \eqref{2layer-CLMUV-EDO} satisfies
$$ ( T_a (t) ,  T_s (t)) \in [0, M] \times [0, \mu M], \qquad \forall\, t>0.$$
\end{Lemma}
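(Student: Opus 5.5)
The plan is to show that the vector field of \eqref{2layer-CLMUV-EDO} points (weakly) into the rectangle $R_M:=[0,M]\times[0,\mu M]$ on each of its four sides, strictly on the two upper sides, and then to conclude by a first-exit-time argument. The solution exists globally and stays nonnegative (in fact positive for $t>0$) by Proposition~\ref{Prop-II.1-chaos}. So the lower sides $T_a=0$ and $T_s=0$ need no separate treatment, and only the upper sides need to be controlled.

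\textbf{Side $T_a=M$, $T_s\in[0,\mu M]$.} Here
\[
\gamma_a T_a' = -\lambda (M-T_s)+\varepsilon_a\sigma_B T_s^4-2\varepsilon_a\sigma_B M^4+\bar q\beta_a(M).
\]
I will bound this by
\[
\lambda(\mu-1)^+M+\varepsilon_a\sigma_B(\mu^4-2)M^4+\bar q\,\|\beta_a\|_{L^\infty(0,M)},
\]
with $\|\beta_a\|_{L^\infty(0,M)}\le |\beta_a(0)|+L_aM$ by the Lipschitz property. Since $\mu^4<2$, the quartic term is negative with positive coefficient $\varepsilon_a\sigma_B(2-\mu^4)$. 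It therefore dominates the terms that grow at most linearly in $M$, and $T_a'<0$ on this side for $M$ large.

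\textbf{Side $T_s=\mu M$, $T_a\in[0,M]$.} Here
\[
\gamma_s T_s'=-\lambda(\mu M-T_a)-\sigma_B\mu^4M^4+\varepsilon_a\sigma_B T_a^4+\bar q\beta_s(\mu M)
\le \lambda(1-\mu)^+M+\sigma_B(\varepsilon_a-\mu^4)M^4+\bar q(|\beta_s(0)|+L_s\mu M).
\]
Since $\mu^4>\varepsilon_a$, the leading quartic coefficient is negative, so $T_s'<0$ for $M$ large. This is exactly where the window $\mu\in(\varepsilon_a^{1/4},2^{1/4})$ is used, and it is nonempty because $\varepsilon_a<2$. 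Let $M_a$ be a threshold beyond which both strict inequalities hold.

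\textbf{Invariance.} Fix $M\ge M_a$ and data in $R_M$. Suppose, for contradiction, that the solution leaves $R_M$, and let $t_*:=\sup\{t:\ (T_a,T_s)([0,t])\subset R_M\}<\infty$. By nonnegativity, at $t_*$ the solution lies on an upper side, say $T_a(t_*)=M$ with $T_s(t_*)\in[0,\mu M]$, and $T_a(t)>M$ for some times $t$ arbitrarily close to $t_*$ from the right. But $T_a'(t_*)<0$ by the first estimate, so $T_a<M$ on a right neighbourhood of $t_*$, which is a contradiction; the case $T_s(t_*)=\mu M$ is identical. If $t_*=0$, the same computation applies at the initial point, which lies in $R_M$.

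The only delicate point is the non-strict behaviour on the lower sides, where for instance $\gamma_aT_a'=\lambda T_s+\varepsilon_a\sigma_BT_s^4+\bar q\beta_a(0)$ may vanish. It is handled by invoking the positivity in Proposition~\ref{Prop-II.1-chaos} rather than a direct inward-pointing argument. Everything else is the elementary domination of the quartic terms.
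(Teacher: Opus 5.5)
Your proof is correct and follows the standard route for this result. The quartic terms make the field point strictly inward on the two upper sides, using $\mu^4<2$ on $\{T_a=M\}$ and $\mu^4>\varepsilon_a$ on $\{T_s=\mu M\}$, which is exactly why the window for $\mu$ appears; nonnegativity covers the lower sides, and a first-exit-time argument concludes. One remark: you only need the nonnegativity part of Proposition~\ref{Prop-II.1-chaos}, which holds independently because $\gamma_aT_a'\ge0$ on $\{T_a=0,\ T_s\ge0\}$ and $\gamma_sT_s'>0$ on $\{T_s=0,\ T_a\ge0\}$; the global existence and boundedness in that proposition follow from your invariance argument on the maximal interval and are what this lemma is used to establish, so relying on them would be circular in a self-contained treatment.
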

On the other hand, when $\varepsilon_a > 2$, finite-time blow-up may occur.
\begin{Proposition}\label{prop.II.6.b-chaos}
\cite[Proposition 2.6 (b): Blow-up in finite time]{CLMUV-EDO}
Assume that Hyp \ref{HYP2} holds and let $\varepsilon _a >2$. Then:
\begin{itemize}
\item[(a)] there exist initial conditions $\bar T_a ^{ 0}, \bar T_s ^{ 0} \geq 0$ such that the associated solution  $( T_a (t) ,  T_s (t))$  of \eqref{2layer-CLMUV-EDO} blows up in finite time;
\item[(b)] if $\lambda=0$ and $\beta_a \equiv 0$, then for all initial conditions $\bar T_a ^{ 0}, \bar T_s ^{ 0} \geq 0$, the corresponding solution of \eqref{2layer-CLMUV-EDO} blows up in finite time. 
\end{itemize}
\end{Proposition}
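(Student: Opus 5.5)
We will prove both items with a single Lyapunov-type functional: a suitably weighted total energy of the two layers, chosen so that its time derivative dominates a positive multiple of its fourth power. The weight exists precisely when $\varepsilon_a>2$. This is the mechanism already anticipated in Remark \ref{rem:eps_a}.

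\emph{Step 1 (positivity).} Let $(T_a,T_s)$ be the local solution of \eqref{2layer-CLMUV-EDO} with $\bar T_a^0,\bar T_s^0\ge 0$, defined on a maximal interval $[0,\TT^*)$. We will first check that it stays in the closed quadrant $[0,\infty)^2$. This holds for every $\varepsilon_a>0$, by the standard quasi-positivity (invariance) argument for cooperative ODEs. On the face $T_a=0$, $T_s\ge0$ we have
$$\gamma_aT_a'=\lambda T_s+\varepsilon_a\sigma_BT_s^4+\bar q\beta_a(0)\ge0 .$$
On the face $T_s=0$, $T_a\ge0$ we have
$$\gamma_sT_s'=\lambda T_a+\varepsilon_a\sigma_BT_a^4+\bar q\beta_s(0)>0 .$$
The vector field is locally Lipschitz, so the quadrant is forward invariant. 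One can make this rigorous by perturbing $\bar q\beta_a$ by $+\delta$ and letting $\delta\to0$. On the quadrant, $|T|^3T=T^4$.

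\emph{Step 2 (the weighted energy).} Since $\varepsilon_a>2$, we may fix $\alpha\in(1/\varepsilon_a,1/2)$ and set $W:=\alpha\gamma_aT_a+\gamma_sT_s\ge0$. A direct computation gives
$$W'=\sigma_B(\alpha\varepsilon_a-1)T_s^4+\sigma_B\varepsilon_a(1-2\alpha)T_a^4+\lambda(1-\alpha)(T_a-T_s)+\alpha\bar q\beta_a(T_a)+\bar q\beta_s(T_s).$$
Both quartic coefficients are positive by the choice of $\alpha$. The coalbedo terms are nonnegative (positive for $\beta_s$). The $\lambda$-term is bounded below by $-\lambda T_s\ge -C_1W$. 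Using $T_a^4+T_s^4\ge c\,(T_a+T_s)^4\ge c'W^4$, we get
$$W'\ge c_0W^4-C_1W,\qquad c_0>0 .$$

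\emph{Step 3 (blow-up).} For (a), choose initial data with $W(0)^3>2C_1/c_0$; for instance take $\bar T_s^0$ large. Then $W$ is increasing as long as $W^3>C_1/c_0$, so $W'\ge \tfrac{c_0}{2}W^4$ for all times. Comparison with the Riccati equation $y'=\tfrac{c_0}{2}y^4$ forces $W$ to reach $+\infty$ before $t=2/(3c_0W(0)^3)$. Hence $\TT^*<\infty$ and the solution blows up.

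For (b), $\lambda=0$ and $C_1=0$, so $W'\ge c_0W^4+\bar q\beta_s(T_s)>0$. Consequently $W(t_0)>0$ for any $t_0>0$, even if $W(0)=0$. On $[t_0,\TT^*)$ we then have $W'\ge c_0W^4$, which blows up in time at most $1/(3c_0W(t_0)^3)$. This holds for every nonnegative initial datum. The hypothesis $\beta_a\equiv0$ is not even needed beyond $\beta_a\ge0$.

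The main obstacle I expect is Step 1. The lower bound $T_a^4+T_s^4\gtrsim W^4$ and the sign control of the $\lambda$ and $\beta$ terms all rely on $T_a,T_s\ge0$ along the whole trajectory. The invariance of the quadrant must therefore be established carefully, e.g.\ via the $\delta$-perturbation plus continuous dependence, before the differential inequality of Step 2 can be used. The remaining steps are elementary once $\alpha\in(1/\varepsilon_a,1/2)$ is chosen.
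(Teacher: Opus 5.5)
Your proof is correct. It takes a genuinely different route from the paper, because the paper gives no argument for this proposition at all; it simply quotes it from \cite{CLMUV-EDO}.

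Your route is a self-contained Riccati argument built on the single functional $W=\alpha\gamma_aT_a+\gamma_sT_s$ with $\alpha\in(1/\varepsilon_a,1/2)$. This interval is nonempty exactly when $\varepsilon_a>2$. Both quartic coefficients $\sigma_B(\alpha\varepsilon_a-1)$ and $\sigma_B\varepsilon_a(1-2\alpha)$ are then positive, and on the quadrant the $\lambda$-term is bounded below by $-\frac{\lambda}{\gamma_s}W$. This gives $W'\ge c_0W^4-C_1W$, from which both items follow as you describe.

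Compared with the citation, your route buys two things:
\begin{enumerate}
\item Explicit information: a concrete sufficient condition $W(0)^3>2C_1/c_0$ for (a), and explicit upper bounds on the blow-up time.
\item Item (b) under the weaker hypothesis $\beta_a\ge0$ instead of $\beta_a\equiv0$.
\end{enumerate}
The second point is not cosmetic. In the proof of Theorem \ref{prop-blowup}(b) the paper invokes item (b) of this proposition while assuming only $\beta_a\ge0$. Your version is therefore exactly what is needed there, and it removes a mismatch of hypotheses in the paper.

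You flag Step 1 as the main obstacle, but it is routine and does not need the $\delta$-perturbation. On any compact subinterval of $[0,\TT^*)$, rewrite the system as a linear cooperative system:
$\gamma_aT_a'=-(\lambda+2\varepsilon_a\sigma_B|T_a|^3)T_a+(\lambda+\varepsilon_a\sigma_B|T_s|^3)T_s+\bar q\beta_a(T_a)$ and
$\gamma_sT_s'=(\lambda+\varepsilon_a\sigma_B|T_a|^3)T_a-(\lambda+\sigma_B|T_s|^3)T_s+\bar q\beta_s(T_s)$.
The coefficients are bounded there, the off-diagonal ones are nonnegative, and the forcing terms are nonnegative. Multiply by $-T_a^-$ and $-T_s^-$ and apply Gronwall, exactly as in the proof of Proposition \ref{lem-ppmaxw-syst} with the diffusion terms dropped. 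This gives $T_a,T_s\ge0$ on the whole maximal interval.
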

Finally, in the subcritical regime and in the absence of atmospheric albedo feedback, the long-time behavior is fully characterized.
\begin{Proposition}\label{prop.II.3-chaos}
\cite[Propositions 2.2 and 2.3: Equilibrium points and convergence]{CLMUV-EDO}
Assume that Hyp \ref{HYP2} holds and $\beta _a =0$. Let $\varepsilon _a \in (0,2)$. Then problem \eqref{2layer-CLMUV-EDO} admits a finite number of equilibrium points. Among them, there exists a distinguished equilibrium $(T_a^{\mathrm{max}}, T_s^{\mathrm{max}})$, referred to as the \emph{warmest} equilibrium, such that for any equilibrium point $(T_a^{\mathrm{stat}}, T_s^{\mathrm{stat}})$ one has
$$T_a ^{\text{stat}} \leq T_a ^{\text{max}}\quad \text{and}\quad T_s ^{\text{stat}} \leq T_s ^{\text{max}}.$$
Moreover, every solution converges to an equilibrium point as $t \to +\infty$.
\end{Proposition}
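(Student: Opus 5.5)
The plan is to reduce the equilibrium problem to a single scalar equation in $T_s$, read off both the finiteness and the warmest equilibrium from it, and then obtain convergence from the planar cooperative structure of \eqref{2layer-CLMUV-EDO}.

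\textbf{Reduction to a scalar equation.} With $\beta_a=0$, the first equation at equilibrium reads
\begin{equation*}
\lambda T_a+2\varepsilon_a\sigma_B|T_a|^3T_a=\lambda T_s+\varepsilon_a\sigma_B|T_s|^3T_s .
\end{equation*}
The map $u\mapsto \lambda u+2\varepsilon_a\sigma_B|u|^3u$ is a strictly increasing bijection of $\mathbb R$. Hence this equation determines $T_a=\varphi(T_s)$, with $\varphi$ continuous, strictly increasing and $\varphi(0)=0$. Substituting into the second equation, equilibria correspond bijectively to the zeros of
\begin{equation*}
h(T_s):=-\lambda\bigl(T_s-\varphi(T_s)\bigr)-\sigma_B|T_s|^3T_s+\varepsilon_a\sigma_B|\varphi(T_s)|^3\varphi(T_s)+\bar q\,\beta_s(T_s).
\end{equation*}
By Proposition~\ref{Prop-II.1-chaos} (positivity), only $T_s\ge0$ matters. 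We have $h(0)=\bar q\beta_s(0)>0$. As $T_s\to+\infty$, $\varphi(T_s)\sim 2^{-1/4}T_s$, so the quartic part of $h$ behaves like $\sigma_B(\varepsilon_a/2-1)T_s^4$. This tends to $-\infty$ precisely because $\varepsilon_a<2$, while $\beta_s$ is bounded and the linear terms are of lower order. Therefore the zero set $Z$ of $h$ on $[0,\infty)$ is nonempty and compact.

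\textbf{Warmest equilibrium.} Set $T_s^{\max}:=\max Z$ and $T_a^{\max}:=\varphi(T_s^{\max})$. For any other equilibrium, $T_s^{\rm stat}\in Z$ gives $T_s^{\rm stat}\le T_s^{\max}$, and monotonicity of $\varphi$ then gives $T_a^{\rm stat}\le T_a^{\max}$.

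\textbf{Finiteness.} This is the step I expect to be the main obstacle. Mere Lipschitz continuity of $\beta_s$ allows $h$ to vanish on an interval, so finiteness must come from the specific structure of the coalbedo used in \cite{CLMUV-EDO}. I would first try assuming $\beta_s$ is piecewise analytic (for instance piecewise affine or $\tanh$-type). On each piece $h$ is then real-analytic on $(0,\infty)$, because $\varphi$ is analytic there by the implicit function theorem. On each piece $h$ is also not identically zero, since it has $T_s^4$ growth or can be compared with its value at the endpoints. Analytic functions that are not identically zero have finitely many zeros in a compact set, which gives finitely many zeros in $Z$.

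\textbf{Convergence.} The Jacobian of the vector field has off-diagonal entries
\begin{equation*}
\frac{\lambda+4\varepsilon_a\sigma_B|T_s|^3}{\gamma_a}\ge0,\qquad \frac{\lambda+4\varepsilon_a\sigma_B|T_a|^3}{\gamma_s}\ge0,
\end{equation*}
so the system is cooperative. By Proposition~\ref{Prop-II.1-chaos} and Lemma~\ref{Lemma-IV.2-chaos}, every trajectory with nonnegative data is global and stays in a compact rectangle. I would then invoke the classical result for planar cooperative systems (Hirsch; see \cite{Smith}): each bounded trajectory is eventually componentwise monotone. The short reason is that the sign pattern of $(T_a',T_s')$ can change only finitely often, since $(T_a',T_s')$ solves the cooperative linearized system and the quadrants $\{\ge0\}$, $\{\le0\}$ are forward invariant for it. Each component is therefore eventually monotone and bounded, hence convergent, and the limit is an equilibrium by continuity of the vector field.
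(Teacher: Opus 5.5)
\textbf{Verdict: gap in the finiteness step.} Everything else in your proposal is correct: the reduction to $h(T_s)=0$ via $T_a=\varphi(T_s)$, the compactness of $Z$, the warmest equilibrium $(\varphi(\max Z),\max Z)$, and the convergence argument through Hirsch's theorem for planar cooperative systems. The paper does not prove this proposition; it imports it from \cite{CLMUV-EDO}. Its Remark~\ref{rmk:warmest-coldest} obtains the warmest equilibrium by exactly your monotonicity argument, but it takes a maximum over a finite set, whereas your $\max Z$ needs no finiteness.

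\textbf{Finiteness is false under the stated hypotheses.} Eliminating $|\varphi|^3\varphi$ with the first equilibrium equation gives, for all $T_s\in\mathbb{R}$,
\[
h(T_s)=\bar q\,\beta_s(T_s)-g(T_s),\qquad g(T_s):=\frac{\lambda}{2}\bigl(T_s-\varphi(T_s)\bigr)+\sigma_B\Bigl(1-\frac{\varepsilon_a}{2}\Bigr)|T_s|^3T_s .
\]
For $T_s>0$ one checks $2^{-1/4}T_s\le\varphi(T_s)\le T_s$. Hence $0<\varphi'<1$, so $g$ is $C^1$, positive and increasing on $(0,\infty)$. Now set $\beta_s:=g/\bar q$ on some $[c,d]\subset(0,\infty)$ and constant outside. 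This $\beta_s$ is positive, bounded, globally Lipschitz and even nondecreasing, so it meets Hyp.~\ref{HYP2} and the monotonicity in Hyp.~\ref{HYP1}. Yet every $T_s\in[c,d]$ gives an equilibrium $(\varphi(T_s),T_s)$, which is a continuum. You were right to suspect this: finiteness needs structure on $\beta_s$ that the statement as quoted does not list.

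\textbf{Your proposed fix is not sufficient as formulated.} Piecewise analyticity does not work, because $g$ is itself real-analytic on $(0,\infty)$. An analytic piece of $\beta_s$ on a bounded interval can therefore make $h\equiv0$ there, and ``comparing with endpoint values'' rules nothing out. Your growth argument does work under the following stronger assumption: $\beta_s$ has finitely many pieces, and each is the restriction of a function $\tilde\beta$ analytic on $(0,\infty)$ with $\tilde\beta(T)=o(T^4)$ as $T\to\infty$. Affine and $\tanh$-type pieces qualify. On such a piece $h$ coincides with $\bar q\tilde\beta-g$. This function is analytic on $(0,\infty)$ and tends to $-\infty$, so it has finitely many zeros in the compact set $Z\subset(0,\infty)$.

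\textbf{Two smaller points.}
\begin{itemize}
\item Restricting to $T_s\ge0$ does not follow from Proposition~\ref{Prop-II.1-chaos}, which concerns trajectories with nonnegative data, not arbitrary equilibria. The identity above settles it: for $T_s\le0$ one has $\varphi(T_s)\ge T_s$, so $g(T_s)\le0$ and $h(T_s)\ge\bar q\beta_s(T_s)>0$.
\item Since $\beta_s$ is only Lipschitz, $(T_a',T_s')$ solves the cooperative linear system only in the Carath\'eodory sense. Write $\frac{d}{dt}\beta_s(T_s(t))=c(t)T_s'(t)$ with $|c|$ bounded by the Lipschitz constant. Alternatively, compare the trajectory with its time shift via Kamke's theorem.
\end{itemize}

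Note that the later sections only use the warmest equilibrium and the convergence. Your argument delivers both without finiteness.
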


\begin{Remark}\label{rmk:warmest-coldest}
    Note that in \cite{CLMUV-EDO} we did not introduce the concept of \lq\lq warmest" equilibrium point. Nevertheless, we proved in \cite[Proposition 2.3]{CLMUV-EDO} that there is a finite number of equilibrium points. If we consider the set of equilibrium points $(T^+_a,T^+_s)$ with highest $T^+_s$ among all equibria, the equation $T'_a(t)=0$ implies
    \begin{equation*}
        \lambda T_a^++ 2\eps_a\sigma_B (T^+_a)^4=\lambda T_s^++ \eps_a\sigma_B (T^+_s)^4.
    \end{equation*}
    Since the maps
    \begin{equation*}
        x\mapsto \lambda x+2\eps_a\sigma_B x^3\qquad x\mapsto \lambda x+\eps_a\sigma_B x^3
    \end{equation*}
    are increasing, we deduce that there exists a unique pair maximizing both $T^+_a$ and $T^+_s$. We call such a point the warmest equilibrium.

    By the same argument the existence of the \lq\lq coldest" equilibrium follows.
\end{Remark}
We emphasize that the preliminary analysis of the ODE system \eqref{2layer-CLMUV-EDO} carried out in \cite{CLMUV-EDO} plays a crucial role in the study of the 2-layer PDE model \eqref{2layer-pbm}-\eqref{eq:bound and in cond}. In both regimes $\varepsilon_a \in (0,2)$ and $\varepsilon_a > 2$, we will systematically exploit the results of \cite{CLMUV-EDO} to derive analogous properties for the PDE model by means of the comparison principle stated in Theorem~\ref{lem-comp-2layer-sub-sup}.

We now show how to produce super- and sub-solution to \eqref{2layer-pbm} as solutions of the ODE system \eqref{2layer-CLMUV-EDO}. Recalling that $q \in L^\infty (I)$ and $r \in L^\infty (0,+\infty)$, we set
$$\begin{array}{l}
\displaystyle{0} \leq q_{\text{min}} := \inf _{x\in I} \ q (x) \leq   \sup _{x\in I}  \ q (x) =: q_{\text{max}} < +\infty,  \\
\displaystyle{0} < r_{\text{min}} := \inf _{t \in  (0, +\infty)} \ r (t)  \leq   \sup _{t \in (0, +\infty)} \ r (t) =: r_{\text{max}} < +\infty. 
\end{array}$$
We define the constants
\begin{equation}\label{defqminmax}
 \bar q_{\rm min} := r_{\text{min}} q_{\text{min}} \geq 0 \qquad \text{ and } \qquad  \bar q_{\rm max} := r_{\text{max}} q_{\text{max}}  \geq 0 . 
 \end{equation}
For nonnegative initial conditions $\bTi \in \bDA_+$ we define (recall that $\bTi$ is continuous by Proposition \ref{prop-reg-borne}) 
\begin{equation}\label{defCIminmax}
0  \leq \bT ^{\text{min}, 0} := \inf _{x\in I} \bTi (x) \leq \sup _{x\in I} \bTi (x) =: \bT^{\text{max}, 0}   < +\infty.
\end{equation}
Let us consider the two ODE systems
\begin{equation}
\label{2layer-pbm-qmin}
\begin{cases}
\gamma _a  T_a '(t) = -\lambda (T_a - T_s) + \varepsilon _a \sigma _B \vert T_s \vert ^3 T_s  - 2 \varepsilon _a \sigma _B \vert T_a \vert ^3 T_a  
+\bar q_{\rm min} \beta _a (T_a)  ,  \\
\gamma _s T' _s (t) = -\lambda (T_s - T_a) - \sigma _B \vert T_s \vert ^3 T_s  + \varepsilon _a \sigma _B \vert T_a \vert ^3 T_a + \bar q_{\rm min}  \beta _s (T_s)  , \\
T_a (0)= T_a ^{\text{min}, 0} ,\\
T_s (0)= T_s ^{\text{min}, 0} ,
\end{cases}
\end{equation}
and
\begin{equation}
\label{2layer-pbm-qmax}
\begin{cases}
\gamma _a  T_a '(t) = -\lambda (T_a - T_s) + \varepsilon _a \sigma _B \vert T_s \vert ^3 T_s  - 2 \varepsilon _a \sigma _B \vert T_a \vert ^3 T_a 
+ \bar q_{\rm max} \beta _a (T_a)  ,  \\
\gamma _s T' _s (t) = -\lambda (T_s - T_a) - \sigma _B \vert T_s \vert ^3 T_s  + \varepsilon _a \sigma _B \vert T_a \vert ^3 T_a + \bar q_{\rm max} \beta _s (T_s)  , \\
T_a (0)= T_a ^{\text{max}, 0} ,\\
T_s (0)= T_s ^{\text{max}, 0} .
\end{cases}
\end{equation}
By construction, the solution of \eqref{2layer-pbm-qmin} is a sub-solution of \eqref{2layer-pbm} whereas the solution of 
\eqref{2layer-pbm-qmax} is a super-solution. Furthermore, if $\bar q_{\rm min},\bar q_{\rm max}>0$ the solutions of \eqref{2layer-pbm-qmin} and \eqref{2layer-pbm-qmax} satisfy Proposition \ref{Prop-II.1-chaos}, Lemma \ref{Lemma-IV.2-chaos} and Proposition \ref{prop.II.6.b-chaos}.

\subsection{Positivity of solutions to the 2-layer EBM\vspace{.2cm}} \hfill

We show that nonnegative initial conditions generate positive solutions of the 2-layer Energy Balance Model \eqref{2layer-pbm}-\eqref{eq:bound and in cond}. This property is essential for the physical consistency of the model, since the unknowns $T_a$ and $T_s$ represent temperatures measured in Kelvin degrees.
\begin{Proposition}{\rm \bf (Nonnegativity/positivity of the solutions)}\hfill
\label{prop-nonneg}

Let $\varepsilon_a >0$ and let $\bTi \in \bV_+$. Denote by $\bT$ the mild solution of \eqref{2layers-abstract-form}, defined on $ [0, \TT ^\star (\bTi)) $. Assume that $\beta_a \geq 0$, $\beta_s >0$. Then, the following statements hold.
\begin{itemize}
\item[(i)] Under Hyp. \ref{HYP1}, one has
 \begin{equation}
\label{eq-nonneg}
 \bT (t,x)  \geq \pmb{0}\quad
\text{ for all } (t,x) \in [0, \TT ^\star (\bTi))\times I.
\end{equation}

\item[(ii)] If, in addition, $q_{\text{min}} > 0$, then
 \begin{equation}
\label{eq-posit}
 \quad  \bT (t,x) > \pmb{0}\quad
\text{ for all } (t,x) \in (0, \TT ^\star (\bTi))\times I .
\end{equation}

\item[(iii)] If $q \geq 0$ in $I$ and $q >0$ in a subinterval $(a,b) \subset \subset I$, then \eqref{eq-posit} holds.
\end{itemize}
\end{Proposition}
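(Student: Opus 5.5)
We would first establish (i) for regular data $\bTi\in\bDA_+$ and then pass to general data in $\bV_+$ by the regularization used in Step 2 of the proof of Theorem \ref{lem-comp-2layer}. For $\bTi\in\bDA_+$, take $\bT^-\equiv \pmb 0$ as a candidate sub-solution. It is smooth and constant in $x$, so the left-hand sides of \eqref{2layer-pbm-comp-tilde-sup} vanish. The right-hand sides reduce to $r(t)q(x)\beta_a(0)\ge0$ and $r(t)q(x)\beta_s(0)\ge0$, because $r>0$, $q\ge0$, $\beta_a\ge0$ and $\beta_s>0$. Hence the reversed inequalities hold, and $\pmb 0$ is a sub-solution on $[0,+\infty)$ with $\bT^-(0)=\pmb 0\le\bTi$. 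Theorem \ref{lem-comp-2layer-sub-sup} then gives $\bT\ge\pmb 0$ on $[0,\TT^\star(\bTi))\times I$. An equivalent route is to note that $\pmb 0$ is the solution of the ODE \eqref{2layer-pbm-qmin} when $\bar q_{\rm min}=0$ and $\beta_s(0)$ is absent, but the direct check is simpler.

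For $\bTi\in\bV_+$, set $\bT^{0,n}:=e^{\frac1n\bA}\bTi\in\bDA$. These are nonnegative, which follows from Lemma \ref{lem-ppmaxw} applied to the uncoupled heat equations with $h=0$ and $f=0$. They converge to $\bTi$ in $\bV$. Fix $\tau<\TT^\star(\bTi)$. By Proposition \ref{prop-dep-continue-CI}, for $n$ large the corresponding solutions $\bT^{(n)}$ exist on $[0,\tau]$, are nonnegative by the regular case, and converge to $\bT$ in $C([0,\tau];\bV)$. Convergence in $\bV$ gives a.e. convergence along a subsequence, and for $t>0$ continuity of $\bT(t)\in\bDA\subset C(\bar I)$ upgrades this to pointwise nonnegativity. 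This proves (i).

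For (ii) and (iii) we would use the strong maximum principle, Proposition \ref{prop-ppmax-system-deg}. Fix $0<t_0<\TT<\TT^\star(\bTi)$. By Proposition \ref{prop: loc reg} the solution is $C^{1,2}$ in the interior. Continuity up to $x=\pm1$ for $t>0$ comes from $\bT(t)\in\bDA\subset C(\bar I)$; if needed we start at a small time $\varepsilon$ to get continuity on $\overline{Q}$.

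Set $u:=-T_a$ and $v:=-T_s$, which are $\le0$ by (i). Write the system as linear, as in the proof of Theorem \ref{lem-comp-2layer}: $F_a(t,x,T_a,T_s)=F_a(t,x,0,0)+(m_a+m_a^*)T_a+m_sT_s$, with bounded coefficients on $[\varepsilon,\TT]$ and $m_s,n_a\ge0$. The source terms $F_a(t,x,0,0)=r q\beta_a(0)/\gamma_a\ge0$ and $F_s(t,x,0,0)=rq\beta_s(0)/\gamma_s\ge0$ are nonnegative. Hence $P_1u\le au+bv$ and $P_2v\le cu+dv$ with $b,c\ge0$.

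Suppose $T_s(t_1,x_1)=0$ for some interior point. Proposition \ref{prop-ppmax-system-deg} gives $T_s\equiv0$ on $(\varepsilon,t_1]\times I$. Substituting into the second equation yields $0=\lambda T_a+\varepsilon_a\sigma_BT_a^4+r q\beta_s(0)$. Every term on the right is $\ge0$, so $rq\beta_s(0)\equiv0$. This contradicts $\beta_s(0)>0$ together with $r>0$ and $q>0$ on $(a,b)$, which holds in both (ii) and (iii). So $T_s>0$ everywhere for $t>0$.

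Suppose instead $T_a(t_1,x_1)=0$. Then $T_a\equiv0$ on $(\varepsilon,t_1]\times I$. The first equation gives $0=\lambda T_s+\varepsilon_a\sigma_BT_s^4+rq\beta_a(0)$, and since $T_s>0$ the right-hand side is $\ge\varepsilon_a\sigma_BT_s^4>0$, a contradiction. Letting $\varepsilon\to0$ covers all $t>0$.

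The main obstacle we expect is justifying the regularity hypothesis $C(\overline{Q_\TT})$ needed in Proposition \ref{prop-ppmax-system-deg} near the degenerate endpoints and near $t=0$ for $\bV$ data. The way around it is to apply that proposition on $[\varepsilon,\TT]$, using $\bT\in C([\varepsilon,\TT];\bDA)$ and the embedding of Proposition \ref{prop-reg-borne}.
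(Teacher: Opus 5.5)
Your argument is correct. Parts (i) and (iii) follow the paper's proof:
\begin{itemize}
\item for (i), the constant sub-solution $\pmb 0$ combined with Theorem \ref{lem-comp-2layer-sub-sup};
\item for (iii), the strong maximum principle (Proposition \ref{prop-ppmax-system-deg}) applied to $(-T_a,-T_s)$, followed by reading off a contradiction from the equations. You rule out zeros of $T_s$ first and then of $T_a$; the paper goes the other way, and both orders work.
\end{itemize}

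The genuine difference is in (ii). The paper does not use the strong maximum principle there. Instead it compares $\bT$ with the solution $\bT^-$ of \eqref{2layer-CLMUV-EDO} with $\bar q=\bar q_{\rm min}>0$ and zero initial datum. This is a spatially homogeneous sub-solution, positive for $t>0$ by Proposition \ref{Prop-II.1-chaos}. That route gives the quantitative bound $\bT(t,x)\ge\bT^-(t)>\pmb 0$, uniform in $x$ and independent of $\bTi$. It also needs neither interior $C^{1,2}$ regularity nor the strong maximum principle.

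Your route absorbs (ii) into (iii), since $q_{\min}>0$ gives $q>0$ on every subinterval. It yields only pointwise positivity. On the other hand, it uses no input from the ODE analysis and works for every $\varepsilon_a>0$, whereas Proposition \ref{Prop-II.1-chaos} is stated only for $\varepsilon_a\in(0,2)$.

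You also make explicit a step the paper passes over. Theorem \ref{lem-comp-2layer-sub-sup} is formulated for data in $\bDA_+$. Your regularization $e^{\frac1n\bA}\bTi$, together with Proposition \ref{prop-dep-continue-CI}, is what justifies applying it to data in $\bV_+$. Finally, running the strong maximum principle on $(\varepsilon,\TT)$, where $\bT\in C([\varepsilon,\TT];\bDA)$, and then letting $\varepsilon\to0$ cleanly meets the continuity-on-the-closed-cylinder hypothesis of Proposition \ref{prop-ppmax-system-deg}.
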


\begin{proof}
Let $\bTi\in \bV_+$ and let $\bT$ denote the corresponding mild solution of \eqref{2layer-pbm}, \eqref{eq:bound and in cond} defined on $ [0, \TT ^\star (\bTi)) $.
\begin{itemize}
\item[(i)] The constant function $(0,0)$ is a sub-solution of \eqref{2layer-pbm}. Therefore, by the comparison principle stated in Theorem \ref{lem-comp-2layer-sub-sup}, we obtain
$$\bT(t,x)  \geq \pmb{0},\qquad\forall\,(t,x) \in [0, \TT ^\star (\bTi))\times I.$$
    
\item[(ii)] Let $\bT^-$ be the solution of the ODE system \eqref{2layer-CLMUV-EDO} with initial condition $(0,0)$ and constant incoming flux $\bar q= \bar q_{min}:=  r_{min}q_{min} >0$, namely
\begin{equation*}
\begin{cases}
\gamma _a  T_a '(t) = -\lambda (T_a - T_s) + \varepsilon _a \sigma _B \vert T_s \vert ^3 T_s  - 2 \varepsilon _a \sigma _B \vert T_a \vert ^3 T_a  
+  \bar q_{\rm min}  \beta _a (T_a)  ,  \vspace{.15cm}\\
\gamma _s T' _s (t) = -\lambda (T_s - T_a) - \sigma _B \vert T_s \vert ^3 T_s  + \varepsilon _a \sigma _B \vert T_a \vert ^3 T_a +  \bar q_{\rm min} \beta _s (T_s)  ,\vspace{.15cm} \\
T_a (0)= 0 ,\quad T_s (0)= 0.
\end{cases}
\end{equation*}
By Proposition \ref{Prop-II.1-chaos}, the solution $\bT^-$ is globally defined and satisfies 
$$\bT^-(t)>\pmb{0},\qquad\forall\, t >0,$$ 
since $\bar q_{min} >0$ and $\beta_s > 0$. Moreover, $\bT^-$ is  a sub-solution of \eqref{2layer-pbm}. Applying again Theorem \ref{lem-comp-2layer-sub-sup}, we deduce
$$\bT(t,x)\geq\bT^-(t)>\pmb{0},\qquad \forall\, (t,x)\in (0, \TT ^\star (\bTi))\times I.$$
\item[(iii)] From item (i), we already know that $\bT(t,x)  \geq 0$ for all $(t,x) \in [0, \TT ^\star (\bTi))\times I $. Assume by contradiction that there exists $(t_a,x_a) \in (0, \TT ^\star (\bTi)) \times I$ such that 
$$T_a (t_a,x_a) =0.$$
Define 
$$u_1:= -T_a,\qquad u_2:= -T_s$$
and introduce the operators
$$ P_1 u := \gamma_a \left[ \frac{\partial u}{\partial t} - \kappa_a \frac{\partial}{\partial x} \left( (1 - x^2) \frac{\partial u}{\partial x} \right) \right],\,\, P_2 u := \gamma_s \left[ \frac{\partial u}{\partial t} - \kappa_s \frac{\partial}{\partial x} \left( (1 - x^2) \frac{\partial u}{\partial x} \right) \right].$$
Then, $(u_1,\,u_2)$ satisfies
\begin{equation}
\label{eq:u1-u2-ineq}
\left\{
\begin{aligned}
P_1 u_1 &= \lambda(T_a - T_s) - \varepsilon_a \sigma_B T_s^4 + 2\varepsilon_a \sigma_B T_a^4 - R_a(x,T_a) , 
,\vspace{.15cm}\\
P_2 u_2 &= \lambda(T_s - T_a) + \sigma_B T_s^4 - \varepsilon_a \sigma_B T_a^4 - R_s(x,T_s),
\end{aligned}
\right.
\end{equation}
that can be recast as
\begin{equation*}
\left\{
\begin{aligned}
P_1 u_1 &= -(\lambda + 2\varepsilon_a \sigma_B T_a^3) u_1 + (\lambda  + \varepsilon_a \sigma_B T_s^3) u_2 - R_a(x,T_a)
, \vspace{.15cm}\\
P_2 u_2 &= (\lambda + \varepsilon_a \sigma_B T_a^3) u_1 - (\lambda  + \sigma_B T_s^3) u_2
- R_s(x,T_s).
\end{aligned}
\right.
\end{equation*}
Since $R_a,\,R_s\geq0$, we infer that
\begin{equation}\label{eq:u1 u2}
\left\{
\begin{aligned}
P_1 u_1 &\leq -(\lambda + 2\varepsilon_a \sigma_B T_a^3) u_1 + (\lambda  + \varepsilon_a \sigma_B T_s^3) u_2  , \vspace{.15cm}\\
P_2 u_2 &\leq (\lambda + \varepsilon_a \sigma_B T_a^3) u_1 - (\lambda  + \sigma_B T_s^3) u_2
.
\end{aligned}
\right.
\end{equation}
Fix $t_a\leq\TT < \TT ^\star (\bTi)$. Since $\bTi \in \bV$, standard regularity for mild solutions (see Proposition \ref{thm-further-reg}) yields 
$$\bT \in C((0, \TT); \bDA),$$ 
and therefore
$$T_a, T_s \in C([\varepsilon,\TT]\times\bar I) \quad \text{for every } 0<\varepsilon<\TT .$$
Moreover, the local regularity result for classical solutions, Proposition \ref{prop: loc reg}, implies that there exists $\theta\in(0,1)$ such that
\begin{equation*}
    T_a,T_s\in C^{1+\frac{\theta}{2},2+\theta}({[\eps,\TT]\times I})\subset C^{1,2}({[\eps,\TT]\times I}).
\end{equation*}
Hence, Proposition \ref{prop-ppmax-system-deg} applies. Let $t_1$ be the first time such that $u_1(t_1,x_1)=0$, for some $x_1\in \bar{I}$. The existence of such point follows from the assumption $T_a(t_a,x_a)=-u_1(t_a,x_a)=0$. By the strong maximum principle for linear parabolic systems, we conclude that 
$$u_1\equiv0 \quad \text{in } (0,t_1]\times I.$$
From the first equation in \eqref{eq:u1-u2-ineq}, this implies that $u_2 \equiv 0$ in $(0,t_1]\times I$. Using the equation satisfied by $u_2$, we deduce that $r(t)q(x) \beta_s (-u_2)\equiv0$ in $(0,t_1]\times I$, which forces $q \equiv 0$ in $I$, in contradiction with the assumption on $q$. 

Therefore, $T_a(t,x) > 0$ for all $(t,x)\in (0,\TT^\star(\bT^0))\times I$. The same argument applies to $T_s$, completing the proof.
 \end{itemize} 
\end{proof}

\subsection{Proof of Theorem \ref{prop-bounds}: Boundedness and global existence for $\varepsilon _a \in (0,2)$\vspace{.2cm}} \hfill
\label{sec-globalite}

In this section, we prove that solutions of \eqref{2layer-pbm}, \eqref{eq:bound and in cond} are globally defined in time and uniformly bounded. We begin by establishing the existence of invariant rectangles for regular initial data. This result is a key ingredient in the proof of Theorem~\ref{prop-bounds} and provides the fundamental a priori bounds needed for the global well-posedness analysis. 

\begin{Proposition}[\bf Invariant rectangles for regular initial conditions]
\label{prop-rect-inv}
Assume that Hyp. \ref{HYP1} holds. Let  $\varepsilon_a \in (0,2)$, $\beta_a \geq 0$, $\beta_s >0$, and suppose that $q>0$. Let $\bTi \in \bDA_+$, and denote by $\bT$ the corresponding solution of \eqref{2layers-abstract-form}, defined on $[0, \TT^\star(\bT^0))$. 

Let $\mu \in (\varepsilon_a^{1/4},2^{1/4})$. There exists $M_a$ sufficiently large such that, for any $M\geq M_a$, if
$$ ( T_a ^{ 0} (x) ,  T_s ^{ 0} (x) ) \in [0, M] \times [0, \mu M], \qquad \forall\, x \in \bar{I},$$ 
then the solution remains in the same rectangle for all times, namely
$$ ( T_a (t,x) ,  T_s (t,x)) \in [0, M] \times [0, \mu M], \qquad \forall\, t \in [0, \TT^\star(\bT^0)), \  \forall\, x \in \bar{I}.$$
\end{Proposition}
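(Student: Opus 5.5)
The plan is to reduce the statement to the ODE invariant rectangle, Lemma \ref{Lemma-IV.2-chaos}, through the comparison principle for super- and sub-solutions, Theorem \ref{lem-comp-2layer-sub-sup}.

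The lower bound is immediate. By Proposition \ref{prop-nonneg}(i), nonnegative initial data give $\bT\geq \pmb 0$ on $[0,\TT^\star(\bTi))\times I$. For the upper bound we build a spatially homogeneous super-solution. Since $q\in L^\infty(I)$ and $q>0$, we have $\bar q_{\rm max}=r_{\max}q_{\max}>0$. Hence the ODE system \eqref{2layer-pbm-qmax} with forcing $\bar q=\bar q_{\rm max}$ satisfies Hyp.~\ref{HYP2}. We apply Lemma \ref{Lemma-IV.2-chaos} to this ODE with the given $\mu\in(\varepsilon_a^{1/4},2^{1/4})$ and obtain a threshold $M_a$, which depends only on the coefficients, $\bar q_{\rm max}$ and $\mu$. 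Now fix $M\geq M_a$. We do not start the super-solution from the data of $\bTi$. Instead we take the ODE solution $\bT^+(t)=(T_a^+(t),T_s^+(t))$ with initial value $(M,\mu M)$. By Lemma \ref{Lemma-IV.2-chaos} this solution is global and stays in $[0,M]\times[0,\mu M]$ for all $t\geq0$.

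Next we check that $\bT^+$ is a super-solution in the sense of \eqref{reg-sup-sol}--\eqref{2layer-pbm-comp-tilde-sup}. Since $\bT^+$ does not depend on $x$, the diffusion terms vanish. Constant functions belong to $D(A)$, because $\rho u_x=0\in H^1_0(I)$, and $\bT^+$ is $C^1$ in time, so the required regularity holds. For the inequalities we use $\beta_a\geq0$ and $\beta_s>0$, which give $r(t)q(x)\beta_a(T_a^+)\le \bar q_{\rm max}\beta_a(T_a^+)$ and similarly for $\beta_s$. Thus $\bT^+$ satisfies \eqref{2layer-pbm-comp-tilde-sup} on $[0,+\infty)$. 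By assumption $\bTi(x)\le (M,\mu M)=\bT^+(0)$ for every $x$. Theorem \ref{lem-comp-2layer-sub-sup}, which applies since $\bTi\in\bDA_+$, then gives $\bT(t,x)\le\bT^+(t)\le (M,\mu M)$ for all $t\in[0,\TT^\star(\bTi))$. We have used $\TT^+=+\infty$ here.

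The statement asks for all $x\in\bar I$, whereas the comparison gives the bound for $x\in I$. We close this gap with the continuity of strict solutions up to the boundary: $\bT(t)\in\bDA\subset C(\bar I)^2$ by Proposition \ref{prop-reg-borne}. The main point requiring care is the comparison step. Theorem \ref{lem-comp-2layer-sub-sup} is only sketched in the paper, so I would verify it by repeating Step 1 of the proof of Theorem \ref{lem-comp-2layer} with the difference $(\bT^+-\bT)$. That difference now solves the linearized cooperative system \eqref{2layer-pbm-comp-abs-diff} with nonnegative source terms. Both functions are bounded on $[0,\tau]\times\bar I$, so the coefficients are in $L^\infty$ and the coupling coefficients are nonnegative, and Proposition \ref{lem-ppmaxw-syst} then applies. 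The remaining requirement, that the ODE threshold $M_a$ is uniform in the initial data, is exactly what Lemma \ref{Lemma-IV.2-chaos} provides.
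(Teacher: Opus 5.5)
Your proposal is correct and follows the paper's argument: nonnegativity from Proposition~\ref{prop-nonneg}, an $x$-independent super-solution given by the ODE \eqref{2layer-pbm-qmax} with forcing $\bar q_{\rm max}$, invariance from Lemma~\ref{Lemma-IV.2-chaos}, and the comparison Theorem~\ref{lem-comp-2layer-sub-sup}. The only difference is that you start the ODE super-solution at the corner $(M,\mu M)$ instead of at $\bT^{\text{max},0}=\sup_x \bTi(x)$ as the paper does; this works equally well and avoids the paper's slip of writing $M_a$ where $M$ is meant.
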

\begin{proof}
By Proposition \ref{prop-nonneg}, the solution $\bT$ in nonnegative for all $t \in [0,\TT^\star(\bT^0))$. It therefore suffices to establish uniform upper bounds. 

Let $\bar q_{\text{max}}$ be defined as in \eqref{defqminmax}, and let
$ \bT^{\text{max}, 0} $ be the constant initial datum defined in \eqref{defCIminmax}.
Denote by $\bT^{\text{max}}(t)$ the corresponding solution of the ODE system \eqref{2layer-pbm-qmax}. By construction, $\bT^{\text{max}}$ is a super-solution of the PDE system \eqref{2layer-pbm}.

Since $\varepsilon _a \in (0,2)$, Proposition \ref{Prop-II.1-chaos} ensures that
 $\bT^{\text{max}}(t)$ is globally defined and bounded for all $t\geq0$. Moreover, because
$$( T_a ^{\text{max}, 0},T_s ^{\text{max}, 0}) \in 
 [0, M_a] \times [0, \mu M_a],$$
Lemma \ref{Lemma-IV.2-chaos} yields 
$$(T_a ^{\text{max}} (t) ,T_s ^{\text{max}} (t) ) \in 
 [0, M_a] \times [0, \mu M_a], \qquad \forall\, t \geq 0.$$

Finally, since $\bT^{\text{max}}$ is a super-solution of \eqref{2layer-pbm}, the comparison principle (Theorem \ref{lem-comp-2layer-sub-sup}) implies that
$$ T_a (t,x) \leq T_a ^{\text{max}} (t) \leq M_a 
\quad \text{ and } \quad  T_s (t,x)  \leq T_s ^{\text{max}} (t)\leq \mu M_a ,  $$
for all $t \in [0,\TT^\star(\bT^0))$ and all $x \in \bar{I}$. This concludes the proof.
\end{proof}

We finally prove the global well-posedness for mild solutions to \eqref{2layer-pbm}-\eqref{eq:bound and in cond}.
\begin{proof}[Proof of Theorem \ref{prop-bounds}]
We first consider the case of more regular initial conditions, namely $\bTi \in \bDA_+$, and denote by $\bT$ the corresponding solution of \eqref{2layer-pbm}-\eqref{eq:bound and in cond} defined on its maximal interval of existence $[0,\TT^\star (\bTi) )$. 

By Proposition \ref{prop-rect-inv}, there exists a constant $M>0$ such that
$$
\forall\, t \in [0, \TT^*(\bTi) ),\,\forall\,x\in I \qquad  0 \leq T_a (t,x)  \leq M,  \qquad  0 \leq T_s (t,x) \leq M.
$$
When $\bTi\in \bDA$ the solution is uniformly bounded in $L^\infty(I)$, which is a stronger property than \eqref{eq-bounds}.

We now derive energy estimates.

Multiplying the first equation of \eqref{2layer-pbm} by $\frac{\partial T_a}{\partial t}$ and integrating over $I$, we obtain
\begin{multline*}
\int_I\frac{\partial T_a}{\partial t} \left[\frac{\partial T_a}{\partial t} -\kappa_a \frac{\partial}{\partial x}\left((1-x^2)\frac{\partial T_a}{\partial x}\right)\right]dx
\\
= \int_I \frac{\partial T_a}{\partial t} \left[\frac{1}{\gamma_a}\left[-\lambda(T_a - T_s)+\varepsilon_a \sigma_B |T_s|^3T_s - 2\varepsilon_a \sigma_B |T_a|^3 T_a +  r(t) q(x) \beta_a(T_a)\right]\right]dx .
\end{multline*}
Integration by parts yields
\begin{multline*}
\int_I\left(\frac{\partial T_a }{\partial t}\right)^2dx
+ \frac{\kappa_a}{2} \frac{\partial}{\partial t} \int_I (1-x^2) \left( \frac{\partial T_a}{\partial x}\right)^2dx
\\
= -\frac{\lambda}{2\gamma_a} \frac{\partial}{\partial t} \int_I T_a ^2\,dx
+ \frac{1}{\gamma_a} \int_I \frac{\partial T_a}{\partial t} \left[\lambda T_s+\varepsilon_a\sigma_B|T_s|^3T_s\right.\\\left.-2\varepsilon_a\sigma_B|T_a|^3T_a+ r(t)q(x)  \beta_a(T_a) \right]dx,
\end{multline*}
and therefore
\begin{multline*}
\int_I\left(\frac{\partial T_a}{\partial t}\right)^2dx
+ \frac{\lambda}{2\gamma_a}  \frac{\partial}{\partial t} \int_I T_a ^2\,dx
+ \frac{\kappa_a}{2} \frac{\partial}{\partial t} \int_I (1-x^2) \left(\frac{\partial T_a}{\partial x}\right)^2dx
\\ 
= \frac{1}{\gamma_a} \int_I \frac{\partial T_a}{\partial t} \left[\lambda T_s +\varepsilon_a \sigma_B |T_s|^3T_s-2\varepsilon_a\sigma_B|T_a|^3T_a+ r(t)q(x)  \beta_a(T_a)\right]dx
\\ \leq
\frac{1}{2}\int_I\left(\frac{\partial T_a}{\partial t}\right)^2dx
+
\frac{1}{2\gamma_a ^2 } \int_I\left[\lambda T_s +\varepsilon_a \sigma_B |T_s|^3T_s\right.\\\left. - 2\varepsilon_a\sigma_B|T_a|^3T_a +r(t) q(x)  \beta_a(T_a)\right] ^2dx.
\end{multline*}
Thus, we obtain
\begin{multline}
\label{wp-borneV-Ta}
\frac{d}{d t} \left( \frac{\lambda}{2\gamma_a}  \int_I T_a ^2\,dx
+ \frac{\kappa_a}{2} \int_I (1-x^2) \left(\frac{\partial T_a}{\partial x}\right)^2dx \right)
\\ 
\leq \frac{1}{2\gamma_a ^2 } \int_I\left[\lambda T_s +\varepsilon_a \sigma_B |T_s|^3T_s - 2\varepsilon_a\sigma_B|T_a|^3T_a + r(t)q(x)  \beta_a(T_a)\right] ^2dx \leq M'_1,
\end{multline}
for some constant $M_1'>0$, where we have used the uniform boundedness of $\bT$. Proceeding analogously for the second equation, we obtain
\begin{multline}
\label{wp-borneV-Ts}
\frac{d}{d t} \left( \frac{\lambda}{2 \gamma_s}  \int_I T_s ^2\,dx
+ \frac{\kappa_s}{2} \int_I (1-x^2) \left(\frac{\partial T_s}{\partial x}\right)^2dx \right)
\\ 
\leq \frac{1}{2\gamma_s ^2 } \int_I\left[\lambda T_a - \sigma_B |T_s|^3T_s + \varepsilon_a\sigma_B |T_a|^3 T_a 
+r(t) q(x)  \beta_s(T_s) \right] ^2dx \leq M'_2
\end{multline}
with $M_2'>0$. Combining \eqref{wp-borneV-Ta} and \eqref{wp-borneV-Ts}, we deduce that $t \mapsto \norm{\bT(t)}_{\bV}$ grows at most linearly in time. Consequently, no blow-up in the $\bV$-norm can occur in finite time, and the solution can be extended globally, i.e.,  $\TT^\star(\bT^0)=+\infty$.

Finally, let $\bT^0 \in \bV_+$. By standard parabolic regularization, the associated mild solution satisfies $\bT(t)\in \bDA_+$ for all $t\in(0,\TT^\star(\bT^0))$. Hence, the previous argument applies for any positive time, yielding uniform bounds on any interval of the form $[t,\TT^*(\bTi))$, with $t>0$. So, global existence also holds in this case.
This concludes the proof.
\end{proof}


\subsection{Proof of Theorem \ref{prop-blowup}: Blow-up in finite time for $\varepsilon _a >2$\vspace{.2cm}} \hfill

This section is devoted to showing that finite-time blow-up may occur for solutions of \eqref{2layer-pbm}-\eqref{eq:bound and in cond} when $\varepsilon_a>2$.
\begin{proof}[Proof of Theorem \ref{prop-blowup}]\hfill

\begin{itemize}
\item[(a)] Let $\bar q_{\text{\rm min}}$ be defined as in \eqref{defqminmax}, and denote $\bar{\bT}$ the solution of the ODE system \eqref{2layer-CLMUV-EDO} associated with the constant incoming flux $\bar q=\bar q_{\text{\rm min}}$. Since $\varepsilon _a >2$ and $\bar q_{\text{\rm min}} >0$, Proposition \ref{prop.II.6.b-chaos},(a) ensures the existence of nonnegative initial conditions $\bar{\bT}^0$ such that $\bar{\bT}$ blows up in finite time. Denote by $\bar \TT^\star<+\infty$ its maximal time of existence. 

We now take $\bT^0=\bar{\bT}^0$ and denote by $\bT$ the corresponding solution of \eqref{2layer-pbm}-\eqref{eq:bound and in cond} defined on $[0, \TT^\star(\bTi))$. Since  $\bar \bT$ is a sub-solution of \eqref{2layer-pbm}, Theorem~\ref{lem-comp-2layer-sub-sup} forces $\TT^\star(\bTi)) \leq  \overline\TT^\star$, thus yielding the conclusion. 
\item[(b)] Let $\bTi \in \bDA_+$, and define the constant initial datum $\bT^{\text{\rm min}, 0}$ as in \eqref{defCIminmax}. Let $\bar q_{\text{\rm min}}$ be given by \eqref{defqminmax} and denote by $\bT^{\rm min}$ the corresponding solution of the ODE system \eqref{2layer-CLMUV-EDO}, defined on $[0,\bar\TT^\star)$. Since $\varepsilon _a >2$ and $\bar q_{\text{\rm min}} >0$, Proposition \ref{prop.II.6.b-chaos},(b) implies that $\bT^{\text{\rm min}}$ blows up in finite time, that is, $ \bar \TT^\star<+\infty$.

Now, observe that $\bT^{\rm min}$ is a sub-solution of \eqref{2layer-pbm} and apply again Theorem~\ref{lem-comp-2layer-sub-sup} to conclude that $\TT^*(\bTi)<+\infty.$
\end{itemize}
\end{proof}
\section{Long time behavior}\label{sec-attractor-statement}
\subsection{Equilibria\vspace{.2cm}} \hfill

Throughout this section we assume that Hyp. \ref{HYP1} holds and that
\begin{equation}
    \label{eq:hyp-long time}
    \eps_a\in(0,2),\qquad\beta_a=0,\qquad\beta_s>0,\qquad r(t)\equiv r_0,  
\end{equation}  
so that $\bG(t,\cdot)=\bG(0,\cdot)$, and we denote such function $\bG_0(\cdot)$. We recall that an equilibrium (or stationary solution) of
\begin{equation}\label{eq:abstract system r0}
    \bT'(t)=\bA\bT(t)+\bG_0(\bT(t)),\qquad t\geq0
\end{equation}
is an element $\pmb E\in \bDA$ such that 
\begin{equation*}
    \bA\pmb{E}+\bG_0(\pmb{E})=\pmb0.
\end{equation*}
We now prove the existence of equilibria for \eqref{eq:abstract system r0}.
\begin{Lemma}\label{lem:ex equilibrium}
    Let $\overline{\pmb{E}}^{\max}$ be the warmest equilibrium of \eqref{2layer-CLMUV-EDO} with $\bar q=\bar q_{\max}$. Let $\bT^{q,\max}$ be the solution of \eqref{2layers-abstract-form} with initial condition $\overline{\pmb{E}}^{\max}$. Then, $\bT ^{ q, \max} (t,x)$ converges as $t\to+\infty$ to an equilibrium of \eqref{eq:abstract system r0}, $\bT ^{q, \text{max}}_\infty$, uniformly in $\bar I$.
\end{Lemma}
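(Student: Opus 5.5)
The plan is the classical monotone-iteration argument of Sattinger type: the constant warmest ODE equilibrium is a super-solution, so the PDE solution started from it decreases in time, and a monotone bounded trajectory must converge to a stationary state.

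\emph{Step 1: super-solution property.} The constant function $\overline{\pmb E}^{\max}$ belongs to $\bDA_+$, since constants lie in $D(A)$ with $A$ vanishing on them, and by Proposition \ref{Prop-II.1-chaos} both of its components are positive. Because $\bA\overline{\pmb E}^{\max}=\pmb 0$, $r\equiv r_0$ and $r_0 q(x)\le \bar q_{\max}$, while $\beta_s>0$ and $\beta_a=0$, we get
\[
\bA\overline{\pmb E}^{\max}+\bG_0(\overline{\pmb E}^{\max})\le \pmb 0
\]
componentwise. Hence $\overline{\pmb E}^{\max}$, viewed as a stationary function, is a super-solution of \eqref{2layer-pbm}.

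\emph{Step 2: monotonicity in time.} By Theorem \ref{prop-bounds} the solution $\bT^{q,\max}$ is global, and Theorem \ref{lem-comp-2layer-sub-sup} gives $\bT^{q,\max}(h)\le\overline{\pmb E}^{\max}$ for every $h>0$. The problem is autonomous, so $t\mapsto\bT^{q,\max}(t+h)$ is the solution with initial datum $\bT^{q,\max}(h)$. The comparison principle, Theorem \ref{lem-comp-2layer}, then yields $\bT^{q,\max}(t+h)\le\bT^{q,\max}(t)$ for all $t,h\ge0$, i.e.\ the solution is nonincreasing in $t$ at every $x$. It is also bounded below by $\pmb 0$ (Proposition \ref{prop-nonneg}) and above by $\overline{\pmb E}^{\max}$. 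Therefore a pointwise limit $\bT_\infty(x)$ exists.

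\emph{Step 3: compactness and identification of the limit.} This is the main step. The energy estimates in the proof of Theorem \ref{prop-bounds} use only $L^\infty$ bounds, and these are uniform here. I will turn them into a uniform-in-time bound on $\norm{\bT(t)}_\bV$ and then on $\norm{\bT(t)}_{\bDA}$ for $t\ge1$. For this I will use the smoothing of the analytic semigroup in the variation of constants formula on windows $[t-1,t]$, with $\bG_0(\bT)$ bounded in $L^\infty(\bH)$. Since the bound on the time derivative of the energy is not directly uniform, an alternative is to test with $\partial_t\bT$ and exploit the gradient-like structure via a Lyapunov functional. If that is not available, I will rely on the local regularity estimates, for which the constants are uniform on translated windows.

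Once $\{\bT(t)\}_{t\ge1}$ is bounded in $\bDA$, Proposition \ref{prop-reg-borne} together with the compact embedding of $D(A)$ into $C(\bar I)$ (the remark after that proposition) makes the trajectory relatively compact in $C(\bar I)^2$. Monotone convergence plus relative compactness then gives uniform convergence to $\bT_\infty$ by Dini-type reasoning, with every cluster point equal to the pointwise limit.

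To show $\bT_\infty$ is an equilibrium, I will consider the shifted solutions $\bT(\cdot+n)$ on $[0,1]$. These converge to the constant-in-time function $\bT_\infty$, and by continuous dependence (Proposition \ref{prop-dep-continue-CI}) the limit is itself a solution. A constant-in-time solution satisfies $\bA\bT_\infty+\bG_0(\bT_\infty)=\pmb 0$ with $\bT_\infty\in\bDA$, so it is an equilibrium of \eqref{eq:abstract system r0}.

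The obstacle I expect is the uniform-in-time $\bDA$ (or $\bV$) bound needed for compactness, since the energy inequality \eqref{wp-borneV-Ta} by itself only controls linear growth.
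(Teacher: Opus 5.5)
Your Steps 1 and 2 coincide with the paper's proof. There, the bound $\bT^{q,\max}(t)\le\overline{\pmb E}^{\max}$ comes from comparison with the solution for $q_{\max}$, which is constant by uniqueness; this is equivalent to your direct super-solution check. Monotonicity then follows from autonomy and Theorem~\ref{lem-comp-2layer}.

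The difference is in the last step. The paper only asserts that the convergence is uniform ``by Dini's Theorem'' and that the limit ``is a fixed point of the semi-flow''. This tacitly assumes two things it does not prove: that the pointwise limit is continuous on $\bar I$, including at $x=\pm1$ where $\bV$ gives no control, and that one can pass to the limit in the equation. Your Step 3 supplies both. A uniform-in-time $\bDA$ bound makes the trajectory relatively compact in $C(\bar I)^2$, which gives continuity of the limit and uniform convergence. Through Lemma~\ref{lem-compacte} it also gives $\bT(n)\to\bT_\infty$ in $\bV$. You should state this last point explicitly, since $\bV$ is the topology in which Proposition~\ref{prop-dep-continue-CI} lets you conclude $S(s)\bT_\infty=\bT_\infty$. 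Your route costs an extra regularity estimate, but it proves what the paper only claims.

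One correction concerns how you obtain that estimate. Feeding an $L^\infty(\bH)$ bound on $\bG_0(\bT)$ into the variation of constants formula on $[t-1,t]$ gives a uniform bound in $\bV$, and indeed in $D((\pmb I-\bA)^\theta)$ for every $\theta<1$. It does not give a bound in $\bDA$, because maximal regularity is not available for forcing that is only bounded in time. One more bootstrap closes the gap:
\begin{itemize}
\item Using $\norm{(e^{h\bA}-\pmb I)u}_{\bH}\le Ch^{1/2}\norm{u}_{\bV}$ and the variation of constants formula started at time $t$, $\bT$ is uniformly $C^{1/2}$ in time with values in $\bH$ on $[1,+\infty)$.
\item Since $\bT$ takes values in the box $[\pmb 0,\overline{\pmb E}^{\max}]$, the same holds for $s\mapsto\bG_0(\bT(s))$.
\item H\"older-continuous forcing does give a uniform $\bDA$ bound for $t\ge 2$.
\end{itemize}

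Your fallback options are weaker. Interior regularity gives no information at $x=\pm1$. A Lyapunov functional is not apparent, because the cross-coupling coefficients $\lambda+4\eps_a\sigma_B|T_s|^3$ and $\lambda+4\eps_a\sigma_B|T_a|^3$ are not symmetric, so the system is not of gradient type.
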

\begin{proof}
   Let $\bT ^{q_{\text{max}}, \text{max}}(t,x)$ denote the solution of \eqref{eq:abstract system r0}, with initial condition $\overline{\pmb E}^{\max}$ and $q=q_{\text{max}}$. By uniqueness, $\bT ^{q_{\text{max}}, \text{max}}(t,x)\equiv \overline{\pmb E}^{\max}$. Therefore, the comparison principle yields
$$
\forall\,t\geq0 \quad \bT ^{ q, \text{max}}(t,x)\leq \bT ^{q_{\text{max}}, \text{max}}(t,x)= \overline{\pmb E}^{\max}=\bT^{ q, \text{max}} (0,x).
$$
Fix $t\geq 0$ and consider the solutions of \eqref{eq:abstract system r0} with initial conditions $\bT ^{ q, \text{max}} (t,x)$ and $\bT^{ q, \text{max}} (0,x)$, respectively. Again by the comparison principle 
$$ \forall\, s \geq 0, \quad \bT ^{ q, \text{max}} (t+s,x) \leq \bT ^{q, \text{max}} (s,x).
$$
This shows that both components $t\mapsto T_a ^{q, \text{max}} (t,x)$ and $t\mapsto T_s ^{q, \text{max}} (t,x)$ are nonincreasing functions of time. Since they are also bounded below, there exists
$\bT^{q,\max}_\infty(x)$ such that 
$$\bT ^{ q, \text{max}} (t,x) \to \bT ^{q, \text{max}}_\infty(x),\quad \text{as } t\to+\infty,$$
component-wise, uniformly with respect to $x$ by Dini's Theorem. Such a limit is a fixed point of the semi-flow, and therefore a stationary solution
of \eqref{eq:abstract system r0}.
\end{proof}
We now investigate the existence of the so-called \emph{warmest} equilibrium. 
\begin{Theorem}
\label{thm-warmest}
The stationary solution $\bT ^{q, \text{max}}_\infty$ constructed in Lemma \ref{lem:ex equilibrium} is the \emph{warmest} equilibrium of \eqref{eq:abstract system r0}, that is, any other stationary solution $\bT^{q}$ of \eqref{eq:abstract system r0} satisfies
\begin{equation}\label{eq: warmest eq}
\forall\, x \in \bar I, \quad \bT^{q} (x)\leq \bT ^{q, \text{max}}_\infty (x). 
\end{equation}
\end{Theorem}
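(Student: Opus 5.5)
The plan is to squeeze an arbitrary equilibrium $\bT^q$ between itself and the monotone trajectory of Lemma~\ref{lem:ex equilibrium}, using spatially homogeneous ODE super-solutions built with $\bar q=\bar q_{\max}$. The core is to show first that $\bT^q\le \overline{\pmb E}^{\max}$, the warmest ODE equilibrium. Once this holds, the conclusion follows from Theorem~\ref{lem-comp-2layer} applied to two genuine solutions of \eqref{eq:abstract system r0}. The first is the constant-in-time solution $\bT(t)\equiv\bT^q$, which is global by uniqueness. The second is $\bT^{q,\max}(t)$, which starts at $\overline{\pmb E}^{\max}$ and is global by Theorem~\ref{prop-bounds}. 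Comparison gives $\bT^q\le \bT^{q,\max}(t,\cdot)$ for all $t\ge0$. Letting $t\to+\infty$ and using the uniform convergence in Lemma~\ref{lem:ex equilibrium} yields \eqref{eq: warmest eq} on $\bar I$.

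To prove $\bT^q\le\overline{\pmb E}^{\max}$, recall that $\bT^q\in\bDA\subset C(\bar I)^2$ by Proposition~\ref{prop-reg-borne}. Its componentwise maxima are therefore finite, and we set
$$\bar T^0_a:=\max\bigl(\max_{\bar I}T^q_a,0\bigr),\qquad \bar T^0_s:=\max\bigl(\max_{\bar I}T^q_s,0\bigr).$$
Let $\bY(t)$ be the solution of the ODE \eqref{2layer-pbm-qmax} starting from $(\bar T^0_a,\bar T^0_s)\ge 0$. Since $\varepsilon_a\in(0,2)$ and $\bar q_{\max}>0$, Proposition~\ref{Prop-II.1-chaos} gives a global bounded solution. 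Because $r\equiv r_0$ and $q\le q_{\max}$, this $\bY$ is a super-solution of \eqref{2layer-pbm}. Comparing the stationary solution $\bT^q$ with $\bY$ gives $\bT^q\le\bY(t)$ for all $t\ge0$. Proposition~\ref{prop.II.3-chaos} applies since $\beta_a=0$, so $\bY(t)$ converges to some equilibrium of the ODE with $\bar q=\bar q_{\max}$. By Remark~\ref{rmk:warmest-coldest}, every such equilibrium lies below $\overline{\pmb E}^{\max}$. Passing to the limit therefore gives $\bT^q\le\overline{\pmb E}^{\max}$ pointwise on $\bar I$.

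The main obstacle is the comparison step between $\bT^q$ and the ODE super-solution. Theorem~\ref{lem-comp-2layer-sub-sup} is stated for initial data in $\bDA_+$, whereas an arbitrary equilibrium need not be nonnegative a priori. I would handle this by observing that the proof of Theorem~\ref{lem-comp-2layer-sub-sup} (as for Theorem~\ref{lem-comp-2layer}) never uses positivity. It only uses that both functions are strict, hence bounded on $[0,\tau]\times\bar I$, so that the linearized coefficients are bounded and the cooperative weak maximum principle, Proposition~\ref{lem-ppmaxw-syst}, applies. Both $\bT^q$ (constant in time, valued in $\bDA$) and $\bY$ (constant in $x$) have this regularity, so the same argument works verbatim for data in $\bDA$. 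Alternatively, I could first show $\bT^q\ge0$ using the sub-solution $\pmb 0$ and then invoke the theorem as stated; I would only do this if the direct extension caused trouble.
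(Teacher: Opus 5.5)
Your proof is correct and follows the paper's argument: bound $\bT^q$ by the spatially homogeneous solution with $\bar q=\bar q_{\max}$ started from the componentwise maxima, and let $t\to+\infty$ to get $\bT^q\le\overline{\pmb E}^{\max}$. Then compare with $\bT^{q,\max}$ and conclude with Lemma~\ref{lem:ex equilibrium}. The paper inserts an intermediate PDE solution started from the constant $\pmb M$, which you skip by comparing directly with the ODE super-solution; you also truncate at $0$ so that the ODE results apply.

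One caveat concerns your fallback of first proving $\bT^q\ge\pmb 0$ with the sub-solution $\pmb 0$. That would be circular, because the comparison requires $\bT^q\ge\pmb 0$ as initial datum. Keep your main route, which extends the comparison principle to data in $\bDA$.
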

\begin{proof}
Let $\bT ^{q}(x)$ be any stationary solution of \eqref{eq:abstract system r0} and set 
$$ \pmb{M}=(\max_{\bar I}\, T^q_a,\max_{\bar I}\, T^q_s).
$$
Consider the solutions $\bT^{q,\pmb M}$ of \eqref{eq:abstract system r0} with initial conditions $\pmb M$. By comparison,
\begin{equation*}
\forall\,t\geq0,\,\forall\,x\in \bar I,\quad\bT^{q}(x)\leq \bT^{q,\pmb M}(t,x).
\end{equation*}
On the other hand, once again by comparison and uniqueness
\begin{equation*}
    \forall\,t\geq0,\,\forall\,x\in\bar I,\quad \bT^{q,\pmb M}(t,x)\leq \bT^{q_{\max},\pmb M}(t,x)=\overline \bT^{q_{\max},\pmb M}(t),
\end{equation*}
where $\overline\bT^{q_{\max},\pmb M}$ denotes the solution of the ODE problem \eqref{2layer-CLMUV-EDO} with $\overline q=r_0q_{\max}$ and initial condition $\pmb M$. Since $\overline\bT^{q_{\max},\pmb M}$ converges to some equilibrium $\overline{\pmb{E}}\leq \overline {\pmb{E}}^{\max}$ as $t\to+\infty$, we conclude that
$$ \forall \,x\in \bar I, \quad  \bT^{q}(x) \leq \overline {\pmb{E}}^{\max}.$$
Recalling that $\bT^{q,\max}$ is the solution of \eqref{2layers-abstract-form} with initial condition $\overline{\pmb{E}}^{\max}$, by comparison we finally have
\begin{equation*}
    \forall\,t\geq0,\,\forall\,x\in\bar I,\quad \bT^{q}(x)\leq \bT^{q,\max}(t,x).
\end{equation*}
Now, invoking Lemma \ref{eq: warmest eq} the conclusion follows. 
\end{proof}

\begin{Remark}
{\rm Both the Lemma and Theorem above have analogous statements for the \lq\lq coldest" equilibrium. It suffices to consider the coldest equilibrium of \eqref{2layer-CLMUV-EDO} in Lemma \ref{lem:ex equilibrium} and substitute $(\max_{\bar I}\, T^q_a,\max_{\bar I}\, T^q_s)$ with $(\min_{\bar I}\, T^q_a,\min_{\bar I}\, T^q_s)$ in the proof of Theorem \ref{thm-warmest}. We leave the details to the reader.} 
\end{Remark}


\subsection{Global attractor\vspace{.2cm}} \hfill

Hereafter we assume Hyp. \ref{HYP1} satisfied and \eqref{eq:hyp-long time} with $r_0=1$. Let us introduce the semi-flow on the metric space $\bV_+$
$$ S(t): \bV_+ \to \bV_+, \quad {\bT}^0  \mapsto S(t)\bTi={\bT} (t),\qquad t\geq0 $$
where ${\bT}$ denotes the solution of \eqref{eq:abstract system r0} with initial condition $\bTi$. We now recall the definition of a global attractor.
\begin{Definition}
    A set $\pmb{K}\subset \bV_+$ is a \emph{global attractor} for $\{S(t)\}_{t\geq0}$ in $\bV_+$ if
    \begin{itemize}
        \item[(i)] $S(t)\pmb K=\pmb K$, $\quad\forall\,t\geq0$,\vspace{.1cm}
        \item[(ii)] for all bounded subsets $\pmb{B}$ of $\bV_+$ it holds
        \begin{equation*}
            \sup_{\pmb u\in \bB}d_{\pmb K}(S(t)\pmb u) \to0\quad\text{as}\quad t\to+\infty,
        \end{equation*}
        with
        \begin{equation*}
            d_{\pmb K}(\pmb u):=\inf_{\pmb v\in\pmb K}\norm{\pmb u-\pmb v}_{\bV}\qquad \pmb (u\in\bV_+).
        \end{equation*}
    \end{itemize}
\end{Definition}

We are able to prove the following result on the asymptotic behavior of solutions to the 2-layer EBM \eqref{2layers-abstract-form}. 
\begin{Theorem}
\label{thm-attractor}
There exists a compact, connected global attractor in $\bV_+$.
\end{Theorem}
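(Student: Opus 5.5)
The plan is to apply the classical abstract theorem on global attractors for semigroups on complete metric spaces (Hale, Temam, Ladyzhenskaya). If $\{S(t)\}_{t\ge0}$ is a continuous semigroup on the complete metric space $\bV_+$ that has a bounded absorbing set and is asymptotically compact, then the $\omega$-limit set of the absorbing set is a compact global attractor. Since $\bV_+$ is connected (it is convex) and the absorbing set can be taken convex, hence connected, this attractor is also connected.

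\textbf{Step 1: semigroup and continuity.} $\bV_+$ is a closed convex subset of $\bV$, so it is a complete metric space. By Proposition \ref{prop-nonneg} and Theorem \ref{prop-bounds}, $S(t)$ maps $\bV_+$ into itself for all $t\ge0$. Uniqueness gives the semigroup property, and Proposition \ref{prop-dep-continue-CI} gives continuity of $\bTi\mapsto S(t)\bTi$ in $\bV$ for each fixed $t$. Continuity in $t$ holds because mild solutions lie in $C([0,\infty);\bV)$.

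\textbf{Step 2: a bounded absorbing set.} First I get a uniform $L^\infty$ absorbing bound. Fix $\mu\in(\varepsilon_a^{1/4},2^{1/4})$ and take $M_a$ from Lemma \ref{Lemma-IV.2-chaos} with $\bar q=q_{\max}$. Given a bounded set $\bB\subset\bV_+$, Proposition \ref{thm-further-reg} and the smoothing estimate of the analytic semigroup show that $S(1)\bB$ is bounded in $\bDA$, hence in $C(\bar I)$ by Proposition \ref{prop-reg-borne}. Each solution is then dominated by the ODE super-solution \eqref{2layer-pbm-qmax} whose initial datum is the constant sup-bound. For the ODE with $\varepsilon_a<2$, the rectangle of Lemma \ref{Lemma-IV.2-chaos} must also absorb larger initial data in a time depending only on that bound. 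This holds because the quartic dissipation dominates, exactly the structure behind Lemma \ref{Lemma-IV.2-chaos}, so I expect to extract it from \cite{CLMUV-EDO}. Hence after a time $t_{\bB}$ every orbit from $\bB$ satisfies $0\le \bT\le (M_a,\mu M_a)$.

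Next, inside this rectangle I upgrade the bound to a $\bV$ bound with a uniform-in-time energy estimate. I multiply the $T_a$ equation by $T_a$ and by $\partial_t T_a$, and likewise for $T_s$, using the $L^\infty$ bound on the nonlinearity. This yields a differential inequality
\[
\frac{d}{dt}\Phi + c\,\Phi\le C ,\qquad \Phi:=\|\bT\|_{\bH}^2+\kappa_a\|\sqrt\rho\,\partial_xT_a\|_{H}^2+\kappa_s\|\sqrt\rho\,\partial_xT_s\|_{H}^2 .
\]
To get the damping term $c\Phi$ I add $\omega\bT$ to both sides, since $\bA$ has the constant eigenfunction. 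An alternative is the uniform Gronwall lemma, using that $\int_t^{t+1}\|\bT\|_{\bV}^2\,ds$ is bounded, which follows from the $L^\infty$ bound and the $\bH$-energy identity. The conclusion is that some ball $\pmb B^+_{\bV}(R_0)$ is absorbing.

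\textbf{Step 3: asymptotic compactness.} This is the main obstacle, because $\bV$ is the energy space and compactness needs a bound in a strictly stronger norm. I will show that $S(1)$ maps the absorbing ball into a bounded subset of $\bDA$. From the variation of constants formula \eqref{form-var-cste}, the bound $\|\bG_0(\bT(s))\|_{\bH}\le C$ on the absorbing set, and the analytic estimate $\|\bA e^{t\bA}\|\le C/t$, I obtain a bound in the fractional space $D((-\bA)^{\alpha})$ for every $\alpha<1$. Any $\alpha>1/2$ already suffices, since that space embeds compactly in $[D(\bA),\bH]_{1/2}=\bV$ by the compact resolvent and Proposition \ref{resu-prop-A}(3). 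Thus $S(1)$ maps the absorbing set into a compact subset of $\bV_+$, and the semigroup is asymptotically compact.

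The abstract theorem then yields the compact attractor $\pmb K=\omega(\pmb B^+_{\bV}(R_0))$. Connectedness follows from the standard result that the attractor is connected whenever the phase space, or a connected absorbing set, is connected.
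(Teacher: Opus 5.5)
Your architecture matches the paper's. You obtain an $L^\infty$ absorbing bound by comparison with the spatially homogeneous ODE. An energy estimate (testing with $T$ and $\partial_t T$, as in Lemma~\ref{lem-decay-V2}) then gives a bounded absorbing set in $\bV_+$. Smoothing plus a compact embedding gives compactness, Temam's theorem gives the attractor, and convexity gives connectedness.

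The real difference is Step 3. The paper proves uniform regularization into $\bDA$ (Proposition~\ref{prop-borne-t_1-D(A)}) and then uses Lemma~\ref{lem-compacte}. This is costly: getting $\bG_0(\bT)\in L^2(0,\tau_R;\bV)$ requires the weighted bound $\int_0^{\tau_R}t^6\|S(t)\bTi\|_{L^\infty}^6\,dt\le C(R)$ of Lemma~\ref{lem-maj-T-infty} and the auxiliary function $t^3(\pmb I-\bA)^{1/2}\pmb Z$. You stop at $D((\pmb I-\bA)^{\alpha})$ with $\alpha\in(1/2,1)$. That needs only $\sup_s\|\bG_0(\bT(s))\|_{\bH}<\infty$, which follows from a $\bV$ bound and $V\hookrightarrow L^8$, and the space already embeds compactly in $\bV$. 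This is a genuine simplification. Apply it to orbit segments $[t-1,t]$ with $t-1$ beyond the absorption time, since the absorbing ball is not known to be positively invariant.

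One claim is not justified as written. In Step 2 you say $S(1)\bB$ is bounded in $\bDA$ by Proposition~\ref{thm-further-reg} and analytic smoothing. That proposition is qualitative, with no uniformity over $\bB$. Smoothing with a forcing bounded only in $\bH$ reaches $D((\pmb I-\bA)^\alpha)$ for $\alpha<1$, not $\bDA$; a uniform $\bDA$ bound is exactly the content of Proposition~\ref{prop-borne-t_1-D(A)}. Also, uniform $\bV$ bounds over $\bB$ are a priori available only on $[0,\tau_R]$, so use $S(\tau_R)$ rather than $S(1)$.

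You can close this gap with your own Step 3 tool, since only a uniform sup-norm bound is needed. Indeed $D((\pmb I-\bA)^\alpha)\hookrightarrow C(\bar I)^2$ for every $\alpha>1/2$: expand in Legendre polynomials, using $\|P_n\|_\infty=1$, $\|P_n\|_{L^2}^2=2/(2n+1)$ and $\sum_n n^{1-4\alpha}<\infty$. Combine this with the qualitative fact $\bT(\tau_R)\in\bDA_+$, which you need in order to invoke Theorem~\ref{lem-comp-2layer-sub-sup}; together they give the ODE domination. Alternatively, cite Proposition~\ref{prop-borne-t_1-D(A)} directly.

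Finally, the uniform ODE entrance time you hope to extract from \cite{CLMUV-EDO} is elementary. After enlarging $M_a$, on the boundary of $[0,M]\times[0,\mu M]$ with $M\ge M_a$ the field points inward at a rate of order $M^4$, because $\varepsilon_a<\mu^4<2$. Hence $W=\max(T_a,T_s/\mu)$ satisfies $D^+W\le -cW^4$ while $W\ge M_a$, and the entrance time is bounded independently of the datum. The paper instead uses convergence to equilibria (Proposition~\ref{prop.II.3-chaos}) to reach $\overline{\pmb E}^{\max}+\pmb 1$ after a time depending on the initial sup-bound; both routes work.
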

We will apply \cite[Theorem 1.1]{Temam}, for which several preliminary results are required. These include Lemma \ref{lem-decay-V} below, as well as the results on dissipation and regularization of solutions to \eqref{eq:abstract system r0} presented in Subsections \ref{subsec:dissip} and \ref{subsec-regular}. The proof of Theorem \ref{thm-attractor} is given in Subsection \ref{subsec: constr-glob-att}.

\begin{Lemma} 
\label{lem-decay-V}
There exists a constant $ \bM=(M_a,M_s)>\pmb 0$ such that, for all $\bTi \in \bDA_+$, there exists
$\tau_0=\tau_0(\norm \bTi _\bDA)$ such that
\begin{equation}
\label{borne-inf-sup-T}
\forall\, t\geq \tau_0,\, \forall\, x\in I, \quad 0\leq {\bT}(t,x)\leq \bM.
\end{equation}
\end{Lemma}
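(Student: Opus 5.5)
The strategy is to compare the PDE solution with a spatially homogeneous super-solution and to exploit the fact that the ODE dynamics come down from infinity uniformly. Nonnegativity, i.e. the lower bound in \eqref{borne-inf-sup-T}, follows at once from Proposition \ref{prop-nonneg}(i), since $\bTi\in\bDA_+$. For the upper bound, Proposition \ref{prop-reg-borne} gives $\bT^{\max,0}=\sup_I\bTi\le C\norm{\bTi}_{\bDA}\,\pmb 1=:R\,\pmb 1$. Let $\overline{\bT}^R(t)$ be the solution of the ODE system \eqref{2layer-pbm-qmax} with $\bar q=\bar q_{\max}=q_{\max}$ (recall $r_0=1$) and constant initial datum $(R,R)$. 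This is a super-solution lying above $\bTi$, so Theorem \ref{lem-comp-2layer-sub-sup} together with global existence (Theorem \ref{prop-bounds}) yields $\bT(t,x)\le\overline{\bT}^R(t)$ for all $t\ge0$ and $x\in I$. The claim is thus reduced to a statement about the ODE: there is a fixed $\bM$ such that, for every $R$, we have $\overline{\bT}^R(t)\le\bM$ for all $t\ge\tau_0(R)$.

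To get this ODE bound I will use the invariant rectangle of Lemma \ref{Lemma-IV.2-chaos}. Fix $\mu\in(\varepsilon_a^{1/4},2^{1/4})$ and the associated threshold $M_a$. Set $\bM:=(M_a+1,\mu(M_a+1))$; this is admissible because any $M\ge M_a$ gives an invariant rectangle. It remains to show that every ODE trajectory starting from $(R,R)$ enters $[0,M_a+1]\times[0,\mu(M_a+1)]$ within a time depending only on $R$. Invariance then keeps the trajectory there for all later times. By monotonicity of the ODE flow (the ODE is cooperative), larger $R$ gives a larger trajectory, so $\tau_0$ can be chosen nondecreasing in $R$ and therefore expressed as a function of $\norm{\bTi}_{\bDA}$.

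The main step is this entrance-time estimate. I will argue through the family of invariant rectangles $\mathcal R_M:=[0,M]\times[0,\mu M]$ for $M\ge M_a$. On the boundary of $\mathcal R_M$ the vector field points strictly inward, with a quantitative margin. On the face $T_a=M$ with $T_s\le\mu M$, we have
\[
\gamma_aT_a'\le \lambda(\mu-1)M+\varepsilon_a\sigma_B(\mu^4-2)M^4+\bar q\norm{\beta_a}_\infty\le -cM^4
\]
for $M$ large, since $\mu^4<2$. On the face $T_s=\mu M$ with $T_a\le M$, we similarly get $\gamma_sT_s'\le\sigma_B(\varepsilon_a-\mu^4)M^4+O(M)\le-cM^4$, since $\mu^4>\varepsilon_a$. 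After enlarging $M_a$ if necessary, this holds for all $M\ge M_a$. Now define $\Phi(T_a,T_s):=\max(T_a,T_s/\mu)$, so that $\mathcal R_M=\{\Phi\le M\}$ on the positive quadrant. The face estimates give $\frac{d}{dt}\Phi\le -c'\Phi^4$ (in the sense of Dini derivatives) whenever $\Phi\ge M_a$. Comparing with the scalar ODE $y'=-c'y^4$ gives $\Phi(t)^{-3}\ge \Phi(0)^{-3}+3c't\ge 3c't$, so $\Phi$ falls below $M_a+1$ by a time bounded independently of $R$, namely $\tau_0=(3c'(M_a+1)^3)^{-1}$. This is even stronger than required, since the bound holds uniformly in $R$.

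The delicate point I expect is justifying the Dini-derivative inequality at corner points, where both faces are active. At such points the maximum of the two one-sided derivatives is still at most $-c'\Phi^4$, so the inequality survives. If this turns out to be awkward to write rigorously, a fallback is to apply the comparison principle repeatedly: each rectangle $\mathcal R_M$ is entered in a time of order $M^{-3}$, and summing over a dyadic sequence of $M$ gives a finite total time.
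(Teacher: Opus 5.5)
Your proof is correct. It shares the paper's first step: reduce to a spatially homogeneous ODE super-solution via Theorem \ref{lem-comp-2layer-sub-sup}, with the initial height controlled by $\norm{\bTi}_{\bDA}$ through Proposition \ref{prop-reg-borne}. Where it departs from the paper is in how the ODE bound is obtained.

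\textbf{The paper's route.} It starts the ODE at $(M_0,\mu M_0)$ and then invokes Proposition \ref{prop.II.3-chaos}. Every ODE trajectory converges to one of finitely many equilibria, all lying below the warmest one $\overline{\pmb E}^{\max}$. Hence eventually $\overline{\bT}(t)\le\overline{\pmb E}^{\max}+\pmb 1$. The time $\tau_0$ is simply whatever this particular trajectory needs, so it is a function of $M_0$ only, with no explicit control.

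\textbf{Your route.} You use $\Phi=\max(T_a,T_s/\mu)$ as a Lyapunov-type function, together with the quartic inward margin on the faces of the rectangles $\mathcal R_M$. This is the same computation that underlies Lemma \ref{Lemma-IV.2-chaos}. It gives $D^+\Phi\le -c'\Phi^4$ for $\Phi\ge M_a$, hence a coming-down-from-infinity estimate.

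\textbf{What each buys.} Your argument is self-contained and does not need $\beta_a=0$ or the convergence-to-equilibria result of \cite{CLMUV-EDO}. It also produces an explicit $\tau_0$ independent of $\norm{\bTi}_{\bDA}$, which is stronger than what the lemma asserts. The price is a cruder constant: your $\bM=(M_a+1,\mu(M_a+1))$ is only the invariant-rectangle threshold. The paper's $\bM=\overline{\pmb E}^{\max}+\pmb 1$ is tied to the warmest equilibrium. For the attractor construction either bound suffices.

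\textbf{The corner-point worry.} This is harmless. Along a trajectory $\Phi$ is locally Lipschitz, so it suffices to have $\Phi'\le -c'\Phi^4$ at points of differentiability and to integrate $(\Phi^{-3})'\ge 3c'$. Invariance of $\{\Phi\le M_a+1\}$ then follows from the strict decrease of $\Phi$ whenever $\Phi\ge M_a$.
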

\begin{proof} 
Fix $\mu \in (\varepsilon_a ^{1/4}, 2^{1/4})$. Since $\bTi \in \bDA_+$ 
and $D(A) \subset C(\bar{I})$, there exists $M_0>0$ large enough such that
\begin{equation}
\label{eq:M0}
    \forall\, x \in \bar{I},\quad  T_a ^{0} (x) \in [0,M_0],\quad
T_s ^{0} (x) \in [0,\mu M_0] ,
\end{equation} 
and, by Proposition \ref{prop-rect-inv}, we have also
\begin{equation}
\label{eq:M00}
    \forall\,t\geq0,\,\forall\, x \in \overline I,\quad  T_a (t,x) \in [0,M_0],\quad
T_s (t,x) \in [0,\mu M_0] .
\end{equation}
Let us consider the solution $\overline{\bT}=(\overline T_a, \overline T_s)$ of the ODE system
\begin{equation}
\label{eq-EDO-sup}
\begin{cases}
\gamma _a \overline T_a ' = -\lambda (\overline T_a -\overline T_s) + \varepsilon_a \sigma_B \overline T_s ^4 -2 \varepsilon _a \sigma_B \overline T_a ^4 + q_{\max} \beta _a (\overline T_a), \\
\gamma _s \overline T_s ' = -\lambda (\overline T_s -\overline T_a) - \sigma_B \overline T_s ^4 + \varepsilon _a \sigma_B \overline T_a ^4 + q_{\max} \beta _s (\overline T_s), \\
\overline T_a (0)= M_0,\quad \overline T_s (0) = \mu M_0 .
\end{cases} 
\end{equation}
By the comparison principle we have that
\begin{equation}
\label{eq:bound-T-barT}
    \forall \,t\geq 0,\, \forall\, x\in I, \quad 0\leq \bT(t,x)\leq \overline{\bT}(t).
\end{equation}
We recall that (see Proposition \ref{prop.II.3-chaos}) system \eqref{eq-EDO-sup} has a finite number of equilibrium points and
the solution $\overline{\bT}$ converges as $t\to +\infty$ to one of such equilibria. Let $\overline{\pmb{E}}^{\max}$ be the warmest equilibrium point. Then, there exists $\tau_0$ such that
$$ t\geq \tau_0 \quad \implies \quad \overline{\bT}(t)\leq \overline{\pmb{E}}^{\max}+\pmb 1.$$
Thanks to \eqref{eq:bound-T-barT}, the last inequality provides a uniform bound for the solution $\bT$ of the PDE system \eqref{eq:abstract system r0}:
\begin{equation}
\label{borne-inf-sup-T-june}
\forall\, t\geq \tau_0, \forall\, x\in I,\, \quad 0\leq {\bT}(t,x)\leq  \overline{\pmb{E}}^{\max}+\pmb 1\quad .
\end{equation}
The time $\tau _0$ depends on  the initial condition $\bTi \in \bDA$ as a function of $M_0$, and therefore only through the norm $\norm{\bTi}_{\bDA}$. Hence \eqref{borne-inf-sup-T} with $\bM=\overline{\pmb{E}}^{\max}+\pmb 1$ is proved. 
\end{proof}

\subsubsection{Dissipation\vspace{.2cm}}\label{subsec:dissip}\hfill 

Let $\bTi \in \bV$, and let $S(t)\bTi={\bT}(t,x)=(T_a(t,x),T_s(t,x))$ be the solution of \eqref{eq:abstract system r0}. We introduce energy functionals: for any $\pmb{u}=(u_1,u_2)\in\bH$
\begin{equation}
\label{def-norm-H}
\mathcal E_{{\bH}} (\pmb{u}) := \int _{-1} ^1 \left(\gamma _a u_1 ^2 + \gamma _s u_2 ^2\right) \, dx ,
\end{equation}
and for any $\pmb{u}=(u_1,u_2)\in\bV$
\begin{equation}
\label{def-norme-V}
    \mathcal E_{{\bV}} (\pmb u) := \gamma _a \kappa_a \int _{-1} ^1 (1-x^2) \left(\frac{d u_1}{d x}\right) ^2 \, dx + \gamma _s \kappa_s \int _{-1} ^1 (1-x^2) \left(\frac{d u_2
    }{d x}\right) ^2 \, dx .
\end{equation}
Note that the mapping
$$\pmb u \mapsto \sqrt{\mathcal E_{{\bH}} (\pmb u)}$$ 
defines a norm on ${\bH}$ that is equivalent to the standard norm of ${\bH}$. Moreover, the mapping 
$$  \pmb u \mapsto \sqrt{\mathcal E_{{\bH}} (\pmb u) + \mathcal E_{{\bV}} (\pmb u)}$$ 
defines a norm on ${\bV}$ that is equivalent to the standard norm of ${\bV}$.

\begin{Lemma} 
\label{lem-decay-V2}
For all $\sigma > 0$, there exists  $L(\sigma,\pmb M)>0$ such that for all $\bTi\in \bDA_+$
\begin{equation}
\label{eq-decay-V}
\forall\, t \geq \tau_0, \quad  \mathcal E_{{\bV}} (\bT(t)) \leq L(\sigma,\pmb M)
+ e^{-\sigma (t-\tau _0)}  \mathcal E_{{\bV}} (\bT(\tau _0)),
\end{equation}
where $\pmb M$ and $\tau_0$ are the same constants as in Lemma \ref{lem-decay-V}.
\end{Lemma}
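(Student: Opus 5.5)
We will derive a differential inequality for $\mathcal E_{\bV}(\bT(t))$ on $[\tau_0,+\infty)$, exploiting the $L^\infty$ bound of Lemma~\ref{lem-decay-V}, and conclude by Gronwall. Since $\bTi\in\bDA_+$, Proposition~\ref{thm-further-reg} gives a strict solution with $\bT\in C^1([0,\TT];\bH)\cap C([0,\TT];\bDA)$ for every $\TT$. Hence the computations below (multiplication by $-A T_a$, $-AT_s$ and integration by parts with the Neumann condition encoded in $\bDA$) are justified. If needed, one can instead argue in the $H^1(0,\TT;\bH)\cap L^2(0,\TT;\bDA)$ framework, using the standard formula $\frac{d}{dt}\frac12\|\sqrt\rho u_x\|^2=-(u_t,Au)$ valid for such functions.

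\textbf{Energy identity.} Write the system as $\gamma_a(\partial_tT_a-\kappa_aAT_a)=\gamma_a G_a(\bT)$, and similarly for $T_s$. Multiply the first equation by $-\kappa_a AT_a$ and the second by $-\kappa_sAT_s$, integrate over $I$ and add. Since $(\partial_t T_a,-AT_a)_H=\frac12\frac{d}{dt}\int\rho (T_{a,x})^2$, we obtain
$$\frac12\frac{d}{dt}\mathcal E_{\bV}(\bT)+\gamma_a\kappa_a^2\|AT_a\|_H^2+\gamma_s\kappa_s^2\|AT_s\|_H^2=-\gamma_a\kappa_a(G_a(\bT),AT_a)_H-\gamma_s\kappa_s(G_s(\bT),AT_s)_H.$$
For $t\ge\tau_0$, Lemma~\ref{lem-decay-V} gives $0\le\bT\le\bM$. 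Hence $|\gamma_aG_a(\bT)|,|\gamma_sG_s(\bT)|\le C_{\bM}$ pointwise, with $C_{\bM}$ depending only on $\bM$, $\lambda,\varepsilon_a,\sigma_B,q_{\max},\|\beta_s\|_\infty$. By Young's inequality the right-hand side is bounded by $\frac12(\gamma_a\kappa_a^2\|AT_a\|^2+\gamma_s\kappa_s^2\|AT_s\|^2)+C'_{\bM}$.

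\textbf{Coercivity step.} This is the main point: we must bound $\mathcal E_{\bV}$ by the dissipation term $\|AT_a\|^2+\|AT_s\|^2$, up to a constant. The operator $A$ has a kernel (the constants), so no Poincaré inequality holds directly. Integration by parts gives $\int\rho(T_{a,x})^2=-(AT_a,T_a)_H\le \|AT_a\|_H\|T_a\|_H\le \eta\|AT_a\|^2+\frac{1}{4\eta}\|T_a\|^2_H$. Since $\|T_a\|_H^2\le 2M_a^2$ for $t\ge\tau_0$ by Lemma~\ref{lem-decay-V}, we obtain for any $\eta>0$
$$\mathcal E_{\bV}(\bT)\le \eta\,C\bigl(\gamma_a\kappa_a^2\|AT_a\|^2+\gamma_s\kappa_s^2\|AT_s\|^2\bigr)+C_\eta(\bM).$$
Given $\sigma>0$, we choose $\eta$ so that $\eta C\sigma\le 1$. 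Then
$$\frac{d}{dt}\mathcal E_{\bV}(\bT)+\sigma\mathcal E_{\bV}(\bT)\le K(\sigma,\bM),\qquad t\ge\tau_0.$$
Note that the use of the uniform $L^\infty$ bound (not only $L^2$) is essential here to control the quartic nonlinearities in $\bH$.

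\textbf{Conclusion.} Gronwall's lemma on $[\tau_0,t]$ yields $\mathcal E_{\bV}(\bT(t))\le e^{-\sigma(t-\tau_0)}\mathcal E_{\bV}(\bT(\tau_0))+K/\sigma$, which is \eqref{eq-decay-V} with $L(\sigma,\bM)=K(\sigma,\bM)/\sigma$. This constant is independent of $\bTi$, because all constants depend only on $\bM$ and the data. The only delicate points are the justification of the time derivative of $\mathcal E_{\bV}$ along strict solutions and the use of the $\|AT\|\,\|T\|$ interpolation to circumvent the missing Poincaré inequality for the Neumann degenerate operator. We expect the latter to be the main obstacle, which is why the bounded $L^2$ norm from Lemma~\ref{lem-decay-V} is invoked there.
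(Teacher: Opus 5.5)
Your proposal is correct and is essentially the paper's energy argument. The paper tests the system with $\partial_t\bT$ and generates $\sigma\mathcal E_{\bV}$ by adding $\sigma$ times the identity obtained by testing with $\bT$, absorbing $\sigma\int_I\gamma_a\,\partial_tT_a\,T_a\,dx$ by Young's inequality and the bound $\bT\le\pmb M$; since $\partial_tT_a=\kappa_aAT_a+G_a$ with $G_a$ bounded for $t\ge\tau_0$, this amounts to your multiplier $-\bA\bT$ combined with $\int_I\rho\,(\partial_xT_a)^2\,dx=-(AT_a,T_a)_H\le\|AT_a\|_H\|T_a\|_H$. Your justification is in fact leaner: for a strict solution, self-adjointness of $A$ gives $\frac{d}{dt}\int_I\rho\,(\partial_xT_a)^2\,dx=-2(\partial_tT_a,AT_a)_H$ directly, so the paper's detour through initial data in $\pmb D((\pmb I-\bA)^{3/2})$, the $H^1(0,\TT;\bV)$ regularity, and the subsequent approximation of data in $\bDA$ is not needed.
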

\begin{proof}
Let us assume first $\bTi\in \pmb D((\pmb I-\bA)^{3/2})_+$. Since, in particular, $\bTi\in \bV_+$, Theorem \ref{prop-bounds} ensures that, for any $\TT>0$, $\bT(t)=S(t)\bTi\in C([0,\TT];\bV)$. We set 
\begin{equation*}
    \pmb W (t)=(\pmb I-\bA)^{1/2}\bT(t).
\end{equation*}
Let us consider the following Cauchy problem
\begin{equation}
\label{eq:problem W}
    \begin{cases}
        \pmb W'(t)=\bA \pmb W(t)+\pmb g(t) & t\in[0,\TT]\\
        \pmb W(0)=(\pmb I-\bA)^{1/2}\bTi,
    \end{cases}
\end{equation}
with $\pmb W(0)\in \bDA_+$ and
\begin{equation*}
    \pmb g(t):=(\pmb I-\bA)^{1/2}\bG_0(\bT(t)).
\end{equation*}
Notice that $\pmb g\in L^2(0,\TT;\bH)$ because $\bG_0\circ \bT\in L^2(0,\TT;\bV)$. Therefore, by \cite[Theorem 3.1, p. 143]{bensoussan1992}, there exists a unique solution $\pmb W\in H^1(0,\TT;\bH)\cap L^2(0,\TT;\bDA)$ to \eqref{eq:problem W}. We deduce that
\begin{equation*}
    \bT(t)=(\pmb I-\bA)^{-1/2}\pmb W(t)\in H^1(0,\TT;\bV).
\end{equation*}
Hence, we can differentiate $\mathcal E_{{\bV}}(t):=\mathcal E_{{\bV}}(\bT(t))$ with respect to time and obtain
\begin{equation*}
\mathcal E_{\bV}'(t)
= 2\gamma_a \int_{-1}^1 \kappa_a(1-x^2) \frac{\partial T_{a}}{\partial x}\frac{\partial^2 T_{a}}{\partial t \partial x}\,dx
+2\gamma_s \int_{-1}^1 \kappa_s(1-x^2) \frac{\partial T_s}{\partial x}\frac{\partial^2 T_s}{\partial t \partial x}\,dx .
\end{equation*}
Integrating by parts and using the boundary conditions satisfied by
$(1-x^2)\frac{\partial T_a}{\partial x}$ and $(1-x^2)\frac{\partial T_s}{\partial x}$, we obtain
\begin{multline*}
\mathcal E_{\bV}'(t)
= -2\gamma_a \int_{-1}^1 \kappa_a\frac{\partial}{\partial x}\left((1-x^2)\frac{\partial T_a}{\partial x}\right) \frac{\partial T_a}{\partial t}\,dx
\\-2\gamma_s \int_{-1}^1 \kappa_s\frac{\partial}{\partial x}\left((1-x^2)\frac{\partial T_s}{\partial x}\right) \frac{\partial T_s}{\partial t}\,dx .
\end{multline*}
Using the equations of system~\eqref{2layer-pbm}, the above identity gives
\begin{multline}
\label{expression-E_V'}
\mathcal E_{\bV}'(t)
= -2\gamma_a \int_{-1}^1 \left(\frac{\partial T_a}{\partial t}\right)^2\,dx
-2\gamma_s \int_{-1}^1 \left(\frac{\partial T_s}{\partial t}\right)^2\,dx
\\
+2\int_{-1}^1 \frac{\partial T_a}{\partial t}\left(-\lambda(T_a-T_s)
+\varepsilon_a\sigma_B T_s^4
-2\varepsilon_a\sigma_B T_a^4
+q(x)\beta_a(T_a)\right)\,dx
\\
+2\int_{-1}^1 \frac{\partial T_s}{\partial t}\left(-\lambda(T_s-T_a)
-\sigma_B T_s^4
+\varepsilon_a\sigma_B T_a^4
+q(x)\beta_s(T_s)\right)\,dx .
\end{multline}
Moreover, the derivative of the energy $\mathcal E_{\bH}(t):=\mathcal E_{\bH}(\bT(t))$ reads as follows
\begin{equation}
\label{a26-0}
\mathcal E_{\bH}'(t)
=2\int_{-1}^1
\left(\gamma_a \frac{\partial T_a}{\partial t}T_a+\gamma_s \frac{\partial T_s}{\partial t}T_s\right)\,dx .
\end{equation}
Now, $\mathcal E_{\bH}'$ can be computed from the above identity by multiplying the first equation of~\eqref{2layer-pbm} by $T_a$, the second by $T_s$, integrating by parts, and summing the resulting identities. We thus obtain
\begin{multline}
\label{a26-1}
\frac12\mathcal E_{\bH}'(t)+\mathcal E_{\bV}(t)
= -\lambda\int_{-1}^1 (T_a-T_s)^2\,dx
\\
+\int_{-1}^1 q(x)
\left(\beta_a(T_a)T_a+\beta_s(T_s)T_s\right)\,dx
\\
+\varepsilon_a\sigma_B\int_{-1}^1
\left(
|T_s|^3T_sT_a
+|T_a|^3T_aT_s
-2|T_a|^5
-\varepsilon_a^{-1}|T_s|^5
\right)\,dx .
\end{multline}
Fix $\sigma>0$, multiply both sides of \eqref{a26-1} by $\sigma$, move the term $\frac{\sigma}{2}\mathcal E_{\bH}'$ to the right-hand side and combine the resulting equation with \eqref{expression-E_V'} to obtain
\begin{equation*}
\begin{split}
\mathcal E_{{\bV}} '(t) + \sigma \mathcal E_{{\bV}} (t)
=& -2 \gamma _a \int _{-1} ^1 \left(\frac{\partial T_a}{\partial t}\right) ^2 \, dx 
-2 \gamma _s \int _{-1} ^1 \left(\frac{\partial T_s}{\partial t}\right) ^2 \, dx 
\\
&+ 2 \int _{-1} ^1 \frac{\partial T_a}{\partial t} \left(-\lambda (T_a-T_s) + \varepsilon_a \sigma_B T_s ^4 -2 \varepsilon_a \sigma_B T_a ^4 + q \beta _a (T_a) \right) \, dx
\\
&+ 2 \int _{-1} ^1 \frac{\partial T_s}{\partial t} \left(-\lambda (T_s-T_a) - \sigma_B T_s ^4 + \varepsilon_a \sigma_B T_a ^4 + q \beta _s (T_s) \right) \, dx
\\
&- \sigma \int _{-1} ^1 \left(\gamma _a \frac{\partial T_a}{\partial t} T_a + \gamma _s \frac{\partial T_s}{\partial t} T_s\right)\, dx
\\
&-\sigma\lambda \int _{-1} ^1 (T_a-T_s)^2 \, dx + \sigma\int _{-1} ^1 q(x) \left(\beta _a (T_a) T_a  + \beta _s (T_s) T_s\right) \, dx 
\\
&+ \sigma \varepsilon _a \sigma _B \int _{-1} ^1 \left(\vert T_s \vert ^3 T_s T_a + \vert T_a \vert ^3 T_a T_s - 2 \vert T_a \vert ^5 - \varepsilon _a^{-1} \vert T_s \vert ^5\right) \, dx .
\end{split}
\end{equation*}
Observe that, by Young’s inequality, the terms involving $\frac{\partial T_a}{\partial t}$ and $\frac{\partial T_s}{\partial t}$
can be absorbed into the negative contributions
$-2\gamma_a\norm{\frac{\partial T_a}{\partial t}}_{L^2(I)}^2$ and
$-2\gamma_s\norm{\frac{\partial T_s}{\partial t}}_{L^2(I)}^2$. So, thanks to the uniform bounds provided by \eqref{borne-inf-sup-T}, we deduce that there exists a constant $N(\sigma,\pmb M)>0$,
such that $ \mathcal E_{{\bV}} '(t) + \sigma \mathcal E_{{\bV}} (t)
\leq N(\sigma,\pmb M)$, $\forall\,t\geq \tau_0.$
The conclusion follows by a Gr{\"o}nwall-type argument. Indeed, for $t \geq \tau _0$,
$$
\frac{d}{dt} \left[ e^{\sigma t} \left( \mathcal E_{{\bV}} (t) - \frac{N(\sigma,\pmb M)}{\sigma } \right)  \right] 
= \sigma  e^{\sigma t} \left( \mathcal E_{{\bV}} (t) - \frac{N(\sigma,\pmb M)}{\sigma } \right)
+ e^{\sigma t} \mathcal E_{{\bV}} '(t)  \leq 0,
$$
and therefore $t \mapsto e^{\sigma t} \left( \mathcal E_{{\bV}} (t) - \frac{N(\sigma,\pmb M )}{\sigma} \right)$ is nonincreasing on $[\tau _0, +\infty)$.  Thus, we conclude that
$$ \forall\, t \geq \tau _0, \quad 
\mathcal E_{{\bV}} (t) \leq \frac{N(\sigma,\pmb M )}{\sigma }
+ e^{-\sigma  (t-\tau _0)}  \mathcal E_{{\bV}} (\tau_0) .$$
If the initial condition $\bTi\in \bDA_+$, we consider a smoother sequence $(\bT^k)_k\in \pmb D(\pmb I-\bA)^{3/2}$ that approximates the initial condition:
\begin{equation*}
    \bT^k\to \bTi\qquad \text{in }\bDA.
\end{equation*}
For instance, we can choose $\bT^k=e^{\frac{1}{k}\bA}\bTi$. Observe that $\norm{\bT^k}_{\bDA}\leq\norm{\bTi}_{\bDA}$ because $e^{t\bA}$ is a contraction semigroup. Furthermore,
\begin{equation*}
    S(t)\bT^k\to S(T)\bTi \quad\text{in }\bDA.
\end{equation*}
Indeed,
\begin{equation*}
    \norm{\bA S(t)\bT^k-\bA S(t)\bTi}_{\bH}\leq \norm{\bT^k-\bTi}_{\bDA},
\end{equation*}
and the term on the right-hand side tends to zero as $k\to+\infty$. So, the argument used to obtain \eqref{eq-decay-V} applies to the sequence of initial data $(\bT^k)_k$. Passing to the limit, we deduce that \eqref{eq-decay-V} also holds for any initial condition $\bTi \in \bDA_+$.
\end{proof}
We recall that the sets $\pmb B^+_{\bDA}(\cdot)$ are defined in \eqref{dfn:ball in D(A)+}, and we prove the following
\begin{Lemma}\label{rmk:bound E'}
    For any $R>0$, there exists $C_1(R)>0$ such that for all $\bTi\in \bB_{\bDA}^+(R)$ we have that
    \begin{equation*}
        \mathcal{E}'_{\bV}(t)\leq C_1(R),\qquad\forall\,t\geq0.
    \end{equation*}
\end{Lemma}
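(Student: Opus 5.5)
We start from the identity \eqref{expression-E_V'} for $\mathcal E'_{\bV}(t)$. It was derived for smooth data, but it extends to $\bTi\in\bDA_+$ by the same approximation $e^{\frac1k\bA}\bTi$ used in Lemma \ref{lem-decay-V2}. Write $\Phi_a(\bT)$ and $\Phi_s(\bT)$ for the two nonlinear brackets appearing there. By Young's inequality,
\begin{equation*}
2\int_{-1}^1 \frac{\partial T_a}{\partial t}\,\Phi_a\,dx\leq 2\gamma_a\int_{-1}^1\Big(\frac{\partial T_a}{\partial t}\Big)^2dx+\frac{1}{2\gamma_a}\int_{-1}^1\Phi_a^2\,dx ,
\end{equation*}
and similarly for the $T_s$ term. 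The negative terms $-2\gamma_a\|\partial_tT_a\|^2_{L^2}-2\gamma_s\|\partial_tT_s\|^2_{L^2}$ therefore absorb the time derivatives completely, and we obtain
\begin{equation*}
\mathcal E'_{\bV}(t)\leq \frac{1}{2\gamma_a}\int_{-1}^1\Phi_a(\bT(t))^2\,dx+\frac{1}{2\gamma_s}\int_{-1}^1\Phi_s(\bT(t))^2\,dx .
\end{equation*}
It then suffices to bound $\bT(t,x)$ in $L^\infty$ uniformly for $t\geq0$ and all $\bTi\in\pmb B^+_{\bDA}(R)$.

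For the $L^\infty$ bound we use Proposition \ref{prop-reg-borne}. Every $\bTi\in\pmb B^+_{\bDA}(R)$ satisfies $0\le T^0_a,T^0_s\le CR$ on $\bar I$. Fix $\mu\in(\varepsilon_a^{1/4},2^{1/4})$ and set $M_0:=\max(M_a,CR)$, where $M_a$ is the constant of Proposition \ref{prop-rect-inv}. Since $\mu>1$, the initial datum lies in $[0,M_0]\times[0,\mu M_0]$. Proposition \ref{prop-rect-inv} then yields $0\le T_a(t,x)\le M_0$ and $0\le T_s(t,x)\le\mu M_0$ for all $t\ge0$ and $x\in\bar I$, and this holds globally by Theorem \ref{prop-bounds}. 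Since $q\in L^\infty$, $\beta_a,\beta_s\in W^{1,\infty}$ and $r\equiv1$, we get $|\Phi_a|,|\Phi_s|\le K(M_0)$ pointwise. Hence $\mathcal E'_{\bV}(t)\le 2\big(\tfrac{1}{2\gamma_a}+\tfrac{1}{2\gamma_s}\big)K(M_0)^2=:C_1(R)$, since $|I|=2$.

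The main obstacle is justifying the differentiation of $\mathcal E_{\bV}$ for data that lie only in $\bDA_+$. The plan is to run the computation for $\bT^k=e^{\frac1k\bA}\bTi\in \pmb D((\pmb I-\bA)^{3/2})$. These approximants satisfy $\|\bT^k\|_{\bDA}\le\|\bTi\|_{\bDA}\le R$, which keeps them in the same ball. They also remain nonnegative, by positivity of the semigroup, which follows from Lemma \ref{lem-ppmaxw}. For them the estimate holds with the same constant $C_1(R)$. In integrated form it reads $\mathcal E_{\bV}(\bT^k(t))-\mathcal E_{\bV}(\bT^k(s))\le C_1(R)(t-s)$. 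We pass to the limit using the $\bV$-continuous dependence of Proposition \ref{prop-dep-continue-CI}. This gives the Lipschitz-type bound for $\bTi$, and so $\mathcal E'_{\bV}\le C_1(R)$ wherever the derivative exists. That is everywhere, because $\bT\in H^1_{loc}(0,\infty;\bV)$ holds for $\bTi\in\bDA$ by the argument of Lemma \ref{lem-decay-V2}.
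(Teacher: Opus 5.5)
Your proof is correct and follows the same route as the paper's: Young's inequality lets the negative terms in the first line of \eqref{expression-E_V'} absorb the $\partial_t T$ contributions, and what remains is bounded via the invariant rectangle of Proposition~\ref{prop-rect-inv}, with $M_0$ depending on $R$ through $D(A)\hookrightarrow C(\bar I)$. Two minor points: $\mu>1$ is not automatic when $\varepsilon_a<1$ (pick $\mu\in(\max\{1,\varepsilon_a^{1/4}\},2^{1/4})$), and your approximation step is unnecessary---for a strict solution, $\bT\in C^1([0,\TT];\bH)\cap C([0,\TT];\bDA)$ and the self-adjointness of $A$ already make $t\mapsto\mathcal E_{\bV}(\bT(t))=-\gamma_a\kappa_a(AT_a,T_a)_H-\gamma_s\kappa_s(AT_s,T_s)_H$ of class $C^1$, with derivative given by \eqref{expression-E_V'} at every $t$, whereas $H^1_{loc}$ regularity in time would by itself only give differentiability almost everywhere.
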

\begin{proof}
    It is enough to apply Young's inequality to the second and third lines of formula \eqref{expression-E_V'}, and absorb the resulting terms with the time derivatives of the temperature into the negative terms in the first line. Consequently, one obtains a uniform bound for $\mathcal{E}'_{\bV}(t)$ depending only on the constant $M_0$ in \eqref{eq:M00}.
\end{proof}
Now we prove the existence of a certain absorbing set in $\bV$.
\begin{Proposition}
\label{prom-absorb-bounded}
There exists a set $\pmb U\subseteq \bV_+$, bounded in $\bV$, with the following property: for all $R>0$ there exists $t_1=t_1(R)>0$ such that
\begin{equation*}
    S(t)\pmb B^+_{\bDA}(R)\subseteq \pmb U,\quad \forall\,t\geq t_1.
\end{equation*}
\end{Proposition}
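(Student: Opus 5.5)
We will construct $\pmb U$ explicitly from the two uniform controls already available, the $L^\infty$ bound of Lemma \ref{lem-decay-V} and the $\mathcal E_{\bV}$ decay of Lemma \ref{lem-decay-V2}. Fix $\sigma=1$ and let $\pmb M=\overline{\pmb E}^{\max}+\pmb 1$ and $L:=L(1,\pmb M)$ be the constants of those lemmas. We define
$$\pmb U:=\Big\{\pmb u\in\bV_+\,:\,\pmb 0\le \pmb u\le \pmb M \text{ a.e. in } I,\ \ \mathcal E_{\bV}(\pmb u)\le L+1\Big\}.$$
This set is bounded in $\bV$. Indeed, the pointwise bound gives $\mathcal E_{\bH}(\pmb u)\le 2(\gamma_a M_a^2+\gamma_s M_s^2)$, and $\sqrt{\mathcal E_{\bH}+\mathcal E_{\bV}}$ is an equivalent norm on $\bV$. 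Note that $\pmb U$ is independent of $R$; only the entrance time will depend on $R$.

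Now fix $R>0$ and $\bTi\in\pmb B^+_{\bDA}(R)$. First we make $\tau_0$ uniform over the ball. By Proposition \ref{prop-reg-borne}, $\norm{\bTi}_{C(\bar I)}\le CR$, so the constant $M_0$ in \eqref{eq:M0} can be chosen as a function $M_0(R)$ alone (taking $M_0\ge CR$ and $\mu M_0\ge CR$, with $M_0\ge M_a$ from Proposition \ref{prop-rect-inv}). The proof of Lemma \ref{lem-decay-V} compares $\bT$ with the single ODE solution $\overline{\bT}$ started at $(M_0,\mu M_0)$. The entrance time of $\overline{\bT}$ into $\{\le \overline{\pmb E}^{\max}+\pmb 1\}$ therefore gives a $\tau_0=\tau_0(R)$ valid for the whole ball. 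For $t\ge\tau_0(R)$ we thus have $\pmb 0\le\bT(t)\le\pmb M$.

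Next we need a uniform bound on $\mathcal E_{\bV}(\bT(\tau_0))$, and we expect this to be the main point. Lemma \ref{rmk:bound E'} gives $\mathcal E'_{\bV}(t)\le C_1(R)$ for all $t\ge0$. Integrating,
$$\mathcal E_{\bV}(\bT(\tau_0))\le \mathcal E_{\bV}(\bTi)+C_1(R)\tau_0(R).$$
Moreover $\mathcal E_{\bV}(\bTi)\le C\norm{\bTi}_{\bV}^2\le C'R^2$, since $\bDA\hookrightarrow\bV$ continuously. Call this bound $K(R)$. By Lemma \ref{lem-decay-V2} with $\sigma=1$, for $t\ge\tau_0$ we get
$$\mathcal E_{\bV}(\bT(t))\le L+e^{-(t-\tau_0)}K(R).$$
Choosing $t_1(R):=\tau_0(R)+\max\{0,\ln K(R)\}$ yields $\mathcal E_{\bV}(\bT(t))\le L+1$ for all $t\ge t_1$. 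Together with the $L^\infty$ bound and the nonnegativity from Proposition \ref{prop-nonneg}, this shows $S(t)\bTi\in\pmb U$ for all $t\ge t_1(R)$.

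The step requiring care is the uniformity of $\tau_0$ and of $C_1$ over the $\bDA$-ball. Both rest on the fact that the constants in Lemmas \ref{lem-decay-V} and \ref{rmk:bound E'} depend on $\bTi$ only through $M_0$, which is controlled by $R$ via the embedding $\bDA\subset C(\bar I)$. We will check this explicitly, using monotonicity in $M_0$ of the ODE super-solution so that one comparison trajectory serves every datum in the ball.
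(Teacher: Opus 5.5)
Your proposal is correct and follows the paper's proof essentially step by step. It uses the uniform $L^\infty$ bound for $t\ge\tau_0(R)$ from Lemma~\ref{lem-decay-V}, then the linear-in-time bound $\mathcal E_{\bV}(S(\tau_0)\bTi)\le \mathcal E_{\bV}(\bTi)+C_1(R)\tau_0$ from Lemma~\ref{rmk:bound E'}, and finally the decay estimate of Lemma~\ref{lem-decay-V2} with $\sigma=1$ to choose $t_1(R)$. The differences are cosmetic: you put the pointwise constraint $\pmb 0\le\pmb u\le\pmb M$ into $\pmb U$ instead of the paper's $\mathcal E_{\bH}(\pmb u)\le C_0$ (your set is smaller but still bounded in $\bV$, which is all that is used later), and you spell out why $\tau_0$ is uniform over the $\bDA$-ball, which the paper leaves implicit.
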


\begin{proof}
Let $R>0$. In view of \eqref{borne-inf-sup-T} there exists $\pmb M=(M_a,M_s)>\pmb{0}$, independent of $R$, such that
\begin{equation*}
    \pmb 0\leq S(t)\bTi\leq \pmb{M},\qquad\forall\, t\geq \tau_0,\,\forall\,\bTi\in \bB_{\bDA}^+(R),
\end{equation*}
for some $\tau_0=\tau_0(R)$. Thus
\begin{equation}
    \label{eq:stima E_H}
    \mathcal E_{{\bH}} (S(t)\bTi) \leq 2 \gamma _a M_a ^2 + 2 \gamma _s M_s ^2=:C_0,\quad \forall\, t \geq \tau_0,\,\forall\,\bTi\in\bB_{\bDA}^+(R) .
\end{equation}
Furthermore, by Lemma \ref{rmk:bound E'} and the fact that $\bB_{\bDA}^+(R)$ is bounded in $\bV$, we conclude that 
$$\mathcal E_{{\bV}} (S(t)\bTi)\leq \mathcal E_{{\bV}} (\bTi) + C_1(R) t\leq C_2(R)(1+t),\quad \forall\,t\geq0,\,\forall\,\bTi\in\bB_{\bDA}^+(R),$$
for some constants $C_i(R)>0$ ($i=1,2$). Therefore, applying \eqref{eq-decay-V}, we deduce that for any $\sigma>0$ there exists $L(\sigma,\pmb M)>0$ with
\begin{multline*}\mathcal E_{{\bV}} (S(t)\bTi) \leq  L(\sigma,\pmb M )
+ e^{-\sigma  (t-\tau _0)} \mathcal{E}_\bV(S(\tau_0)\bTi)\\
\leq L(\sigma,\pmb M)+C_2(R)e^{-\sigma  (t-\tau _0)}(1+\tau_0),\quad \forall\, t \geq \tau _0,\,\forall\,\bTi\in\bB_{\bDA}^+(R),
\end{multline*}
where $\tau_0$ is the same constant as in \eqref{eq:stima E_H}. Now, fixing $\sigma=1$, we obtain, for some $t_1=t_1(R) \geq \tau _0$,
$$\quad 
\mathcal E_{{\bV}} (S(t)\bTi) \leq  L (1,\pmb M ) + 1 ,\quad  \forall\, t \geq t_1,\, \forall\, \bTi \in \bB_{\bDA}^+(R).$$
The conclusion follows from the above inequality and \eqref{eq:stima E_H} taking
\begin{equation}
    \label{eq: def U}
    \pmb U=\{\pmb{u}\in\bV_+\,:\,\,\mathcal{E}_\bH(\pmb{u})\leq C_0,\,\,\,\mathcal{E}_\bV(\pmb{u})\leq L(1,\pmb M)+1\}.\qedhere
\end{equation}
\end{proof}
\subsubsection{Regularization\vspace{.2cm}}\label{subsec-regular}\hfill

\begin{Proposition} 
\label{prop-borne-t_1-D(A)}
For all $R>0$ there exist $\tau_R>0$ and $C(R)>0$ such that 
\begin{equation}
    S(\tau_R)\pmb B^+_\bV(R)\subseteq \pmb B^+_{\bDA}(C(R)).
\end{equation}
\end{Proposition}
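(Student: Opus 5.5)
We aim to show that, starting from a $\bV$-ball, the solution enters a bounded set of $\bDA$ after a fixed time. The plan is to combine the local well-posedness result (Proposition~\ref{thm-loc-wellpos}) with analytic-semigroup smoothing in the variation of constants formula \eqref{form-var-cste}.

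\textbf{Step 1: uniform bounds on a short interval.} Fix $R>0$. By Proposition~\ref{thm-loc-wellpos}, there is $\tau_R>0$ such that every $\bTi\in\pmb B_\bV^+(R)$ produces a mild solution on $[0,\tau_R]$ with $\norm{\bT(t)}_\bV\le M(R)$. Since $V\hookrightarrow L^p(I)$ for every finite $p$ (Proposition~\ref{resu-prop-V}), the quartic terms in $\bG_0$ are bounded in $\bH$, and Lemma~\ref{lem-propr-G} gives
\[
\sup_{t\le\tau_R}\norm{\bG_0(\bT(t))}_\bH\le K_1(R),
\]
together with the $\bV$-to-$\bH$ local Lipschitz estimate \eqref{propr3-G} with a constant depending only on $R$.

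\textbf{Step 2: H\"older regularity in time.} I would next prove that $t\mapsto \bT(t)$ is H\"older continuous in $\bV$ on $[\tau_R/2,\tau_R]$, with constants depending only on $R$. This follows from the standard estimates
\[
\norm{(e^{h\bA}-I)e^{t\bA}}_{\mathcal L(\bH,\bV)}\le C h^{\alpha}t^{-1/2-\alpha}
\quad\text{and}\quad
\norm{e^{t\bA}}_{\mathcal L(\bH,\bV)}\le Ct^{-1/2},
\]
where $\bV$ is identified with the interpolation space $[\bDA,\bH]_{1/2}$ by Proposition~\ref{resu-prop-A}. Applying \eqref{propr3-G} then shows that $s\mapsto \bG_0(\bT(s))$ is $\theta$-H\"older in $\bH$ on $[\tau_R/2,\tau_R]$, again with a constant $K_2(R)$.

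\textbf{Step 3: the $\bDA$ bound at $t=\tau_R$.} Write
\[
\bT(\tau_R)=e^{\tau_R\bA}\bTi+\int_0^{\tau_R}e^{(\tau_R-s)\bA}\bG_0(\bT(s))\,ds.
\]
For the first term, $\norm{\bA e^{\tau_R\bA}\bTi}_\bH\le C\tau_R^{-1/2}\norm{\bTi}_\bV$. In the integral, split at $\tau_R/2$. On $[0,\tau_R/2]$ the kernel satisfies $\norm{\bA e^{(\tau_R-s)\bA}}\le C(\tau_R/2)^{-1}$, so that piece is bounded by $CK_1$. On $[\tau_R/2,\tau_R]$ use the classical decomposition
\[
\int e^{(\tau_R-s)\bA}\bigl(\bG_0(s)-\bG_0(\tau_R)\bigr)\,ds+\int e^{(\tau_R-s)\bA}\bG_0(\tau_R)\,ds.
\]
The first piece is bounded in $\bDA$ by $\int C(\tau_R-s)^{-1+\theta}K_2\,ds$. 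For the second, $\bA\int_0^{h}e^{\sigma\bA}\,d\sigma=e^{h\bA}-I$ is bounded. This is exactly the argument behind Lunardi's Theorem~4.3.1, or equivalently the regularity statement of Proposition~\ref{thm-further-reg}, made quantitative. All constants depend only on $R$, which gives $\norm{\bT(\tau_R)}_\bDA\le C(R)$. Positivity of $\bT(\tau_R)$ follows from Proposition~\ref{prop-nonneg}, so $S(\tau_R)\bTi\in\pmb B^+_\bDA(C(R))$.

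\textbf{Main obstacle.} The delicate point is Step~2: obtaining H\"older continuity of $\bG_0(\bT(\cdot))$ with constants uniform over the ball, since $\bG_0$ has quartic growth and is only locally Lipschitz from $\bV$ to $\bH$. The uniform bound $M(R)$ from \eqref{eq-T-bounded} is what makes the Lipschitz constant $C_1$ in \eqref{propr3-G} uniform. If this step proves awkward, a fallback is to take $\tau_R$ smaller and iterate: first obtain a bound in an intermediate space $D((\pmb I-\bA)^{\alpha})$ with $\alpha\in(1/2,1)$, where $D((\pmb I-\bA)^{\alpha})\hookrightarrow L^\infty$, and then apply maximal regularity in $\bH$.
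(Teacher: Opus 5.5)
Your proof is correct, but it takes a different route from the paper's. Both arguments start from $S(t)\bTi=e^{t\bA}\bTi+\pmb Z(t)$. The paper, however, treats $\pmb Z$ by weighted maximal regularity rather than by H\"older continuity in time. It sets $\pmb W(t)=t^3(\pmb I-\bA)^{1/2}\pmb Z(t)$, which solves $\pmb W'=\bA\pmb W+\pmb h$, $\pmb W(0)=\pmb 0$, with $\pmb h=t^3(\pmb I-\bA)^{1/2}\bG_0(\bT)+3t^2(\pmb I-\bA)^{1/2}\pmb Z$. From $L^2$ maximal regularity it deduces $\pmb W\in C([0,\tau_R];\bV)$, i.e.\ $\sup_{t\le\tau_R}t^3\norm{\pmb Z(t)}_{\bDA}\le C(R)$.

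The price of that route is that $\bG_0(\bT)$ must be controlled in a weighted $L^2(0,\tau_R;\bV)$ norm. Differentiating $|T|^3T$ produces $\int_I(1-x^2)T^6T_x^2\,dx$. This is why the paper needs the weighted $L^\infty$ estimate of Lemma~\ref{lem-maj-T-infty}, which is itself obtained from maximal $L^6$ regularity and the embedding $\bDA\hookrightarrow\pmb L^\infty(I)$.

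Your argument only uses that $\bG_0$ is Lipschitz from $\bV$ to $\bH$ on $\bV$-bounded sets, together with standard analytic smoothing. It is essentially a quantitative version of Proposition~\ref{thm-further-reg}, and it is shorter. It is also less demanding on the data. Estimating $\norm{\bG_0(\bT)}_{\bV}$ tacitly requires $\sqrt{1-x^2}\,q'\in L^2(I)$, in order to differentiate $q\,\beta_s(T_s)$, and this is not part of Hyp.~\ref{HYP1}. You only need $q\in L^\infty(I)$. What the paper's route produces beyond the statement is the weighted bound on $t^3\pmb Z$ over all of $[0,\tau_R]$. Your argument, with split point $t/2$ in place of $\tau_R/2$, gives an analogous bound for every $t\in(0,\tau_R]$.

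Two details to tighten. In Step~2 you need $\alpha<1/2$ for $\int_0^t(t-s)^{-1/2-\alpha}\,ds$ to converge. Also, ``locally bounded'' in Lemma~\ref{lem-propr-G} does not by itself give a Lipschitz constant uniform on $\{\norm{\pmb u}_\bV\le M(R)\}$. Use instead the explicit estimate from its proof, $C_1(\pmb u,\tilde{\pmb u})\le C\bigl(1+\norm{\pmb u}_\bV^3+\norm{\tilde{\pmb u}}_\bV^3\bigr)$.
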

The following technical result, together with \eqref{eq-T-bounded}, is needed for the proof of Proposition \ref{prop-borne-t_1-D(A)}.
\begin{Lemma}
\label{lem-maj-T-infty}
For every $R>0$ there exists $C(R)>0$ such that, for all $\bTi\in \pmb B_\bV(R)$ it holds that $S(t)\bTi\in\pmb L^\infty(I)$ for a.e. $t\in[0,\tau_R]$ and
\begin{equation}
\label{22mai-2}
\int _0 ^{\tau_R} t^6 \norm {S(t)\bTi} _{\pmb L^\infty (I)} ^6 \, dt \leq C(R),
\end{equation}
where $\tau_R$ is defined in Proposition \ref{thm-loc-wellpos} and we have set $\norm{\pmb u}_{\pmb L^\infty(I)}=\norm{u_a}_{L^\infty(I)}+\norm{u_s}_{L^\infty(I)}$.
\end{Lemma}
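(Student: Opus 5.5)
We will combine the uniform $\bV$-bound \eqref{eq-T-bounded} on $[0,\tau_R]$ with the smoothing effect of the analytic semigroup, using an interpolation inequality for the $L^\infty$-norm. The starting point is Proposition \ref{prop-reg-borne}, which gives $D(A)\subset C(\bar I)$. Interpolating this against the embedding $V\subset L^p$ of Proposition \ref{resu-prop-V}, we aim to prove a Gagliardo--Nirenberg-type estimate
\begin{equation*}
\norm{u}_{L^\infty(I)}\leq C\norm{u}_V^{1/2}\norm{u}_{D(A)}^{1/2},\qquad u\in D(A).
\end{equation*}
The proof of Proposition \ref{prop-reg-borne} (via $u^2\in W^{1,1}$) suggests how to get it: write $(u^2)_x=2uu_x$. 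Away from the endpoints, $\sqrt\rho\, u_x\in L^2$ controls this term. Near $x=\pm1$ we use $|\rho u_x(x)|\le\int_x^{\pm1}|(\rho u_x)_x|\le C\norm{Au}_{L^2}\,(1\mp x)^{1/2}$. Combining the two regimes should give $\norm{u}_\infty^2\lesssim\norm{u}_{H}^2+\norm{u}_V\norm{u}_{D(A)}$. If the exponent $1/2$ turns out not to be attainable, any exponent $\theta<1$ on $\norm{u}_{D(A)}$ would suffice after adjusting the weight $t^6$ below; since the weight $t^6$ is given in the statement, we will aim for exponent $1/2$.

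Next we need a weighted $L^2$-in-time bound on $\norm{\bT(t)}_{\bDA}$. Since $\bT\in C([0,\tau_R];\bV)$ with $\norm{\bT(t)}_\bV\le M(R)$, and $V\subset L^8$ by Proposition \ref{resu-prop-V}, the nonlinearity satisfies $\norm{\bG(t,\bT(t))}_\bH\le C(1+\norm{\bT}_{\bV}^4)\le C(R)$ uniformly on $[0,\tau_R]$. Maximal regularity \eqref{eq: max reg} together with $\bTi\in\bV=[\bDA,\bH]_{1/2}$ then gives
\begin{equation*}
\int_0^{\tau_R}\norm{\bT(t)}_{\bDA}^2\,dt\le C\left(\norm{\bTi}_\bV^2+\norm{\bG(\cdot,\bT)}_{L^2(0,\tau_R;\bH)}^2\right)\le C(R).
\end{equation*}
In particular $\bT(t)\in\bDA\subset\pmb L^\infty$ for a.e. $t$. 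A sharper pointwise alternative comes from the variation of constants formula \eqref{form-var-cste}: analyticity gives $\norm{e^{t\bA}\bTi}_{\bDA}\le Ct^{-1/2}\norm{\bTi}_\bV$, and the convolution term is bounded in $\bDA$ up to a logarithmic or Hölder correction. We will take the $L^2$ route, since it suffices.

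Finally we combine the two estimates. By the interpolation inequality,
\begin{equation*}
t^6\norm{\bT(t)}_{\pmb L^\infty}^6\le C\,t^6\,M(R)^3\norm{\bT(t)}_{\bDA}^3.
\end{equation*}
The power $3$ exceeds the $L^2$ integrability we have. We will therefore use the pointwise bound $\norm{\bT(t)}_{\bDA}\le C(R)t^{-1/2}$ on one factor, obtained from the semigroup estimate above plus a bound on the Duhamel term, so that
\begin{equation*}
t^6\norm{\bT(t)}_{\bDA}^3\le C(R)\,t^{5}\norm{\bT(t)}_{\bDA}^2,
\end{equation*}
which is integrable on $[0,\tau_R]$. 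The main obstacle is obtaining that pointwise $\bDA$ bound for the Duhamel term: with $\bG$ only bounded in $\bH$ one gets $\norm{\bT(t)}_{D((-\bA)^{\alpha})}\le C(R)$ for every $\alpha<1$ but not for $\alpha=1$. To handle this we will replace $\bDA$ throughout by $D((I-\bA)^{\alpha})$ with $\alpha$ close to $1$, which should still embed in $C(\bar I)$ by interpolating Proposition \ref{prop-reg-borne}. Then $\norm{\bT(t)}_{\pmb L^\infty}\le C(R)t^{-(\alpha-1/2)}$, so $t^6\norm{\bT(t)}_{\pmb L^\infty}^6$ is in fact bounded, which gives \eqref{22mai-2} at once.
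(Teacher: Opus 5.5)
There is a genuine gap in your last step. You claim $D((I-\bA)^\alpha)\hookrightarrow C(\bar I)$ for some $\alpha<1$ ``by interpolating Proposition~\ref{prop-reg-borne}'', but that does not follow. Interpolating $D(A)\hookrightarrow L^\infty(I)$ with $H=L^2(I)$ only gives $D((I-A)^\alpha)\hookrightarrow L^{2/(1-\alpha)}(I)$, because the $L^\infty$ endpoint is not inherited. At $\alpha=1/2$ the embedding really fails (Remark~\ref{rq-trace-V}), so operator-specific information is needed. You rightly abandoned the $L^2$ and pointwise-$\bDA$ routes, so everything rests on this unsupported embedding.

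The missing ingredient is your own step~1, which you left at ``should give'' but which does hold. Since $\rho u_x\in H^1_0(I)$, one has $|u_x(x)|\le (1-|x|)^{-1/2}\norm{Au}_H$, hence $\int_I |u u_x|\,dx\le C\norm{u}_{L^4(I)}\norm{Au}_H$. Combined with $u^2\in W^{1,1}(I)$ and $V\hookrightarrow L^4(I)$, this gives $\norm{u}_{L^\infty(I)}^2\le C\norm{u}_V\norm{u}_{D(A)}$. Apply it directly to the semigroup: for $0<\tau\le\tau_R$,
\[
\norm{e^{\tau\bA}\pmb f}_{\pmb L^\infty(I)}\le C\norm{e^{\tau\bA}\pmb f}_{\bV}^{1/2}\norm{e^{\tau\bA}\pmb f}_{\bDA}^{1/2}\le C\tau^{-3/4}\norm{\pmb f}_{\bH} .
\]
The factor $\tau^{-3/4}$ is integrable. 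Since $\sup_{[0,\tau_R]}\norm{\bG(s,\bT(s))}_\bH\le C(R)$, the Duhamel term is $O(t^{1/4})$ in $\pmb L^\infty(I)$, while $\norm{e^{t\bA}\bTi}_{\pmb L^\infty(I)}\le CRt^{-1/4}$. Hence $\norm{S(t)\bTi}_{\pmb L^\infty(I)}\le C(R)t^{-1/4}$ on $(0,\tau_R]$, which is stronger than \eqref{22mai-2}. Equivalently, by reiteration this inequality gives $D((I-A)^\alpha)\hookrightarrow L^\infty(I)$ for $\alpha>3/4$, which is what your last sentence actually needs.

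The paper needs neither this inequality nor any pointwise bound. It splits $S(t)\bTi=\bY(t)+\pmb Z(t)$ and uses $t\norm{\bA\bY(t)}_\bH\le C\norm{\bTi}_\bH$; the weight $t^6$ is chosen exactly to absorb this $t^{-1}$ singularity raised to the sixth power. It then controls $\int_0^{\tau_R}\norm{\pmb Z(t)}_{\bDA}^6\,dt$ by maximal $L^6$-regularity and uses only $\bDA\hookrightarrow\pmb L^\infty(I)$ from Proposition~\ref{prop-reg-borne}. Your step~2 had the right idea with the wrong exponent: maximal $L^p$-regularity with $p=6$ instead of $p=2$ closes the argument at once. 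The repaired version of your route costs a Gagliardo--Nirenberg inequality but yields a pointwise-in-time estimate. The paper's integrated bound is all that Proposition~\ref{prop-borne-t_1-D(A)} uses.
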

\begin{proof}
Let $\bTi \in \pmb B_V(R)$. Then
\begin{equation}
\label{eq:sol with Y and Z}
    S(t)\bTi = e^{t {\bA}} \bTi + \int _0 ^t e^{(t-s) {\bA}} \bG_0 (S(s)\bTi) \, ds=:\pmb Y(t)+\pmb Z(t),\quad\forall\,t\geq0.
\end{equation}
Since $e^{t\bA}$ is an analytic semigroup of contractions we have that $\pmb Y(t)\in \bDA$ and there exist constants $C_0,\, C_1>0$ such that for all $t>0$
\begin{equation}
\label{eq:estim Y_H}
\Vert  \bY(t) \Vert_{{\bH}}
= \Vert e^{t \mathcal {\bA}} \bTi \Vert_{{\bH}}
\leq C_0 \Vert  \bTi \Vert_{{\bH}} ,
\end{equation}
and
\begin{equation}
\label{eq:estim Y_DA}
t \Vert \bA {\bY}(t) \Vert_{{\bH}}
= \Vert t \bA e^{t \bA} \bTi \Vert_{\bH}
\leq C_1 \Vert \bTi \Vert_{\bH} .
\end{equation}
Therefore,
\begin{multline*}
t^6 \norm{ \bY (t)}_{\bDA} ^6\\\leq C t ^6 \left(\norm{\pmb Y(t)} _{\bH} ^6 + \norm{\bA\pmb Y(t) } _{\bH} ^6\right)\leq C(t^6+1)R^6,\quad\forall\,t>0,\,\forall\,\bTi\in\pmb B_\bV(R).
\end{multline*}
Now, since $\bDA \hookrightarrow \pmb L^\infty(I)$, from the above estimate we deduce
\begin{equation*}
    t^6\norm{\pmb Y(t)}^6_{\pmb L^\infty(I)}\leq C(t^6+1)R^6,\quad\forall t>0,\,\forall\,\bTi\in \pmb B_\bV(R).
\end{equation*}
Next, recalling that $t\mapsto S(t)\bTi=:\bT(t)$ is a $\bV$-continuous function and \eqref{eq-T-bounded} holds, we have that
\begin{multline*}
    \norm{\bG_0(\bT(t))}^2_\bH\leq C(1+ \norm{\bT(t)}^2_\bH+\norm{\bT(t)}^8_\bV)\\\leq C(1+\sup_{0\leq t\leq \tau}\norm{\bT(t)}^8_\bV)\leq C(1+M^8(R)),\quad\forall\,\bTi\in\pmb B_\bV(R),\,\forall\,t\in[0,\tau_R].
\end{multline*}
Therefore, $\pmb g(t):=\bG_0(\bT(t))$ is in $ L^p(0,\tau_R;\bH)$ for all $p\in[1,+\infty]$. By maximal regularity (see \cite{PC-Vespri-1986}) for $p=6$ we have that
\begin{equation*}
    \int_0^{\tau_R} \left(\norm{\pmb Z'(t)}^6_\bH+\norm{\bA\pmb Z(t)}^6_\bH\right)\, dt\leq C\int_0^{\tau_R} \norm{\pmb g(t)}^6_\bH\,dt.
\end{equation*}
Since $\pmb Z(0)=\pmb 0$, from the above estimate it also follows that
\begin{equation*}
    \int_0^{\tau_R} \norm{\pmb Z(t)}_{\bH}^6\leq C\int_0^{\tau_R}\norm{\pmb g(t)}^6_\bH dt.
\end{equation*}
Therefore,
\begin{equation*}
    \int_0^{\tau_R} \norm{\pmb Z(t)}^6_{\bDA}dt\leq C\int_0^{\tau_R} \norm{\pmb g(t)}^6_\bH dt\leq C(R),
\end{equation*}
and we conclude that
\begin{equation*}
    \int _0 ^{\tau_R} t^6 \norm {S(t)\bTi} _{\pmb L^\infty (I)} ^6 \, dt\leq \int _0 ^{\tau_R} t^6 (\norm{\pmb Y(t)}_{\pmb L^\infty (I)} ^6+\norm{\pmb Z(t)}^6_{\pmb L^\infty (I)}) \,  \leq C(R).
\end{equation*}
Note that the constant’s value may change from line to line; here, we are only concerned with its dependence on $R$.
\end{proof}
\begin{proof}[Proof of Proposition \ref{prop-borne-t_1-D(A)}]

Let $\bTi\in \pmb B_\bV^+(R)$ and $\pmb{Y}(\cdot)$, $\pmb Z(\cdot)$ be defined as in \eqref{eq:sol with Y and Z}. 

From \eqref{eq:estim Y_H} and \eqref{eq:estim Y_DA} it follows that
\begin{equation}
\label{eq:estim Y D(A)}
        \norm{\pmb Y(t)}_{\bDA}\leq C(1+\frac{1}{t})R,\qquad\forall\,t>0,\,\forall\,\bTi\in \pmb B_\bV^+(R).
\end{equation}
Recall that $\pmb Z$ is the solution to the following problem
\begin{equation}
\label{eq: C-prb Z}
    \begin{cases}
        \pmb Z'(t)=\bA \pmb Z(t)+\pmb g(t),& t\geq0\\
        \pmb Z(0)=\pmb 0,
    \end{cases}
\end{equation}
with $\pmb g(t)=\bG_0(S(t)\bTi)$. Our aim is to prove that 
\begin{equation*}
    t\mapsto t^3 \pmb Z(t)\quad\text{is of class}\quad C([0,\tau_R];\bDA),
\end{equation*}
with $\tau_R$ defined in Proposition \ref{thm-loc-wellpos}. Since $\pmb g\in L^2(0,\tau_R;\bH)$, we also have that $\pmb Z\in C([0,\tau_R];\pmb V)$. Let us set
\begin{equation*}
    \pmb W(t)=t^3(\pmb I-\bA)^{1/2}\pmb Z(t),\qquad t\in[0,\tau_R],
\end{equation*}
and observe that $\pmb{W}$ solves the following Cauchy problem
\begin{equation}
\label{eq:problem W2}
    \begin{cases}
        \pmb W'(t)=\bA\pmb W(t)+\pmb h(t), &t\in(0,\tau_R)\\
        \pmb W(0)=\pmb 0,
    \end{cases}
\end{equation}
with 
\begin{equation*}
    \pmb h(t)=t^3(\pmb I-\bA)^{1/2}\pmb g(t)+3t^2(\pmb I -\bA)^{1/2}\pmb Z(t),\qquad t\in (0,\tau_R).
\end{equation*}
We note that
\begin{equation}
\label{eq:estim h}
    \norm{\pmb h(t)}^2_\bH\leq C\left(t^6\norm{\pmb g(t)}^2_\bV+t^4\norm{\pmb Z(t)}^2_\bV\right).
\end{equation}
The first term on the right-side of the above inequality can be bounded as follows
\begin{multline*}
\norm{\pmb g(t)}_\bV ^2
\leq C \left( 1+ \Vert T_a (t) \Vert _{L^2(I)} ^2 + \Vert T_s (t) \Vert _{L^2(I)} ^2 + \Vert T_a (t) \Vert _{L^8(I)} ^8 + \Vert T_s (t) \Vert _{L^8(I)} ^8 \right)
\\
\quad+ C \int _I (1-x^2) \left( \left(\frac{\partial T_a}{\partial x}\right) ^2 + \left(\frac{\partial T_s}{\partial x}\right) ^2 + T_a ^6 \left(\frac{\partial T_a}{\partial x}\right) ^2 + T_s ^6 \left(\frac{\partial T_s}{\partial x}\right) ^2 \right) \, dx 
\\
\leq C \left( 1+ \norm{S(t)\bTi}_\bV ^2  
+ \norm{S(t)\bTi}_{\pmb L^8(I)} ^8 \right)
\\+ C \int _I (1-x^2) \left( T_a ^6 \left(\frac{\partial T_a}{\partial x}\right) ^2 + T_s ^6 \left(\frac{\partial T_s}{\partial x}\right) ^2 \right) \, dx.
\end{multline*}
Since $S(t)\bTi$ is uniformly bounded in $\bV$ on $[0,\tau_R]$ due to Proposition \ref{thm-loc-wellpos}, recalling the continuous embedding $\bV\hookrightarrow{\pmb L^8}(I)$ we deduce that
\begin{multline*}
    t^6\norm{\pmb g(t)} _{\bV} ^2
\leq Ct^6\left(1+M^2(R)+M^8(R)\right)\\
+t^6 \norm{S(t)\bTi}^6_{\pmb L^\infty(I)} M^2(R),\qquad\forall\,t\in[0,\tau_R],\,\forall\, \bTi\in \pmb B^+_\bV(R).
\end{multline*}
Therefore, by applying Lemma \ref{lem-maj-T-infty} we obtain
\begin{equation*}
    \int_0^{\tau_R}t^6\norm{\pmb g(t)}^2_\bV dt\leq C(R).
\end{equation*}
Plugging the above estimate in \eqref{eq:estim h} and recalling that $\pmb Z\in C([0,\tau_R];\pmb V)$, we conclude that $\pmb h(t)\in L^2(0,\tau_R;\bH)$. Thus, \cite[Theorem 3.1, p. 143]{bensoussan1992} yields the following regularity for the solution of \eqref{eq:problem W2}
\begin{equation*}
   \pmb W(t)\in C([0,\tau_R];\bV), 
\end{equation*}
that in turns implies
\begin{equation*}
    t^3\pmb Z(t)\in C([0,\tau_R],\bDA).
\end{equation*}
Therefore,
\begin{equation}
    \sup_{t\in[0,\tau_R]}\norm{t^3\pmb Z(t)}_{\bDA}\leq \sup_{t\in[0,\tau_R]}\norm{\pmb W(t)}_\bV\leq C\norm{\pmb h}_{L^2((0,\tau_R),\bH)}\leq C(R).
\end{equation}
Together with \eqref{eq:estim Y D(A)}, this implies that
\begin{equation*}
    \norm{S(\tau_R)\bTi}_{\bDA}\leq C(R),\qquad\forall\,\bTi\in\pmb B_\bV^+(R).\qedhere
\end{equation*}
\end{proof}
\subsubsection{Construction of the global attractor\vspace{.2cm}}\label{subsec: constr-glob-att}\hfill

The following is the last item needed to prove Theorem \ref{thm-attractor}.
\begin{Lemma}
\label{lem-compacte}
The injection of $\bDA$ into ${\bV}$ is compact.
\end{Lemma}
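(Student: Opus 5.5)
Since $\bDA = D(A)\times D(A)$ and $\bV=V\times V$ with product norms, it suffices to prove that the scalar injection $D(A)\hookrightarrow V$ is compact. The cleanest route uses the spectral structure given by Proposition~\ref{resu-prop-A}. By assertion (1), $-A$ is self-adjoint and nonnegative with compact resolvent, so $H$ admits an orthonormal basis of eigenfunctions $(\phi_n)_n$ with $-A\phi_n=\lambda_n\phi_n$, $0\le\lambda_0\le\lambda_1\le\dots$ and $\lambda_n\to+\infty$. In this basis the graph norm of $D(A)$ is equivalent to $\sum_n(1+\lambda_n)^2|u_n|^2$, where $u_n=(u,\phi_n)_H$. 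The $V$-norm is $\norm{u}_V^2=\norm{u}_H^2+\norm{\sqrt\rho\, u_x}_H^2=\norm u_H^2+(-Au,u)_H$ for $u\in D(A)$; integrating by parts, the boundary terms vanish because $\rho u_x\in H^1_0(I)$. Hence $\norm{u}_V^2=\sum_n(1+\lambda_n)|u_n|^2$ on $D(A)$, and by density (Proposition~\ref{resu-prop-A}(3), $V=[D(A),H]_{1/2}=D((I-A)^{1/2})$) on all of $V$.

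With these representations, the embedding $D(A)\hookrightarrow V$ is unitarily equivalent to the diagonal operator $(I-A)^{-1/2}$ acting on $V$ (or equivalently $(1+\lambda_n)^{-1/2}$ on $\ell^2$). This operator is compact because $(1+\lambda_n)^{-1/2}\to0$. Concretely, I would take a bounded sequence $(u^k)$ in $D(A)$ with $\sum_n(1+\lambda_n)^2|u^k_n|^2\le C$. A diagonal argument extracts a subsequence converging coefficientwise. The tail estimate
$$\sum_{n>N}(1+\lambda_n)|u^k_n-u^j_n|^2\le \frac{4C}{1+\lambda_{N}}$$
then shows that the subsequence is Cauchy in $V$.

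As an alternative check that avoids the spectral decomposition, I would use an interpolation inequality: for $u\in D(A)$, $\norm{\sqrt\rho u_x}_H^2=(-Au,u)_H\le\norm{Au}_H\norm u_H$, so $\norm u_V^2\le \norm u_H^2+\norm u_{D(A)}\norm u_H$. A bounded sequence in $D(A)$ is bounded in $V$, hence precompact in $H$ by Proposition~\ref{resu-prop-V}(3). Applying the inequality to differences, Cauchy in $H$ together with bounded in $D(A)$ gives Cauchy in $V$.

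The only point requiring care is the integration by parts identity $(-Au,u)_H=\norm{\sqrt\rho u_x}_H^2$ for $u\in D(A)$. Functions of $V$ may be unbounded near $\pm1$ (Remark~\ref{rq-trace-V}), but $D(A)\subset C(\bar I)$ by Proposition~\ref{prop-reg-borne}, so $u$ is bounded. Since also $\rho u_x\in H^1_0(I)$, the boundary term $[\rho u_x u]_{-1}^{1}$ vanishes. I would verify this on $[-1+\delta,1-\delta]$ and let $\delta\to0$, and I expect this limit to be the main, though mild, obstacle.
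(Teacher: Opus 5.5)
Your proposal is correct. Your ``alternative check'' is the paper's proof. The paper combines three facts: the continuous embedding $D(A)\hookrightarrow V$, the compactness of $V\hookrightarrow L^2(I)$ (Proposition~\ref{resu-prop-V}(3)), and the interpolation inequality $\norm{u}_V\le C\norm{u}_H^{1/2}\norm{u}_{D(A)}^{1/2}$. The paper only asserts the interpolation inequality, whereas you justify it through $(-Au,u)_H=\norm{\sqrt\rho\,u_x}_H^2$ and Cauchy--Schwarz. Your treatment of the boundary term is sound, using the boundedness of $u$ from Proposition~\ref{prop-reg-borne} and $\rho u_x\in H^1_0(I)$, so the limit $\delta\to0$ is not a real obstacle.

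Your primary spectral route is genuinely different. It uses only the self-adjointness and compact resolvent of $A$ (Proposition~\ref{resu-prop-A}(1)), together with the identification of $\norm{\cdot}_V$ with the form norm $\sum_n(1+\lambda_n)|u_n|^2$ on $D(A)$. It never invokes Proposition~\ref{resu-prop-V}(3), whose proof in the paper is only sketched as an adaptation of an external result. As a by-product it even recovers the compactness of $V\hookrightarrow H$, via the multiplier $(1+\lambda_n)^{-1/2}$. The cost is heavier machinery, whereas the paper's argument takes two lines once $V\Subset H$ is granted.

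Two small remarks. First, the embedding is only equivalent, not unitarily equivalent, to the multiplier $(1+\lambda_n)^{-1/2}$, since the graph norm is $\sum_n(1+\lambda_n^2)|u_n|^2$; equivalence suffices for compactness. Second, you do not need the norm identity on all of $V$ (the density and interpolation step), because the differences $u^k-u^j$ you estimate already lie in $D(A)$.
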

\begin{proof}
The result follows from the continuous embedding $D(A)\hookrightarrow V$, from item (iii) of Proposition \ref{resu-prop-V} and the interpolation inequality
$$\exists\,C>0\,:\,\forall\, u \in D(A), \quad \Vert u \Vert _V \leq C \, \Vert u \Vert _H ^{1/2} \, \Vert u \Vert _{D(A)} ^{1/2} .$$
\end{proof}
We now have all the tools needed to prove the existence of the global attractor.

\begin{proof}[Proof of Theorem \ref{thm-attractor}]

Let $R>0$. By Proposition \ref{prop-borne-t_1-D(A)} there exists $\tau_R >0$ such that $S(\tau_R) \bB_\bV^+(R)\subseteq \bB^+_{\bDA}(C(R))$. Thus, applying Proposition \ref{prom-absorb-bounded}, we infer the existence of $t_1=t_1(R)>0$ such that
\begin{equation}
\label{eq:t_1}
    \forall\, t \geq t_1, \quad S(t) S(\tau_R) \bB_\bV^+(R)  \subset \pmb U,
\end{equation}
where $\pmb U$ defined in \eqref{eq: def U}.

Appealing to Proposition \ref{prop-borne-t_1-D(A)} once again, there exists $t_0>0$ (which depends on $\pmb U$, but not on $R$) such that
$S(t_0) \pmb U$ is bounded in $\bDA$. Therefore, 
\begin{equation}
\label{eq:t_0}
    S(t_0) \pmb U\Subset \bV_+
\end{equation}
by Lemma \ref{lem-compacte}. So, from \eqref{eq:t_1} and \eqref{eq:t_0} it follows that
$$\forall\, t \geq t_0  + t_1 + \tau_R, \quad S(t) \bB_\bV^+(R)=S(t_0)S(t-t_0-\tau_R)S(\tau_R) \bB_\bV^+(R)\subset S(t_0) \pmb U\Subset \bV_+ .$$
Finally, invoking \cite[Theorem 1.1]{Temam}, we conclude that the $\omega$-limit set of $S(t_0) \pmb U$ is a compact, connected global attractor in $\bV_+$. \footnote{Note that the entire conclusion of \cite[Theorem 1.1]{Temam})---including connectedness---applies to our problem although we are working in a metric space, because $\bV_+$ is convex and $S(t)$ is uniformly compact for $t$ sufficiently large.}
\end{proof}


\section{Concluding remarks}
\label{sec:conclusions}

In this work, we provided a rigorous analysis of a two-layer Energy Balance Model with degenerate diffusion, focusing on well-posedness, qualitative properties, and long-time behavior. We proved positivity of solutions, invariant regions, global existence, and the existence of a global attractor for $\varepsilon_a\in(0,2)$, as well as finite-time blow-up for $\varepsilon_a>2$.

Compared with the classical results in \cite{Hetzer-Schmidt1991}, our analysis establishes global existence and the presence of a global attractor under the weaker condition $\varepsilon_a \in (0,2)$. This demonstrates that the two-layer model remains mathematically well behaved over a broader parameter range. Furthermore, a possible physical interpretation can be drawn by relating this extended range to regimes of increasing atmospheric opacity. While values $\varepsilon_a \leq 1$ correspond to physically consistent atmospheric single-layer descriptions, the interval $\varepsilon_a \in (1,2)$ may be viewed as an effective parametrization of highly opaque atmospheres, still compatible with a stable radiative balance. In this perspective, the persistence of global attractors suggests that even strong greenhouse conditions do not necessarily lead to instability within the model. On the other hand, the critical threshold $\varepsilon_a = 2$ emerges as a tipping point beyond which the mathematical dynamics lose global regularity, reflecting the onset of a runaway greenhouse regime. This reinforces the interpretation of $\varepsilon_a = 2$ as a meaningful boundary between stable and unstable climate configurations, rather than a purely technical limitation of the analysis.

A central outcome is the existence of a \emph{warmest} stationary solution. While extremal equilibria are classical in monotone dynamical systems, here this solution has a clear modeling interpretation as the climate state corresponding to the highest admissible temperature compatible with the energy balance, and may be viewed as a mathematical analogue of the present climatic regime.

Several natural questions remain open. In particular, it is unclear whether the warmest stationary solution is asymptotically stable, possibly in a strong or exponential sense. Addressing this would require a refined spectral or nonlinear stability analysis in a setting complicated by degenerate diffusion and strong nonlinear couplings.

Another key issue concerns the dependence of the warmest stationary solution on the emissivity parameter $\varepsilon_a$. For the spatially homogeneous ODE system \cite{CLMUV-EDO}, this equilibrium is asymptotically stable and depends monotonically and analytically on $\varepsilon_a$, providing a rigorous formulation of the greenhouse effect. Whether similar properties hold in the PDE setting remains an open and challenging problem.

More broadly, extending qualitative results from the ODE model to the spatially heterogeneous PDE framework raises further questions, such as the structure of the global attractor, the possible existence of multiple stationary states, and the role of spatial heterogeneities in long-term dynamics. The analytical framework developed here provides a basis for these investigations and for future studies of more complex climate models.

\bigskip

\textbf{Acknowledgments}. The authors would like to thank Jochen Br\"ocker, Giulia Carigi and Tobias Kuna for fruitful discussions.

This research was supported by Istituto Nazionale di Alta Matematica - GNAMPA Research Projects - and by the MIUR Excellence Department Project awarded to the Department of Mathematics, University of Rome Tor Vergata, CUP E83C18000100006. 

Piermarco Cannarsa was also supported by the PRIN 2022 PNRR Project P20225SP98 ``Some mathematical approaches to climate change and its impacts"  (funded by the European Community-Next Generation EU), CUP E53D2301791 0001.

P. Martinez and J. Vancostenoble gratefully acknowledge the Istituto Nazionale di Alta Matematica (INDAM) and the University of Roma Tor Vergata for funding research stays during which part of this work was conducted. They were also partially supported by the French National Research Agency (ANR) through the TRECOS project (grant ANR‑20‑CE40‑0009) and the LabEx CIMI – Centre International de Mathématiques et d'Informatique (grant ANR‑11‑LABX‑0040) within the French programme \lq\lq Investissements d'Avenir".

V. Lucarini acknowledges the partial support provided by the Horizon Europe Projects ClimTIP (Grant No. 100018693) and Past2Future (Grant No. 101184070)., by ARIA SCOP-PR01-P003 - Advancing Tipping Point Early Warning AdvanTip, by the European Space Agency project PREDICT (contract 4000146344/24/I-LR), and by the NNSFC  International Collaboration Fund for Creative Research Teams (Grant No. W2541005).


\appendix
\section{Proof of Lemma \ref{lem-propr-G}}
\label{app: A}

In order to prove that $\bG$ is well-defined from $[0,\TT]\times \bV$ to $ \bH$, we choose any $t\in[0,\TT]$ and any $\pmb{u}\in  \bV$ and we study the $ \bH$-norm of $\bG(t,\pmb{u})$. First, we observe that the $\bH $-norm of $\bG(t,\pmb{u})$ is finite since $V$ is continuously embedded in $L^q(I)$ for all $1\leq q<+\infty$ (see (2) of Proposition \ref{resu-prop-V}). 

The continuity of $\bG: [0,\TT] \times \bV \to \bH$ will follow from   
\eqref{propr2-G} and \eqref{propr3-G}. First, we prove \eqref{propr2-G}: for every $t,s\in[0,\TT]$ and $\pmb{u}\in \bV$, we have that
\begin{equation*}
\begin{split}
& \norm{\bG(t,\pmb{u})-\bG(s,\pmb{u})}^2_{\bH}
\\& \quad 
=\int_I|r(t)-r(s)|^2q(x)^2\beta_a(u_a)^2dx
+\int_I|r(t)-r(s)|^2q(x)^2\beta_s(u_s)^2dx
\\& \quad
\leq 
2 \left( \norm{r}^2_{Lip}\norm{q}^2_{L^\infty(I)}\norm{\beta_a}^2_{L^\infty(\RR)}
+
\norm{r}^2_{Lip}\norm{q}^2_{L^\infty(I)}\norm{\beta_s}^2_{L^\infty(\RR)}\right)
|t-s|^2,
\end{split}
\end{equation*}
where $\norm{\cdot}_{Lip}$ denotes the Lipschitz semi-norm in $(0+\infty)$. Then, \eqref{propr2-G} is verified.

Next, we prove \eqref{propr3-G}: let $t\in[0,\TT]$ and consider $\pmb{u},\pmb{\tilde u}\in \bV$. Then, we have
\begin{equation*}
\begin{split}
&\norm{\bG(t,\pmb{u})-\bG(t,\pmb{\tilde u})}_{\bH}^2
\\
&\qquad
=\frac{1}{\gamma_a^2}\int_I
\Big|-\lambda \left((u_a-u_s)-(\tilde u_a-\tilde u_s)\right)
+\varepsilon_a\sigma_B\left(|u_s|^3u_s-|\tilde u_s|^3|\tilde u_s\right)
\\
&\qquad\qquad \qquad\qquad
-2\varepsilon_a\sigma_B\left(|u_a|^3u_a-|\tilde u_a|^3\tilde u_a\right) +r(t)q\left(\beta_a(u_a)-\beta_a(\tilde u_a)\right)\Big|^2dx
\\
&\qquad\quad
+\frac{1}{\gamma_s^2}\int_I
\Big|-\lambda\left((u_s-u_a)-(\tilde u_s-\tilde u_a)\right)
-\sigma_B\left(|u_s|^3u_s-|\tilde u_s|^3\tilde u_s\right)\\
&\qquad\qquad \qquad\qquad
+\varepsilon_a\sigma_B\left(|u_a|^3u_a-|\tilde u_a|^3\tilde u_a\right)+r(t)q\left(\beta_s(u_s)-\beta_s(\tilde u_s)\right)\Big|^2dx
.\end{split}
\end{equation*}
Since $r, q \in L^\infty$, and $\beta _a, \beta_s$ are globally Lipschitz, it is easy to see that
\begin{multline*}
\norm{\bG(t,\pmb{u})-\bG(t,\pmb{\tilde u})}_{\bH}^2
\leq \\
C \left( \norm{\pmb{u} - \pmb{\tilde u}}_{\bH}^2 
+ \int_I \left||u_a|^3u_a-|\tilde u_a|^3\tilde u_a\right|^2 \, dx
+ \int_I \left||u_s|^3u_s-|\tilde u_s|^3\tilde u_s\right|^2 \, dx
\right) .
\end{multline*}
Moreover,
\begin{equation*}
\begin{split}
& \int_I\left||u_s|^3u_s-|\tilde u_s|^3\tilde u_s\right|^2dx
= \int_I
\left|  |u_s|^3 (u_s-\tilde u_s) + \tilde u_s (|u_s|^3-|\tilde u_s|^3)
 \right|^2dx
\\
& \quad 
\leq
2\int_I \left[ |u_s|^6|u_s-\tilde u_s|^2 \right]
+ \left[ |\tilde u_s|^2 \left(|u_s|^2+|u_s||\tilde u_s|+|\tilde u_s|^2\right)^2
\left| |u_s|-|\tilde u_s| \right|^2 \right] \, dx
\\
& \quad 
\leq 2\norm{u_s}_{L^{12}(I)}^6\norm{u_s-\tilde u_s}_{L^4(I)}^2
+ C \int_I \left(|u_s|^6+|\tilde u_s|^6\right)
|u_s-\tilde u_s|^2 \, dx
\\
& \quad 
\leq C' \left( \norm{u_s}_{V}^6 + \norm{\tilde u_s}_{V}^6 \right) \norm{u_s-\tilde u_s}_{V}^2 ,
\end{split}
\end{equation*}
where, in the last inequality, we have used that $V$ is continuously embedded in $L^p(I)$ for all $p\in [1,+\infty)$. This proves \eqref{propr3-G}.

\section{Proof of Proposition \ref{prop-reg-borne}}

The proof of Proposition \ref{prop-reg-borne} follows from the following Hardy-type inequality, adapted from \cite{JV-DCDS2011}:

\begin{Lemma}
\label{lem-Hardy}
Let $n>0$ and $\gamma \in (0,1)$. Then, there exists a constant $C_{n,\gamma}>0$ such that for all $v \in V$, we have
\begin{equation}
\label{eq-Hardy}
n \int _{-1} ^1 \frac{v(x)^2}{(1-x^2)^{\gamma}} \, dx \leq \int _{-1} ^1 (1-x^2) v'(x) ^2 \, dx + C_{n,\gamma} \int _{-1} ^1 v(x)^2 \, dx .
\end{equation}
\end{Lemma}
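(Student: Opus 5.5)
We prove the Hardy-type inequality \eqref{eq-Hardy} by splitting $I$ into a neighborhood of each endpoint and an interior part. By the density of $C_0^\infty(I)$ in $V$ (Proposition \ref{resu-prop-V}(1)) and Fatou's lemma on the left-hand side, it suffices to treat $v\in C_0^\infty(I)$. By symmetry $x\mapsto -x$ we work near $x=1$. Set $y=1-x$, so $1-x^2=y(2-y)$, which is comparable to $y$ on $(0,1]$. It is then enough to show the one-sided statement: for every $n>0$ there is $\delta\in(0,1)$ such that
\begin{equation*}
n\int_0^{\delta}\frac{w(y)^2}{y^{\gamma}}\,dy\le \int_0^{\delta} y\, w'(y)^2\,dy + C\int_{\delta/2}^{\delta} w(y)^2\,dy ,
\end{equation*}
for smooth $w$ on $[0,\delta]$. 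Here $w$ need not vanish at $0$, since functions in $V$ may be unbounded there (Remark \ref{rq-trace-V}). The interior region $\{1-x^2\ge c\}$ is trivial, since there $(1-x^2)^{-\gamma}$ is bounded and contributes to $C_{n,\gamma}\int v^2$.

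The key step is a weighted one-dimensional estimate near $y=0$. Take a cutoff $\eta$ equal to $1$ on $[0,\delta/2]$ and $0$ near $\delta$, and set $u=\eta w$. Write
\begin{equation*}
u(y)^2=-\int_y^{\delta}(u^2)'(s)\,ds=-2\int_y^\delta u\,u'\,ds .
\end{equation*}
Multiply by $y^{-\gamma}$ and integrate in $y$ using Fubini. This gives
\begin{equation*}
\int_0^\delta \frac{u^2}{y^\gamma}\,dy\le \frac{2}{1-\gamma}\int_0^\delta s^{1-\gamma}|u||u'|\,ds .
\end{equation*}
Apply Cauchy--Schwarz, splitting $s^{1-\gamma}=s^{1/2}\cdot s^{1/2-\gamma}$:
\begin{equation*}
\int_0^\delta \frac{u^2}{y^\gamma}\,dy\le \frac{2}{1-\gamma}\Big(\int_0^\delta s\,u'^2\Big)^{1/2}\Big(\int_0^\delta s^{1-2\gamma}u^2\Big)^{1/2}.
\end{equation*}
Since $s^{1-2\gamma}=s^{1-\gamma}\,s^{-\gamma}\le \delta^{1-\gamma}s^{-\gamma}$ on $(0,\delta)$, the last factor is bounded by $\delta^{(1-\gamma)/2}\big(\int_0^\delta u^2 s^{-\gamma}\big)^{1/2}$. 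Dividing through yields
\begin{equation*}
\int_0^\delta \frac{u^2}{y^\gamma}\,dy\le \frac{4\delta^{1-\gamma}}{(1-\gamma)^2}\int_0^\delta s\,u'^2\,ds .
\end{equation*}
Choosing $\delta$ small so that $n\cdot 4\delta^{1-\gamma}(1-\gamma)^{-2}\le 1/4$ gives smallness in front of the degenerate Dirichlet term. This is where $\gamma<1$ is essential.

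The remaining work is bookkeeping. We have $u'=\eta w'+\eta' w$, so $s\,u'^2\le 2s\,w'^2+2\|\eta'\|_\infty^2\delta\, w^2\chi_{[\delta/2,\delta]}$. Absorb the constant $2$ by choosing $\delta$ a bit smaller. Convert back to the variable $x$: $y$ is comparable to $1-x^2$ near $1$ (with constants $1$ and $2$), and $\eta$ equals $1$ near the endpoint. Summing the two endpoint pieces and the interior piece gives \eqref{eq-Hardy} with $C_{n,\gamma}$ depending on $\delta$, $\|\eta'\|_\infty$ and $\sup_{\{1-x^2\ge \delta/2\}}(1-x^2)^{-\gamma}$.

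The main obstacle is that no boundary term can be dropped. We avoid it by integrating from the interior cutoff point, where $u$ vanishes, toward the singular endpoint, so only $u(\delta)=0$ is used. A second point to watch is the absorption step: we need $\int u^2 y^{-\gamma}<\infty$ before dividing, which holds for smooth $u$, and this is why we first reduce to $C_0^\infty(I)$ functions.
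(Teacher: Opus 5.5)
Your proof is correct, but it takes a different route from the paper's.

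\textbf{The paper's route.} The paper uses neither a cutoff nor a small interval. For each $a\in[-\tfrac12,0]$ it expands the nonnegative quantity $\int_{-1}^a\bigl(\sqrt{1-x^2}\,v'+v\,(1-x^2)^{-\gamma/2}\bigr)^2\,dx$ and integrates the cross term $(1-x^2)^{(1-\gamma)/2}(v^2)'$ by parts. This puts the coefficient $\frac{n+1}{(1-x^2)^{\gamma}}+\frac{x(1-\gamma)}{(1-x^2)^{(1+\gamma)/2}}$ in front of $v^2$. That coefficient is bounded above on $(-1,0]$, because for $\gamma<1$ the negative term is the more singular one, since $(1+\gamma)/2>\gamma$. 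The leftover boundary term $(1-a^2)^{(1-\gamma)/2}v(a)^2$ is then removed by averaging over $a\in[-\tfrac12,0]$, and the same is done on $[0,1]$.

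\textbf{Your route.} You localize with a cutoff to $[0,\delta]$. You prove $\int_0^\delta u^2y^{-\gamma}\,dy\le \frac{4\delta^{1-\gamma}}{(1-\gamma)^2}\int_0^\delta y\,u'^2\,dy$ by Fubini and Cauchy--Schwarz, and choose $\delta$ small in terms of $n$. Here $\gamma<1$ enters through the factor $\delta^{1-\gamma}$.

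\textbf{Comparison.} Your argument is the more standard one, and it makes the role of subcriticality and the dependence of $C_{n,\gamma}$ on $n$ and $\gamma$ transparent. The price is the reduction to $C_0^\infty(I)$ via density plus Fatou. You need it to know that $\int u^2y^{-\gamma}$ is finite before dividing, and you handle it correctly. (Your aside that $w$ need not vanish at $0$ is moot after that reduction, but harmless, since you only use $u(\delta)=0$.) The paper's completing-the-square identity needs no a priori finiteness of the weighted integral. It can be run directly on $v\in V$ by truncating at $-1+\epsilon$, where the boundary contribution has a favorable sign. Its averaging in $a$ plays the role of your cutoff function.
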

\begin{proof}
The argument of \cite{JV-DCDS2011} cannot be directly applied because the operator at hand does not satisfy Dirichlet boundary conditions. However, we can follow the same strategy. Let $a \in [-\frac{1}{2},0]$. Then, we have
\begin{multline*}
0 \leq \int _{-1} ^a \left( \sqrt{1-x^2} v'(x) + \frac{v(x)}{(1-x^2)^{\gamma /2}} \right)^2 \, dx
\\
= \int _{-1} ^a (1-x^2) v'(x) ^2 + \frac{v(x)^2}{(1-x^2)^{\gamma }} + (1-x^2)^{(1-\gamma) /2} (v(x)^2)' \, dx .
\end{multline*}
Integrating by parts the last term, we get
\begin{multline*}
\int _{-1} ^a (1-x^2)^{(1-\gamma) /2} (v(x)^2)' \, dx
\\
=
\left[ (1-x^2)^{(1-\gamma) /2} v(x)^2 \right] _{-1} ^a 
- \frac{1-\gamma}{2} \int _{-1} ^a -2x (1-x^2)^{(-1-\gamma) /2} v(x)^2 \, dx .
\end{multline*}
Therefore
$$
0 \leq \int _{-1} ^a (1-x^2) v'(x) ^2 + \left( \frac{1}{(1-x^2)^{\gamma }}
+ \frac{x(1-\gamma)}{(1-x^2)^{(1+\gamma) /2}} \right) v(x)^2 \, dx 
+ (1-a^2)^{(1-\gamma) /2} v(a)^2,
$$
and
\begin{multline*}
n \int _{-1} ^a \frac{v(x)^2}{(1-x^2)^{\gamma }} \, dx 
\leq (1-a^2)^{(1-\gamma) /2} v(a)^2
\\
+ \int _{-1} ^a 
\left\{ (1-x^2) v'(x) ^2  + \left( \frac{n+1}{(1-x^2)^{\gamma }}
+ \frac{x(1-\gamma)}{(1-x^2)^{(1+\gamma) /2}} \right) v(x)^2  \right\}
\, dx .
\end{multline*}
We note that the function
$$ x \mapsto \frac{n+1}{(1-x^2)^{\gamma }}
+ \frac{x(1-\gamma)}{(1-x^2)^{(1+\gamma) /2}} $$
is continuous on $(-1,0]$ and tends to $-\infty$ as $x\to (-1)^+$ (since $\gamma <1$). Setting
$$ c_{n,\gamma}:= \sup _{(-1,0]} \frac{n+1}{(1-x^2)^{\gamma }}
+ \frac{x(1-\gamma)}{(1-x^2)^{(1+\gamma) /2}} ,$$
we deduce that, for all $a \in [-\frac{1}{2},0]$ :
\begin{multline*}
n \int _{-1} ^{-1/2} \frac{v(x)^2}{(1-x^2)^{\gamma }} \, dx 
\leq  n \int _{-1} ^a \frac{v(x)^2}{(1-x^2)^{\gamma }} \, dx 
\\
\leq (1-a^2)^{(1-\gamma) /2} v(a)^2
+ \int _{-1} ^a 
\left\{ (1-x^2) v'(x) ^2  + c_{n,\gamma} v(x)^2 \right\} \, dx \\
\leq (1-a^2)^{(1-\gamma) /2} 
 v(a)^2
+ \int _{-1} ^0 \left\{ (1-x^2) v'(x) ^2  + c_{n,\gamma} v(x)^2 \right\} \, dx
.
\end{multline*}
Integrating with respect to $a$ over $[-\frac{1}{2},0]$, we obtain that
$$ \frac{n}{2} \int _{-1} ^{-1/2} \frac{v(x)^2}{(1-x^2)^{\gamma }} \, dx 
\leq \int _{-1/2} ^0 
(1-a^2)^{(1-\gamma) /2} v(a)^2 \, da
+ \frac{1}{2} \int _{-1} ^0 
\left\{(1-x^2) v'(x) ^2  + c_{n,\gamma} v(x)^2 \right\}
\, dx .$$
Therefore
\begin{equation}
\label{eq-Hardy-}
 n \int _{-1} ^{-1/2} \frac{v(x)^2}{(1-x^2)^{\gamma }} \, dx 
\leq (2 + c_{n,\gamma})\int _{-1} ^0 v(x)^2 \, dx
+ \int _{-1} ^0 (1-x^2) v'(x) ^2 \, dx.
\end{equation}
Fix now $b \in [0,\frac{1}{2}]$. Then, we have
$$
0 \leq \int _{b} ^1 \left( \sqrt{1-x^2} v'(x) - \frac{v(x)}{(1-x^2)^{\gamma /2}} \right)^2 \, dx ,
$$
and integrating with respect to $b$ over $[0,\frac{1}{2}]$, we obtain that 
\begin{equation}
\label{eq-Hardy+}
n \int _{1/2} ^1 \frac{v(x)^2}{(1-x^2)^{\gamma }} \, dx 
\leq (2 + c' _{n,\gamma})\int _{0} ^1 v(x)^2 \, dx
+ \int _{0} ^1 (1-x^2) v'(x) ^2 \, dx ,
\end{equation}
where 
$$ c' _{n,\gamma} = \sup _{[0,1)} \frac{n+1}{(1-x^2)^{\gamma }}
- \frac{x(1-\gamma)}{(1-x^2)^{(1+\gamma) /2}} .$$
Then \eqref{eq-Hardy-} and \eqref{eq-Hardy+} yield
$$ n \int _{-1} ^{1} \frac{v(x)^2}{(1-x^2)^{\gamma }} \, dx 
\leq (4 + c _{n,\gamma} + c' _{n,\gamma} + \frac{4}{3}n ) \int _{-1} ^1 v(x)^2 \, dx
+ \int _{-1} ^1 (1-x^2) v'(x) ^2 \, dx .$$
This concludes the proof of  \eqref{eq-Hardy}. 
\end{proof}
We now prove Proposition \ref{prop-reg-borne}.
\begin{proof}[Proof of Proposition \ref{prop-reg-borne}] Let $u \in D(A)$, and fix $\gamma <1$. Then
\begin{multline}\label{Z1}
\int _{-1} ^1 \vert (u (x)^2)' \vert \, dx
=  2 \int _{-1} ^1 \left \vert \frac{u (x)}{(1-x^2)^{\gamma /2}} \right \vert \, \vert (1-x^2)^{\gamma /2} u' (x) \vert\, dx
\\ 
\leq \int _{-1} ^1  \frac{u (x)^2}{(1-x^2)^{\gamma}} \, dx 
+ \int _{-1} ^1 (1-x^2)^{\gamma} u' (x) ^2 \, dx .
\end{multline}
We use Lemma \ref{lem-Hardy} to bound the first term: since $u\in V$, we have
\begin{equation}\label{Z2}
\int _{-1} ^1 \frac{u(x)^2}{(1-x^2)^{\gamma}} \, dx \leq \int _{-1} ^1 (1-x^2) u'(x) ^2 \, dx + C_{1,\gamma} \int _{-1} ^1 u(x)^2 \, dx \leq C \Vert u \Vert _{V} ^2 .
\end{equation}
For the second one, we observe that
$$ \int _{-1} ^1 (1-x^2)^{\gamma} u' (x) ^2 \, dx
= \int _{-1} ^1 (1-x^2)^{\gamma -2} \left( (1-x^2) u' (x) \right) ^2 \, dx 
=\int _{-1} ^1 (1-x^2)^{\gamma -2} w(x) ^2 \, dx  
$$
where we denote 
$$ w(x) := (1-x^2) u' (x) .$$
Since $u \in D(A)$, we know that 
$w \in H^1_0(I)$ and one can deduce that 
\begin{equation}\label{appl-Hardy-class}
\int _{-1} ^1 \frac{w(x)^2}{(1-x^2)^{2}}  \, dx
\\ 
\leq C_H \int _{-1} ^1  w' (x) ^2 \, dx,
\end{equation}
for some $C_H>0$. 
Indeed the classical Hardy inequality ensures that all $W \in H^1 _0(0,1)$: 
\begin{equation}\label{Hardy-class}
 \int_0^1 \frac{W(x)^2}{x^2}dx \leq \frac{1}{4} \int_0 ^1  W'(x)^2 dx .
\end{equation}
By a  change of variables, one obtain \eqref{appl-Hardy-class} from \eqref{Hardy-class}.

Then, applying \eqref{appl-Hardy-class} to $w(x) := (1-x^2) u' (x)$ and using  the fact that  $ \gamma >0$, we obtain 
$$
\int _{-1} ^1 (1-x^2)^{\gamma -2} w (x) ^2 \, dx 
\leq \int _{-1} ^1 (1-x^2)^{-2} w (x) ^2 \, dx 
 \leq C_H \int _{-1} ^1 w'(x)  ^2 \, dx 
.
$$
It follows that 
\begin{multline}\label{Z3}
\int _{-1} ^1 (1-x^2)^{\gamma}   u' (x)  ^2 \, dx
= \int _{-1} ^1 (1-x^2)^{\gamma -2} w (x) ^2 \, dx
\\ 
\leq C_H \int _{-1} ^1 w'(x)  ^2 \, dx =
 C_H \int _{-1} ^1 \vert ((1-x^2) u' (x))'  \vert^2 \, dx
\leq C_H \Vert u \Vert _{D(A)} ^2 .
\end{multline}
From \eqref{Z1}, \eqref{Z2} and \eqref{Z3}, we deduce that there exists $C$, independent of $u$, such that
$$ \int _{-1} ^1 \vert (u (x)^2)' \vert \, dx \leq C \Vert u \Vert _{D(A)} ^2 .$$
This implies that $u^2 \in W^{1,1} (I)$. Consequently, $u^2$ is absolutely continuous on $[-1,1]$, and therefore continuous on $[-1,1]$. 

In particular, $u^2$ admits a finite limit as $x\to 1^-$. If this limit is zero, then necessarily $u (x) \to 0$ as $x\to 1^-$.  Otherwise, if the limit is strictly positive, then $u^2$ remains bounded away from zero in a neighborhood of $x=1$. By continuity, $u$ does not change sign near $x=1$, and hence admits a finite limit as $x\to 1^-$.

Moreover, since $u \in H^1 (-d,d)$ for every $d \in (0,1)$, it follows that $u$ is continuous on $I$. An analogous argument applies to the limit $x\to (-1)^+$.

Finally, we obtain the estimate 
$$ \Vert u \Vert _{\infty} ^2 = \Vert u ^2 \Vert _{\infty} \leq C \Vert u ^2 \Vert _{W^{1,1} (I)} \leq C'  \Vert u \Vert _{D(A)} ^2 ,$$
for suitable constants $C,C'>0$.

This completes the proof of Proposition \ref{prop-reg-borne}. 
\end{proof}

\bibliographystyle{plain}
\bibliography{biblio}

@PREAMBLE{
 "\providecommand{\noopsort}[1]{}" 
 # "\providecommand{\singleletter}[1]{#1}%" 
}

@article{Sheffrey2006,
      author = "Shaffrey, L. and Sutton, R.",
      title = "Bjerknes Compensation and the Decadal Variability of the Energy Transports in a Coupled Climate Model",
      journal = "Journal of Climate",
      year = "2006",
      publisher = "American Meteorological Society",
      address = "Boston MA, USA",
      volume = "19",
      number = "7",
      doi = "10.1175/JCLI3652.1",
      pages=      "1167 - 1181",
      url = "https://journals.ametsoc.org/view/journals/clim/19/7/jcli3652.1.xml"
}

@article{Stone1978,
   author    =  "P. H. Stone",
   title     =  "Constraints on dynamical transports of energy on a spherical planet",
   year      =  "1978",
   journal   =  "Dyn. Atmos. Oceans",
   volume    =  "2",
   pages     =  "123--139"}

@incollection{Bjerknes1964,
title = {Atlantic Air-Sea Interaction},
editor = {H.E. Landsberg and J. {Van Mieghem}},
series = {Advances in Geophysics},
publisher = {Elsevier},
volume = {10},
pages = {1-82},
year = {1964},
issn = {0065-2687},
doi = {https://doi.org/10.1016/S0065-2687(08)60005-9},
url = {https://www.sciencedirect.com/science/article/pii/S0065268708600059},
author = {J. Bjerknes},
}

@article{Outten2018,
      author = "Outten, S. and Esau, I. and Otterå, O. H.",
      title = "Bjerknes Compensation in the CMIP5 Climate Models",
      journal = "Journal of Climate",
      year = "2018",
      publisher = "American Meteorological Society",
      address = "Boston MA, USA",
      volume = "31",
      number = "21",
      doi = "10.1175/JCLI-D-18-0058.1",
      pages=      "8745 - 8760",
      url = "https://journals.ametsoc.org/view/journals/clim/31/21/jcli-d-18-0058.1.xml"
}

@article{Lucarini2024,
  title = {Detecting and Attributing Change in Climate and Complex Systems: Foundations, Green's Functions, and Nonlinear Fingerprints},
  author = {Lucarini, V. and Chekroun, M. D.},
  journal = {Phys. Rev. Lett.},
  volume = {133},
  issue = {24},
  pages = {244201},
  numpages = {11},
  year = {2024},
  month = {Dec},
  publisher = {American Physical Society},
  doi = {10.1103/PhysRevLett.133.244201},
  url = {https://link.aps.org/doi/10.1103/PhysRevLett.133.244201}
}

@article{Ramirez2024,
doi = {10.3847/PSJ/ad0729},
url = {https://dx.doi.org/10.3847/PSJ/ad0729},
year = {2024},
month = {jan},
publisher = {The American Astronomical Society},
volume = {5},
number = {1},
pages = {2},
author = {Ramirez, R. M.},
title = {A New 2D Energy Balance Model for Simulating the Climates of Rapidly and Slowly Rotating Terrestrial Planets},
journal = {The Planetary Science Journal},
}

@article{HaqqMisra2022,
doi = {10.3847/PSJ/ac49eb},
url = {https://dx.doi.org/10.3847/PSJ/ac49eb},
year = {2022},
month = {feb},
publisher = {The American Astronomical Society},
volume = {3},
number = {2},
pages = {32},
author = {Haqq-Misra, Jacob and Hayworth, Benjamin P. C.},
title = {An Energy Balance Model for Rapidly and Synchronously Rotating Terrestrial Planets},
journal = {The Planetary Science Journal},
}

@Article{Knietzsch2015,
AUTHOR = {Knietzsch, M.-A. and Schr\"oder, A. and Lucarini, V. and Lunkeit, F.},
TITLE = {The impact of oceanic heat transport on the atmospheric circulation},
JOURNAL = {Earth System Dynamics},
VOLUME = {6},
YEAR = {2015},
NUMBER = {2},
PAGES = {591--615},
URL = {https://esd.copernicus.org/articles/6/591/2015/},
DOI = {10.5194/esd-6-591-2015}
}

@article{AL-CAN-FRA,
    AUTHOR = {Alabau-Boussouira, F. and Cannarsa, P. and Fragnelli, G.},
     TITLE = {Carleman estimates for degenerate parabolic operators with
              applications to null controllability},
   JOURNAL = {J. Evol. Equ.},
  FJOURNAL = {Journal of Evolution Equations},
    VOLUME = {6},
      YEAR = {2006},
    NUMBER = {2},
     PAGES = {161--204},
      ISSN = {1424-3199,1424-3202},
   MRCLASS = {93B05 (35B45 35K20 35K65 93C20)},
  MRNUMBER = {2227693},
MRREVIEWER = {Angelo\ Favini},
       DOI = {10.1007/s00028-006-0222-6},
       URL = {https://doi.org/10.1007/s00028-006-0222-6},
}

@book {bensoussan1992,
    AUTHOR = {Bensoussan, A. and Da Prato, G. and Delfour, M. C. and Mitter, S. K.},
     TITLE = {Representation and control of infinite-dimensional systems.
              {V}ol. {II}},
    SERIES = {Systems \& Control: Foundations \& Applications},
 PUBLISHER = {Birkh\"auser Boston, Inc., Boston, MA},
      YEAR = {1993},
     PAGES = {xviii+345},
      ISBN = {0-8176-3642-0},
   MRCLASS = {49-02 (34H05 47D03 47N70 49J27 49K27 93C95)},
  MRNUMBER = {1246331},
MRREVIEWER = {Irena\ Lasiecka},
       DOI = {10.1007/978-1-4612-2750-2},
       URL = {https://doi.org/10.1007/978-1-4612-2750-2},
}

@article{Budyko,
  title={The effect of solar radiation variations on the climate of the Earth},
  author={Budyko, M. I.},
  journal={Tellus},
  volume={21},
  number={5},
  pages={611--619},
  year={1969},
  publisher={Taylor \& Francis}
}

@article {Campiti1998,
    AUTHOR = {Campiti, M. and Metafune, G. and Pallara, D.},
     TITLE = {Degenerate self-adjoint evolution equations on the unit
              interval},
   JOURNAL = {Semigroup Forum},
  FJOURNAL = {Semigroup Forum},
    VOLUME = {57},
      YEAR = {1998},
    NUMBER = {1},
     PAGES = {1--36},
      ISSN = {0037-1912,1432-2137},
   MRCLASS = {35K65 (34D05 47D06)},
  MRNUMBER = {1621852},
MRREVIEWER = {Paolo\ Acquistapace},
       DOI = {10.1007/PL00005959},
       URL = {https://doi.org/10.1007/PL00005959},
}

@article {CA-MA-UR,
    AUTHOR = {Cannarsa, P. and Martinez, P. and Urbani,
              C.},
     TITLE = {Bilinear control of a degenerate hyperbolic equation},
   JOURNAL = {SIAM J. Math. Anal.},
  FJOURNAL = {SIAM Journal on Mathematical Analysis},
    VOLUME = {55},
      YEAR = {2023},
    NUMBER = {6},
     PAGES = {6517--6553},
      ISSN = {0036-1410,1095-7154},
   MRCLASS = {93B05 (33C10 35L80 42C40 93C10 93C20)},
  MRNUMBER = {4662407},
MRREVIEWER = {Julian\ Edward},
       DOI = {10.1137/22M148745X},
       URL = {https://doi.org/10.1137/22M148745X},
}

@article {CPAA2004,
    AUTHOR = {Cannarsa, P. and Martinez, P. and Vancostenoble,
              J.},
     TITLE = {Persistent regional null controllability for a class of
              degenerate parabolic equations},
   JOURNAL = {Commun. Pure Appl. Anal.},
  FJOURNAL = {Communications on Pure and Applied Analysis},
    VOLUME = {3},
      YEAR = {2004},
    NUMBER = {4},
     PAGES = {607--635},
      ISSN = {1534-0392,1553-5258},
   MRCLASS = {93B05 (35B37 35K20 35K65 93C20)},
  MRNUMBER = {2106292},
MRREVIEWER = {Kim\ Dang\ Phung},
       DOI = {10.3934/cpaa.2004.3.607},
       URL = {https://doi.org/10.3934/cpaa.2004.3.607},
}

@article{CMV-note,
    author = {Cannarsa, P. and Martinez, P. and Vancostenoble, J.},
    title = {High order Hardy-type inequalities and optimal Sobolev embeddings for strongly degenerate elliptic operators},
    journal = {submitted},
}

@article {CLMUV-EDO,
    AUTHOR = {Cannarsa, P. and Lucarini, V. and Martinez, P. and Urbani, C.
              and Vancostenoble, J.},
     TITLE = {Analysis of a two-layer energy balance model: long time
              behavior and greenhouse effect},
   JOURNAL = {Chaos},
  FJOURNAL = {Chaos. An Interdisciplinary Journal of Nonlinear Science},
    VOLUME = {33},
      YEAR = {2023},
    NUMBER = {11},
     PAGES = {Paper No. 113111, 34},
      ISSN = {1054-1500,1089-7682},
   MRCLASS = {86A10},
  MRNUMBER = {4663335},
       DOI = {10.1063/5.0136673},
       URL = {https://doi.org/10.1063/5.0136673},
}

@incollection{Diaz1997,
  title={On the mathematical treatment of energy balance climate models},
  author={D{\'\i}az, J. I.},
  booktitle={The mathematics of models for climatology and environment},
  pages={217--251},
  year={1997},
  publisher={Springer}
}

@book {Fife,
    AUTHOR = {Fife, P. C.},
     TITLE = {Mathematical aspects of reacting and diffusing systems},
    SERIES = {Lecture Notes in Biomathematics},
    VOLUME = {28},
 PUBLISHER = {Springer-Verlag, Berlin-New York},
      YEAR = {1979},
     PAGES = {iv+185},
      ISBN = {3-540-09117-3},
   MRCLASS = {35-02 (92A15)},
  MRNUMBER = {527914},
MRREVIEWER = {C.\ V.\ Pao},
}

@article {Ghil76,
    AUTHOR = {Ghil, M.},
     TITLE = {Climate stability for a {S}ellers-type model},
   JOURNAL = {Journal of the Atmospheric Sciences},
    VOLUME = {33},
      YEAR = {1976},
    NUMBER = {1},
     PAGES = {3--20}
}

@article {GL2020,
    AUTHOR = {Ghil, M. and Lucarini, V.},
     TITLE = {The physics of climate variability and climate change},
   JOURNAL = {Reviews of Modern Physics},
    VOLUME = {92},
      YEAR = {2020},
    NUMBER = {3},
     PAGES = {035002, 77}
}

@book{Hartmann,
  title={Global physical climatology, 2nd edn},
  author={Hartmann, D. L.},
  volume={103},
  year={2016},
  publisher={Newnes}
}

@article{Hoffman2000,
  title={The snowball Earth hypothesis: testing the limits of global change},
  author={Hoffman, P. F and Schrag, D. P.},
  journal={Terra nova},
  volume={14},
  number={3},
  pages={129--155},
  year={2002},
  publisher={Wiley Online Library}
}

@book {lions1968problemes,
    AUTHOR = {Lions, J.-L. and Magenes, E.},
     TITLE = {Probl\`emes aux limites non homog\`enes et applications.
              {V}ol. 1},
    SERIES = {Travaux et Recherches Math\'ematiques},
    VOLUME = {No. 17},
 PUBLISHER = {Dunod, Paris},
      YEAR = {1968},
     PAGES = {xx+372},
   MRCLASS = {35.00 (46.00)},
  MRNUMBER = {247243},
MRREVIEWER = {R.\ S.\ Freeman},
}

@article{Lucarini2014,
  title={Mathematical and physical ideas for climate science},
  author={Lucarini, V. and Blender, R. and Herbert, C. and Ragone, F. and Pascale, S. and Wouters, J.},
  journal={Reviews of Geophysics},
  volume={52},
  number={4},
  pages={809--859},
  year={2014},
  publisher={Wiley Online Library}
}

@article {Lucarini-Bodai2017,
    AUTHOR = {Lucarini, V. and B\'{o}dai, T.},
     TITLE = {Edge states in the climate system: exploring global
              instabilities and critical transitions},
   JOURNAL = {Nonlinearity},
    VOLUME = {30},
      YEAR = {2017},
    NUMBER = {7},
     PAGES = {R32--R66}
}

@article {Lucarini-Bodai2020,
    AUTHOR = {Lucarini, V. and B\'{o}dai, T.},
     TITLE = {Global stability properties of the climate: melancholia
              states, invariant measures, and phase transitions},
   JOURNAL = {Nonlinearity},
    VOLUME = {33},
      YEAR = {2020},
    NUMBER = {9},
     PAGES = {R59--R92}
}

@book {lunardi1995analytic,
    AUTHOR = {Lunardi, A.},
     TITLE = {Analytic semigroups and optimal regularity in parabolic
              problems},
    SERIES = {Modern Birkh\"auser Classics},
      NOTE = {[2013 reprint of the 1995 original] [MR1329547]},
 PUBLISHER = {Birkh\"auser/Springer Basel AG, Basel},
      YEAR = {1995},
     PAGES = {xviii+424},
      ISBN = {978-3-0348-0556-8; 978-3-0348-0557-5},
   MRCLASS = {47D06 (01A75 34G20 35Kxx 46M35 46N20 47N20 58D25)},
  MRNUMBER = {3012216},
}

@article{North81,
  title={Energy balance climate models},
  author={North, G. R. and Cahalan, R. F. and Coakley, J. A.},
  journal={Reviews of Geophysics},
  volume={19},
  number={1},
  pages={91--121},
  year={1981},
  publisher={Wiley Online Library}
}

@book{Peixoto1992,
  title={Physics of climate},
  author={Peixoto, J. P. and Oort, A. H. and Lorenz, E. N.},
  volume={520},
  year={1992},
  publisher={Springer}
}

@book {Pierrehumber2011,
    AUTHOR = {Pierrehumbert, R. T.},
     TITLE = {Principles of planetary climate},
 PUBLISHER = {Cambridge University Press, Cambridge},
      YEAR = {2010},
     PAGES = {xxvi+652}
}

@book {Protter-Weinberger,
    AUTHOR = {Protter, M. H. and Weinberger, H. F.},
     TITLE = {Maximum principles in differential equations},
 PUBLISHER = {Prentice-Hall, Inc., Englewood Cliffs, N.J.},
      YEAR = {1967},
     PAGES = {x+261}
}

@article {BSchmidt,
    AUTHOR = {Schmidt, B. E.},
     TITLE = {Bifurcation from {S}-shaped solution curves in a class of
              {S}turm-{L}iouville problems related to climate modeling},
   JOURNAL = {Adv. Math. Sci. Appl.},
  FJOURNAL = {Advances in Mathematical Sciences and Applications},
    VOLUME = {10},
      YEAR = {2000},
    NUMBER = {2},
     PAGES = {513--537},
      ISSN = {1343-4373},
   MRCLASS = {34B18 (34C23 34C60 35B20 35K57 86A10)},
  MRNUMBER = {1807440},
MRREVIEWER = {Junping\ Shi},
}

@article{sellers,
  title={A global climatic model based on the energy balance of the earth-atmosphere system},
  author={Sellers, W. D.},
  journal={Journal of Applied Meteorology (1962-1982)},
  pages={392--400},
  year={1969},
  publisher={JSTOR}
}

@book {Smith,
    AUTHOR = {Smith, H. L.},
     TITLE = {Monotone dynamical systems},
    SERIES = {Mathematical Surveys and Monographs},
    VOLUME = {41},
      NOTE = {An introduction to the theory of competitive and cooperative
              systems},
 PUBLISHER = {American Mathematical Society, Providence, RI},
      YEAR = {1995},
     PAGES = {x+174}
}

@book {Temam,
    AUTHOR = {Temam, R.},
     TITLE = {Infinite-dimensional dynamical systems in mechanics and
              physics},
    SERIES = {Applied Mathematical Sciences},
    VOLUME = {68},
 PUBLISHER = {Springer-Verlag, New York},
      YEAR = {1988},
     PAGES = {xvi+500},
      ISBN = {0-387-96638-2},
   MRCLASS = {58Fxx (35-02 47H20 58-02 58D25 76-02 76D05)},
  MRNUMBER = {953967},
MRREVIEWER = {J. E. Marsden},
       DOI = {10.1007/978-1-4684-0313-8},
       URL = {https://doi.org/10.1007/978-1-4684-0313-8},
}

@article {TORT2012683,
    AUTHOR = {Tort, J. and Vancostenoble, J.},
     TITLE = {Determination of the insolation function in the nonlinear
              {S}ellers climate model},
   JOURNAL = {Annales de l'Institut Henri Poincar\'{e} C. Analyse Non Lin\'{e}aire},
    VOLUME = {29},
      YEAR = {2012},
    NUMBER = {5},
     PAGES = {683--713}
}

@article {JV-DCDS2011,
    AUTHOR = {Vancostenoble, J.},
     TITLE = {Improved {H}ardy-{P}oincar\'e{} inequalities and sharp
              {C}arleman estimates for degenerate/singular parabolic
              problems},
   JOURNAL = {Discrete Contin. Dyn. Syst. Ser. S},
  FJOURNAL = {Discrete and Continuous Dynamical Systems. Series S},
    VOLUME = {4},
      YEAR = {2011},
    NUMBER = {3},
     PAGES = {761--790},
      ISSN = {1937-1632,1937-1179},
   MRCLASS = {35K67 (35A23 35K65 93B05 93B07)},
  MRNUMBER = {2746432},
MRREVIEWER = {Eduardo\ Cerpa},
       DOI = {10.3934/dcdss.2011.4.761},
       URL = {https://doi.org/10.3934/dcdss.2011.4.761},
}

@article {Yang-Deng,
    AUTHOR = {Yang, L. and Deng, Z.-C.},
     TITLE = {Uniqueness for an inverse source problem in degenerate
              parabolic equations},
   JOURNAL = {J. Math. Anal. Appl.},
  FJOURNAL = {Journal of Mathematical Analysis and Applications},
    VOLUME = {488},
      YEAR = {2020},
    NUMBER = {2},
     PAGES = {124095, 9},
      ISSN = {0022-247X,1096-0813},
   MRCLASS = {35R30 (35K20 35K65)},
  MRNUMBER = {4082539},
       DOI = {10.1016/j.jmaa.2020.124095},
       URL = {https://doi.org/10.1016/j.jmaa.2020.124095},
}

@article {PC-Vespri-1986,
    AUTHOR = {Cannarsa, P. and Vespri, V.},
     TITLE = {On maximal {$L^p$} regularity for the abstract {C}auchy
              problem},
   JOURNAL = {Boll. Un. Mat. Ital. B (6)},
  FJOURNAL = {Unione Matematica Italiana. Bollettino. B. Serie VI},
    VOLUME = {5},
      YEAR = {1986},
    NUMBER = {1},
     PAGES = {165--175},
   MRCLASS = {34G10},
  MRNUMBER = {841623},
MRREVIEWER = {Tom\'as\ Dom\'inguez Benavides},
}

@phdthesis{Fornasaro,
AUTHOR= {Fornasaro, F.},
TITLE={Well-posedness and long-time dynamics for a deterministic and stochastic quasi-geostrophic ocean-atmosphere model with heat exchange},
SCHOOL={Sapienza Università di Roma}, 
YEAR={2026},
}

@article {Hetzer-Schmidt1990,
    AUTHOR = {Hetzer, G. and Schmidt, P. G.},
     TITLE = {A global attractor and stationary solutions for a
              reaction-diffusion system arising from climate modeling},
   JOURNAL = {Nonlinear Anal.},
  FJOURNAL = {Nonlinear Analysis. Theory, Methods \& Applications. An
              International Multidisciplinary Journal},
    VOLUME = {14},
      YEAR = {1990},
    NUMBER = {11},
     PAGES = {915--926},
      ISSN = {0362-546X,1873-5215},
   MRCLASS = {35K57 (58F12 58F39 86A10)},
  MRNUMBER = {1058413},
MRREVIEWER = {Pierre\ A.\ Vuillermot},
       DOI = {10.1016/0362-546X(90)90109-T},
       URL = {https://doi.org/10.1016/0362-546X(90)90109-T},
}

@article {Hetzer-Schmidt1991,
    AUTHOR = {Hetzer, G. and Schmidt, P. G.},
     TITLE = {Global existence and asymptotic behavior for a quasilinear
              reaction-diffusion system from climate modeling},
   JOURNAL = {J. Math. Anal. Appl.},
  FJOURNAL = {Journal of Mathematical Analysis and Applications},
    VOLUME = {160},
      YEAR = {1991},
    NUMBER = {1},
     PAGES = {250--262},
      ISSN = {0022-247X,1096-0813},
   MRCLASS = {35K57 (58F39 86A10)},
  MRNUMBER = {1124090},
MRREVIEWER = {J\'anos\ T\'oth},
       DOI = {10.1016/0022-247X(91)90303-H},
       URL = {https://doi.org/10.1016/0022-247X(91)90303-H},
}

@incollection{Jentsch0,
  title={Cloud-ice-vapor feedbacks in a global climate model},
  author={Jentsch, V.},
  booktitle={Irreversible Phenomena and Dynamical Systems Analysis in Geosciences},
  pages={417--437},
  year={1987},
  publisher={Springer}
}

@article{Jentsch1,
  title={An energy balance climate model with hydrological cycle: 1. Model description and sensitivity to internal parameters},
  author={Jentsch, V.},
  journal={Journal of Geophysical Research: Atmospheres},
  volume={96},
  number={D9},
  pages={17169--17179},
  year={1991},
  publisher={Wiley Online Library}
}

@article{Jentsch2,
  title={An energy balance climate model with hydrological cycle: 2. Stability and sensitivity to external forcing},
  author={Jentsch, V.},
  journal={Journal of Geophysical Research: Atmospheres},
  volume={96},
  number={D9},
  pages={17181--17193},
  year={1991},
  publisher={Wiley Online Library}
}

@article{Gill1982,
  title={Atmosphere-ocean dynamics},
  author={Gill, A. E.},
  journal={International Geophysics Series},
  volume={30},
  pages={662 p.},
  year={1982},
  publisher={Academic Press}
}

@article{Paltridge1974,
  title={Global cloud cover and earth surface temperature},
  author={Paltridge, G. W.},
  journal={Journal of atmospheric Sciences},
  volume={31},
  number={6},
  pages={1571--1576},
  year={1974}
}

@article{BCCKU,
  title={Stochastic Two-Layer Energy Balance Model: well-posedness and exponential ergodicity},
  author={Broecker, J. and Cannarsa, P. and Carigi, G. and Kuna, T. and Urbani, C.},
  journal={preprint},
  year={2026}
}

@book {Evans,
    AUTHOR = {Evans, L. C.},
     TITLE = {Partial differential equations},
    SERIES = {Graduate Studies in Mathematics},
    VOLUME = {19},
   EDITION = {Second},
 PUBLISHER = {American Mathematical Society, Providence, RI},
      YEAR = {2010},
     PAGES = {xxii+749},
      ISBN = {978-0-8218-4974-3},
   MRCLASS = {35-01},
  MRNUMBER = {2597943},
MRREVIEWER = {Diego\ M.\ Maldonado},
       DOI = {10.1090/gsm/019},
       URL = {https://doi.org/10.1090/gsm/019},
}

@book {Krylov,
    AUTHOR = {Krylov, N. V.},
     TITLE = {Lectures on elliptic and parabolic equations in {H}\"older
              spaces},
    SERIES = {Graduate Studies in Mathematics},
    VOLUME = {12},
 PUBLISHER = {American Mathematical Society, Providence, RI},
      YEAR = {1996},
     PAGES = {xii+164},
      ISBN = {0-8218-0569-X},
   MRCLASS = {35-01 (35Jxx 35Kxx)},
  MRNUMBER = {1406091},
MRREVIEWER = {Manfred\ W.\ Kracht},
       DOI = {10.1090/gsm/012},
       URL = {https://doi.org/10.1090/gsm/012},
}

@book {Ladyz,
    AUTHOR = {Ladyzhenskaya, O. A. and Ural'tseva, N. N.},
     TITLE = {Linear and quasilinear elliptic equations},
      NOTE = {Translated from the Russian by Scripta Technica, Inc,
              Translation editor: Leon Ehrenpreis},
 PUBLISHER = {Academic Press, New York-London},
      YEAR = {1968},
     PAGES = {xviii+495},
   MRCLASS = {35.47},
  MRNUMBER = {244627},
}

@book {Lieberman,
    AUTHOR = {Lieberman, G. M.},
     TITLE = {Second order parabolic differential equations},
 PUBLISHER = {World Scientific Publishing Co., Inc., River Edge, NJ},
      YEAR = {1996},
     PAGES = {xii+439},
      ISBN = {981-02-2883-X},
   MRCLASS = {35-02 (35Bxx 35Dxx 35Kxx)},
  MRNUMBER = {1465184},
MRREVIEWER = {Siegfried\ Carl},
       DOI = {10.1142/3302},
       URL = {https://doi.org/10.1142/3302},
}

@book {henry,
    AUTHOR = {Henry, Daniel},
     TITLE = {Geometric theory of semilinear parabolic equations},
    SERIES = {Lecture Notes in Mathematics},
    VOLUME = {840},
 PUBLISHER = {Springer-Verlag, Berlin-New York},
      YEAR = {1981},
     PAGES = {iv+348},
      ISBN = {3-540-10557-3},
   MRCLASS = {35K55 (34G20 58D25)},
  MRNUMBER = {610244},
MRREVIEWER = {J.\ A.\ Goldstein},
}
\end{document}